\definecolor{ANDREW}{RGB}{255,127,0}
\theoremstyle{plain}
\newtheorem{proposition}{Proposition}[section]
\newtheorem{theorem}[proposition]{Theorem}
\newtheorem{lemma}[proposition]{Lemma}
\newtheorem{corollary}[proposition]{Corollary}
\theoremstyle{definition}
\newtheorem{example}[proposition]{Example}
\newtheorem{definition}[proposition]{Definition}
\newtheorem{observation}[proposition]{Observation}
\theoremstyle{remark}
\newtheorem{remark}[proposition]{Remark}
\DeclareMathOperator{\Ad}{Ad}
\DeclareMathOperator{\Aut}{Aut}
\DeclareMathOperator{\SL}{SL}
\DeclareMathOperator{\GL}{GL}
\DeclareMathOperator{\SO}{SO}
\DeclareMathOperator{\PO}{PO}
\DeclareMathOperator{\SU}{SU}
\DeclareMathOperator{\PSL}{PSL}
\DeclareMathOperator{\PGL}{PGL}
\DeclareMathOperator{\Hom}{Hom}
\DeclareMathOperator{\End}{End}
\DeclareMathOperator{\Span}{Span} 
\DeclareMathOperator{\Sym}{Sym} 
\DeclareMathOperator{\id}{id}
\DeclareMathOperator{\Gr}{Gr}
\DeclareMathOperator{\Usf}{\mathsf{U}}
\DeclareMathOperator{\Cc}{\mathcal{C}}
\DeclareMathOperator{\Gc}{\mathcal{G}}
\DeclareMathOperator{\Hc}{\mathcal{H}}
\DeclareMathOperator{\Lc}{\mathcal{L}}
\DeclareMathOperator{\Oc}{\mathcal{O}}
\DeclareMathOperator{\Pc}{\mathcal{P}}
\DeclareMathOperator{\Uc}{\mathcal{U}}
\DeclareMathOperator{\Vc}{\mathcal{V}}
\DeclareMathOperator{\Es}{\mathscr{E}}
\DeclareMathOperator{\Fs}{\mathscr{F}}
\DeclareMathOperator{\Ab}{\mathbb{A}}
\DeclareMathOperator{\Bb}{\mathbb{B}}
\DeclareMathOperator{\Cb}{\mathbb{C}}
\DeclareMathOperator{\Hb}{\mathbb{H}}
\DeclareMathOperator{\Nb}{\mathbb{N}}
\DeclareMathOperator{\Pb}{\mathbb{P}}
\DeclareMathOperator{\Rb}{\mathbb{R}}
\DeclareMathOperator{\Zb}{\mathbb{Z}}
\DeclareMathOperator{\aL}{\mathfrak{a}}
\DeclareMathOperator{\gL}{\mathfrak{g}}
\DeclareMathOperator{\kL}{\mathfrak{k}}
\DeclareMathOperator{\pL}{\mathfrak{p}}
\DeclareMathOperator{\sL}{\mathfrak{sl}}
\newcommand{\abs}[1]{\left|#1\right|}
\newcommand{\norm}[1]{\left\|#1\right\|}
\newcommand{\wt}[1]{\widetilde{#1}}
\newcommand{\wh}[1]{\widehat{#1}}
\newcommand{\ip}[1]{\left\langle #1\right\rangle}
\title{Regularity of limit sets of Anosov representations}
\thanks{T.Z. was partially supported by the National Science Foundation under grant DMS-1566585, and by the NUS-MOE grants R-146-000-270-133 and A-8000-458-00-00. A.Z. was partially supported by the National Science Foundation under grants DMS-1760233, DMS-2105580, and DMS-2104381. The authors also acknowledge support from the GEAR Network, funded by the National Science Foundation under grant numbers DMS 1107452, 1107263, and 1107367 (``RNMS: GEometric structures And Representation
varieties''.)}
\author{Tengren Zhang}
\address{Department of Mathematics, National University of Singapore, 21 Lower Kent Ridge Road, Singapore 119077}
\email{matzt@nus.edu.sg}
\author{Andrew Zimmer}
\address{Department of Mathematics, University of Wisconsin, Madison, WI, 53706}
\email{amzimmer2@wisc.edu}
\date{\today}
\keywords{}
\subjclass[2010]{}
\begin{document}

\begin{abstract} In this paper we establish necessary and sufficient conditions for the limit set of a projective Anosov representation to be a $C^{\alpha}$-submanifold of the real projective space for some $\alpha\in(1,2)$. We also calculate the optimal value of $\alpha$ in terms of the eigenvalue data of the Anosov representation.
\end{abstract}

\maketitle

\tableofcontents

\section{Introduction}
Let $\Hb_{\Rb}^d$ be the real hyperbolic $d$-space, let $\partial_\infty\Hb_{\Rb}^d$ denote the Gromov boundary of $\Hb_{\Rb}^d$, and  let ${ \rm Isom}(\Hb_{\Rb}^d)$ denote the isometry group of $\Hb_{\Rb}^d$. Given a representation $\rho: \Gamma \rightarrow { \rm Isom}(\Hb_{\Rb}^d)$, the \emph{limit set of $\rho$} is defined to be 
\begin{align*}
\Lc_\rho = \overline{\rho(\Gamma) \cdot x_0} \cap \partial_\infty \Hb_{\Rb}^d
\end{align*}
where $x_0 \in \Hb_{\Rb}^d$ is some (equivalently, any) point. If we further assume that $\Gamma$ is a hyperbolic group and $\rho$ is a convex co-compact representation, then there is a $\rho$-equivariant, continuous map from $\partial_\infty \Gamma$, the Gromov boundary of $\Gamma$, to the limit set $\Lc_\rho$. The limit set in this setting is generically very irregular, for instance when $\partial_\infty\Gamma$ is a topological manifold Yue~\cite{Y1996} proved: unless $\rho$ is a co-compact action on a totally geodesic subspace of $\Hb_{\Rb}^d$, its limit set has Hausdorff dimension strictly greater than its topological dimension. In particular, it is fractal-like. 

The group ${ \rm Isom}(\Hb_{\Rb}^d)$ is a semisimple Lie group. For a general semisimple Lie group $G$, there is a rich class of representations from a hyperbolic group $\Gamma$ to $G$ called \emph{Anosov representations}, which generalize the convex co-compact representations from $\Gamma$ to ${ \rm Isom}(\Hb_{\Rb}^d)$. Anosov representations were introduced by Labourie~\cite{L2006} and extended by Guichard-Wienhard~\cite{GW2012}. Since then, they have been heavily studied, \cite{KLP2017,KLP2013,KLP2014b,GGKW2015,BPS2016}. One reason for their popularity is that they are rigid enough to retain many of the good geometric properties that convex co-compact representations have, while at the same time are flexible enough to admit many new and interesting examples. 

In this paper, we investigate the regularity of the limit sets of Anosov representations from $\Gamma$ into $\PGL_d(\Rb)$.
We will give precise definitions in Section~\ref{sec:Anosov_repn} but informally: if $\Gamma$ is a word hyperbolic group with Gromov boundary $\partial_\infty \Gamma$, a representation $\rho:\Gamma \rightarrow \PGL_d(\Rb)$ is said to be $P_k$-Anosov if there exist continuous $\rho$-equivariant  maps 
\begin{align*}
\xi_\rho^k : \partial_\infty \Gamma \rightarrow \Gr_k(\Rb^d) \text{ and } \xi_\rho^{d-k} : \partial_\infty \Gamma \rightarrow \Gr_{d-k}(\Rb^d)
\end{align*}
which satisfy certain dynamical properties. Such maps are unique if they exist. As such, $\xi_\rho^k$ is called the \emph{$P_k$-limit map} of $\rho$ and the image of $\xi_\rho^k$ in $\Gr_k(\Rb^d)$ is called the \emph{$P_k$-limit set of $\rho$}. If $\rho$ is $P_k$-Anosov for all $k \in \{ k_1,\dots, k_j\}$ we say that $\rho$ is \emph{$P_{k_1,\dots, k_j}$-Anosov}. 

We will largely focus our attention on $P_1$-Anosov representations; by a result of Guichard-Wienhard \cite[Proposition 4.3]{GW2012}, for any Anosov representation $\rho : \Gamma \rightarrow G$ into a semisimple Lie group $G$, there exists $d > 0$ and an irreducible representation $\phi : G \rightarrow \PGL_d(\Rb)$ such that $\phi \circ \rho$ is $P_1$-Anosov. Thus, up to post composition with irreducible representations, the class of $P_1$-Anosov representations contains all other types of Anosov representations. Further, the limit maps are related via the composition by a smooth map between the associated flag manifolds (again see \cite[Proposition 4.3]{GW2012}). So all regularity properties of the limit set can be investigated by reducing to the case of $P_1$-Anosov representations.

Our first main result gives a sufficient condition for the $P_1$-limit set of a $P_1$-Anosov representation to be a $C^\alpha$-submanifold of $\Pb(\Rb^d)$ for some $\alpha>1$.

\begin{theorem}\label{thm:main} (Theorem \ref{thm:main_body}) Suppose $\Gamma$ is a hyperbolic group, $\partial_\infty \Gamma$ is a topological $(m-1)$-manifold, and $\rho: \Gamma \rightarrow \PGL_{d}(\Rb)$ is a $P_1$-Anosov representation. If 
\begin{enumerate}
\item[($\dagger$)] $\rho$ is $P_m$-Anosov and $\xi_\rho^1(x) + \xi_\rho^1(z) +  \xi_\rho^{d-m}(y)$ is a direct sum for all pairwise distinct $x,y,z \in \partial_\infty \Gamma$,
\end{enumerate}
then 
\begin{enumerate}
\item[($\ddagger$)] $M:=\xi_\rho^1(\partial_\infty \Gamma)$ is a $C^{\alpha}$-submanifold of $\Pb(\Rb^d)$ for some $\alpha > 1$. 
\end{enumerate}
Moreover, $T_{\xi_\rho^1(x)} M = \xi_\rho^m(x)$ for any $x \in \partial_\infty \Gamma$. 
\end{theorem}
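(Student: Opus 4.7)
Because $\rho$ is $1$-Anosov, $\xi^{(1)}$ is a $\rho$-equivariant topological embedding, so $M$ is automatically a topological $(m-1)$-submanifold of $\Pb(\Rb^d)$. To get smooth local coordinates near a point $x_0 \in \partial_\infty\Gamma$, fix an auxiliary $y_0 \neq x_0$. The $m$-Anosov property gives $\xi^{(m)}(x_0) \oplus \xi^{(d-m)}(y_0) = \Rb^d$, so choosing any complement $V'$ of $\xi^{(1)}(x_0)$ inside $\xi^{(m)}(x_0)$ produces a direct sum $\Rb^d = \xi^{(1)}(x_0) \oplus V' \oplus \xi^{(d-m)}(y_0)$. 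This yields an affine chart on $\Pb(\Rb^d)$ centered at $\xi^{(1)}(x_0)$ in which the candidate tangent space $\xi^{(m)}(x_0)/\xi^{(1)}(x_0)$ is identified with $V'$. Condition $(\dagger)$ says that for distinct $x, z$ near $x_0$ the $2$-plane $\xi^{(1)}(x) + \xi^{(1)}(z)$ is transverse to $\xi^{(d-m)}(y_0)$; equivalently, the chord from $\xi^{(1)}(x)$ to $\xi^{(1)}(z)$ in the chart has a finite slope between $V'$ and $\xi^{(d-m)}(y_0)$. Hence $M$ is locally a graph $f \colon V' \supset \Omega \to \xi^{(d-m)}(y_0)$ of a continuous function.

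\textbf{$C^1$ regularity and the tangent space.} Next I would show that $f$ is differentiable at $0$ with $df_0 = 0$, equivalently every Grassmannian limit $\Pi$ of secant $2$-planes $\xi^{(1)}(x_0) + \xi^{(1)}(z_n)$ with $z_n \to x_0$ lies inside $\xi^{(m)}(x_0)$. Here the dynamics must enter: since every boundary point of a hyperbolic group is a conical limit of the $\Gamma$-orbit, one may approximate $z_n$ by orbit points $\gamma_n \cdot w$, where $w \neq x_0$ is fixed and $\gamma_n \in \Gamma$ have attracting fixed points $\gamma_n^+ \to x_0$ along a controlled direction. The joint $1$-Anosov and $m$-Anosov hypotheses provide two exponentially divergent singular-value gaps $\sigma_1/\sigma_2$ and $\sigma_m/\sigma_{m+1}$ for $\rho(\gamma_n)$; the second gap forces any fixed $2$-plane through $\xi^{(1)}(w)$ transverse to $\xi^{(d-m)}(\gamma_n^-)$ to be pushed by $\rho(\gamma_n)$ into the attracting $m$-plane $\xi^{(m)}(\gamma_n^+) \to \xi^{(m)}(x_0)$. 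Therefore $\Pi \subset \xi^{(m)}(x_0)$, and together with the continuity of $\xi^{(m)}$ this shows $M$ is $C^1$ with $T_{\xi^{(1)}(x_0)} M = \xi^{(m)}(x_0)$.

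\textbf{Hölder upgrade and the main obstacle.} To promote $C^1$ to $C^\alpha$ for some $\alpha > 1$, I would estimate the modulus of continuity of the tangent map $x \mapsto \xi^{(m)}(x)$ in $\Gr_m(\Rb^d)$. For nearby $x, z \in \partial_\infty\Gamma$, choose $\gamma \in \Gamma$ of word length $|\gamma| \asymp -\log d_{\partial_\infty\Gamma}(x,z)$ moving $(x,z)$ into a fixed compact piece of the pair space, so that $d_{\Gr_m}(\xi^{(m)}(\gamma x), \xi^{(m)}(\gamma z)) = O(1)$; then the Anosov contraction rate of $\rho(\gamma^{-1})$ on $\Gr_m$ (controlled by the second gap) combined with the Anosov expansion rate on $\partial_\infty\Gamma$ (controlled by the first gap) produces the estimate $d_{\Gr_m}(\xi^{(m)}(x), \xi^{(m)}(z)) \lesssim d_{\partial_\infty\Gamma}(x,z)^{\alpha - 1}$, where $\alpha - 1 > 0$ is the ratio of those two exponential rates. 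The main obstacle is the middle step: condition $(\dagger)$ by itself only forces the secant limit $\Pi$ to avoid each $\xi^{(d-m)}(y)$ individually, which is strictly weaker than $\Pi \subset \xi^{(m)}(x_0)$, so genuinely new dynamical input from the $m$-Anosov hypothesis (the second singular value gap) is needed to pin down the tangent space.
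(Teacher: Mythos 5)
Your strategy (local graph via hyperconvexity and invariance of domain, then differentiability via secant-plane dynamics, then a H\"older upgrade of the tangent field) is a genuinely different route from the paper's, but as written the crux is not actually proved. Your $C^1$ step only treats secants of the special form $\xi^{(1)}(x_0)+\xi^{(1)}(\gamma_n\cdot w)$, and even there the reduction is off: the secant plane is the image of a \emph{fixed} $2$-plane under $\rho(\gamma_n)$ only when $x_0$ is exactly the attracting fixed point $\gamma_n^+$, so that $\rho(\gamma_n)\xi^{(1)}(x_0)=\xi^{(1)}(x_0)$. For an arbitrary point $x_0$ and an arbitrary sequence $z_n\to x_0$ one must renormalize the pair $(x_0,z_n)$ by group elements $\gamma_n$ of word length comparable to the Gromov product; the planes $\xi^{(1)}(\gamma_n^{-1}\cdot x_0)+\xi^{(1)}(\gamma_n^{-1}\cdot z_n)$ then vary with $n$ and must be shown \emph{uniformly} transverse to the Cartan repelling $(d-m)$-subspace of $\rho(\gamma_n)$ (not literally $\xi^{(d-m)}(\gamma_n^-)$), which requires both the standard comparison of Cartan attractors/repellers with the flag maps and a uniform version of $(\dagger)$ over compact sets of distinct triples (e.g. via co-compactness of $\Gamma$ on $\partial_\infty\Gamma^{(3)}$, Proposition~\ref{prop:3_point_action}). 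None of this is carried out, and you yourself label this middle step ``the main obstacle''; since this is exactly where the content of the theorem lies, there is a genuine gap here. Note that the paper sidesteps a separate $C^1$ step altogether: it proves the uniform inequality $d_{\Pb}(p,\xi^{(m)}(x))\le D\, d_{\Pb}(p,\xi^{(1)}(x))^{\alpha}$ for all $p\in M$ and $\alpha<\alpha^m(\rho)$ (Lemma~\ref{thm:optimal_contraction}) using a $\Gamma$-invariant norm over the flow space, the dominated splitting $E_1\oplus E_2\oplus E_3$, and compactness of $P(M)/\Gamma$ built from the projections of Proposition~\ref{prop:rho_controlled_sets_projections}; combined with the graph structure (Lemma~\ref{lem:homeo}) this yields $C^\alpha$ with the sharp exponent in one stroke.

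A second, smaller gap is in your H\"older upgrade. An estimate $d_{\Gr_m}(\xi^{(m)}(x),\xi^{(m)}(z))\lesssim d_{\partial_\infty\Gamma}(x,z)^{\alpha-1}$ is essentially the standard H\"older continuity of the limit map and does not by itself give the $C^\alpha$ property in the sense of Section~\ref{sec:terminology}: there the remainder must be controlled in terms of distances measured in the chart, i.e. essentially in terms of $d_{\Pb}(\xi^{(1)}(x),\xi^{(1)}(z))$. You therefore also need a lower (inverse-H\"older) bound $d_{\partial_\infty\Gamma}(x,z)\lesssim d_{\Pb}(\xi^{(1)}(x),\xi^{(1)}(z))^{\epsilon}$ before the integral-remainder argument converts a H\"older tangent field into the $f(u+h)=f(u)+df_u(h)+{\rm O}(\norm{h}_{2}^{\alpha'})$ expansion; such a bound holds for $1$-Anosov representations but must be stated and proved, and the exponent you end up with is in general far from the optimal $\alpha^m(\rho)=\alpha_m(\rho)$ that the paper's method produces.
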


\begin{remark}\label{rmk:open} \
\begin{enumerate}
\item  Pozzetti-Sambarino-Wienhard \cite{PSW18} independently proved a version of Theorem \ref{thm:main_body}, where they deduce that $M$ is a $C^1$-submanifold of $\Pb(\Rb^d)$.

\item Property ($\dagger$) and $P_1$-Anosovness in Theorem~\ref{thm:main} are open conditions in $\Hom(\Gamma, \PGL_d(\Rb))$, see Section \ref{sec:stability}. 
\item The last sentence of Theorem \ref{thm:main} is to be made sense of via the following identifications: We may view every $P\in\Gr_m(\Rb^d)$ as a $(m-1)$-dimensional projective subspace of $\Pb(\Rb^d)$. Thus, if $p\in\Pb(\Rb^d)$ is a point that lies in $P$, then in any affine chart $\Ab$ containing $p$, $P\cap\Ab$ is an affine subspace that contains $p$, and so is canonically identified with a subspace of 
\[T_p\Ab\cong T_p\Pb(\Rb^d).\] This gives a canonical identification between $\{P\in\Gr_m(\Rb^d):p\in P\}$ and the set of $(m-1)$-dimensional subspaces of $T_p\Pb(\Rb^d)$.
\end{enumerate}
 \end{remark}

Theorem \ref{thm:main} is a generalization of the following theorem due to Benoist in the setting of divisible, properly convex domains in $\Pb(\Rb^d)$ (see Section \ref{sec:properly_convex}). A group of projective transformations $\Gamma\subset\PGL_d(\Rb)$ \emph{divides} a properly convex domain $\Omega\subset\Pb(\Rb^d)$ if $\Gamma$ acts properly discontinuously and co-compactly on $\Omega$. 

\begin{theorem}[Theorem 1.1 and Proposition 4.6 of \cite{Ben04}]\label{thm:convex_divisible}
Let $\Gamma\subset\PGL_d(\Rb)$ be a hyperbolic group that divides a properly convex domain $\Omega\subset\Pb(\Rb^d)$. Then the inclusion representation $\iota:\Gamma\hookrightarrow\PGL_d(\Rb)$ is a $P_1$-Anosov representation whose $P_1$-limit set is $\partial\Omega$. Furthermore, $\partial\Omega\subset\Pb(\Rb^d)$ is a $C^\alpha$-submanifold for some $\alpha>1$.
\end{theorem}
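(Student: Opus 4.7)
The plan is to derive the theorem from Theorem~\ref{thm:main}, applied with $m = d-1$. The key preliminary input is Benoist's earlier structure theory for divisible convex sets: since $\Gamma$ is word hyperbolic and divides $\Omega$, the domain $\Omega$ is strictly convex with $C^1$ boundary, the Hilbert metric on $\Omega$ is Gromov hyperbolic, and there is a natural $\Gamma$-equivariant homeomorphism $\xi^{(1)}\colon \partial_\infty \Gamma \to \partial \Omega$ identifying the Gromov boundary of $\Gamma$ with the Hilbert boundary of $\Omega$. In particular $\partial_\infty \Gamma$ is a topological $(d-2)$-sphere, so the manifold hypothesis of Theorem~\ref{thm:main} holds with $m-1 = d-2$.

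The next step is to upgrade this data to a pair of Anosov structures. Because $\partial \Omega$ is $C^1$, the assignment $\xi^{(d-1)}(x) := T_{\xi^{(1)}(x)} \partial \Omega$ defines a continuous $\Gamma$-equivariant map to $\Gr_{d-1}(\Rb^d)$, and transversality of $(\xi^{(1)},\xi^{(d-1)})$ away from the diagonal is immediate from the strict convexity. The dynamical contraction requirement for $1$-Anosovness can be verified using the standard singular-value-gap characterization of Anosov representations (Kapovich-Leeb-Porti, Gu\'eritaud-Guichard-Kassel-Wienhard) together with Benoist's eigenvalue estimates for elements dividing $\Omega$: proper cocompactness of the action on $\Omega$ with respect to the Hilbert metric forces a uniform top singular value gap for $\rho(\gamma)$ growing linearly in the word length of $\gamma$. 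This establishes that $\id$ is $1$-Anosov with $1$-limit set $\partial \Omega$, giving the first two assertions of the theorem. Running the same argument on the dual convex domain $\Omega^* \subset \Pb((\Rb^d)^*)$—which is again properly convex, strictly convex, $C^1$, and divided by $\Gamma$—yields the $(d-1)$-Anosov property, with $(d-1)$-flag map equal to the tangent hyperplane map $\xi^{(d-1)}$ just constructed.

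To apply Theorem~\ref{thm:main} it remains to check hypothesis ($\dagger$), i.e., that $\xi^{(1)}(x) + \xi^{(1)}(z) + \xi^{(d-m)}(y) = \xi^{(1)}(x) + \xi^{(1)}(y) + \xi^{(1)}(z)$ is a direct sum for pairwise distinct $x,y,z \in \partial_\infty \Gamma$. Equivalently, three distinct points of $\partial \Omega$ should never be projectively collinear, which is immediate from convexity: a projective line meets $\overline{\Omega}$ in at most a closed segment, hence meets $\partial \Omega$ in at most two points. With ($\dagger$) in hand, Theorem~\ref{thm:main} concludes that $\partial \Omega = \xi^{(1)}(\partial_\infty \Gamma)$ is a $C^\alpha$-submanifold of $\Pb(\Rb^d)$ for some $\alpha > 1$.

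The main obstacle in this plan is the verification of the Anosov properties, since translating Hilbert-geometric contraction into a uniform spectral gap requires invoking non-trivial characterizations from the Anosov representations literature (and, historically, this equivalence for convex-divided groups is itself a substantial theorem). Everything else—the topology of $\partial_\infty \Gamma$, triple non-collinearity, and the identification of $\xi^{(d-1)}$ as the tangent hyperplane map—is formal once strict convexity and $C^1$ regularity of $\partial \Omega$ are available from Benoist's earlier work.
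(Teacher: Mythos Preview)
The paper does not give its own proof of this statement: Theorem~\ref{thm:convex_divisible} is simply cited as a prior result of Benoist~\cite{Ben04}, presented as motivation which the paper's Theorem~\ref{thm:main} then generalizes. Your proposal instead runs the logic in reverse, deriving Benoist's theorem as a corollary of Theorem~\ref{thm:main} together with Benoist's earlier structure theory (strict convexity and $C^1$ boundary for divisible convex sets with hyperbolic dividing group). This is a legitimate and natural route, and indeed the paper implicitly endorses it by presenting Theorem~\ref{thm:main} as a generalization; the closely related Theorem~\ref{thm:regularity2} makes this essentially explicit for the properly convex case.

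Your argument is correct in outline, with two minor remarks. First, your verification of the direct-sum condition~($\dagger$) appeals to ``convexity'' to rule out three collinear boundary points, but convexity alone does not suffice (a face of $\partial\Omega$ could contain a segment); you need the \emph{strict} convexity you already established earlier. Second, the detour through the dual domain $\Omega^*$ to get $(d-1)$-Anosovness is unnecessary: as the paper notes in Section~\ref{sec:Anosov_repn}, a representation is $k$-Anosov if and only if it is $(d-k)$-Anosov, so $1$-Anosov already gives $(d-1)$-Anosov with the same flag maps. With these tweaks your derivation goes through cleanly; the substantive external input remains, as you correctly flag, the equivalence between convex-cocompactness on a strictly convex domain and the $1$-Anosov property.
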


\begin{remark} Benoist doesn't use the language of Anosov representations and for this reinterpretation of his results, see Section 6.2 in~\cite{GW2012}. \end{remark}

Theorem \ref{thm:main} also generalizes a result due to Labourie in the setting of Hitchin representations. Let $S$ be a closed orientable hyperbolizable surface and fix a Fuchsian representation $\rho_0: \pi_1(S) \rightarrow \PGL_2(\Rb)$. Then let $\tau_d : \PGL_2(\Rb) \rightarrow \PGL_d(\Rb)$ be the standard irreducible representation (see Section \ref{sec:rhoirred}). A representation $\rho : \pi_1(S) \rightarrow \PGL_d(\Rb)$ is \emph{Hitchin} if it is conjugate to a representation in the connected component of $\Hom(\pi_1(S), \PGL_d(\Rb))$ that contains $\tau_d \circ \rho_0$. 

\begin{theorem}[Theorem 1.4 of \cite{L2006}]
If $\rho:\pi_1(S)\to\PGL_d(\Rb)$ is a Hitchin representation, then $\rho$ is $P_k$-Anosov for every $k \in \{1,\dots, d-1\}$, and the $P_1$-limit set of $\rho$ is a $C^{\alpha}$-submanifold in $\Pb(\Rb^d)$ for some $\alpha>1$. 
\end{theorem}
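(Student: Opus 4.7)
Since $\partial_\infty \pi_1(S) \cong S^1$ is a topological $1$-manifold, we take $m = 2$ in Theorem~\ref{thm:main}. The strategy is to verify that every Hitchin representation $\rho$ satisfies hypothesis $(\dagger)$ -- namely, that $\rho$ is $2$-Anosov and $\xi^{(1)}(x) + \xi^{(1)}(z) + \xi^{(d-2)}(y)$ is a direct sum for pairwise distinct $x, y, z$ -- after which Theorem~\ref{thm:main} immediately delivers the $C^\alpha$-regularity. The stronger claim that $\rho$ is $k$-Anosov for every $k$ will be established along the way.

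For the Fuchsian base point $\rho_{\text{base}} = \tau_d \circ \rho_0$, the limit curve is the rational normal (Veronese) curve $t \mapsto [1 : t : t^2 : \cdots : t^{d-1}]$, and the higher osculating flags $\xi^{(k)}(x)$ admit explicit polynomial descriptions. Verification of both $k$-Anosovness (for all $k$) and the direct sum condition at $\rho_{\text{base}}$ reduces to the non-vanishing of Vandermonde-type determinants at distinct points of $\partial_\infty \pi_1(S)$, which is immediate.

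The key step is to propagate these properties throughout the Hitchin component, which is connected by definition. Being $k$-Anosov and condition $(\dagger)$ are both open (the latter by Remark~\ref{rmk:open}), but openness alone is insufficient on a connected space. I would follow Labourie's approach of establishing a stronger \emph{Frenet property}: a $\rho$-equivariant continuous curve $\xi : S^1 \to \Pb(\Rb^d)$ equipped with osculating flag maps $\xi^{(k)}$ such that for any distinct $x_1, \ldots, x_n \in S^1$ and positive integers $k_1, \ldots, k_n$ with $\sum k_i \leq d$, the sum $\xi^{(k_1)}(x_1) + \cdots + \xi^{(k_n)}(x_n)$ is direct. This single property simultaneously encodes $k$-Anosovness for all $k$ and all required transversality conditions, including the case $n = 3$, $(k_1, k_2, k_3) = (1, 1, d-2)$ needed for $(\dagger)$.

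The main obstacle is producing a Frenet curve for an arbitrary Hitchin representation. Since openness alone is insufficient, one needs a closedness (or direct construction) argument within the Hitchin component. Labourie's original route invokes cross-ratio and dynamical estimates to show that the Frenet locus is closed among Hitchin representations, combined with a compactness argument to pass flag maps to limits; an alternative modern approach uses the Fock-Goncharov positive coordinates on the Hitchin component, where total positivity of the holonomy matrices directly forces the desired transversality of osculating flags. Once the Frenet property is in hand, condition $(\dagger)$ follows immediately, $\rho$ is $k$-Anosov for every $k \in \{1, \ldots, d-1\}$, and Theorem~\ref{thm:main} yields the $C^\alpha$-submanifold structure on $\xi^{(1)}(\partial_\infty \pi_1(S))$.
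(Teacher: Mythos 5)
Your proposal is correct and follows essentially the same route as the paper: the paper treats the hard input -- Labourie's hyperconvex Frenet property and $k$-Anosovness for all $k$, recalled as properties (1)--(3) in Section~\ref{sec:prelim_Hitchin} and cited to \cite{L2006} rather than reproven -- exactly as you do, and then regards the $C^\alpha$ statement as the $m=2$ instance of Theorem~\ref{thm:main}, since the Frenet transversality with $(k_1,k_2,k_3)=(1,1,d-2)$ is precisely condition ($\dagger$).
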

 
Using Theorem~\ref{thm:main}, we can find more examples of representations that preserve $C^\alpha$-submanifolds in $\Pb(\Rb^d)$. 

\begin{example}\label{cor:hyperbolic_lattices}(See Section~\ref{sec:real_hyp_lattices}) Suppose $\tau: {\rm Isom}(\Hb^m_{\Rb}) \rightarrow \PGL_d(\Rb)$ is an irreducible proximal representation, $\Gamma \leq {\rm Isom}(\Hb_{\Rb}^m)$ is a co-compact lattice, and $\rho := \tau|_{\Gamma} : \Gamma \rightarrow \PGL_d(\Rb)$. Then $\rho$ is $P_1$-Anosov, and there exists a neighborhood $\Oc$ of $\rho$ in $\Hom(\Gamma, \PGL_d(\Rb))$ such that every representation in $\Oc$ is a $P_1$-Anosov representation whose $P_1$-limit set is a $C^{\alpha}$-submanifold of $\Pb(\Rb^d)$ for some $\alpha > 1$. 
\end{example}

\begin{example}\label{cor:hitchin} (See Section \ref{sec:Hitchin}) If $\rho : \pi_1(S) \rightarrow \PGL_d(\Rb)$ is a Hitchin representation, then for all $k=1,\dots,d-1$, there is an open set $\Oc$ of $\bigwedge^k\rho$ in $\Hom\big(\Gamma,\PGL\big(\bigwedge^k\Rb^d\big)\big)$ such that every representation in $\Oc$
is a $P_1$-Anosov representation whose $P_1$-limit set is a $C^{\alpha}$-submanifold of $\Pb\big(\bigwedge^k\Rb^d\big)$ for some $\alpha>1$. See Section \ref{sec:wedge} for the definition of $\bigwedge^k\rho$. In particular, by applying \cite[Proposition 4.4]{GW2012}, the $P_k$-limit set of $\rho$ is a $C^{\alpha}$-submanifold of $\Gr_k(\Rb^d)$ for some $\alpha>1$. 
\end{example}

\begin{remark} Example~\ref{cor:hitchin} was independently observed by Pozzetti, Sambarino, and Wienhard \cite{PSW18}.\end{remark}
 
In fact, Theorem \ref{thm:main} is a consequence of a more general theorem, see Theorem~\ref{thm:main_body}, that is stated using \emph{$\rho$-controlled subsets} $M\subset\Pb(\Rb^d)$, of which the $P_1$-limit set of $\rho$ is an example, see Definition \ref{def:controlled}. In the main body of our paper, all our results will be stated for $\rho$-controlled subsets. These statements are stronger than the results we mention in this introduction, but are more technical to state.
 
We also investigate the extent to which the converse of Theorem \ref{thm:main} holds. In general, there are $P_1$-Anosov representations $\rho$ whose $P_1$-limit set are $C^\infty$-submanifolds of $\Pb(\Rb^d)$, but for which ($\dagger$) in Theorem \ref{thm:main} does not hold, see Examples \ref{ex:irred_bad_example} and~\ref{ex:surface_bad_example}. However, we prove that when $\Gamma$ is virtually a surface group and $\rho$ is irreducible, the conditions in Theorem~\ref{thm:main} are both necessary and sufficient.

\begin{theorem}\label{thm:nec_surface}(Theorem \ref{thm:nec_surface_body}) Suppose $\Gamma$ is a hyperbolic group, $\rho: \Gamma \rightarrow \PGL_{d}(\Rb)$ is an irreducible $P_1$-Anosov representation, and $\partial_\infty \Gamma$ is homeomorphic to a circle. Then the following are equivalent: 
\begin{enumerate}
\item[($\dagger$)] $\rho$ is a $P_2$-Anosov representation and $\xi_\rho^1(x) + \xi_\rho^1(y) + \xi_\rho^{d-2}(z)$ is a direct sum for all $x,y,z \in \partial_\infty \Gamma$ pairwise distinct,
\item[($\ddagger$)] $\xi_\rho^1(\partial_\infty \Gamma)$ is a $C^{\alpha}$-submanifold of $\Pb(\Rb^d)$ for some $\alpha > 1$.
\end{enumerate}
\end{theorem}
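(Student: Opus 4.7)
The implication $(\dagger)\Rightarrow(\ddagger)$ is immediate from Theorem \ref{thm:main} applied with $m=2$, since $\partial_\infty\Gamma\cong S^1$ is a topological $1$-manifold. The content is therefore the reverse implication.

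For $(\ddagger)\Rightarrow(\dagger)$, my plan is to exploit the tangent structure of the $C^\alpha$ curve $M:=\xi^{(1)}(\partial_\infty\Gamma)\subset\Pb(\Rb^d)$. At each $x\in\partial_\infty\Gamma$ the tangent line to $M$ at $\xi^{(1)}(x)$ lifts to a unique $2$-plane $\zeta^{(2)}(x)\subset\Rb^d$ containing $\xi^{(1)}(x)$. By the $C^1$-regularity of $M$, the resulting map $\zeta^{(2)}:\partial_\infty\Gamma\to\Gr_2(\Rb^d)$ is continuous, and since each $\rho(\gamma)$ acts as a diffeomorphism on $M$ it is also $\rho$-equivariant. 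The task then becomes identifying $\zeta^{(2)}$ with the limit map $\xi^{(2)}$ of a $2$-Anosov structure.

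I would invoke the singular-value gap characterization of Anosov representations (e.g.\ of Bochi--Potrie--Sambarino or Kapovich--Leeb--Porti): $\rho$ is $2$-Anosov iff there exist $C,c>0$ so that $\mu_2(\rho(\gamma))/\mu_3(\rho(\gamma))\geq Ce^{c|\gamma|}$ for all $\gamma\in\Gamma$. For a proximal $\gamma$ with attracting fixed point $\gamma^+\in\partial_\infty\Gamma$, the element $\rho(\gamma)$ preserves $\zeta^{(2)}(\gamma^+)$ with $\xi^{(1)}(\gamma^+)$ as its dominant eigenline. The $C^\alpha$ H\"older regularity of the tangent bundle of $M$ with $\alpha>1$ should force the second eigenvalue of $\rho(\gamma)$ (the one acting on $\zeta^{(2)}(\gamma^+)/\xi^{(1)}(\gamma^+)$) to strictly dominate the remaining spectrum, with a gap controlled by $\alpha-1$. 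A compactness argument on $\partial_\infty\Gamma$, using density of pairs of attracting/repelling fixed points and continuity of $\zeta^{(2)}$ and $\xi^{(d-1)}$, should then upgrade this pointwise gap to the required uniform gap along $\Gamma$. The irreducibility hypothesis enters at this stage to rule out degenerate configurations such as $\zeta^{(2)}(x)\subset\xi^{(d-1)}(x)$ for some $x$, which by equivariance and minimality of the $\Gamma$-action on $\partial_\infty\Gamma$ would yield a proper $\rho$-invariant subspace.

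With $2$-Anosovness established and $\xi^{(2)}=\zeta^{(2)}$, the direct-sum condition in $(\dagger)$ follows from a further equivariance argument: a failure of $\xi^{(1)}(x)+\xi^{(1)}(y)+\xi^{(d-2)}(z)=\Rb^d$ for distinct $x,y,z$ would, via $\rho$-equivariance together with density of $\Gamma$-orbits on triples, produce a nontrivial closed linear relation among the values of $\xi^{(1)}$, again contradicting irreducibility. The main obstacle I anticipate is the quantitative step of converting the modulus of continuity of the tangent map of $M$ into a uniform singular-value gap for $\rho(\gamma)$ over all $\gamma\in\Gamma$, not only at loxodromic fixed points; I expect this to follow from a bootstrap/self-similarity estimate along $\rho$-orbits, in the spirit of arguments of Benoist \cite{Ben04} and Labourie \cite{L2006}.
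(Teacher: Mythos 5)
The forward implication and your overall architecture (a pointwise spectral gap extracted from the tangent line at each attracting fixed point, then an upgrade to $2$-Anosovness, then transversality) do match the paper's plan, but both critical steps of the converse are missing or wrong as proposed. First, the upgrade: what you need for Theorem~\ref{thm:SV_char_of_Anosov} is a uniform \emph{singular value} gap $\mu_2/\mu_3(\rho(\gamma))\geq Ce^{c|\gamma|}$, whereas the regularity of $M$ only gives you, at each infinite-order $\gamma$, information about \emph{eigenvalues} (Jordan data) of $\rho(\gamma)$. No compactness argument on $\partial_\infty\Gamma$ or density of fixed-point pairs bridges this: singular values are not conjugation-invariant and the elements $\rho(\gamma)$ leave every compact subset of $\PGL_d(\Rb)$, so continuity of $\zeta^{(2)}$ and $\xi^{(d-1)}$ controls nothing uniformly over $\Gamma$. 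The paper closes exactly this gap with Benoist's theorem that the asymptotic cone of Cartan projections equals the cone of Jordan projections for Zariski-dense subgroups (Theorem~\ref{thm:cones}), made applicable by Quint's result that the Zariski closure of an irreducible Anosov image is semisimple (Proposition~\ref{prop:Zclosure}); see Lemma~\ref{lem:est_on_sing_values} inside Proposition~\ref{prop:nec_general}. Your ``bootstrap/self-similarity'' placeholder is precisely where this nontrivial input is needed. Moreover, even the pointwise step is subtler than ``the Hölder regularity should force'' proximality: to show $\bigwedge^2\rho(\gamma)$ is proximal with attracting line the tangent line, one must produce limit-set data in $\Pb\left(\bigwedge^2\Rb^d\right)$ avoiding certain proper subspaces, and irreducibility of $\rho$ gives spanning of $\Rb^d$, not spanning of $\bigwedge^2\Rb^d$ by \emph{tangent} lines. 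The paper's device, special to the circle case, is the secant-line map $\Psi(p,q)$ of Lemma~\ref{lem:surface1}, whose image does span $\bigwedge^2\Rb^d$ and which agrees with the tangent map on the diagonal up to an $(\alpha-1)$-Hölder error; your tangent-only setup does not supply this spanning and the rate argument (Observations~\ref{obs:dynamics} and~\ref{obs:slow}) breaks without it.

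Second, your argument for the direct-sum condition fails: the $\Gamma$-action on $\partial_\infty\Gamma^{(3)}$ is properly discontinuous and cocompact (Proposition~\ref{prop:3_point_action}), so orbits of triples are never dense, and the failure locus is merely a closed invariant set of triples; this does not produce a $\rho$-invariant subspace nor a contradiction with irreducibility. The paper instead argues dynamically in Proposition~\ref{prop:nec_general}: once $2$-Anosovness is established and one knows $T_{\xi^{(1)}(x)}M=\xi^{(2)}(x)$, transversality $\xi^{(2)}(x)+\xi^{(d-2)}(y)=\Rb^d$ gives a uniform angle bound when $\xi^{(1)}(x)$ and $\xi^{(1)}(y)$ are uniformly separated, so directness holds on an explicit open set of triples whose first two entries are close and whose third is far; cocompactness of the action on the flow space (via the set $P(M)$) then moves every triple into that good region. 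You should either reproduce this quantitative argument or find a genuine substitute; the irreducibility-plus-density shortcut you propose is not one.
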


\begin{remark} \
\begin{enumerate}
\item Example \ref{ex:surface_bad_example} demonstrates that the irreducibility assumption in Theorem~\ref{thm:nec_surface} is necessary.
\item The condition that $\partial_\infty \Gamma$ is homeomorphic to a circle is equivalent to requiring $\Gamma$ to be virtually a surface group, see \cite{Tukia1988,Gabai1992}.
\end{enumerate}
\end{remark}

From Theorem \ref{thm:nec_surface}, we have the following corollary. 

\begin{corollary} Suppose $\Gamma$ is a hyperbolic group with $\partial_\infty \Gamma$ homeomorphic to a circle. Let $\Oc \subset \Hom(\Gamma, \PGL_d(\Rb))$ denote the set of representations that are irreducible, $P_1$-Anosov, and whose $P_1$-limit set is a $C^{\alpha}$-submanifold of $\Pb(\Rb^d)$ for some $\alpha> 1$ (which may depend on $\rho$).  Then $\Oc$ is an open set in $\Hom(\Gamma, \PGL_d(\Rb))$.
\end{corollary}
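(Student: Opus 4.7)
The plan is to apply Theorem~\ref{thm:nec_surface} to rephrase the a priori geometric condition defining $\Oc$ in terms of dynamical and linear-algebraic data, and then to invoke known openness properties in the representation variety. Concretely, by Theorem~\ref{thm:nec_surface}, a representation $\rho \in \Hom(\Gamma,\PGL_d(\Rb))$ belongs to $\Oc$ if and only if (a) $\rho$ is irreducible, (b) $\rho$ is $1$-Anosov, and (c) $\rho$ satisfies condition ($\dagger$) of Theorem~\ref{thm:nec_surface}, i.e.\ $\rho$ is $2$-Anosov and $\xi^{(1)}(x)+\xi^{(1)}(y)+\xi^{(d-2)}(z)$ is a direct sum for all pairwise distinct $x,y,z \in \partial_\infty\Gamma$. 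It therefore suffices to prove that each of (a), (b), (c) cuts out an open subset of $\Hom(\Gamma,\PGL_d(\Rb))$.

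The openness of (b), and of the $2$-Anosov half of (c), is the standard stability theorem for Anosov representations due to Labourie and Guichard--Wienhard. The openness of (c) as a whole is exactly the content of Remark~\ref{rmk:open}(2): in the Anosov regime the boundary maps $\xi^{(1)}$ and $\xi^{(d-2)}$ vary continuously with $\rho$, and the direct-sum condition is a non-vanishing determinant condition imposed over the compact space of ordered pairwise distinct triples in $\partial_\infty\Gamma \cong S^1$. For (a), irreducibility is open by a standard compactness argument: if $\rho_n \to \rho$ with every $\rho_n$ reducible, choose proper $\rho_n(\Gamma)$-invariant subspaces $V_n \subset \Rb^d$; after passing to a subsequence we may assume $V_n \to V$ in the compact space $\bigsqcup_{k=1}^{d-1}\Gr_k(\Rb^d)$, and continuity of the $\PGL_d(\Rb)$-action on Grassmannians forces $V$ to be a proper $\rho(\Gamma)$-invariant subspace, so $\rho$ is reducible. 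Hence $\Oc$ is open, being a finite intersection of open sets.

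The entire difficulty of the corollary is absorbed into Theorem~\ref{thm:nec_surface}: once the $C^\alpha$-regularity of $\xi^{(1)}(\partial_\infty\Gamma)$ has been translated into the dynamical condition ($\dagger$), the openness of $\Oc$ follows essentially formally. In particular, the only genuinely hard input is the implication $(\ddagger)\Rightarrow(\dagger)$ in that theorem; the openness statements used above are comparatively soft, and I do not expect any new obstacle beyond what is already handled in the earlier sections of the paper.
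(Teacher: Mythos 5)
Your proposal is correct and follows essentially the same route as the paper: the corollary is deduced by combining Theorem~\ref{thm:nec_surface} (to replace the $C^\alpha$ condition ($\ddagger$) by the open condition ($\dagger$)) with the openness of Anosovness and of ($\dagger$) recorded in Remark~\ref{rmk:open} and proved in Section~\ref{sec:stability}. Your explicit compactness argument for the openness of irreducibility is a standard fact that the paper leaves implicit, and it is handled correctly.
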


For non-surface groups the situation is more complicated; there exist irreducible $P_1$-Anosov representations $\rho: \Gamma \rightarrow \PGL_d(\Rb)$ whose $P_1$-limit set is a $C^{\infty}$-submanifold of $\Pb(\Rb^d)$, but $\rho$ does not satisfy the condition ($\dagger$) in Theorem~\ref{thm:main}, see Example~\ref{ex:irred_bad_example}. However, if one assumes a stronger irreducibility condition on $\rho$, then the conditions in Theorem~\ref{thm:main} are both necessary and sufficient.

\begin{theorem}\label{thm:nec_general} (Theorem \ref{thm:nec_general_body}) Suppose $\Gamma$ is a hyperbolic group such that $\partial_\infty \Gamma$ is a $(m-1)$-dimensional topological manifold, and $\rho: \Gamma \rightarrow \PGL_{d}(\Rb)$ is an irreducible $P_1$-Anosov representation, such that $\bigwedge^m \rho: \Gamma \rightarrow \PGL\big(\bigwedge^m \Rb^d\big)$ is also irreducible. Then the following are equivalent: 
\begin{enumerate}
\item[($\dagger$)] $\rho$ is a $P_m$-Anosov representation and $\xi_\rho^1(x) + \xi_\rho^1(y) + \xi_\rho^{d-m}(x)$
is a direct sum for all pairwise distinct $x,y,z \in \partial_\infty \Gamma$,
\item[($\ddagger$)] $\xi_\rho^1(\partial_\infty \Gamma)$ is a $C^{\alpha}$-submanifold of $\Pb(\Rb^d)$ for some $\alpha > 1$.
\end{enumerate}
\end{theorem}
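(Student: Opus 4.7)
The forward direction $(\dagger) \Rightarrow (\ddagger)$ is Theorem~\ref{thm:main} applied to $\rho$, so I focus on the converse. Assume $M := \xi^{(1)}(\partial_\infty \Gamma)$ is a $C^\alpha$-submanifold of $\Pb(\Rb^d)$ of dimension $m-1$ for some $\alpha > 1$. The plan is to define a candidate $\xi^{(m)}$ from projective tangent spaces, extract a pointwise eigenvalue gap at position $m$ from the $C^\alpha$ regularity via an invariant manifold computation, construct a transverse $\xi^{(d-m)}$ using the irreducibility hypotheses, upgrade the pointwise gap to the uniform Anosov-type gap, and verify the triple direct sum condition.

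Set $\xi^{(m)}(x) \in \Gr_m(\Rb^d)$ to be the $m$-plane whose projectivization is $T_{\xi^{(1)}(x)} M$; the $C^1$ regularity of $M$ makes this map continuous and $\rho$-equivariant. For $\gamma \in \Gamma$ of infinite order with attracting fixed point $x^+_\gamma$, $\rho(\gamma)$ is proximal at $p = \xi^{(1)}(x^+_\gamma)$, and in an affine chart about $p$ its linearization has eigenvalues $\lambda_i(\rho(\gamma))/\lambda_1(\rho(\gamma))$ for $i = 2, \dots, d$, all of modulus strictly less than one. The image of $\xi^{(m)}(x^+_\gamma)$ in this chart is a sum of generalized eigenspaces indexed by some $I \subset \{2, \dots, d\}$ of size $m-1$. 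Solving the local invariance equation for the graph of $M$ --- modeled by the $2$-dimensional computation that an invariant curve $y = cx^\beta$ of $(x,y) \mapsto (\mu_1 x, \mu_2 y)$ tangent to the $x$-axis satisfies $\beta = \log \mu_2/\log \mu_1$, hence is $C^{\alpha}$ only when $|\mu_2| \le |\mu_1|^\alpha$ --- forces $|\lambda_j| < |\lambda_i|$ for every $i \in I$ and $j \in \{2, \dots, d\} \setminus I$. Thus $I = \{2, \dots, m\}$, so $\xi^{(m)}(x^+_\gamma)$ coincides with the sum of the top $m$ generalized eigenspaces of $\rho(\gamma)$, and every infinite-order $\gamma$ satisfies $|\lambda_m(\rho(\gamma))| > |\lambda_{m+1}(\rho(\gamma))|$.

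Next, define $\xi^{(d-m)}(x)$ as the limit of the bottom $(d-m)$-dimensional singular subspaces of $\rho(\gamma_n)$ along sequences $\gamma_n \in \Gamma$ with $\gamma_n^- \to x$ in $\partial_\infty \Gamma$; the $1$-Anosov hypothesis guarantees convergence to a continuous, $\rho$-equivariant map. Transversality $\xi^{(m)}(x) \oplus \xi^{(d-m)}(y) = \Rb^d$ for $x \neq y$ is obtained from the pointwise eigenvalue gap, density of pairs $(\gamma^+, \gamma^-)$ in $\partial_\infty \Gamma \times \partial_\infty \Gamma$, and the irreducibility of $\bigwedge^m \rho$, which prevents $\xi^{(m)}(\partial_\infty \Gamma)$ from collapsing into a proper $\rho$-invariant subvariety of $\Gr_m(\Rb^d)$. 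Given the transverse pair $(\xi^{(m)}, \xi^{(d-m)})$, the Bochi--Potrie--Sambarino characterization then upgrades the pointwise gap to the uniform exponential singular value gap defining $m$-Anosovness. Finally, the triple direct sum condition in $(\dagger)$ follows from this transversality together with the irreducibility of $\rho$, which keeps $M$ from lying in a proper projective subspace.

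The main obstacle is the passage from a pointwise spectral gap at every attracting fixed point to the uniform Anosov gap. Both irreducibility hypotheses are essential: without them, the limit set can be $C^\infty$ yet $(\dagger)$ can still fail (Example~\ref{ex:irred_bad_example}). The hypothesis $\alpha > 1$ is essential in the invariant manifold step as well, since for $\alpha \leq 1$ there exist invariant $C^\alpha$ submanifolds through proximal fixed points tangent to arbitrary invariant subspaces.
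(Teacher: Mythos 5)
Your forward direction is fine, but the converse has two genuine gaps. First, the pointwise spectral-gap step does not work as stated. You claim that $C^\alpha$ regularity with $\alpha>1$, via a local invariant-manifold computation at the attracting fixed point, forces the tangent directions to be strictly faster (in modulus) than all transverse directions, hence $\lambda_m(\rho(\gamma))>\lambda_{m+1}(\rho(\gamma))$. This is false without using the irreducibility of $\bigwedge^m\rho$, and your proposal only invokes that hypothesis later, for transversality. Example~\ref{ex:irred_bad_example} is a counterexample to the purely local claim: there the $1$-limit set is a $3$-dimensional $C^\infty$ submanifold, yet at the attracting fixed point the tangent space contains a direction contracted at the same relative rate $\lambda^{-2}$ as transverse directions, and indeed $\lambda_4=\lambda_5$, so no strict separation between tangent and transverse moduli is forced by smoothness and invariance alone. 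The paper's Lemma~\ref{lem:proximal} avoids this by a genuinely global argument: irreducibility of $\bigwedge^m\rho$ makes the tangent lifts $\Phi(\xi^{(1)}(x))$ span $\bigwedge^m\Rb^d$, which (together with the H\"older estimate $d_2(\Phi(p),\Phi(q))\le C d_1(p,q)^{\alpha-1}$) shows $\bigwedge^m\rho(\gamma)$ is proximal with attracting point $\Phi(\xi^{(1)}(\gamma^+))$; Proposition~\ref{prop:eigenvalue_est} then yields not just strictness but the quantitative bound $\frac{\lambda_{m+1}}{\lambda_m}(\rho(\gamma))\le\bigl(\frac{\lambda_2}{\lambda_1}(\rho(\gamma))\bigr)^{\alpha-1}$, which is what makes the rest of the proof possible.

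Second, you acknowledge the pointwise-to-uniform passage as ``the main obstacle'' but do not actually bridge it. Your construction of $\xi^{(d-m)}$ as a limit of bottom $(d-m)$-dimensional singular subspaces is circular: convergence of those subspaces requires $\frac{\mu_m}{\mu_{m+1}}(\rho(\gamma_n))\to\infty$, i.e.\ precisely the $m$-Anosov gap you are trying to establish (the $1$-Anosov hypothesis only controls $\mu_1/\mu_2$). And the Bochi--Potrie--Sambarino characterization cannot ``upgrade'' a per-element strict eigenvalue gap to a uniform one; it takes the uniform singular value gap as input. The paper's mechanism for uniformity is the quantitative inequality above combined with Benoist's theorem $\Cc_\mu(\overline{\rho}(\Gamma))=\Cc_\lambda(\overline{\rho}(\Gamma))$ (Theorem~\ref{thm:cones}, applicable since $\rho$ is irreducible and $1$-Anosov, so its Zariski closure is semisimple), which converts the eigenvalue inequality into the singular value estimate of Lemma~\ref{lem:est_on_sing_values}; linear growth of $\log\frac{\mu_1}{\mu_2}$ from $1$-Anosovness then gives linear growth of $\log\frac{\mu_m}{\mu_{m+1}}$, and Theorem~\ref{thm:SV_char_of_Anosov} concludes $m$-Anosovness. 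Finally, the triple direct sum in $(\dagger)$ is not a formal consequence of transversality plus irreducibility of $\rho$ as you assert; the paper proves it by a separate compactness-and-flow argument on $P(M)$ (Proposition~\ref{prop:nec_general}), pushing an arbitrary triple into a uniform neighborhood where the sum is direct.
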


Recall that if $\rho:\Gamma\to H$ is a Zariski-dense representation and $\tau: H\to \PGL_d(\Rb)$ is an irreducible representation, then $\tau\circ\rho:\Gamma\to\PGL_d(\Rb)$ is irreducible. Thus, Theorem \ref{thm:nec_general}, together with the observation that the obvious action of $\PGL_d(\Rb)$ on $\Pb\big(\bigwedge^m\Rb^d\big)$ is irreducible, gives the following corollary. 

\begin{corollary} Suppose $\Gamma$ is a hyperbolic group. Let $\Oc \subset \Hom(\Gamma, \PGL_d(\Rb))$ denote the set of representations $\rho: \Gamma \rightarrow \PGL_d(\Rb)$ where $\rho$ is $P_1$-Anosov, has Zariski dense image, and whose $P_1$-limit set is a $C^{\alpha}$-submanifold of $\Pb(\Rb^d)$ for some $\alpha> 1$ (which may depend on $\rho$). Then $\Oc$ is an open set in $\Hom(\Gamma, \PGL_d(\Rb))$.
\end{corollary}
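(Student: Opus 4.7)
The plan is to fix $\rho_0 \in \Oc$ and produce an open neighborhood of $\rho_0$ contained in $\Oc$ by combining the two main theorems with the stability results of Remark \ref{rmk:open}. The strategy runs in two stages: Theorem \ref{thm:nec_general} converts the $C^{\alpha}$-regularity hypothesis on $\rho_0$ into the Anosov/transversality condition ($\dagger$), and then Theorem \ref{thm:main} gives $C^{\alpha}$-regularity back at all nearby representations once ($\dagger$) has been propagated by openness.

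First I would verify that Theorem \ref{thm:nec_general} applies to $\rho_0$. Since the boundary map $\xi^{(1)}$ of $\rho_0$ is a homeomorphism onto the $C^{\alpha}$-submanifold $\xi^{(1)}(\partial_\infty \Gamma) \subset \Pb(\Rb^d)$, the Gromov boundary $\partial_\infty\Gamma$ inherits the structure of a topological $(m-1)$-manifold for some integer $m$, necessarily with $1 \le m \le d-1$. Because $\rho_0$ has Zariski dense image and $\PGL_d(\Rb)$ acts irreducibly on both $\Rb^d$ and $\bigwedge^m \Rb^d$, the observation on Zariski-dense composition with irreducible representations stated just before the corollary implies that $\rho_0$ and $\bigwedge^m \rho_0$ are both irreducible. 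All hypotheses of Theorem \ref{thm:nec_general} therefore hold, and the implication ($\ddagger$)$\Rightarrow$($\dagger$) yields that $\rho_0$ is $m$-Anosov and that $\xi^{(1)}(x) + \xi^{(1)}(y) + \xi^{(d-m)}(z)$ is a direct sum for all pairwise distinct $x,y,z \in \partial_\infty \Gamma$.

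Next I would invoke the openness statements of Remark \ref{rmk:open}: by item (2), $k$-Anosovness (for $k \in \{1,m\}$) and the transversality portion of ($\dagger$) are open in $\Hom(\Gamma,\PGL_d(\Rb))$, and by item (3) Zariski density is open at $\rho_0$. Intersecting these open conditions produces an open neighborhood $\Uc \ni \rho_0$ on which every representation $\rho$ is simultaneously $1$-Anosov, $m$-Anosov, Zariski dense, and satisfies the transversality portion of ($\dagger$). With $m$ held fixed (it depends only on $\Gamma$), every such $\rho$ therefore satisfies the full hypothesis ($\dagger$) of Theorem \ref{thm:main}, which produces some $\alpha_\rho > 1$ for which $\xi_\rho^{(1)}(\partial_\infty \Gamma)$ is a $C^{\alpha_\rho}$-submanifold of $\Pb(\Rb^d)$. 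Together with $1$-Anosovness and Zariski density, this places $\rho$ in $\Oc$, so $\Uc \subseteq \Oc$.

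The real analytic content has been offloaded to Theorems \ref{thm:main} and \ref{thm:nec_general}, so the argument itself is formal bookkeeping. The only genuine subtlety is ensuring that the openness assertion (3) of Remark \ref{rmk:open} really covers Zariski denseness and that the integer $m$ does not jump across the neighborhood; the latter is immediate since $\partial_\infty \Gamma$ depends only on $\Gamma$, and the former is to be invoked as a standard fact about representation varieties into semisimple Lie groups.
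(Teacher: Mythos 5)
Your proposal is correct and is essentially the paper's own (only sketched) argument: Zariski density gives irreducibility of $\rho_0$ and $\bigwedge^m\rho_0$, Theorem~\ref{thm:nec_general} converts ($\ddagger$) into ($\dagger$), the openness of ($\dagger$) and of $1$- and $m$-Anosovness from Remark~\ref{rmk:open} and Section~\ref{sec:stability} propagates it to a neighborhood, and Theorem~\ref{thm:main} returns $C^\alpha$-regularity there. Your appeal to an "item (3)" for openness of Zariski density mirrors the paper's own dangling reference to that nonexistent item, so treating it as a standard fact about representations into semisimple Lie groups is exactly what the paper does as well.
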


Finally, for representations satisfying certain irreducibility conditions, we also determine the optimal regularity of the $P_1$-limit set in terms of the spectral data of $\rho(\Gamma)$. More precisely, given $g\in\PGL_d(\Rb)$, let 
\[\lambda_1(g)\ge\dots\ge\lambda_d(g)\]
denote the moduli of the eigenvalues of a linear representative of $g$ with unit determinant. Then given a representation $\rho : \Gamma \rightarrow \PGL_d(\Rb)$ and $2 \leq m \leq d-1$ define
\begin{align*}
\Es_m(\rho) = \inf_{\gamma\in\Gamma }\left\{\log\frac{\lambda_1(\rho(\gamma))}{\lambda_{m+1}(\rho(\gamma))}\Bigg/\log\frac{\lambda_1(\rho(\gamma))}{\lambda_{m}(\rho(\gamma))}: \frac{\lambda_1(\rho(\gamma))}{\lambda_{m}(\rho(\gamma))} \neq 1 \right\}.
\end{align*}
If $\rho$ is $P_{1,m}$-Anosov, it follows from the definition that $\Es_m(\rho)>1$, see Section~\ref{sec:Anosov_repn}. 

\begin{theorem}\label{thm:regularity} (Theorem \ref{thm:regularity_body} and Corollary \ref{cor:main_body}) Suppose $\Gamma$ is a hyperbolic group such that $\partial_\infty\Gamma$ is an $(m-1)$-dimensional topological manifold and $\rho: \Gamma \rightarrow \PGL_{d}(\Rb)$ is an irreducible $P_{1,m}$-Anosov representation. If $\xi_\rho^1(x) + \xi_\rho^1(y) +  \xi_\rho^{d-m}(z)$ is a direct sum for all pairwise distinct $x,y,z \in \partial_\infty \Gamma$, then
\begin{align*}
\Es_m(\rho) \leq \sup\left\{ \alpha \in (1,2) : \xi_\rho^1(\partial_\infty \Gamma) \text{ is a }C^{\alpha}\text{-submanifold} \right\}
\end{align*}
with equality if 
\begin{align*}
\xi_\rho^1(\partial_\infty \Gamma) \cap \left(\xi_\rho^1(x) + \xi_\rho^1(y) +  \xi_\rho^{d-m}(z)\right)
\end{align*}
spans $\xi_\rho^1(x) + \xi_\rho^1(y) +  \xi_\rho^{d-m}(z)$ for all pairwise distinct $x,y,z \in \partial_\infty \Gamma$.
\end{theorem}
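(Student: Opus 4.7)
The theorem has two halves: the lower bound $\alpha_m(\rho)\le\sup\{\alpha\}$ (unconditional) and the matching upper bound under the spanning hypothesis. The plan is to work in linearized coordinates at the attracting fixed points $\xi^{(1)}(\gamma^+)$ of elements $\gamma\in\Gamma$. By Theorem~\ref{thm:main_body}, $M:=\xi^{(1)}(\partial_\infty\Gamma)$ is locally the graph of a $C^{\alpha'}$-map ($\alpha'>1$) over $\xi^{(m)}(\gamma^+)$. Fix a lift of $\rho(\gamma)$ in real Jordan form with eigenvalue absolute values $\lambda_1>\lambda_2\ge\cdots\ge\lambda_m>\lambda_{m+1}\ge\cdots\ge\lambda_d$, and pass to the affine chart in which $\xi^{(1)}(\gamma^+)=[e_1]$ and $\xi^{(m)}(\gamma^+)=\Span(e_1,\ldots,e_m)$. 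Then $M$ is the graph of $f:U\to\Rb^{d-m}$ with $f(0)=0$, $Df(0)=0$, and the equivariance $f(Av)=Bf(v)$, where $A$ has eigenvalues $\lambda_j/\lambda_1$ for $2\le j\le m$ and $B$ has $\lambda_j/\lambda_1$ for $m+1\le j\le d$; small Jordan contributions can be absorbed into $\epsilon$-losses.

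For the upper bound under the spanning hypothesis, suppose $M$ is $C^\alpha$ for some $\alpha\in(1,2)$. Take $v$ an eigenvector of $A$ for the eigenvalue $\lambda_m/\lambda_1$, the fastest tangent contraction, and iterate to obtain $f(A^nv)=B^nf(v)$. The hypothesis $Df(0)=0$ combined with the $C^\alpha$ bound gives $|f(A^nv)|\le C(\lambda_m/\lambda_1)^{n\alpha}|v|^\alpha$, while the $e_{m+1}$-component of $B^nf(v)$ equals $(\lambda_{m+1}/\lambda_1)^n$ times the $e_{m+1}$-component of $f(v)$. Applying the spanning condition to triples $(x,y,z)$ converging to $(\gamma^+,\gamma^+,\gamma^-)$ forces the latter component to be nonzero for a suitable $v$; otherwise $M$ would locally miss the $e_{m+1}$-direction, contradicting that $M\cap(\xi^{(1)}(x)+\xi^{(1)}(y)+\xi^{(d-m)}(z))$ spans that subspace. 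Letting $n\to\infty$ yields $\alpha\le\log(\lambda_1/\lambda_{m+1})/\log(\lambda_1/\lambda_m)$ for this $\gamma$; taking the infimum over $\gamma$ gives $\sup\{\alpha\}\le\alpha_m(\rho)$.

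For the achievability direction, one must show $M$ is $C^\alpha$ for every $\alpha<\alpha_m(\rho)$. The strategy is a bootstrap via the differentiated equivariance $Df(Av)=BDf(v)A^{-1}$: iterating as $Df(v)=B^{-n}Df(A^nv)A^n$ and using $Df(0)=0$ together with the initial H\"older estimate from Theorem~\ref{thm:main_body}, one obtains a quantitative H\"older bound on $Df$ governed by the spectral ratios of $\rho(\gamma)$. A contraction-mapping argument on the space of $C^{1,\alpha-1}$-germs at $\xi^{(1)}(\gamma^+)$ closes the loop; the contraction factor is strictly less than $1$ precisely when $\alpha<\log(\lambda_1/\lambda_{m+1})/\log(\lambda_1/\lambda_m)$ for this $\gamma$. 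Choosing $\gamma$ whose spectral ratio approaches $\alpha_m(\rho)$ then gives $C^\alpha$-regularity at that fixed point for every $\alpha<\alpha_m(\rho)$, and $\rho$-equivariance together with density of attracting fixed points in $M$ propagates it to all of $M$.

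The main obstacle lies in the achievability direction. The per-fixed-point bootstrap is clean, but uniformly propagating the regularity from attracting fixed points across all of $M$ requires controlling H\"older constants as $\gamma$ varies over elements whose spectral ratios approach $\alpha_m(\rho)$. By contrast, the obstruction direction localizes at a single $\gamma$ once the role of the spanning hypothesis is unpacked.
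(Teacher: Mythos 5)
Your treatment of the equality case (the upper bound on regularity under the spanning hypothesis) is essentially the paper's own argument: fix an infinite-order $\gamma$, work in coordinates adapted to $\xi^{(1)}(\gamma^+)$, $\xi^{(m)}(\gamma^+)$, $\xi^{(d-m)}(\gamma^-)$, use the $C^1$ graph structure from Theorem~\ref{thm:main_body} together with the spanning hypothesis to produce a limit-set point in $\xi^{(1)}(\gamma^+)+p+\xi^{(d-m)}(\gamma^-)$ with nonvanishing $(m+1)$-st component, then iterate by $\rho(\gamma)^n$ and compare decay rates toward $\xi^{(1)}(\gamma^+)$ and $\xi^{(m)}(\gamma^+)$. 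The paper phrases the rates through the limits \eqref{eqn:P2stretch} and Observation~\ref{obs:delta1} rather than exact equivariance of the graph map, which cleanly absorbs Jordan-block issues, but the mechanism is the same and your sketch of this half is sound.

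The genuine gap is in the lower bound $\alpha_m(\rho)\le\sup\{\alpha\}$. Your per-fixed-point bootstrap, even granting it, only yields at $\xi^{(1)}(\gamma^+)$ a pointwise bound $\norm{f(h)}\lesssim\norm{h}^{\alpha_\gamma}$ with $\alpha_\gamma=\log\frac{\lambda_1}{\lambda_{m+1}}(\rho(\gamma))\big/\log\frac{\lambda_1}{\lambda_m}(\rho(\gamma))$ and with constants depending on $\gamma$ (spectral gaps, Jordan structure, size of the linearizing chart); moreover the ``contraction on $C^{1,\alpha-1}$-germs'' is not really a construction problem since $f$ is already given, and what the $C^\alpha$-submanifold property demands is the estimate $f(u+h)=f(u)+df_u(h)+{\rm O}(\norm{h}^\alpha)$ at \emph{every} basepoint with a uniform constant. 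Density of attracting fixed points plus $\rho$-equivariance does not deliver this: equivariance transports the estimate at $\xi^{(1)}(\gamma^+)$ only along its $\Gamma$-orbit, with constants distorted without bound by $\rho(\gamma')$. You acknowledge this obstacle, but it is precisely the hard part, and the paper's proof is organized around solving it: the sharp exponent is first defined dynamically as $\alpha^m(\rho)$ over the whole flow space, the key inequality (Lemma~\ref{thm:optimal_contraction}) is made uniform using cocompactness of the $\Gamma$-action on $P(M)$ (Observation~\ref{obs:compact}), and only afterwards is $\alpha^m(\rho)$ identified with the eigenvalue quantity $\alpha_m(\rho)$ (Theorem~\ref{thm:alphas}), where irreducibility enters through Benoist's theorem $\Cc_\mu(\Lambda)=\Cc_\lambda(\Lambda)$ (Theorem~\ref{thm:cones}, via Lemma~\ref{lem:preliminary-inequality}) to convert periodic eigenvalue data into uniform singular-value estimates along all orbits. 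Some input of this kind is unavoidable, since eigenvalue ratios only see conjugacy classes; as written, your proposal has no substitute for it and therefore does not prove the lower bound.
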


\begin{remark} \label{rem:stablility} \
\begin{enumerate} 
\item In Theorem \ref{thm:regularity}, when $\xi_\rho^1(\partial_\infty \Gamma)$ has either dimension one or co-dimension one, the extra hypothesis for equality is automatically satisfied. Indeed, if the dimension is one (i.e. $m=2$), then 
\begin{align*}
\xi_\rho^1(x) + \xi_\rho^1(y) +  \xi_\rho^{d-m}(z)= \xi_\rho^1(x) + \xi_\rho^1(y) +  \xi_\rho^{d-2}(z) = \Rb^d,
\end{align*}
so the extra hypothesis follows from the irreducibility of $\rho$. If the co-dimension is one (i.e. $m=d-1$), then 
\begin{align*}
\xi_\rho^1(x) + \xi_\rho^1(y) +  \xi_\rho^{d-m}(z)= \xi_\rho^1(x) + \xi_\rho^1(y) +  \xi_\rho^1(z)
\end{align*}
is spanned by $\xi_\rho^1(x), \xi_\rho^1(y), \xi_\rho^1(z)$, so the extra hypothesis always holds. 
\item In general, the extra hypothesis for equality is an open condition, see Section~\ref{sec:stability}.
\item The irreducibility of $\rho$ is necessary in Theorem~\ref{thm:regularity}. For instance, if $\overline{\tau}_d : \GL_2(\Rb) \rightarrow \GL_d(\Rb)$ is the standard irreducible representation (see Section~\ref{sec:rhoirred}), $\Gamma \leq \SL_2(\Rb)$ is a co-compact lattice, and $p_d:\GL_d(\Rb)\to\PGL_d(\Rb)$ is the obvious projection, then 
\[\rho = p_7\circ (\tau_5 \oplus \tau_2) |_{\Gamma}:\Gamma\to\PGL_7(\Rb)\] 
is $P_1$-Anosov and its $P_1$-limit set is a $1$-dimensional $C^\infty$-submanifold of $\Pb(\Rb^7)$. At the same time, for any infinite order $\gamma\in\Gamma$,
\[\log\frac{\lambda_1}{\lambda_{3}}(\rho(\gamma))\Bigg/\log\frac{\lambda_1}{\lambda_{2}}(\rho(\gamma))= 3/2.\]
\item Notice that the quantity $\Es_m(\rho)$ is invariant under passing to finite index subgroups. Indeed, if $\Gamma_0 \leq \Gamma$ is a finite index subgroup and $\gamma \in \Gamma$, then there exists some $k \in \Nb$ such that $\gamma^k \in \Gamma_0$. Further, 
\begin{align*}
\log\frac{\lambda_1}{\lambda_{m+1}}(\rho(\gamma^k))\Bigg/\log\frac{\lambda_1}{\lambda_{m}}(\rho(\gamma^k)) = \log\frac{\lambda_1}{\lambda_{m+1}}(\rho(\gamma))\Bigg/\log\frac{\lambda_1}{\lambda_{m}}(\rho(\gamma)).
\end{align*}
Hence $\Es_m(\rho|_{\Gamma_0}) = \Es_m(\rho)$. 
 
\end{enumerate}
\end{remark}

In Section~\ref{sec:regularity}, we establish a generalization of Theorem~\ref{thm:regularity} which holds for $\rho$-controlled subsets. One example of such a subset is the boundary of a properly convex domain $\Omega\subset\Pb(\Rb^d)$ that admits a $\Gamma$-action induced by a $P_1$-Anosov representation $\rho$. In this case, Theorem~\ref{thm:regularity_body} implies the following.

\begin{theorem}(Theorem \ref{thm:regularity_body} and Corollary \ref{cor: d-1 bound})\label{thm:regularity2} Suppose $\Gamma$ is a hyperbolic group and $\rho: \Gamma \rightarrow \PGL_{d}(\Rb)$ is an irreducible $P_1$-Anosov representation. Also, suppose $\Omega\subset\Pb(\Rb^d)$ is a $\rho(\Gamma)$-invariant properly convex domain such that $\xi_\rho^{d-1}(x)\cap \partial\Omega=\xi_\rho^1(x)$ for all $x\in \partial_\infty \Gamma$. If 
\begin{enumerate}
\item [($\star$)]$p_1+p_2+\xi_\rho^1(y)$ is a direct sum for all pairwise distinct $p_1,p_2,\xi_\rho^1(y)\in\partial\Omega$,
\end{enumerate}
then 
\begin{enumerate}
\item [($\star\star$)] $\partial\Omega$ is $C^\alpha$ along $\xi_\rho^1(\partial_\infty\Gamma)$ for some $\alpha>1$.
\end{enumerate}
Moreover, $T_{\xi_\rho^1(x)} \partial\Omega = \xi_\rho^{d-1}(x)$ for any $x \in \partial_\infty \Gamma$, and
\begin{align*}
\Es_{d-1}(\rho) = \sup\left\{ \alpha \in (1,2) : \partial\Omega \text{ is } C^{\alpha} \text{ along }\xi_\rho^1(\partial_\infty \Gamma) \right\}.
\end{align*}
\end{theorem}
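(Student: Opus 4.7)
The plan is to derive this statement as a direct specialisation of the main-body generalisation of Theorem \ref{thm:regularity} (namely Theorem \ref{thm:regularity_body}), which is stated for arbitrary $\rho$-controlled subsets $M\subset\Pb(\Rb^d)$. I will apply it with $M:=\partial\Omega$ and index $m:=d-1$.

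The first task is to verify that $\partial\Omega$ qualifies as a $\rho$-controlled subset containing $\xi^{(1)}(\partial_\infty\Gamma)$ in the sense of Definition \ref{def:controlled}. The hypothesis $\xi^{(d-1)}(x)\cap\partial\Omega=\xi^{(1)}(x)$ for all $x\in\partial_\infty\Gamma$ asserts that the projective hyperplane $\xi^{(d-1)}(x)$ meets $\partial\Omega$ only at the point $\xi^{(1)}(x)$; since $\Omega$ is properly convex, this forces $\xi^{(d-1)}(x)$ to be the unique supporting hyperplane to $\Omega$ at $\xi^{(1)}(x)$. Combined with the $\rho(\Gamma)$-invariance of $\Omega$ and the equivariance of $\xi^{(1)}$ and $\xi^{(d-1)}$, this should supply exactly the tangency and transversality data required by the control definition.

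With the framework in place, the remaining hypotheses translate easily. Taking $m=d-1$, the direct-sum condition in Theorem \ref{thm:regularity_body} becomes ``$\xi^{(1)}(x)+\xi^{(1)}(y)+\xi^{(1)}(z)$ is a direct sum for all pairwise distinct $x,y,z\in\partial_\infty\Gamma$''; since $\xi^{(1)}(\partial_\infty\Gamma)\subset\partial\Omega$ and $\xi^{(1)}$ is injective, this is immediate from $(\star)$. Moreover, as observed in Remark \ref{rem:stablility}(1), the extra spanning condition for the equality statement is automatic in this codimension-one case, since the three points $\xi^{(1)}(x),\xi^{(1)}(y),\xi^{(1)}(z)$ already span their sum and all lie in $\partial\Omega$. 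Irreducibility of $\rho$ and the $1$-Anosov assumption (which in this paper's convention is the $(1,d-1)$-Anosov condition) furnish the remaining requirements. Invoking Theorem \ref{thm:regularity_body} then simultaneously yields the $C^\alpha$-regularity of $\partial\Omega$ along $\xi^{(1)}(\partial_\infty\Gamma)$ for some $\alpha>1$, the tangent-space identification $T_{\xi^{(1)}(x)}\partial\Omega=\xi^{(d-1)}(x)$, and the equality $\alpha_{d-1}(\rho) = \sup\{\alpha\in(1,2): \partial\Omega \text{ is } C^\alpha\text{ along }\xi^{(1)}(\partial_\infty\Gamma)\}$.

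The only genuinely nontrivial step is the definitional verification in the second paragraph: confirming that $\partial\Omega$ meets every clause of Definition \ref{def:controlled}. I expect this to amount to unpacking the definition and using the supporting-hyperplane characterisation extracted from proper convexity, rather than any new analytic input; once this bookkeeping is complete, the conclusion is obtained simply by substituting indices into Theorem \ref{thm:regularity_body}.
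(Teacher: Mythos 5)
Your proposal is correct and follows exactly the paper's route: Theorem \ref{thm:regularity2} is obtained by specialising the $\rho$-controlled framework to $M=\partial\Omega$ with $m=d-1$ (Example \ref{eg:convex}), where the $1$-Anosov hypothesis already gives $(1,d-1)$-Anosov, condition ($\star$) is precisely the $(d-1)$-hyperconvexity of $\partial\Omega$, and the spanning condition ($\ast$) is automatic in this codimension-one case. Two small precisions: the hyperconvexity to be verified concerns arbitrary pairs $p_1,p_2\in\partial\Omega$ (i.e.\ it is literally ($\star$), not merely the direct-sum condition for triples in $\xi^{(1)}(\partial_\infty\Gamma)$), and the $C^\alpha$ regularity together with the tangent-space identification $T_{\xi^{(1)}(x)}\partial\Omega=\xi^{(d-1)}(x)$ should be cited from Theorem \ref{thm:main_body} (whose hypotheses you have already checked), since the statement of Theorem \ref{thm:regularity_body} only supplies the optimal-exponent equality.
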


In the case when the $\Gamma$-action on $\Omega$ is co-compact, Theorem \ref{thm:regularity2} was previously proven by Guichard~\cite{G2005} using different techniques. In that case, Guichard further proves that 
\[\Es_{d-1}(\rho) = \max\left\{ \alpha \in (1,2) : \partial\Omega \text{ is } C^{\alpha} \text{ along }\xi_\rho^1(\partial_\infty \Gamma) \right\}.\] 
In this more general setting however, our techniques do not tell us if the supremum in Theorem \ref{thm:regularity2} is attained.

\subsection*{Acknowledgements} We thank the referees for their careful reading and numerous comments which greatly improved the paper.

%%%%%%%%%%%%%%%%%%%%%%%%%%%%%%%%%%%%%%%%%%%%%%%%%%%%
%%%%%%%%%%%%%%%%%%%%%%%%%%%%%%%%%%%%%%%%%%%%%%%%%%%%
\section{Anosov representations}\label{sec:Anosov_repn}
%%%%%%%%%%%%%%%%%%%%%%%%%%%%%%%%%%%%%%%%%%%%%%%%%%%%
%%%%%%%%%%%%%%%%%%%%%%%%%%%%%%%%%%%%%%%%%%%%%%%%%%%%
In this section, we define Anosov representations from a (word) hyperbolic group to $\PGL_d(\Rb)$, and discuss some of their properties. 

%%%%%%%%%%%%%%%%%%%%%%%%%%%%%%%%%%%%%%%%%%%%%%%%%%%%
\subsection{A definition of Anosov representations}
%%%%%%%%%%%%%%%%%%%%%%%%%%%%%%%%%%%%%%%%%%%%%%%%%%%%
Since they were introduced by Labourie~\cite{L2006}, several characterizations of Anosov representations have been given by Kapovich, Leeb, and Porti ~\cite{KLP2017,KLP2013,KLP2014b}, Gu{\'e}ritaud, Guichard, Kassel, and Wienhard ~\cite{GGKW2015}, and Bochi, Potri, and Sambarino~\cite{BPS2016}. The definition we give below comes from~\cite[Theorem 1.7]{GGKW2015}.

To define Anosov representations, we first need appropriate definitions of ``well-behaved" limit maps associated to representations. Let $\Gamma$ be a hyperbolic group, and $\partial_\infty\Gamma$ its Gromov boundary.

\begin{definition} Let $\rho: \Gamma \rightarrow \PGL_{d}(\Rb)$ be a representation.  If $1 \leq k \leq d-1$, then a pair of maps $\xi_\rho^k: \partial_\infty \Gamma \rightarrow \Gr_k(\Rb^d)$ and $\xi_\rho^{d-k}: \partial_\infty \Gamma \rightarrow \Gr_{d-k}(\Rb^d)$ are called:
\begin{itemize}
\item \emph{$\rho$-equivariant} if $\xi_\rho^k(\gamma\cdot x) = \rho(\gamma)\cdot\xi_\rho^k(x)$ and $\xi_\rho^{d-k}(\gamma\cdot x) = \rho(\gamma)\cdot\xi_\rho^{d-k}(x)$ for all $x \in \partial_\infty \Gamma$ and $\gamma \in \Gamma$,
\item \emph{dynamics preserving} if for every $\gamma \in \Gamma$ of infinite order with attracting fixed point $\gamma^+ \in \partial_\infty \Gamma$, the points $\xi_\rho^k(\gamma^+) \in \Gr_k(\Rb^d)$ and $\xi_\rho^{d-k}(\gamma^+) \in \Gr_{d-k}(\Rb^d)$ are attracting fixed points of the action of $\rho(\gamma)$ on $\Gr_k(\Rb^d)$ and $\Gr_{d-k}(\Rb^d)$ respectively, and
\item \emph{transverse} if for every distinct $x, y \in \partial_\infty \Gamma$ we have 
\begin{align*}
\xi_\rho^k(x) + \xi_\rho^{d-k}(y) = \Rb^{d}.
\end{align*}
\end{itemize}
\end{definition}

With these, we can now define Anosov representations.
 
\begin{definition} 
A representation $\rho: \Gamma \rightarrow \PGL_d(\Rb)$ is \emph{$P_k$-Anosov} if 
\begin{itemize}
\item there exist continuous, $\rho$-equivariant, dynamics preserving, and transverse maps $\xi_\rho^k: \partial_\infty \Gamma \rightarrow \Gr_k(\Rb^d)$, $\xi_\rho^{d-k}: \partial_\infty \Gamma \rightarrow \Gr_{d-k}(\Rb^d)$, and
\item for any infinite sequence $\{\gamma_i\}_{i=1}^\infty\subset\Gamma$, we have
\[\lim_{i\to+\infty}\log\frac{\lambda_k(\rho(\gamma_i))}{\lambda_{k+1}(\rho(\gamma_i))}=\infty.\]
\end{itemize}
\end{definition}
It follows from the definition that a representation $\rho:\Gamma\to\PGL_d(\Rb)$ is $P_k$-Anosov if and only if it is $P_{d-k}$-Anosov. The dynamics preserving property implies that the maps $\xi_\rho^k$ and $\xi_\rho^{d-k}$ are unique (if they exist), and that if $k\le d-k$, then $\xi_\rho^k(x)\subset\xi_\rho^{d-k}(x)$ for all $x\in\partial_\infty\Gamma$. As such, we refer to $\xi_\rho^k$ as the \emph{$P_k$-limit map} of $\rho$, and $\xi_\rho^k(\partial_\infty\Gamma)\subset\Gr_k(\Rb^d)$ as the \emph{$P_k$-limit set} of $\rho$. Also, if $\rho$ is $P_k$-Anosov for all $k \in \{ k_1,\dots, k_j\}$ we say that $\rho$ is \emph{$P_{k_1,\dots, k_j}$-Anosov}.
 
%Given a subspace $V \subset \Rb^N$ define 
%\begin{align}\label{eqn:projectivization}
%[V]=\{ [v] \in \Pb(\Rb^N) : v \in V\}.
%\end{align} 
%Often, we will view $\xi_\rho^k(x)$ as the projective subspace $[\xi_\rho^k(x)]\subset\Pb(\Rb^d)$. However, to simplify notation, we will denote $[\xi_\rho^k(x)]$ simply by $\xi_\rho^k(x)$ in those settings.
 
%\begin{remark}
%In many other places in the literature, what we call a $P_k$-Anosov representation is usually known as a \emph{$P_k$-Anosov representation}, where $P_k$ is the stabilizer in $\PGL_d(\Rb)$ of a point in $\Gr_k(\Rb^d)$. This notation is an artifact of a more general definition of Anosov representations to an arbitrary non-compact semisimple Lie group. Since we do not use that generality here, we will use $k$ in place of $P_k$ to simplify the notation.
%\end{remark} 

%%%%%%%%%%%%%%%%%%%%%%%%%%%%%%%%%%%%%%%%%%%%%%%%%%%%
\subsection{Singular values and Anosov representations} \label{sec:properties0}
%%%%%%%%%%%%%%%%%%%%%%%%%%%%%%%%%%%%%%%%%%%%%%%%%%%%
Next, we give a description of Anosov representations using singular values.

\begin{definition}\label{def: singular} Let $|\cdot|$ and $\norm{\cdot}$ be norms on $\Rb^d$ induced by inner products, and let $L:(\Rb^d,|\cdot|)\to(\Rb^d,\norm{\cdot})$ be a linear map.
\begin{itemize}
\item For any non-zero $X\in(\Rb^d,|\cdot|)$, the \emph{stretch factor} of $X$ under $L$ is the quantity 
\[\sigma_X(L):=\frac{\norm{L(X)}}{|X|}.\]
\item For $i=1,\dots,d$, the $i$-th \emph{singular value} of $L$ is the quantity
\begin{align*}
\sigma_i(L)  :=\max_{W\subset\Rb^d,\dim W=i} & \min_{X\in W}\sigma_X(L) = \min_{W \subset \Rb^d, \dim W=d-i+1}  \max_{X \in W} \sigma_X(L). 
\end{align*}
\end{itemize}
\end{definition}

Observe that for all $i=1,\dots,d-1$, $\sigma_i(L)\geq\sigma_{i+1}(L)$, and if $L$ is invertible, then $\sigma_{d-i+1}(L^{-1})=\frac{1}{\sigma_i(L)}$. 

When $L\in\GL_d(\Rb)$, and $\norm{\cdot}=|\cdot|$ and is the standard $\ell^2$-norm $\norm{\cdot}_{2}$ on $\Rb^d$, we denote $\sigma_i(L)$ by $\mu_i(L)$. In that case, the singular values $\mu_1(L)\geq \dots\geq\mu_d(L)>0$ of $L$ are the square roots of the eigenvalues of $^tLL$. Then for any $g\in\PGL_d(\Rb)$, let
\[\mu_1(g)\ge\dots\ge\mu_d(g)>0\]
denote the singular values of a linear representative of $g$ with unit determinant.

Let $S$ be a finite symmetric generating set of $\Gamma$, and $d_S$ the induced word metric on $\Gamma$. The following characterization of Anosov representations was due to Kapovich, Leeb and Porti ~\cite{KLP2017,KLP2014b}, but was also later proven by Bochi, Potrie and Sambarino \cite{BPS2016} using different techniques.

\begin{theorem}\label{thm:SV_char_of_Anosov}
Suppose  $\Gamma$ is a finitely generated group, $S$ is a finite symmetric generating set, and $\rho:\Gamma\to\PGL_d(\Rb)$ is a representation. Then $\Gamma$ is a hyperbolic group and $\rho$ is a $P_k$-Anosov representation if and only if there are constants $B,C>0$ such that
\begin{align*}
\log  \frac{\mu_k(\rho(\gamma))}{\mu_{k+1}(\rho(\gamma))} \geq C d_S(\gamma,\id)-B
 \end{align*}
for all $\gamma \in \Gamma$.
\end{theorem}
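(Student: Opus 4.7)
The strategy is to prove the two implications separately. The singular-value gap condition is the hyperbolic-group analogue of a dominated splitting for the ``cocycle'' $\gamma\mapsto\rho(\gamma)$, and this perspective guides both directions.

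For the harder implication $(\Leftarrow)$, the idea is to manufacture the flag maps directly from the singular value decompositions. For $g\in\PGL_d(\Rb)$ with a gap between $\mu_k(g)$ and $\mu_{k+1}(g)$, associate to it a ``top-$k$'' subspace $U^k(g)\in\Gr_k(\Rb^d)$ (spanned by eigenvectors of $\overline{g}\overline{g}^T$ for the largest $k$ eigenvalues). The key ingredient is a quantitative linear-algebraic stability lemma: if $g,h\in\GL_d(\Rb)$ satisfy $\mu_k(g)/\mu_{k+1}(g)\geq K\norm{h}\norm{h^{-1}}$ for $K$ large, then the angle between $U^k(gh)$ and $U^k(g)$ is ${\rm O}(1/K)$. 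Combined with the hypothesized linear-in-word-length gap, this forces $\bigl(U^k(\rho(\gamma_n))\bigr)$ to be Cauchy along any geodesic ray $(\gamma_n)$ in the Cayley graph, with limit depending only on the endpoint $x\in\partial_\infty\Lambda$; this defines $\xi^{(k)}(x)$. The map $\xi^{(d-k)}$ is produced analogously from the corresponding ``bottom-$(d-k)$'' subspaces. Continuity and $\rho$-equivariance are built into the construction; transversality and dynamics-preservation then follow from the stability lemma applied to bi-infinite Cayley geodesics and to iterates $\rho(\gamma^n)$, respectively. Finally, the required divergence $\log(\lambda_k/\lambda_{k+1})(\rho(\gamma_i))\to\infty$ along sequences with $\ell_S(\gamma_i)\to\infty$ is extracted from the singular-value bound applied to powers $\gamma_i^n$, together with the identity $\log\lambda_i(g)=\lim_n\frac{1}{n}\log\mu_i(g^n)$.

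For $(\Rightarrow)$, assume $\rho$ is $k$-Anosov. The transverse equivariant pair $(\xi^{(k)},\xi^{(d-k)})$ yields, for each pair of distinct $x,y\in\partial_\infty\Lambda$, a splitting $\Rb^d=\xi^{(k)}(x)\oplus\xi^{(d-k)}(y)$ which, by the original dynamical characterization of Anosov representations from \cite{L2006, GW2012}, is a dominated splitting under the geodesic flow on an appropriate flow space $U\Lambda$. Cocompactness of the $\Lambda$-action on $U\Lambda$ produces uniform domination constants, and a Milnor-Svarc comparison between the time parameter on $U\Lambda$ and the word length $d_S(\gamma,\id)$ transfers this uniform exponential contraction into the sought linear lower bound on $\log\bigl(\mu_k(\rho(\gamma))/\mu_{k+1}(\rho(\gamma))\bigr)$.

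The main obstacle is the stability lemma underlying $(\Leftarrow)$. Although purely linear-algebraic, the quantitative form required here is delicate: the angle defect accumulated when multiplying $\rho(\gamma_n)$ by a single generator $s\in S$ must be summable along the geodesic ray, and this is precisely where the \emph{linear} (rather than merely unbounded) growth of the singular-value gap is essentially used. Everything else in the argument is driven by that single estimate.
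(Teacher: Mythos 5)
The paper does not actually prove this statement: it is quoted as Theorem~\ref{thm:SV_char_of_Anosov} and attributed to Kapovich--Leeb--Porti and to Bochi--Potrie--Sambarino \cite[Proposition 4.9]{BPS2016}, so there is no internal proof to compare against. Your outline does follow the general strategy of those papers (singular-value subspaces $U^k(\rho(\gamma))$ contracting along word geodesics in one direction, dominated splitting over the flow space plus co-compactness and a Milnor--\v{S}varc comparison in the other), and the $(\Rightarrow)$ half is a reasonable sketch granted the known equivalence of the eigenvalue-gap definition with the flow-space definition.

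The genuine gap is in $(\Leftarrow)$. The theorem assumes only that $\Lambda$ is finitely generated; as Remark~2.6 of the paper stresses, hyperbolicity of $\Lambda$ is \emph{implied, not assumed}. Your construction starts from ``any geodesic ray $(\gamma_n)$ in the Cayley graph, with limit depending only on the endpoint $x\in\partial_\infty\Lambda$,'' and later uses bi-infinite Cayley geodesics and attracting fixed points $\gamma^{\pm}$ --- all of which presuppose that $\Lambda$ is word hyperbolic and has a Gromov boundary with the usual properties. Proving that the uniform gap $\log\frac{\mu_k}{\mu_{k+1}}(\rho(\gamma))\geq C\,d_S(\gamma,\id)-c$ forces $\Lambda$ to be hyperbolic (and the orbit maps to be Morse quasi-geodesic embeddings, so that limits of $U^k(\rho(\gamma_n))$ are independent of the chosen geodesic and vary continuously with the endpoint) is precisely the hard core of the Kapovich--Leeb--Porti and Bochi--Potrie--Sambarino theorems; your stability lemma alone does not yield it. A related symptom appears at the end of your sketch: deducing $\log\frac{\lambda_k}{\lambda_{k+1}}(\rho(\gamma_i))\to\infty$ from the singular-value bound via $\log\lambda_i(g)=\lim_n\frac1n\log\mu_i(g^n)$ gives a lower bound in terms of the \emph{stable} translation length of $\gamma_i$, and comparing that with $\ell_S(\gamma_i)$ again uses hyperbolicity of $\Lambda$. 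So the proposal is a fair account of the two implications once hyperbolicity is in hand, but it omits the step that makes the theorem deep.
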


Define respectively the \emph{Cartan} and \emph{Jordan projection} $\mu,\lambda:\PGL_d(\Rb)\to\Rb^d$ by 
\[
\mu(g):= \left ( \log \mu_1(g), \dots, \log \mu_d(g) \right) \quad \text{and} \quad \lambda(g) = \left ( \log \lambda_1(g), \dots, \log \lambda_d(g) \right).
\]
Observe that while the Jordan projection is invariant under conjugation in $\PGL_d(\Rb)$, the Cartan projection is not (although it is invariant under the left and right action of $\PO(d)$ on $\PGL_d(\Rb)$). These two projections can be interpreted geometrically in the following way. 

Associated to the Lie group $\PGL_d(\Rb)$ is the Riemannian symmetric space $X$, on which $\PGL_d(\Rb)$ acts transitively  and by isometries. As a $\PGL_d(\Rb)$-space, $X=\PGL_d(\Rb)/\PO(d)$. Furthermore, (after possibly scaling) the distance $d_X$ on $X$ induced by its Riemannian metric can be computed from the Cartan projection by the formula
\[d_X(g_1\cdot\PO(d),g_2\cdot\PO(d))=\norm{\mu\left(g_1^{-1}g_2\right)}_2,\]
where $\norm{\cdot}_2$ is the standard $\ell^2$-norm on $\Rb^d$. On the other hand, if $g\in\PGL_d(\Rb)$, then 
\[\inf_{p\in X}d_X(p,g\cdot p)=\norm{\lambda(g)}_2.\]
%As such, if $g\in\PGL_d(\Rb)$, then $\mu(g)$ is a refinement of the distance by which $g$ translates the identity coset in $X$, and $\lambda(g)$ is a refinement of the minimal translation distance of $g$ in $X$.

As an immediate consequence of Theorem~\ref{thm:SV_char_of_Anosov} an Anosov representation coarsely preserve the metric $d_S$ on $\Gamma$.

\begin{corollary}\label{thm:QI_Anosov} 
Let $\rho:\Gamma\to\PGL_d(\Rb)$ be $P_k$-Anosov for some $k$. Then the map $\Gamma\to X$ defined by $\gamma\mapsto \rho(\gamma)\cdot\PO(d)$ is a quasi-isometric embedding. In other words, there are constants $A\geq 1$ and $B\geq 0$ such that for all $\gamma_1,\gamma_2\in\Gamma$,
\[\frac{1}{A}\norm{\mu\left(\rho(\gamma_1^{-1}\gamma_2)\right)}_2-B\leq d_S(\gamma_1,\gamma_2)\leq A\norm{\mu\left(\rho(\gamma_1^{-1}\gamma_2)\right)}_2+B.\]
 \end{corollary}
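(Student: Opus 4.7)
The plan is to reduce the statement to the case $\gamma_1=\id$ using left-invariance, then establish the two sides of the quasi-isometric inequality separately: the upper bound on $d_S$ follows from the singular-value characterization of Anosov (Theorem \ref{thm:SV_char_of_Anosov}), while the lower bound is a standard Lipschitz estimate using the generating set $S$.

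For the reduction, I would use that $d_S$ is left-invariant, so $d_S(\gamma_1,\gamma_2)=d_S(\id,\gamma_1^{-1}\gamma_2)$. On the symmetric space side, the formula $d_X(g_1\cdot\PO(d),g_2\cdot\PO(d))=\|\mu(\overline{g_1^{-1}g_2})\|_2$ given just before the corollary is manifestly invariant under left translation in $\PGL_d(\Rb)$. Therefore it suffices to produce constants $C\geq 1$ and $c\geq 0$ with
\[\tfrac{1}{C}\bigl\|\mu\bigl(\overline{\rho(\gamma)}\bigr)\bigr\|_2-c\;\leq\;d_S(\gamma,\id)\;\leq\;C\bigl\|\mu\bigl(\overline{\rho(\gamma)}\bigr)\bigr\|_2+c\]
for every $\gamma\in\Gamma$, where $\overline{\rho(\gamma)}\in\SL_d^\pm(\Rb)$.

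For the upper bound on $d_S(\gamma,\id)$, Theorem \ref{thm:SV_char_of_Anosov} supplies constants $C',c'>0$ with $\log\frac{\mu_k}{\mu_{k+1}}(\rho(\gamma))\geq C'd_S(\gamma,\id)-c'$. Taking an $\SL_d^\pm$-lift ensures $\sum_i\log\mu_i(\overline{\rho(\gamma)})=0$, so the entries of $\mu(\overline{\rho(\gamma)})$ lie in $[-\|\mu\|_\infty,\|\mu\|_\infty]$, and in particular
\[\log\tfrac{\mu_k}{\mu_{k+1}}(\rho(\gamma))\;\leq\;\log\mu_1(\overline{\rho(\gamma)})-\log\mu_d(\overline{\rho(\gamma)})\;\leq\;2\bigl\|\mu(\overline{\rho(\gamma)})\bigr\|_2.\]
Combining these inequalities gives $d_S(\gamma,\id)\leq \tfrac{2}{C'}\|\mu(\overline{\rho(\gamma)})\|_2+\tfrac{c'}{C'}$, which is the desired upper bound.

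For the lower bound, I would set $L:=\max_{s\in S}\|\mu(\overline{\rho(s)})\|_2=\max_{s\in S}d_X(\PO(d),\rho(s)\cdot\PO(d))$, which is finite because $S$ is finite. For any $\gamma\in\Gamma$ of word length $n:=d_S(\gamma,\id)$, write $\gamma=s_1\cdots s_n$ with $s_i\in S$ and estimate along the corresponding path in $X$:
\begin{align*}
\bigl\|\mu(\overline{\rho(\gamma)})\bigr\|_2
&=d_X\bigl(\PO(d),\rho(\gamma)\cdot\PO(d)\bigr) \\
&\leq \sum_{i=1}^n d_X\bigl(\rho(s_1\cdots s_{i-1})\cdot\PO(d),\,\rho(s_1\cdots s_i)\cdot\PO(d)\bigr) \\
&= \sum_{i=1}^n d_X\bigl(\PO(d),\rho(s_i)\cdot\PO(d)\bigr)\;\leq\;Ln\;=\;L\,d_S(\gamma,\id),
\end{align*}
using that $\PGL_d(\Rb)$ acts on $X$ by isometries. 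Combining this with the previous paragraph produces the required constants. There is no real obstacle here: the content of the result is entirely packaged in Theorem \ref{thm:SV_char_of_Anosov}, and the rest is the routine observation that any orbit map of a finitely generated group into a metric space on which it acts by isometries is Lipschitz with respect to any word metric.
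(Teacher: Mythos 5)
Your argument is correct and is exactly the intended one: the paper gives no written proof, stating the corollary as an ``immediate consequence'' of Theorem~\ref{thm:SV_char_of_Anosov}, and your write-up supplies precisely the expected details (upper bound on $d_S$ from the singular-value gap estimate plus the bound $\log\frac{\mu_k}{\mu_{k+1}}\leq 2\norm{\mu}_2$, lower bound from the standard Lipschitz orbit-map estimate, and reduction to $\gamma_1=\id$ by left-invariance). Nothing is missing.
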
 

%%%%%%%%%%%%%%%%%%%%%%%%%%%%%%%%%%%%%%%%%%%%%%%%%%%%
 \subsection{Properties of Anosov representations} \label{sec:properties}
 %%%%%%%%%%%%%%%%%%%%%%%%%%%%%%%%%%%%%%%%%%%%%%%%%%%%
Next, we recall some important properties of Anosov representations. 

First, there are strong restrictions on the Zariski closures of irreducible Anosov representations to $\PGL_d(\Rb)$. See \cite[Lemma 2.19]{BCLS2015} for a proof. 
 
\begin{proposition}\label{prop:Zclosure} Let $\rho: \Gamma \rightarrow \PGL_{d}(\Rb)$ be a $P_1$-Anosov representation. If $\rho$ is irreducible, then the Zariski closure of $\rho(\Gamma)$ is a semisimple Lie group without compact factors. 
\end{proposition}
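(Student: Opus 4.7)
The plan is to show successively that $G := \overline{\rho(\Gamma)}^{\mathrm{Zar}}$ is reductive, has finite center, and has no compact factors, following the standard template used in \cite[Lemma 2.19]{BCLS2015}.

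\emph{Step 1: Reductivity.} I would examine the unipotent radical $R_u(G)$, which is a normal unipotent subgroup of $G$. By the Engel/Lie--Kolchin theorem, the fixed subspace $V := (\Rb^d)^{R_u(G)}$ is nonzero, and normality of $R_u(G)$ in $G$ forces $V$ to be $G$-invariant, hence $\rho(\Gamma)$-invariant. The irreducibility hypothesis then gives $V = \Rb^d$, so $R_u(G)$ acts trivially on $\Rb^d$ and must be the trivial subgroup of $\PGL_d(\Rb)$. Thus $G$ is reductive.

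\emph{Step 2: Finite center.} Schur's lemma applied to the irreducible real representation $\rho$ implies that the commutant of $\rho(\Gamma)$ in $\End(\Rb^d)$ is a finite-dimensional real division algebra, hence isomorphic to one of $\Rb$, $\Cb$, $\Hb$. Quotienting by scalars, the identity component $Z(G)^0$ of the center is trapped inside a compact torus of dimension at most three. Such a central torus would be a compact factor of $G$, which Step 3 rules out; equivalently, once $G$ is known to have no compact factors, finiteness of the center is automatic.

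\emph{Step 3: No compact factors.} Write $G^0$, up to isogeny, as an almost direct product $G_{nc} \cdot K$ of its non-compact and compact factors. Every element of $K$ is conjugate into a maximal compact subgroup and hence has all eigenvalues on the unit circle, so its Jordan projection is zero; consequently the Jordan projections $\{\lambda(\rho(\gamma)) : \gamma \in \Gamma\}$ all lie in the Cartan subalgebra of $G_{nc}$. On the other hand, Benoist's limit cone theorem (applied to the Zariski dense subgroup $\rho(\Gamma)$ of the reductive group $G$) says that the closure of the positive real span of these Jordan projections has non-empty interior in the full Cartan subalgebra of $G$. The only way to reconcile these is that the $K$-part of the Cartan subalgebra is trivial, i.e.\ $K = \{1\}$.

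The main obstacle is Step 3: the crux is exploiting the tension between Zariski density (which spreads $\rho(\Gamma)$ throughout $G$) and the Anosov hypothesis (which, via Corollary \ref{thm:QI_Anosov}, forces $\rho$ to be a quasi-isometric embedding into the symmetric space of $G$, so that the ``growth'' of $\rho(\gamma)$ is genuinely non-compact). Converting this tension into the ruling out of a compact factor cleanly essentially requires Benoist's density theorem for Jordan projections of Zariski dense subgroups, since a compact factor would trap the Jordan projections inside a proper subspace of the Cartan algebra, contradicting the conclusion of that theorem.
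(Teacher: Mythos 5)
First, a framing remark: the paper does not prove this proposition at all --- it quotes it as a result of Quint and points to \cite[Lemma 2.19]{BCLS2015} for the proof --- so the comparison is with that standard argument. Your Step 1 (killing the unipotent radical via its fixed subspace and irreducibility) is correct. The genuine gap is Step 3, and with it Step 2, which you explicitly make depend on Step 3. Benoist's limit cone theorem concerns the Jordan and Cartan projections, which take values in a closed Weyl chamber $\aL^+$ of a Cartan subspace $\aL\subset\pL$, and a compact factor contributes \emph{nothing} to $\aL$: all of its elements are elliptic, so for $G=G_{nc}\cdot K$ with $K$ compact one has $\aL(G)=\aL(G_{nc})$, the Jordan projection of \emph{every} element of $G$ (not just of $\rho(\Gamma)$) already lies in $\aL(G_{nc})$, and the conclusion ``nonempty interior in $\aL(G)$'' is perfectly consistent with $K\neq\{1\}$. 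There is no proper subspace being violated, hence no contradiction. Indeed your Step 3 never actually uses the Anosov hypothesis, and so it would prove far too much: take $\Gamma$ a surface group, $\rho_1:\Gamma\to\SL_2(\Rb)$ Fuchsian and $\rho_2:\Gamma\to\SO(3)$ with dense image; then $\rho_1\otimes\rho_2$ on $\Rb^2\otimes\Rb^3$ is irreducible, its Zariski closure is the almost product $\SL_2(\Rb)\cdot\SO(3)$, which has a compact factor, and Benoist's theorem holds with the limit cone having nonempty interior in the one-dimensional $\aL$ of the $\SL_2(\Rb)$-factor. (That $\rho$ is of course not $1$-Anosov --- which is exactly the hypothesis your argument fails to exploit.)

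The standard proof runs instead through \emph{proximality}, which is where $1$-Anosovness enters: for $\gamma$ of infinite order the dynamics-preserving limit map (equivalently $\frac{\lambda_1}{\lambda_2}(\rho(\gamma))>1$) makes $\rho(\gamma)$ proximal, i.e.\ its top-modulus eigenvalue is real with one-dimensional generalized eigenspace. By Schur the commutant of $\rho(\Gamma)$ is $\Rb$, $\Cb$ or $\Hb$; if it were $\Cb$ or $\Hb$, the attracting eigenline of $\rho(\gamma)$, being invariant under the commutant, would have even real dimension --- so the commutant is $\Rb$, the connected center lies in the scalars, and the center of $G$ is finite with no appeal to Step 3. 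For compact factors, pass to a finite-index subgroup so that $G$ may be taken connected (irreducibility survives by Proposition~\ref{prop:strongly_irreducible}) and let $K\leq G$ be a nontrivial connected compact normal subgroup. Connectedness and irreducibility force $\Rb^d$ to be $K$-isotypic with a nontrivial irreducible isotype $W$, so $\dim_{\Rb}W\geq 2$; writing $\rho(\gamma)=g'k$ with $g'$ centralizing $K$ and $k\in K$, the sum of the generalized eigenspaces of $g'$ of maximal modulus is a nonzero $K$-submodule, hence of dimension at least $2$, and since the eigenvalues of $k$ all have modulus one, every eigenvalue of $\rho(\gamma)$ on this subspace has maximal modulus. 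Thus no element of $G$ would be proximal, a contradiction; hence $K=\{1\}$. The decisive input is the proximality furnished by the Anosov condition, not the limit cone theorem, and without it your Step 3 cannot be repaired.
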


Also, for $P_1$-Anosov representations, irreducibility implies strong irreducibility. See \cite[Lemma 5.12]{GW2012}.

\begin{proposition}\label{prop:strongly_irreducible} Let $\rho: \Gamma \rightarrow \PGL_{d}(\Rb)$ be an irreducible $P_1$-Anosov representation. If $\Gamma_0 \leq \Gamma$ is a finite index subgroup, then $\rho|_{\Gamma_0}$ is also irreducible. 
\end{proposition}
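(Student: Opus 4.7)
The plan is to reduce to the case where $\Gamma_0$ is normal in $\Gamma$, establish complete reducibility of $\rho|_{\Gamma_0}$ using Proposition \ref{prop:Zclosure}, and then derive a contradiction by combining the equivariance of the flag map with minimality of the boundary action.

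First, I would replace $\Gamma_0$ by its normal core $\Gamma_1:=\bigcap_{\gamma\in\Gamma}\gamma\Gamma_0\gamma^{-1}$, which is a finite-index normal subgroup of $\Gamma$ contained in $\Gamma_0$. Any $\rho(\Gamma_0)$-invariant subspace is automatically $\rho(\Gamma_1)$-invariant, so it suffices to prove the statement for $\Gamma_1$; we may thus assume $\Gamma_0$ is normal in $\Gamma$. Next, I would show that $\rho|_{\Gamma_0}$ acts completely reducibly on $\Rb^d$. By Proposition \ref{prop:Zclosure}, the Zariski closure $H$ of $\rho(\Gamma)$ is a semisimple Lie group without compact factors. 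The Zariski closure $H_0$ of $\rho(\Gamma_0)$ satisfies $[H:H_0]\leq [\Gamma:\Gamma_0]<\infty$, so $H_0$ contains the identity component $H^\circ$. Since $H^\circ$ is connected semisimple it acts completely reducibly on $\Rb^d$, and an averaging argument over the finite group $H_0/H^\circ$ promotes this to complete reducibility of the $H_0$-action, hence of the $\rho(\Gamma_0)$-action.

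Now suppose for contradiction that $\rho|_{\Gamma_0}$ is reducible, and write $\Rb^d=V_1\oplus\cdots\oplus V_n$ as a direct sum of irreducible $\rho(\Gamma_0)$-submodules with $n\geq 2$. For each $i$ define
\begin{equation*}
S_i:=\left\{x\in\partial_\infty\Gamma:\xi^{(1)}(x)\subset V_i\right\}.
\end{equation*}
By continuity of $\xi^{(1)}$ and $\Gamma_0$-invariance of $V_i$, each $S_i$ is closed and $\Gamma_0$-invariant. Moreover, for any infinite-order $\gamma\in\Gamma_0$, the element $\rho(\gamma)$ is proximal (since $\rho$ is $1$-Anosov), so its unique attracting eigenline $\xi^{(1)}(\gamma^+)$ lies in whichever summand $V_i$ contains the corresponding top eigenvector; hence at least one $S_i$ is non-empty.

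Finally, the finite-index subgroup $\Gamma_0$ is itself a non-elementary hyperbolic group with $\partial_\infty\Gamma_0=\partial_\infty\Gamma$, so its action on $\partial_\infty\Gamma$ is minimal. Minimality forces every non-empty $S_i$ to equal all of $\partial_\infty\Gamma$, so $\xi^{(1)}(\partial_\infty\Gamma)\subset V_i$ for some $i$. But then the linear span of $\xi^{(1)}(\partial_\infty\Gamma)$ is a proper, non-zero, $\rho(\Gamma)$-invariant subspace (by $\Gamma$-equivariance of $\xi^{(1)}$), contradicting the irreducibility of $\rho$. The main obstacle is the complete-reducibility step, which depends essentially on Proposition \ref{prop:Zclosure}; once $\rho|_{\Gamma_0}$ is known to be semisimple, the rest is a routine combination of proximality and boundary minimality.
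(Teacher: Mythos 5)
Your proof is correct, but note that the paper itself does not prove this proposition at all: it is quoted directly from \cite[Lemma 5.12]{GW2012}, so the relevant comparison is with the argument behind that citation. The usual proof is purely dynamical and lighter than yours: for $\gamma \in \Gamma_0$ of infinite order, $\rho(\gamma)$ is proximal, and any $\rho(\gamma)$-invariant subspace $W$ either contains the attracting line $\xi^{(1)}(\gamma^+)$ or is contained in the repelling hyperplane $\xi^{(d-1)}(\gamma^-)$; combining this with the density of such fixed points in $\partial_\infty \Gamma$ and the irreducibility of $\rho$ (so that $\xi^{(1)}(\partial_\infty\Gamma)$ spans $\Rb^d$, while $\bigcap_{x} \xi^{(d-1)}(x)=\{0\}$) already rules out a proper non-zero $\rho(\Gamma_0)$-invariant subspace, with no need for the normal core, complete reducibility, or the structure of the Zariski closure. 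Your route instead imports Proposition \ref{prop:Zclosure} to make $\rho|_{\Gamma_0}$ semisimple and then runs a minimality argument; every step checks out: the attracting eigenline of $\rho(\gamma)$ lies in exactly one summand $V_i$ because the $V_i$ are $\rho(\gamma)$-invariant and the top eigenvalue is simple, invariant subspaces for $\rho(\Gamma_0)$ and for its Zariski closure coincide, and there is no circularity since Proposition \ref{prop:Zclosure} precedes the statement and is independent of it. (The reduction to the normal core is never actually used.) What the heavier machinery buys you is a shorter dynamical argument; what the citation's argument buys is independence from Quint's theorem. One point worth making explicit: both your minimality step and the statement itself require $\Gamma$ to be non-elementary --- an infinite dihedral group generated by two projective reflections across disjoint geodesics gives an irreducible $1$-Anosov representation into $\PGL_2(\Rb)$ whose restriction to the index-two cyclic subgroup is reducible --- so the non-elementarity implicit in the paper's conventions is genuinely needed where you invoke minimality of the $\Gamma_0$-action on $\partial_\infty\Gamma$ and the existence of infinite-order elements of $\Gamma_0$.
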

 
In many places, it will be more convenient to work with representations into $\SL_d(\Rb)$ instead of $\PGL_d(\Rb)$. The next observation allows us to make this reduction. Let $p_d:\GL_d(\Rb)\to\PGL_d(\Rb)$ denote the obvious projection.

\begin{observation}\label{obs:lift} For any representation $\rho: \Gamma \rightarrow \PGL_{d}(\Rb)$, there exists a subgroup $\Gamma_\rho \leq \SL_d(\Rb)$ such that $p_d|_{\Gamma_\rho}:\Gamma_\rho\to\PGL_d(\Rb)$ is a representation whose kernel is a subgroup of $\Zb/2\Zb$, and whose image is a subgroup of $\rho(\Gamma)$ with index at most two.
\end{observation}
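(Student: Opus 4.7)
The plan is to define $\Lambda_\rho$ by pulling back $\rho(\Gamma)$ through $\pi$ and intersecting with $\SL_d(\Rb)$; concretely, I would set
\[
\Lambda_\rho := \SL_d(\Rb) \cap \pi^{-1}\bigl(\rho(\Gamma)\bigr).
\]
With this definition, $\pi|_{\Lambda_\rho}$ is automatically a homomorphism into $\rho(\Gamma)$, so the statement reduces to two elementary facts about the restricted map $\pi|_{\SL_d(\Rb)} : \SL_d(\Rb) \to \PGL_d(\Rb)$. Both facts are established by a case split on the parity of $d$, using nothing more than the observation that the scaling $\tilde g \mapsto \lambda \tilde g$ multiplies $\det\tilde g$ by $\lambda^d$.

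For the kernel, I would note that $\ker \pi$ consists of scalar matrices, so $\ker(\pi|_{\SL_d(\Rb)}) = \{\lambda I : \lambda \in \Rb,\ \lambda^d = 1\}$; this is trivial when $d$ is odd and equals $\{\pm I\}$ when $d$ is even. In either case the kernel of $\pi|_{\Lambda_\rho}$ is contained in $\{\pm I\}$, hence is a subgroup of $\Zb_2$. For the image, I would ask when a class $[\tilde g] \in \PGL_d(\Rb)$ admits a determinant-one representative $\lambda \tilde g$; this requires $\lambda^d = \det(\tilde g)^{-1}$. When $d$ is odd such a $\lambda$ always exists in $\Rb$, so $\pi|_{\SL_d(\Rb)}$ is surjective. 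When $d$ is even the quantity $\lambda^d$ is always positive, so $\sgn \circ \det$ descends to a well-defined homomorphism $\PGL_d(\Rb) \to \{\pm 1\}$ whose kernel is exactly $\pi(\SL_d(\Rb))$, a subgroup of index $2$. Intersecting with $\rho(\Gamma)$ then gives $[\rho(\Gamma) : \pi(\Lambda_\rho)] \leq 2$.

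There is no real obstacle here; the entire content of the observation is the identification of the sign of the determinant as the unique obstruction to lifting a projective transformation to $\SL_d(\Rb)$ when $d$ is even, and the recognition that this obstruction is at most a $\Zb_2$-worth of data both in the domain (kernel) and in the cokernel (index).
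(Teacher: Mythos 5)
Your proposal is correct and is essentially the paper's argument: you define the same group $\Lambda_\rho=\SL_d(\Rb)\cap\pi^{-1}(\rho(\Gamma))$, and the only cosmetic difference is that the paper routes the index bound through $\Lambda_0\subset\SL^{\pm}_d(\Rb)$ (using $[\SL^{\pm}_d(\Rb):\SL_d(\Rb)]=2$), whereas you phrase the same obstruction via the homomorphism $\sgn\circ\det$ descended to $\PGL_d(\Rb)$ together with the parity case split for the kernel. Both come down to the identical observation that the sign of the determinant is the only obstruction, so no further comment is needed.
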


\begin{proof}  Define $\Gamma_\rho' := \{ g \in \SL^{\pm}_d(\Rb) : [g] \in \rho(\Gamma) \}$, and let $\Gamma_\rho := \Gamma_\rho' \cap \SL_d(\Rb)$. Then $p_d(\Gamma_\rho')\subset\PGL_d(\Rb)$ coincides with $\rho(\Gamma)$, and $\Gamma_\rho$ has index at most two in $\Gamma_\rho'$.
\end{proof}

In particular, if $\rho:\Gamma\to\PGL_d(\Rb)$ is an Anosov representation, then $\Gamma_\rho$ is a hyperbolic group, and there are canonical identifications $\partial_\infty\Gamma=\partial_\infty\rho(\Gamma)=\partial_\infty p_d(\Gamma_\rho)=\partial_\infty\Gamma_\rho$. Furthermore, the following proposition is an immediate consequence of \cite[Corollary 1.3]{GW2012} .

\begin{proposition}
Let $\rho:\Gamma\to\PGL_d(\Rb)$ be a representation. The representation 
\[\rho':=p_d|_{\Gamma_\rho}:\Gamma_\rho\to\PGL_d(\Rb)\] 
is $P_k$-Anosov if and only if $\rho$ is $P_k$-Anosov. If so, the $P_k$-limit maps of $\rho$ and $\rho'$ agree.
\end{proposition}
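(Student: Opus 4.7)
The plan is to deduce the statement as a direct consequence of \cite[Corollary 1.3]{GW2012}, which asserts that the Anosov property and the associated flag maps transfer between two representations whose source groups are related by a surjective homomorphism with finite kernel. I would build such a chain of homomorphisms between $\Gamma$ and $\Lambda_\rho$.

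First, I would set $\Gamma_0:=\rho^{-1}(\pi(\Lambda_\rho))$, which is a subgroup of $\Gamma$ of index at most two by Observation~\ref{obs:lift}. Since $\Gamma_0\hookrightarrow\Gamma$ is a finite index inclusion, \cite[Corollary 1.3]{GW2012} already yields that $\rho$ is $k$-Anosov if and only if $\rho|_{\Gamma_0}$ is, and the flag maps coincide under the canonical identification $\partial_\infty\Gamma\cong\partial_\infty\Gamma_0$.

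Next, to compare $\rho|_{\Gamma_0}:\Gamma_0\to\pi(\Lambda_\rho)$ with $\rho'=\pi|_{\Lambda_\rho}:\Lambda_\rho\to\pi(\Lambda_\rho)$, both of which have the same image in $\PGL_d(\Rb)$, I would pass to the fiber product
\[H:=\Gamma_0\times_{\pi(\Lambda_\rho)}\Lambda_\rho=\{(\gamma,\lambda)\in\Gamma_0\times\Lambda_\rho:\rho(\gamma)=\pi(\lambda)\}.\]
The two projections $H\to\Gamma_0$ and $H\to\Lambda_\rho$ are surjective, with kernels $\ker(\pi|_{\Lambda_\rho})\subset\{\pm I\}$ and $\ker(\rho|_{\Gamma_0})$, respectively. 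The former is automatically finite, while the latter is finite as soon as one knows that either $\rho|_{\Gamma_0}$ or $\rho'$ is $k$-Anosov, by the quasi-isometric embedding property of Corollary~\ref{thm:QI_Anosov}. Two more applications of \cite[Corollary 1.3]{GW2012}, one along each of $H\to\Gamma_0$ and $H\to\Lambda_\rho$, then furnish the equivalence of Anosov-ness in the two settings, and the flag maps match under the induced identifications $\partial_\infty\Gamma_0\cong\partial_\infty H\cong\partial_\infty\Lambda_\rho$.

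The main (and very mild) obstacle is checking the finiteness of $\ker(\rho|_{\Gamma_0})$ before applying \cite[Corollary 1.3]{GW2012} to the projections from $H$. In both directions of the biconditional this reduces immediately to Corollary~\ref{thm:QI_Anosov} applied to whichever representation is assumed Anosov. Beyond this bookkeeping, there is no real mathematical content, which is why the author calls the result an immediate consequence of \cite[Corollary 1.3]{GW2012}.
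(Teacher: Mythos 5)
The paper gives no argument here: it simply records this proposition as an immediate consequence of \cite[Corollary 1.3]{GW2012}, so your write-up is really an expansion of that citation. In the direction the paper actually uses (see Remark~\ref{rem:lift}) — $\rho$ being $k$-Anosov implies $\rho'$ is $k$-Anosov with the same flag maps — your scheme is sound: $\Gamma_0$ has index at most two in $\Gamma$, so Anosov-ness and the flag maps pass to $\rho|_{\Gamma_0}$; Corollary~\ref{thm:QI_Anosov} then makes $\ker(\rho|_{\Gamma_0})$ finite, both projections of your fiber product $H$ become surjections with finite kernel, and the Anosov property together with the boundary maps transfers along them. (One caveat: check that \cite[Corollary 1.3]{GW2012} is actually stated in the finite-kernel form you quote and not only for finite-index subgroups; if not, that transfer is in any case easy to verify directly from Theorem~\ref{thm:SV_char_of_Anosov}, since a surjection with finite kernel between finitely generated groups is a quasi-isometry and the singular-value data depends only on the image element.)

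The genuine gap is in the converse direction. You assert that $\ker(\rho|_{\Gamma_0})$ is finite as soon as $\rho'$ is $k$-Anosov, ``by Corollary~\ref{thm:QI_Anosov}''; but that corollary applied to $\rho'$ gives a quasi-isometric embedding of $\Lambda_\rho$, and $\Lambda_\rho$ is built from the image $\rho(\Gamma)$ alone (Observation~\ref{obs:lift}), so it carries no information about $\ker\rho\leq\Gamma$. Moreover the missing finiteness cannot be supplied by any argument: let $\Gamma$ be a closed genus-two surface group, let $\Gamma\to F_2$ be a surjection (its kernel is infinite since $\Gamma$ is torsion-free and not free), and let $\rho$ be the composition with a Schottky representation $F_2\to\PGL_2(\Rb)$. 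Then $\Lambda_\rho$ is a finite extension of a convex co-compact free group and $\rho'$ is $1$-Anosov, while $\rho$ is not: for a nontrivial $\gamma\in\ker\rho$ one has $\ell_S(\gamma^n)\to\infty$ and $d_S(\gamma^n,\id)\to\infty$ while $\frac{\lambda_1}{\lambda_2}(\rho(\gamma^n))=1=\frac{\mu_1}{\mu_2}(\rho(\gamma^n))$, contradicting both the definition and Theorem~\ref{thm:SV_char_of_Anosov}. So the ``only if'' direction genuinely requires the extra hypothesis that $\ker\rho$ is finite; your fiber-product argument localizes exactly where that hypothesis is consumed, and the sentence claiming it follows from Anosov-ness of $\rho'$ is the step that fails. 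Since the paper only invokes the proposition starting from an Anosov $\rho$, nothing downstream is affected, but as a proof of the stated biconditional your argument is incomplete in that direction (as, strictly speaking, is the unqualified statement itself).
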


\begin{remark}\label{rem:lift}
To prove any property about the $P_k$-limit sets of $\rho$, it is now sufficient to show that this property holds for the $P_k$-limit sets of $\rho'$. The advantage of working with $\rho'$ in place of $\rho$ is that $\rho':\Gamma_\rho\to\PGL_d(\Rb)$ admits a lift to a representation from $\Gamma_\rho$ to $\SL_d(\Rb)$. {\bf With this, we can henceforth assume that $\rho:\Gamma\to\PGL_d(\Rb)$ admits a lift to a representation $\overline{\rho}:\Gamma\to\SL_d(\Rb)$.} 
\end{remark}

%%%%%%%%%%%%%%%%%%%%%%%%%%%%%%%%%%%%%%%%%%%%%%%%%%%%
\subsection{Gromov geodesic flow space}\label{sec:flowspace} 
%%%%%%%%%%%%%%%%%%%%%%%%%%%%%%%%%%%%%%%%%%%%%%%%%%%%
In their proof of Theorem \ref{thm:SV_char_of_Anosov}, Bochi, Potrie, and Sambarino \cite{BPS2016} gave a characterization of Anosov representations using dominated splittings, which we now describe. To do so, we recall the definition of the flow space of a hyperbolic group, and state some of their well-known properties. For more details, see for instance~\cite{Gromov1987},~\cite{C1994}, or~\cite{M1991}. 

As a topological space, the \emph{flow space} for a hyperbolic group $\Gamma$, denoted $\Usf(\Gamma)$, is homeomorphic to $\partial_\infty\Gamma^{(2)}\times\Rb$, where $\partial_\infty\Gamma^{(2)}:=\{(x,y)\in\partial_\infty\Gamma^{2}:x\neq y\}$. This flow space admits a natural $\Rb$-action by translation in the $\Rb$-factor called the \emph{geodesic flow on $\Usf(\Gamma)$}. We will use the notation $v=(v^+,v^-,s)\in \Usf(\Gamma)$, and denote the geodesic flow on $\Usf(\Gamma)$ by $\varphi_t$, i.e. 
\[\varphi_t(v)=(v^+,v^-,s+t). \]

There is a proper, co-compact  $\Gamma$-action on $\Usf(\Gamma)$ that commutes with $\varphi_t$, and satisfies $\gamma\cdot(v^+,v^-,\Rb)=(\gamma\cdot v^+,\gamma\cdot v^-,\Rb)$. There is also a natural action of the group $\{\pm1\}$ on $\Usf(\Gamma)$ which satisfies 
\begin{align*}
-1 \cdot (x,y,\Rb) = (y,x,\Rb).
\end{align*}
This action commutes with the $\Gamma$ action, but anti-commutes with the $\varphi_t$ action, i.e.
\begin{align*}
(-1)\circ \varphi_t \circ (-1) = \varphi_{-t}.
\end{align*}
So the actions of $\Gamma$, $\varphi_t$, and $\{\pm 1\}$ combine to yield an action of $\Gamma \times (\Rb \rtimes_{\psi}\{\pm 1\})$ on $\Usf(\Gamma)$ where $\psi : \{\pm 1\} \rightarrow \Aut(\Rb)$ is given by $\psi(-1)(t) = -t$.

Since the $\Gamma$ action commutes with $\varphi_t$, the geodesic flow on $\Usf(\Gamma)$ descends to a flow on the compact space $\wh\Usf(\Gamma):=\Usf(\Gamma)/\Gamma$, which we refer to as the \emph{geodesic flow on $\wh\Usf(\Gamma)$}, and denote it by $\wh{\varphi}_t$. This also implies that if $v^+=\gamma^+$ and $v^-=\gamma^-$ are the attracting and repelling fixed points of some infinite order $\gamma\in\Gamma$, then the orbit $(\gamma^+,\gamma^-,\Rb)\subset\Usf(\Gamma)$ of $\varphi_t$ descends to a closed orbit of $\wh{\varphi}_t$ in $\wh\Usf(\Gamma)$. We will denote the period of this closed orbit by $T_\gamma\in\Rb$, and refer to $T_\gamma$ as the \emph{period of $\gamma$}. 

Further, $\Usf(\Gamma)$ admits a $\Gamma \times \{\pm1\}$-invariant proper Gromov hyperbolic metric with the following properties: 
\begin{enumerate}
\item Every orbit $(v^+,v^-,\Rb)$ of $\varphi_t$ is a quasi-geodesic.
\item For every $v \in \Usf(\Gamma)$, the orbit map $\Gamma\to \Usf(\Gamma)$ given by $\gamma\mapsto\gamma \cdot v$ is a quasi-isometry and this orbit map extends to a $\Gamma$-equivariant homeomorphism $\partial_\infty \Gamma \rightarrow \partial_\infty \Usf(\Gamma)$.
\item $v^+$ and $v^-$ in $\partial_\infty\Usf(\Gamma) \cong \partial_\infty \Gamma$ are respectively the forward and backward endpoints of the $\Rb$-orbit $(v^+,v^-,\Rb)\subset\Usf(\Gamma)$.
\end{enumerate}

\begin{remark}
In the case when $\Gamma$ is the fundamental group of a compact Riemannian manifold $X$ with negative sectional curvature,  this geodesic flow space is what one would expect. More precisely, let $T^1 X$ denote the unit tangent bundle of $X$, let $\wt{X}$ denote the universal cover of $X$, and let $T^1\wt{X}$ denote the unit tangent bundle of $\wt{X}$. Then we may take $\Usf(\Gamma)$ to be $T^1\widetilde X$ so that $\wh\Usf(\Gamma)$ is $T^1X$. The geodesic flow on both $T^1X$ and $T^1\widetilde{X}$ is the usual geodesic flow associated to the Riemannian metrics on $\widetilde{X}$ and $X$, and the Sasaki metric can be used to define a $\Gamma$-invariant proper distance on $T^1 X$. Further, the $\{\pm1\}$ action is given by $v \mapsto -v$. 
\end{remark}

Gromov proved that the geodesic flow space $\Usf(\Gamma)$ is unique up to homeomorphism.

\begin{theorem}\cite[Theorem 8.3.C]{Gromov1987}\label{thm:Gromov_uniqueness} Suppose  $\Gc$ is a proper Gromov hyperbolic metric space such that
\begin{enumerate}
\item $\Gamma \times (\Rb \rtimes_{\psi} \{\pm1\})$ acts on $\Gc$,
\item the actions of $\Gamma$ and $\{\pm1\}$ are isometric, 
\item for every $v \in \Gc$, the orbit map $\Gamma\to\Gc$ given by $\gamma\mapsto\gamma \cdot v$ is a quasi-isometry and this orbit map extends to a $\Gamma$-equivariant homeomorphism $\partial_\infty \Gamma \rightarrow \partial_\infty \Gc$ (note that this extension is independent of the orbit),
\item the $\Rb$ action is free and every $\Rb$-orbit is a quasi-geodesic in $\Gc$,
\item the induced map $\Gc / \Rb \rightarrow \partial_\infty \Gc^{(2)}$ is a homeomorphism.
\end{enumerate}
Then there exists a $\Gamma \times \{\pm1\}$-equivariant homeomorphism $T : \Gc \rightarrow \Usf(\Gamma)$ that maps $\Rb$-orbits to $\Rb$-orbits. 
\end{theorem}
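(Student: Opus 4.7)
The plan is to realize both $\Gc$ and $\wt{U\Gamma}$ as principal $\Rb$-bundles over the common base $\partial_\infty \Gamma^{(2)}$, and to build $T$ by matching trivializations of these bundles in a $\Gamma \times \mathbb{Z}/2\mathbb{Z}$-equivariant way.

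First I would identify the bases. By hypothesis (3), for any $v \in \Gc$ the orbit map $\gamma \mapsto \gamma \cdot v$ is a quasi-isometry $\Gamma \to \Gc$; since both $\Gamma$ and $\Gc$ are proper Gromov hyperbolic, this induces a canonical $\Gamma$-equivariant homeomorphism $\partial_\infty \Gc \cong \partial_\infty \Gamma$ independent of $v$. The $\mathbb{Z}/2\mathbb{Z}$-action is isometric, so it extends to the boundary; combined with $\alpha \phi_t \alpha = \phi_{-t}$ (encoded in (1)), it preserves each $\Rb$-orbit while swapping its two endpoints, so $\alpha$ acts on $\partial_\infty \Gc$ by $v^+ \leftrightarrow v^-$. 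Composing with the homeomorphism from (4) yields a $\Gamma \times \mathbb{Z}/2\mathbb{Z}$-equivariant identification $\Pi_\Gc : \Gc/\Rb \to \partial_\infty \Gamma^{(2)}$; the analogous identification for $\wt{U\Gamma}$ is tautological. The $\Rb$-action on $\Gc$ is free by (4) and proper since orbits are quasi-geodesics in a proper space, so $\Gc \to \Gc/\Rb$ is a principal $\Rb$-bundle; contractibility of $\Rb$ together with paracompactness of the base make it trivial, so I fix a continuous section $\sigma_\Gc : \partial_\infty \Gamma^{(2)} \to \Gc$ and take $\sigma_U(x,y) := (x,y,0)$ as the canonical section of $\wt{U\Gamma}$. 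The naive matching $T_0(\phi_t \sigma_\Gc(x,y)) := (x,y,t)$ is then a continuous orbit-preserving homeomorphism, but it is not yet equivariant.

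The main obstacle is upgrading $T_0$ to a $\Gamma \times \mathbb{Z}/2\mathbb{Z}$-equivariant map. Relative to the chosen sections, the $\Gamma$-actions are recorded by continuous cocycles $c_\Gc, c_U : \Gamma \times \partial_\infty \Gamma^{(2)} \to \Rb$ defined by
\[
\gamma \cdot \sigma_\bullet(x,y) = \phi_{c_\bullet(\gamma, x, y)} \sigma_\bullet(\gamma x, \gamma y),
\]
and no choice of section can make them vanish, because on every periodic orbit the period $T_\gamma$ is forced into the cocycle. The statement, however, only requires $T$ to preserve $\Rb$-orbits and not to commute with the $\Rb$-action, and this extra freedom is the key to resolving the difficulty. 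I would seek $T$ of the form $T(\phi_t \sigma_\Gc(x,y)) = (x, y, h_{x,y}(t))$ for a continuous family $\{h_{x,y}\}$ of orientation-preserving homeomorphisms of $\Rb$; the $\Gamma$-equivariance condition becomes the functional equation
\[
h_{\gamma x, \gamma y}(t + c_\Gc(\gamma, x, y)) = h_{x,y}(t) + c_U(\gamma, x, y),
\]
together with a compatibility identity under $\alpha$. I would solve this by fixing a fundamental domain $F$ for the cocompact, properly discontinuous $\Gamma$-action on $\partial_\infty \Gamma^{(2)}$ (standard for non-elementary hyperbolic $\Gamma$), declaring $h_{x,y} = \id$ on $F$, and propagating by the cocycle identity; a final symmetrization along $\alpha$-orbits imposes $\mathbb{Z}/2\mathbb{Z}$-equivariance. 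The most delicate step is verifying that the resulting family $\{h_{x,y}\}$ is jointly continuous and fiberwise a homeomorphism; this reduces to showing that $c_\Gc$ and $c_U$ are cohomologous up to orbit-wise reparametrization, which is forced by the fact that hypotheses (1)--(4) determine the $\Gamma$-dynamics on the flow space up to orbit equivalence, so the two cocycles agree in sign and in their translational behavior on every periodic orbit.
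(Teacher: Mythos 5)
A preliminary remark: the paper does not prove this statement at all — it is quoted directly from Gromov \cite[Theorem 8.3.C]{Gromov1987} (with the expositions of Champetier and Mineyev cited nearby), so there is no internal proof to compare against; your proposal has to stand on its own, and it does not.

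The decisive gap is your appeal to ``a fundamental domain $F$ for the cocompact, properly discontinuous $\Gamma$-action on $\partial_\infty \Gamma^{(2)}$.'' That action is cocompact but it is \emph{not} properly discontinuous: the stabilizer of a pair $(\gamma^+,\gamma^-)$ for $\gamma$ of infinite order contains the infinite cyclic group $\langle \gamma \rangle$, so no fundamental domain in your sense exists, and the quotient is not even Hausdorff. This is not a technicality you can route around, because these fixed pairs are exactly where your scheme breaks: if $(x,y)=(\gamma^+,\gamma^-)$ and you have declared $h_{x,y}=\id$, the functional equation $h_{\gamma x,\gamma y}(t+c_{\Gc}(\gamma,x,y))=h_{x,y}(t)+c_U(\gamma,x,y)$ forces $c_{\Gc}(\gamma,x,y)=c_U(\gamma,x,y)$, i.e.\ literal equality of the period of the closed orbit in $\Gc$ with the period $T_\gamma$ in $\wt{U\Gamma}$, which fails in general (the periods in $\Gc$ depend on its metric, e.g.\ on Hilbert-metric translation lengths in the application the paper has in mind). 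Any correct construction must reparametrize each orbit, and the real difficulty — which your last sentence simply asserts away — is to do this so that the family $\{h_{x,y}\}$ remains continuous as non-periodic orbits accumulate on periodic ones. Your closing claim that $c_{\Gc}$ and $c_U$ are ``cohomologous up to orbit-wise reparametrization'' because ``hypotheses (1)--(4) determine the $\Gamma$-dynamics on the flow space up to orbit equivalence'' is circular: the existence of a $\Gamma\times\mathbb{Z}/2\mathbb{Z}$-equivariant orbit equivalence is precisely the theorem to be proved. (Secondarily, your principal-bundle step also needs justification: freeness plus quasi-geodesic orbits does not by itself give properness and local triviality of the $\Rb$-action, which you use to produce the global section $\sigma_{\Gc}$.) As it stands the proposal reduces the theorem to an unproved claim that is equivalent to the theorem itself.
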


\begin{remark} \ 
\begin{enumerate}
\item We note that the space $\Gc$ in Theorem~\ref{thm:Gromov_uniqueness} is not assumed to be geodesic and so the Gromov hyperbolicity of $\Gc$ is defined in terms of the four-point condition on the Gromov product (see~\cite[Chapter III.H, Definition 1.20]{BridsonHaefliger}). 
\item Despite $\Gc$ being non-geodesic, quasi-geodesics in $\Gc$ do have well defined limits in the Gromov boundary and so there is a well defined map $\Gc / \Rb \rightarrow \partial_\infty \Gc^{(2)}$ in part (5). This follows from part (3) and the fact that any quasi-geodesic in $\Gamma$ has a well defined limit in $\partial_\infty \Gamma$. 
\end{enumerate}
\end{remark}

%\begin{proof} 
%This is equivalent to showing that there is a continuous assignment to every $v \in \Usf(\Gamma)$ a norm $\norm{ \cdot }_v$ on $\Rb^d$, such that 
%\[\norm{\overline{\rho}(\gamma) X}_{\gamma v} = \norm{X}_v\,\,\,\text{ for all }\,\,\,\gamma \in \Gamma,\,\,\, v \in \Usf(\Gamma),\,\,\,\text{ and }\,\,\,X \in \Rb^d.\] 
%Fix a compact set $K \subset \wh\Usf(\Gamma)$ such that $\Gamma \cdot K =\Usf(\Gamma)$ and choose a continuous, compactly supported function $f : \Usf(\Gamma)\rightarrow \Rb_{>0}$ such that $K \subset { \rm supp}(f)$. Then define 
%\begin{align*}
%\norm{X}_v = \sum_{\gamma \in \Gamma} f(\gamma v) \norm{\overline{\rho}(\gamma)X}_{\Euc}.
%\end{align*}
%\end{proof}

\subsection{Dominated Splittings} \label{sec: dominated splittings}
Next, we describe an alternate characterization of Anosov representations in $\SL_d(\Rb)$ using dominated splittings due to Bochi, Potrie, and Sambarino~\cite{BPS2016}.

Let $\Gamma$ be a hyperbolic group and $\rho:\Gamma\to\SL_d(\Rb)$ be a representation. Let $E:=\Usf(\Gamma)\times\Rb^d$ be the trivial bundle over $\Usf(\Gamma)$, and define the vector bundle $E_{\rho}:=E/\Gamma$ over $\wh\Usf(\Gamma)$, where the $\Gamma$ action on $E$ is given by $\gamma\cdot(v,X)=(\gamma\cdot v,\rho(\gamma)\cdot X)$. Since $E_{\rho}$ is a vector bundle over $\wh\Usf(\Gamma)$, there is a continuous family of norms on the fibers of $E_\rho$, and the compactness of $\wh\Usf(\Gamma)$ ensures that any two such continuous families are bi-Lipschitz. For any continuous family of norms on $E_{\rho}$, let $\norm{\cdot}$ denote its lift to $E$.

We say that a representation $\rho:\Gamma\to\SL_d(\Rb)$ is \emph{$P_k$-Anosov} if $p_d\circ\rho:\Gamma\to\PGL_d(\Rb)$ is $P_k$-Anosov, where $p_d:\GL_d(\Rb)\to\PGL_d(\Rb)$ is the obvious projection. With this, we can state the following theorem due to Bochi, Potrie, and Sambarino (see Theorem 2.2, Proposition 4.5, and Proposition 4.9 in~\cite{BPS2016}).

\begin{theorem}\label{prop:dom_split} A representation $\rho:\Gamma\to\SL_d(\Rb)$ is $P_k$-Anosov if and only if there exist
\begin{itemize}
\item continuous, $\varphi_t$-invariant, $\rho$-equivariant maps 
\[
F_1:\Usf(\Gamma)\to \Gr_k(\Rb^d)\quad\text{and} \quad F_2:\Usf(\Gamma)\to\Gr_{d-k}(\Rb^d)
\]
such that $F_1(v)+F_2(v)=\Rb^d$ for all $v\in\Usf(\Gamma)$, and
\item constants $C \ge 1$, $\beta > 0$ such that 
\begin{align*}
\frac{\norm{X_1}_{\varphi_tv}}{\norm{X_2}_{\varphi_tv}} \leq C e^{-\beta t} \frac{\norm{X_1}_{v}}{\norm{X_2}_{v}}
\end{align*}
for all $v \in \Usf(\Gamma)$, $X_i \in F_i(v)$ non-zero, and $t \geq 0$.  
\end{itemize}
\end{theorem}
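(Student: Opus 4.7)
The plan is to deduce the equivalence by translating between the flow-space data on $\wt{U\Gamma}$ and the word-length data on $\Gamma$ via the quasi-isometry from Corollary~\ref{thm:QI_Anosov}, and then invoking the singular-value characterization (Theorem~\ref{thm:SV_char_of_Anosov}). After replacing $\rho$ by a lift to $\SL_d(\Rb)$ as in Remark~\ref{rem:lift}, the norm $\norm{\cdot}$ on $E$ is a tool to measure how $\rho(\gamma)$ expands/contracts specific subspaces, and everything else is a bookkeeping exercise using co-compactness of the $\Gamma$-action on $\wt{U\Gamma}$.

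For the forward direction, assume $\rho$ is $k$-Anosov with flag maps $\xi^{(k)},\xi^{(d-k)}$. Define $F_1(v):=\xi^{(k)}(v^+)$ and $F_2(v):=\xi^{(d-k)}(v^-)$. Continuity follows from continuity of the flag maps and of the endpoint projections $v\mapsto v^{\pm}$; $\phi_t$-invariance is immediate since $\phi_t$ fixes $v^\pm$; $\rho$-equivariance follows from equivariance of $\xi^{(k)},\xi^{(d-k)}$ together with $\gamma\cdot(v^+,v^-,\Rb)=(\gamma v^+,\gamma v^-,\Rb)$; and $F_1(v)+F_2(v)=\Rb^d$ is transversality of the flag maps. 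For the domination inequality, by co-compactness pick a compact fundamental domain $K\subset\wt{U\Gamma}$; for any $v\in K$ and $t\geq 0$, there exists $\gamma_t\in\Gamma$ with $\phi_tv\in \gamma_t K$ and, by Corollary~\ref{thm:QI_Anosov}, $d_S(\gamma_t,e)\asymp t$. Using the $\Gamma$-equivariance of $\norm{\cdot}$, the ratio $\norm{X_1}_{\phi_t v}/\norm{X_2}_{\phi_t v}$ is then comparable to $\mu_k(\rho(\gamma_t))/\mu_{k+1}(\rho(\gamma_t))$ times a bounded error, and Theorem~\ref{thm:SV_char_of_Anosov} produces the desired exponential decay.

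For the reverse direction, the key step is to show that $F_1(v)$ depends only on $v^+$ (and $F_2(v)$ only on $v^-$). Given $v,w$ with $v^+=w^+=x$, the orbits $\phi_t v$ and $\phi_t w$ converge to the same boundary point and hence can be synchronized so that their distance in $\wt{U\Gamma}$ tends to zero as $t\to\infty$; by continuity of $F_1$ and compactness modulo $\Gamma$, $F_1(\phi_t v)$ and $F_1(\phi_t w)$ remain close in $\Gr_k(\Rb^d)$. But the domination inequality forces any two transverse $k$-planes with the same $F_2$-complement to converge to each other exponentially under the action pulled back along the flow; combining these gives $F_1(v)=F_1(w)$. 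The analogue for $F_2$ follows by running the argument backwards using the $\Zb/2\Zb$-action that exchanges $\phi_t\leftrightarrow\phi_{-t}$ and $v^+\leftrightarrow v^-$. One then defines $\xi^{(k)}(x):=F_1(v)$ and $\xi^{(d-k)}(x):=F_2(v)$ for any $v$ with $v^+=x$, respectively $v^-=x$; continuity and equivariance are inherited from $F_1,F_2$, transversality is automatic, and dynamics-preservation follows by examining the periodic $\phi_t$-orbit in $\wt{U\Gamma}$ associated to an infinite-order $\gamma$, where the domination exhibits $F_1(\gamma^+)$ as the attracting $k$-plane of $\rho(\gamma)$. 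Finally, reversing the translation between $\norm{\cdot}$ on $E_\rho$ and singular values of $\rho(\gamma)$ converts the uniform domination into a uniform lower bound $\log\tfrac{\mu_k}{\mu_{k+1}}(\rho(\gamma))\geq Cd_S(\gamma,e)-c$, completing the $k$-Anosov criterion via Theorem~\ref{thm:SV_char_of_Anosov}.

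The main obstacle is the well-definedness step in the reverse direction: the proof that $F_1(v)$ is a function of $v^+$ alone requires combining three ingredients delicately: (i) the continuity of $F_1$ together with the fact that two flow lines sharing a forward endpoint become asymptotic in $\wt{U\Gamma}$, (ii) the domination, which controls angular deviation between transverse subspaces under the linear cocycle, and (iii) the $\phi_t$-invariance of $F_1$. Once this rigidity is in place, the rest of the argument is a systematic translation between the flow-space dynamics and the discrete dynamics on $\Gamma$ using the quasi-isometry of Corollary~\ref{thm:QI_Anosov}.
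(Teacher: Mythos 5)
First, note that the paper does not prove this statement at all: it is quoted from Bochi--Potrie--Sambarino (Theorem 2.2, Propositions 4.5 and 4.9 of \cite{BPS2016}), so there is no internal proof to compare against; your attempt has to stand on its own, and as written it has genuine gaps rather than just compressed routine steps.

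In the forward direction, the sentence ``the ratio $\norm{X_1}_{\phi_t v}/\norm{X_2}_{\phi_t v}$ is then comparable to $\mu_k(\rho(\gamma_t))/\mu_{k+1}(\rho(\gamma_t))$ times a bounded error'' is precisely the hard point, not bookkeeping. After translating into $K$ it amounts to the claim that $\rho(\gamma_t)^{-1}$ contracts $\xi^{(k)}(v^+)$ like its $k$ smallest singular directions and acts on $\xi^{(d-k)}(v^-)$ like the complementary ones, uniformly in $v\in K$ and $t\geq 0$; equivalently, that the limit maps stay within a bounded angle of the Cartan attractors of the group elements $\gamma_t$ tracking the ray toward $v^+$. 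This uniform compatibility of the boundary maps with singular value decompositions does not follow from equivariance, transversality and cocompactness alone; it is essentially the content of the cited results of \cite{BPS2016} (or of the analogous compatibility statements in Gu\'eritaud--Guichard--Kassel--Wienhard and Kapovich--Leeb--Porti), so your argument assumes the theorem's main quantitative input. In the reverse direction there are two further problems. The well-definedness step relies on the claim that two flow lines with $v^+=w^+$ ``can be synchronized so that their distance in $\wt{U\Gamma}$ tends to zero''; in the Gromov flow space the $\Rb$-orbits are only quasi-geodesics, and asymptotic orbits remain within \emph{bounded} distance, not vanishing distance, so the argument must instead combine bounded distance (hence uniform comparability of the norms $\norm{\cdot}_{\phi_t v}$ and $\norm{\cdot}_{\phi_{t'}w}$ after moving into a compact set) with the exponential domination to force $F_1(v)=F_1(w)$; as stated the step fails. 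Finally, invoking Corollary~\ref{thm:QI_Anosov} to translate flow time into word length is circular in this direction, since that corollary presupposes that $\rho$ is Anosov; what you may use instead is the fact, built into the flow space of Section~\ref{sec:flowspace}, that $\Gamma$-orbit maps into $\wt{U\Gamma}$ are quasi-isometries and $\phi_t$-orbits are quasi-geodesics, before concluding with Theorem~\ref{thm:SV_char_of_Anosov}.
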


Here, we may think of $F_1$ and $F_2$ as $\Gamma$-invariant sub-bundles of $E$. The maps $F_1$ and $F_2$ are related to the limit maps $\xi_\rho^k$ and $\xi_\rho^{d-k}$ by
\[F_1(v) = \xi_\rho^k(v^+)\quad\text{and}\quad F_2(v) = \xi_\rho^{d-k}(v^-)\]
for all $v = (v^+,v^-,s)\in\Usf(\Gamma)$. 

\section{$\rho$-controlled sets}\label{sec:rho_controlled_sets}

In this section we introduce the notion of $\rho$-controlled sets, and describe some of their properties. 

\begin{definition}\label{def:controlled}Suppose  $\rho : \Gamma \rightarrow \PGL_d(\Rb)$ is a $P_1$-Anosov representation. A subset $M\subset\Pb(\Rb^d)$ is \emph{$\rho$-controlled} if it is non-empty, closed, $\rho(\Gamma)$-invariant, and
\[M\cap\xi_\rho^{d-1}(x)=\xi_\rho^1(x)\] 
for every $x\in\partial_\infty\Gamma$. If $\rho$ also happens to be $P_m$-Anosov for some $m=2,\dots,d-1$, then a $\rho$-controlled subset $M\subset\Pb(\Rb^d)$ is \emph{$m$-hyperconvex} if 
\begin{align*}
a_1+a_2+\xi_\rho^{d-m}(x)
\end{align*}
is a direct sum for all $a_1,a_2 \in M$ and $x \in \partial_\infty \Gamma$ with $a_1, a_2, \xi_\rho^1(x)$ pairwise distinct.
\end{definition}

\begin{remark} We will typically consider the case when $M$ is a topological $(m-1)$-dimensional manifold and then require that $M$ is $m$-hyperconvex. 
\end{remark} 

The three main examples of $\rho$-controlled subsets $M\subset\mathbb{P}(\Rb^d)$ that we will be concerned with are the following. 

\begin{example}\label{eg:limitset} When $\rho : \Gamma \rightarrow \PGL_d(\Rb)$ is a $P_1$-Anosov representation, the $P_1$-limit set for $\rho$ is $\rho$-controlled. Furthermore, if $\rho$ is $P_m$-Anosov for some $m=2,\dots,d-1$, then $\xi_\rho^1(\partial_\infty\Gamma)$ is $m$-hyperconvex if and only if 
\begin{align*}
\xi_\rho^1(x)+\xi_\rho^1(z)+\xi_\rho^{d-m}(y)
\end{align*}
 is a direct sum for all pairwise distinct $x,y,z\in\partial_\infty\Gamma$.
\end{example}

\begin{example}\label{eg:convex} Suppose  $\rho:\Gamma\to\PGL_d(\Rb)$ is a $P_1$-Anosov representation and $\rho(\Gamma)$ preserves a properly convex domain $\Omega\subset\Pb(\Rb^d)$ (see Section \ref{sec:properly_convex}) such that $\xi_\rho^{d-1}(x)\cap\partial\Omega=\xi_\rho^1(x)$ for all $x\in\partial_\infty\Gamma$. Then $M:=\partial\Omega$ is $\rho$-controlled. Notice that in this case, the requirement that $M$ is $(d-1)$-hyperconvex is simply that 
\begin{align*}
a_1+a_2+\xi_\rho^1(y)
\end{align*}
is a direct sum for all pairwise distinct $a_1,a_2, \xi_\rho^1(y)\in\partial\Omega$. This is satisfied if and only if $\xi_\rho^1(\partial_\infty \Gamma)$ does not intersect any proper line segments in $\partial \Omega$.
\end{example}

\begin{example}\label{eg:limitset_subgroup} Suppose $\rho : \Gamma \rightarrow \PGL_d(\Rb)$ is a $P_1$-Anosov representation and $\Gamma_1 \leq \Gamma$ is a quasi-convex subgroup. Then $\rho_1:=\rho|_{\Gamma_1}: \Gamma_1 \rightarrow \PGL_d(\Rb)$ is also $P_1$-Anosov, and the $P_1$-limit set of $\rho$ is $\rho_1$-controlled. Furthermore, if $\rho_1$ is $P_m$-Anosov for some $m=2,\dots,d-1$, then $P_1$-limit set of $\rho$ is $m$-hyperconvex if and only if 
\begin{align*}
\xi_\rho^1(x)+\xi_\rho^1(z)+\xi_\rho^{d-m}(y)
\end{align*}
 is a direct sum for all pairwise distinct $x,y,z \in\partial_\infty\Gamma$ with $y \in \partial_\infty \Gamma_1$. 
\end{example}

For any $(x,y)\in\partial_\infty\Gamma^{(2)}$, let $L_{x,y}$ denote the orbit $(x,y,\Rb)\subset\Usf(\Gamma)$ of $\varphi_t$. The following proposition is one of the key tools we use to investigate regularity properties of $\rho$-controlled subsets.

\begin{proposition}\label{prop:rho_controlled_sets_projections} Suppose  $\rho : \Gamma \rightarrow \PGL_d(\Rb)$ is a $P_1$-Anosov representation and $M \subset \Pb(\Rb^d)$ is $\rho$-controlled. Then there exists a family of maps 
\begin{align*}
\pi_{x,y} : M - \left\{ \xi_\rho^1(x), \xi_\rho^1(y) \right\} \rightarrow L_{x,y}
\end{align*}
indexed by $(x,y) \in \partial_\infty \Gamma^{(2)}$ such that 
\[
\gamma\circ\pi_{x,y} =\pi_{\gamma\cdot x,\gamma\cdot y}\circ \rho(\gamma), \quad \pi_{x,y}(a)= \lim_{(u,v,b)\to (x,y,a)} \pi_{u,v}(b),
\]
\[
x  = \lim_{(u,v,b)\to (x,y,\xi_\rho^1(x))}  \pi_{u,v}(b),\quad\text{and}\quad y  = \lim_{(u,v,b)\to (x,y,\xi_\rho^1(y))} \pi_{x,y}(b).\]
for all $(x,y) \in \partial_\infty \Gamma^{(2)}$, $\gamma\in\Gamma$, and $a\in M - \left\{ \xi_\rho^1(x), \xi_\rho^1(y) \right\}$.
\end{proposition}

Delaying the proof of Proposition~\ref{prop:rho_controlled_sets_projections} until Section \ref{sec:controlled_proof}, we describe the main application. Suppose that $\rho : \Gamma \rightarrow \PGL_d(\Rb)$ is a $P_1$-Anosov representation and $M \subset \Pb(\Rb^d)$ is $\rho$-controlled. Let $\{ \pi_{x,y} : (x,y)\in \partial_\infty \Gamma^{(2)}\}$ be a family of maps satisfying Proposition~\ref{prop:rho_controlled_sets_projections}. Then define 
\begin{align}\label{eqn:P(M)}
P(M) : = \left\{ (v,a) \in \Usf(\Gamma) \times M : a \in \pi_{v^-,v^+}^{-1}(v) \right\}.
\end{align}
Notice that there is a natural $\Gamma$ action on $P(M)$ given by 
\begin{align*}
\gamma \cdot (v,a) = (\gamma \cdot v, \rho(\gamma)\cdot a).
\end{align*}
%Also, $P(M)$ has the following properties.

 In the next observation, let  $d_{\Pb}$ denotes the distance function of some Riemannian metric on $\Pb(\Rb^d)$.

\begin{observation}\label{obs:compact} With the notation above, 
\begin{enumerate}
\item $\Gamma$ acts co-compactly on $P(M)$, 
\item for any $v \in \Usf(\Gamma)$ and $a \in M - \{ \xi_\rho^1(v^+), \xi_\rho^1(v^-)\}$, there exists $t \in \Rb$ such that $(\varphi_t(v), a) \in P(M)$,
\item for any compact set $K \subset \Usf(\Gamma)$ there exists $\delta > 0$ such that: If $v \in K$ and $a \in M- \{ \xi_\rho^1(v^+)\}$ satisfy $d_{\Pb}\left(\xi_\rho^1(v^+), a\right) \leq \delta$, then $(\varphi_t (v), a) \in P(M)$ for some $t > 0$. 
\end{enumerate}
\end{observation}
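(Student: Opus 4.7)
The plan is to prove the three items in order, using throughout the continuous family $\{\pi_{x,y}\}$ from Proposition \ref{prop:rho_controlled_sets_projections} and the fact that $M$ is a closed, hence compact, subset of $\Pb(\Rb^d)$. Part (2) is essentially immediate from the construction: given $v$ and $z$, set $w := \pi_{v^+,v^-}(z) \in L_{v^+,v^-}$; since $w$ lies on the same $\phi_t$-orbit as $v$, we have $w = \phi_t(v)$ for some $t \in \Rb$, and then $(\phi_t(v), z) = (w,z) \in P(M)$ by the defining conditions of $P(M)$.

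For part (1), I would fix a compact set $D \subset \wt{U\Gamma}$ whose $\Gamma$-translates cover $\wt{U\Gamma}$ (possible since $U\Gamma = \wt{U\Gamma}/\Gamma$ is compact) and set $\wt{K} := \{(v,p) \in P(M) : v \in D\}$. The $\Gamma$-equivariance asserted in Proposition \ref{prop:rho_controlled_sets_projections} implies that $P(M)$ is $\Gamma$-invariant and that $\Gamma \cdot \wt{K} = P(M)$, so it remains to check that $\wt{K}$ is compact. Since $D \times M$ is compact, I only need $\wt{K}$ to be closed there. Given a convergent sequence $(v_n, p_n) \to (v, p)$ in $D \times M$ with $(v_n, p_n) \in \wt{K}$, continuity of $\pi_{x,y}$ jointly in $(x,y,p)$ yields $\pi_{v^+,v^-}(p) = v$ as soon as $p \neq \xi^{(1)}(v^{\pm})$. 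If instead $p = \xi^{(1)}(v^+)$, the boundary limit $\lim_{q \to \xi^{(1)}(x)}\pi_{x,y}(q) = x$ forces $v_n = \pi_{v_n^+, v_n^-}(p_n)$ to approach $v^+ \in \partial_\infty \Gamma$, which is impossible as $\{v_n\} \subset D \subset \wt{U\Gamma}$ stays in the interior; the case $p = \xi^{(1)}(v^-)$ is symmetric.

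For part (3), I will argue by contradiction. Assume the conclusion fails for some compact $K \subset \wt{U\Gamma}$, so there are sequences $v_n \in K$ and $p_n \in M$ with $p_n \neq \xi^{(1)}(v_n^+)$, $d_{\Pb}(\xi^{(1)}(v_n^+), p_n) \to 0$, and yet the unique $t_n \in \Rb$ with $\phi_{t_n}(v_n) = \pi_{v_n^+, v_n^-}(p_n)$ (well-defined by part (2), using that $\xi^{(1)}(v^+)$ stays uniformly separated from $\xi^{(1)}(v^-)$ over $v \in K$ by transversality and compactness, so $p_n \neq \xi^{(1)}(v_n^-)$ for large $n$) satisfies $t_n \leq 0$. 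Pass to a subsequence with $v_n \to v_\infty \in K$ and $t_n \to t_\infty \in [-\infty, 0]$. Then $p_n \to \xi^{(1)}(v_\infty^+)$, so the limit property of Proposition \ref{prop:rho_controlled_sets_projections} forces $\phi_{t_n}(v_n) = \pi_{v_n^+, v_n^-}(p_n) \to v_\infty^+$ in $\wt{U\Gamma} \cup \partial_\infty \Gamma$. But if $t_\infty$ is finite then $\phi_{t_n}(v_n) \to \phi_{t_\infty}(v_\infty) \in \wt{U\Gamma}$, while if $t_\infty = -\infty$ then $\phi_{t_n}(v_n) \to v_\infty^- \in \partial_\infty \Gamma$; in neither case is the limit $v_\infty^+$, a contradiction.

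The main delicate point is invoking the limit behavior of $\pi_{x,y}(p)$ as $p \to \xi^{(1)}(x)$ with $(x,y)$ simultaneously varying. This requires a mild uniform version of the limit statement in Proposition \ref{prop:rho_controlled_sets_projections}, which should follow from continuity of the family together with the compactness of the parameter space in each application. Granted this, all three parts reduce to routine compactness arguments of the form above.
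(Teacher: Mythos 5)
Your argument is correct and takes essentially the same route as the paper's: (2) directly from the definition, (1) via the compact set $\{(v,p)\in K\times M : p\neq\xi^{(1)}(v^\pm),\ v=\pi_{v^+,v^-}(p)\}$ together with $\Gamma$-equivariance (you merely make the closedness check explicit where the paper asserts compactness), and (3) by the identical contradiction with $v_n\in K$, $p_n\to\xi^{(1)}(v^+)$, $t_n\le 0$, comparing the boundary limit $\pi_{v_n^+,v_n^-}(p_n)\to v^+$ with the fact that $\phi_{t_n}(v_n)$ can only accumulate in $L_{v^+,v^-}\cup\{v^-\}$. The ``delicate point'' you flag --- that the limit $\pi_{x,y}(p)\to x$ as $p\to\xi^{(1)}(x)$ must hold locally uniformly in $(x,y)$ --- is used in exactly the same implicit way in the paper's own proof, so it is not a gap specific to your write-up; it can be justified from the explicit projective construction of $\pi_{x,y}$ in Section~\ref{sec:controlled_proof}.
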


\begin{proof}
(1): Since the $\Gamma$-action on $\Usf(\Gamma)$ is co-compact, there exists a compact set $K \subset \Usf(\Gamma)$ such that $\Gamma \cdot K = \Usf(\Gamma)$. Then the set 
\begin{align*}
 \wh{K} : = \left\{(v,a) \in K \times M:a\in \pi_{v^-,v^+}^{-1}(v)  \right\}
\end{align*}
is compact. Indeed, if $\{(v_n,a_n)\}$ is a sequence in $\wh K$, then because $K\times M$ is compact, it converges, up to taking subsequence, to some $(v_\infty,a_\infty)\in K\times M$. By Proposition~\ref{prop:rho_controlled_sets_projections}, if $a_\infty=\xi_\rho^1(v_\infty^\pm)$, then $v_n\to v_\infty^\pm$, which contradicts the assumption that $v_n\to v_\infty$. Thus, $a_\infty\neq \xi_\rho^1(v_\infty^\pm)$ and so $\pi_{v_\infty^-,v_\infty^+}(a_\infty)$ is well defined. By Proposition~\ref{prop:rho_controlled_sets_projections},
\[\pi_{v_\infty^-,v_\infty^+}(a_\infty)=\lim_{n\to+\infty} \pi_{v_n^-,v_n^+}(a_n)=\lim_{n\to+\infty}v_n=v_\infty,\]
so $(v_\infty,a_\infty)\in\wh K$.

Since $\wh K$ is compact, and $\Gamma\cdot \wh{K} = P(M)$ by definition, (1) holds.

(2): Follows directly from the definition. 

(3): Fix a compact set $K \subset \Usf(\Gamma)$. If such a $\delta > 0$ does not exist, then for every $n \geq 1$ there exist $v_n \in K$, $a_n \in M- \{ \xi_\rho^1(v_n^+)\}$, and $t_n \leq 0$ such that  
\begin{align*}
d_{\Pb}\left(\xi_\rho^1(v_n^+), a_n\right) \leq 1/n
\end{align*}
and $\varphi_{t_n}(v_n)=\pi_{v_n^-,v_n^+}(a_n) $. By passing to a subsequence we can suppose that $v_n \rightarrow v_\infty \in K$. But then $a_n \rightarrow \xi_\rho^1(v_\infty^+)$ as $n\to+\infty$, so by Proposition~\ref{prop:rho_controlled_sets_projections},
\begin{align*}
v_\infty^+ = \lim_{n \rightarrow +\infty} \pi_{v_n^-,v_n^+}(a_n) = \lim_{n \rightarrow +\infty} \varphi_{t_n}(v_n) \in L_{v_\infty^+, v_\infty^-} \cup \{ v_\infty^-\}
\end{align*}
which is a contradiction.
\end{proof}

\begin{remark} The set $P(M)$ is designed to be a generalization of the following construction: Suppose $X$ a compact negatively curved Riemannian manifold, $\wt{X}$ is the universal cover of $X$,  $\pi: T^1 \wt{X}\to\wt X$ is the usual projection, and $\varphi_t$ is the geodesic flow on $T^1 \wt{X}$. Then define \begin{align*}
P(\partial_\infty\wt X) \subset T^1 \wt{X} \times \partial_\infty \wt X
\end{align*}
to be the set of pairs $(v,a)$ such that there exists $w \in T^1_{\pi(v)} \wt{X}$ with $w \bot v$ and $\lim_{t \rightarrow +\infty} \pi(\varphi_tw) = a$.
\end{remark}

\subsection{Properly convex domains}\label{sec:properly_convex}
We now describe properly convex domains and some of their relevant properties. These will be used to prove Proposition~\ref{prop:rho_controlled_sets_projections}.

An open set $\Omega\subset\Pb(\Rb^d)$ is a \emph{properly convex domain} if its closure lies in an affine chart in $\Pb(\Rb^d)$, and it is convex, i.e. for distinct $x,y\in\Omega$, there is a projective line segment in $\Omega$ whose endpoints are $x$ and $y$. Given a properly convex domain $\Omega\subset \Pb(\Rb^d)$, we denote \begin{align*}
\Aut(\Omega) := \{ g \in \PGL_d(\Rb) : g\cdot \Omega = \Omega\}.
\end{align*}

Every properly convex domain $\Omega\subset\Pb(\Rb^d)$ admits a natural metric which is defined as follows. For any pair of points $x,y\in\Omega$, let $l$ be a projective line through $x$ and $y$, and let $a$ and $b$ be the two points of intersection of $l$ with $\partial\Omega$, ordered such that $a<x\leq y<b$ lie along $l$. Then define
\[H_\Omega(x,y):=\log C(a,x,y,b),\]
where, $C$ is the cross ratio along the projective line $l$. Since $a<x\leq y<b$ along $l$, 
\[C(a,x,y,b)=\frac{\norm{a-y}\norm{b-x}}{\norm{a-x}\norm{b-y}},\]
where $\norm{\cdot}$ is a norm on some (equiv. any) affine chart of $\Pb(\Rb^d)$ containing the closure of $\Omega$. One can verify from properties of the cross ratio that the map $H_\Omega:\Omega\times\Omega\to\Rb^+\cup\{0\}$ is a continuous distance function. This is commonly known as the \emph{Hilbert metric} on $\Omega$. 

Let $T\Omega$ denote the tangent bundle of $\Omega$. The Hilbert metric $H_\Omega$ on $\Omega$ is a Finsler metric, i.e. it is infinitesimally given by a continuous family of norms on the fibers of $T^1\Omega$. This norm, denoted
\[h_\Omega:T\Omega\to\Rb,\] 
can be described as followed. First, fix an affine chart $\Ab \subset \Pb(\Rb^d)$ which contains $\Omega$ and make an affine identification $\Ab = \Rb^{d-1}$. Then we can identify 
$$
T\Omega = \Omega \times \Ab =\Omega \times \Rb^{d-1}.
$$
Let $\pi : T\Omega \rightarrow \Omega$ denote the natural projection, and let $Z\subset T\Omega$ denote the image of the zero section. For any $v\in T\Omega-Z$, let $l_v$ denote the oriented projective line segment in $\Omega$ through $\pi(v)$ in the direction given by $v$, and with forward and backward endpoints $v^+,v^-\in \partial\Omega$ respectively. Then $h_\Omega$ is defined by 
\[h_\Omega(v)= \left\{\begin{array}{ll}
0&\text{if }v\in Z,\\
\norm{v}_2\left(\frac{1}{\norm{\pi(v)-v^+}_2}+\frac{1}{\norm{\pi(v)-v^-}_2}\right)&\text{if }v\in T\Omega-Z.
\end{array}\right.\]
With this, define the \emph{unit tangent bundle} of $\Omega$ to be
\begin{align*}
T^1 \Omega := \{ v \in T\Omega : h_\Omega(v) = 1\}.
\end{align*}

We recall the definition of a convex co-compact action on $\Omega$.

\begin{definition}\label{defn:cc} Suppose  $\Omega$ is a properly convex domain. A discrete subgroup $\Gamma \leq \Aut(\Omega)$ \emph{acts convex co-compactly} on $\Omega$ if there exists a closed non-empty $\Gamma$-invariant convex subset $\Cc \subset \Omega$ such that the quotient $\Gamma \backslash \Cc$ is compact.
 \end{definition} 
 
 \begin{remark} This is not the definition of convex co-compactness used in~\cite{DCG17}, instead they say groups satisfying Definition~\ref{defn:cc} act \emph{naive convex co-compactly}. \end{remark}
 
We now use work of Danciger-Gu\'eritaud-Kassel~\cite{DCG17} and the second author~\cite{Zimmer17} to construct a convex co-compact action.

 \begin{theorem}\label{thm:cc_action}\cite[Theorem 1.4]{DCG17}, \cite[Theorem 1.27]{Zimmer17} Suppose  $\rho : \Gamma \rightarrow \PGL_d(\Rb)$ is a $P_1$-Anosov representation and there exists a properly convex domain $\Omega_0 \subset \Pb(\Rb^d)$ with $\rho(\Gamma) \leq \Aut(\Omega_0)$. Then there is a properly convex domain $\Omega\subset\Pb(\Rb^d)$ such that
 \begin{enumerate}
 \item $\xi_\rho^1(\partial_\infty \Gamma) \subset \partial \Omega$, 
 \item the convex hull $\overline{\Cc}$ of $\xi_\rho^1(\partial_\infty \Gamma)$ in $\overline{\Omega}$ satisfies 
 \[\overline{\Cc} \cap \partial \Omega = \xi_\rho^1(\partial_\infty \Gamma),\]
 \item  if we set $\Cc:=\overline{\Cc}-\xi_\rho^1(\partial_\infty \Gamma)$, then $\Gamma\backslash \Cc$ is compact and in particular $\rho(\Gamma)$ acts convex co-compactly on $\Omega$,
 \item for every $x,y \in \partial_\infty \Gamma$ distinct, $\Omega_0$ and $\Omega$ are contained in the same connected component of 
\begin{align*}
\Pb(\Rb^d) - \left( \xi_\rho^{d-1}(x) \cup \xi_\rho^{d-1}(y) \right).
\end{align*}
 \end{enumerate}
 \end{theorem}
 
 \begin{remark} \
 \begin{enumerate}
 \item In~\cite[Theorem 1.27]{Zimmer17} it is assumed that $\rho$ is irreducible. 
 \item To be precise, the various assertions in Theorem~\ref{thm:cc_action} follow (essentially) from the proofs of \cite[Theorem~1.4]{DCG17} and \cite[Theorem~1.27]{Zimmer17}. 
 \begin{enumerate}
 \item For part (1), see Proposition 3.10 part (1) in~\cite{DCG17} (notice that the proximal limit set coincides with the image of $\xi_\rho^1$). 
 \item For part (2), see Lemmas 8.4 and 8.6 in~\cite{DCG17}.
  \item For part (3), see Lemma 8.7 in~\cite{DCG17}.
 \item For Part (4), see Proposition 3.10 part (3) in~\cite{DCG17}. 
 \end{enumerate} 
 \item In general, if $\rho(\Gamma)\le\Aut(\Omega_0)$, then $\xi_\rho^1(\partial_\infty\Gamma)$ lies in $\partial\Omega_0$, but the convex hull in $\overline{\Omega}_0$ of $\xi_\rho^1(\partial_\infty\Gamma)$ might intersect $\partial\Omega_0$ along a much larger set than $\xi_\rho^1(\partial_\infty\Gamma)$ (for example, if $\Omega_0$ is the set $\mathcal C$ in Theorem \ref{thm:cc_action} part (3)). The properly convex domain $\Omega$ given by Theorem \ref{thm:cc_action} can be thought of as an ``enlargement" of $\Omega_0$ to remove this problem.
 \end{enumerate}
 \end{remark}

\subsection{The proof of Proposition~\ref{prop:rho_controlled_sets_projections}}\label{sec:controlled_proof}

We will prove Proposition~\ref{prop:rho_controlled_sets_projections} by constructing a projective model of the geodesic flow space $\Usf(\Gamma)$. This construction has several steps: first we post compose to obtain a new $P_1$-Anosov representation that preserves a properly convex domain. Theorem \ref{thm:cc_action} then gives us a convex co-compact action, which we then use to construct a projective model of the geodesic flow space. Finally we use this projective model to construct the maps $\pi_{x,y}$. 

\subsubsection{Constructing an invariant properly convex domain}\label{subsec:constructing_properly_convex_domain}

In general, a $P_1$-Anosov representation might not preserve a properly convex domain. For example, Danciger-Gu\'eritaud-Kassel \cite[Proposition 1.7]{DCG17} proved that if $d$ is even and $\rho : \pi_1(S) \rightarrow \PGL_d(\Rb)$ is Hitchin (see Definition~\ref{defn:hitchin_reps}), then $\rho(\pi_1(S))$ does not preserve a properly convex domain. However, it is well-known that after post composing with another representation we can always find an invariant properly convex domain, see Proposition \ref{prop :S_composition} for a precise statement.  We recall the proof of this well-known fact for the convenience of the reader.

Denote the vector space of symmetric 2-tensors by $\Sym_2(\Rb^d)$ and let 
\[D :=\dim \,\Sym_2(\Rb^d)=\frac{d(d+1)}{2}.\] 
Then let $\overline{S} : \GL_d(\Rb) \rightarrow \GL(\Sym_2(\Rb^d))\cong\GL_D(\Rb)$ be the representation determined by
\begin{align*}
\overline{S}(g)(v \otimes v) = gv \otimes gv,
\end{align*}
and note that $\overline{S}$ descends to a representation $S:\PGL_d(\Rb)\to\PGL_D(\Rb)$. 
Associated to $S$ are smooth embeddings 
\[\Phi : \Pb(\Rb^d) \rightarrow \Pb(\Sym_2(\Rb^d))\quad \text{and}\quad\Phi^* : \Gr_{d-1}(\Rb^d) \rightarrow \Gr_{D-1}(\Sym_2(\Rb^d))\] 
defined by
\begin{align*}
\Phi([v]) = [v \otimes v]
\end{align*}
and
\begin{align*}
\Phi^*(W) = \Span\left\{ v \otimes w + w \otimes v : w \in W, v \in \Rb^d \right\}.
\end{align*}
Equivalently, $\Phi^*(W)=\Phi(W^\perp)^\perp$ with respect to the standard inner product on $\Rb^d$ and the induced inner product on $\Sym_2(\Rb^d)=\Rb^D$. Notice that $\Phi$ and $\Phi^*$ are both $S$-equivariant. 

Next, we define a properly convex domain in $\Pb(\Sym_2(\Rb^d))$ which is invariant under the action of $S(\PGL_d(\Rb))$. 
Given $X \in \Sym_2(\Rb^d)$ we say that $X$ is \emph{positive definite}, and write $X > 0$, if $(f\otimes f)(X) > 0$ for every $f \in (\Rb^{d})^*-\{0\}$. Also, we say that $X$ is \emph{positive semidefinite}, and write $X \geq 0$, if $(f\otimes f)(X) \geq 0$ for every $f \in (\Rb^{d})^*$. Then define 
\begin{align*}
\Pc^+ := \left\{ [X] : X \in \Sym_2(\Rb^d), X > 0\right\},
\end{align*}
in which case
\begin{align*}
\overline{\Pc^+} = \left\{ [X] : X \in \Sym_2(\Rb^d), X \ge 0\right\}.
\end{align*}

\begin{observation}\label{obs:PD_matrices} In the notation above,
 \begin{enumerate}
\item $\Pc^+$ is a properly convex domain in $\Pb(\Sym_2(\Rb^d))$,
\item $S(\PGL_d(\Rb)) \le \Aut(\Pc^+)$,
\item $\Phi(\Pb(\Rb^d)) \subset \overline{\Pc^+}$. 
\end{enumerate}
\end{observation}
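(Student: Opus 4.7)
The plan is to verify each of the three assertions by reducing to well-known facts about the cone of positive semidefinite symmetric tensors. None of the steps is genuinely subtle; the main thing to be careful about is part~(1), where ``properly convex'' requires not just convexity but also that the closure sits inside some affine chart, which amounts to showing that the underlying cone is salient (contains no line through~$0$).

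For part~(1), I would first identify $\Pc^+$ as the projectivization of the open cone $\mathcal{K} := \{X \in \Sym_2(\Rb^d) : X > 0\}$. Openness of $\mathcal{K}$ is immediate from the definition, since positive definiteness amounts to the strict inequality $(f\otimes f)(X)>0$ on the compact set $\{f \in \Rb^{d*} : \norm{f}_2 = 1\}$. Convexity of $\mathcal{K}$ follows from the fact that $(f\otimes f)(tX + (1-t)Y) = t (f\otimes f)(X) + (1-t)(f\otimes f)(Y)$ for $t\in[0,1]$. To conclude proper convexity of $\Pc^+$, I would observe that if $X > 0$ then $-X < 0$, so $\mathcal{K}\cap(-\mathcal{K}) = \emptyset$; hence $\mathcal{K}$ is a salient cone and its projectivization is contained in an affine chart disjoint from any supporting hyperplane of $\overline{\mathcal{K}}$ at $0$.

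For part~(2), I would compute directly: for $g \in \GL_d(\Rb)$, $X \in \Sym_2(\Rb^d)$, and $f \in \Rb^{d*}$,
\[
(f\otimes f)\bigl(S(g) X\bigr) = \bigl((g^T f)\otimes (g^T f)\bigr)(X).
\]
Since $g^T$ is invertible, $f \neq 0$ implies $g^T f \neq 0$, so $X > 0$ forces $S(g)X > 0$. Thus $S(g) \in \Aut(\Pc^+)$, and this descends to $\PGL_d(\Rb)$ because the ambiguity in the lift of $g$ only changes $S(g)$ by a nonzero scalar, which acts trivially on $\Pb(\Sym_2(\Rb^d))$.

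For part~(3), I would note that for any $v \in \Rb^d\setminus\{0\}$ and $f \in \Rb^{d*}$,
\[
(f\otimes f)(v\otimes v) = f(v)^2 \geq 0,
\]
so $v\otimes v \geq 0$, i.e.\ $[v\otimes v] \in \overline{\Pc^+}$. (Here I am using that the closure of the positive definite cone is exactly the positive semidefinite cone, a standard fact which can be proven by perturbing $X \geq 0$ by $\epsilon\,\mathrm{Id}$.) Since $\Phi([v]) = [v\otimes v]$ by definition, this gives the required inclusion. I expect no real obstacle; the entire observation is a translation of standard linear-algebraic facts about $\Sym_2(\Rb^d)$ into the language of projective geometry.
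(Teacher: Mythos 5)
Parts (2) and (3) of your proposal are correct and essentially the same as the paper's proof: the paper performs exactly your computation in the form $(f\otimes f)(S(g)X)=\bigl((f\circ g)\otimes(f\circ g)\bigr)(X)$ (your $g^Tf$), and for (3) it uses $(f\otimes f)(v\otimes v)=f(v)^2\geq 0$; your remark that the closure of the definite cone is the semidefinite cone (via perturbation by $\epsilon\,\mathrm{Id}$) just makes explicit a point the paper leaves implicit.

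The gap is in part (1), at the step from convexity to \emph{proper} convexity. You verify only that the open cone satisfies $\mathcal{K}\cap(-\mathcal{K})=\emptyset$ and then assert that the projectivization (you in fact need its \emph{closure}) lies in an affine chart. That inference fails for open cones in general: the open half-space $\{(x,y)\in\Rb^2: y>0\}$ is an open convex cone with $C\cap(-C)=\emptyset$, yet its projectivization is $\Pb(\Rb^2)$ minus a single point, whose closure is all of $\Pb(\Rb^2)$, so it is not properly convex. The condition you need concerns the closed cone: $\overline{\mathcal{K}}\cap(-\overline{\mathcal{K}})=\{0\}$, equivalently that $\mathcal{K}$ contains no affine line, which is exactly what the paper checks using the identity $(f\otimes f)(X+tY)=(f\otimes f)(X)+t(f\otimes f)(Y)$. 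From that one gets a linear functional strictly positive on $\overline{\mathcal{K}}\setminus\{0\}$, for instance $X\mapsto\sum_i (e_i^*\otimes e_i^*)(X)$ (the trace against a definite form), and its kernel is a projective hyperplane avoided by $\overline{\Pc^+}$, which is what proper convexity demands. Relatedly, your phrase ``disjoint from any supporting hyperplane of $\overline{\mathcal{K}}$ at $0$'' is not correct even for the semidefinite cone: a supporting hyperplane at $0$ given by a singular semidefinite form meets $\overline{\mathcal{K}}$ in a nontrivial face, so one must choose the hyperplane coming from a definite form. The statement is of course true and the repair is short, but as written the key step establishing ``properly'' does not hold.
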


\begin{proof} 
(1): Clearly $C := \{ X : X\in \Sym_2(\Rb^d), X > 0\}$ is a convex open cone in $\Sym_2(\Rb^d)$. Since 
\begin{align*}
(f\otimes f)(X+tY) = (f\otimes f)(X)+t(f\otimes f)(Y)
\end{align*}
for all $X,Y\in\Sym_2(\Rb^d)$, all $t\in\Rb$ and all $f \in (\Rb^{d})^*$, $\overline{C}$ does not contain any real affine lines. Thus $C$ is a sharp convex cone. Since $C$ projectivizes to $\Pc^+$ we see that $\Pc^+$ is a properly convex domain.

(2): Notice that for all $f \in \Rb^{d*}$, $g \in \GL_d(\Rb)$, and $X \in\Sym_2(\Rb^d)$
\begin{align*}
(f\otimes f)(\overline{S}(g)X) = \Big((f \circ g) \otimes (f \circ g) \Big)(X).
\end{align*}
This implies that $S(\PGL_d(\Rb)) \leq \Aut(\Pc^+)$.

(3): Suppose that $[v] \in \Pb(\Rb^d)$. Then $\Phi([v]) = [v \otimes v]$ and 
\begin{align*}
(f\otimes f)(v \otimes v) = f(v)f(v) \geq 0
\end{align*}
for all $f \in \Rb^{d*}$. Thus $\Phi([v]) \subset \overline{\Pc^+}$. 
\end{proof}

The next proposition tells us that up to post composing any $P_1$-Anosov representation with $S$, we may assume that it leaves invariant a properly convex domain.

\begin{proposition}\label{prop :S_composition} If $\rho : \Gamma \rightarrow \PGL_d(\Rb)$ is $P_1$-Anosov, then $S\circ\rho$ is $P_1$-Anosov with boundary maps $\xi_{S\circ\rho}^1=\Phi \circ \xi_\rho^1$ and $\xi_{S\circ\rho}^{D-1}=\Phi^* \circ \xi_\rho^{d-1}$. Moreover,  $(S \circ \rho)(\Gamma)\leq \Aut(\Pc^+)$. 
\end{proposition}

\begin{proof} The maps $\Phi \circ \xi_\rho^1$ and $\Phi^* \circ \xi_\rho^{d-1}$ are clearly $S\circ\rho$-equivariant and transverse. If $\gamma\in\Gamma$, then $\lambda_1(S\circ\rho(\gamma)) = \lambda_1(\rho(\gamma))^2$ and $\lambda_2(S\circ\rho(\gamma)) = \lambda_1(\rho(\gamma)) \lambda_2(\rho(\gamma))$, so 
\begin{align*}
\frac{\lambda_1(S\circ\rho(\gamma))}{\lambda_2(S\circ\rho(\gamma))} = \frac{\lambda_1(\rho(\gamma))}{\lambda_2(\rho(\gamma))}.
\end{align*}
The assumption that $\xi_\rho^1$ and $\xi_\rho^{d-1}$ are dynamics preserving now imply that $\Phi \circ \xi_\rho^1$ and $\Phi^* \circ \xi_\rho^{d-1}$ are both dynamics preserving. Thus, $S\circ\rho$ is also $P_1$-Anosov. The fact that the image of $S\circ\rho$ lies in $\Aut(\Pc^+)$ follows from Observation~\ref{obs:PD_matrices}.
\end{proof}

 \subsubsection{Constructing a projective geodesic flow}\label{sec:proj_geod_flow}
Suppose that $\rho : \Gamma \rightarrow \PGL_d(\Rb)$ is a $P_1$-Anosov representation and $\rho(\Gamma)\subset\Aut(\Omega_0)$ for some properly convex domain $\Omega_0\subset\Pb(\Rb^d)$. 
Let $\Omega\subset\Pb(\Rb^d)$ be the properly convex domain given by Theorem~\ref{thm:cc_action}. 

Since every projective line segment in $\Omega$ is a geodesic in the Hilbert metric, $T^1 \Omega$ has a natural geodesic flow, denoted by $\psi_t$, obtained by flowing along the projective line segments at unit speed. Using this flow we can construct a model of the flow space $\Usf(\Gamma)$, which we call the \emph{projective geodesic flow}.

For distinct $x,y \in \partial_\infty \Gamma$, let $\ell_{x,y} \subset T^1 \Omega$ be the set of unit tangent vectors that have base points in the line segment joining $\xi_\rho^1(x)$ to $\xi_\rho^1(y)$ and that point towards $\xi_\rho^1(y)$. Note that $\ell_{x,y}$ exists because $\Omega$ satisfies property (2) of Theorem~\ref{thm:cc_action}. Then the set
\begin{align*}
\Gc : = \bigcup_{(x,y) \in \partial_\infty \Gamma^{(2)}}\ell_{x,y} 
\end{align*}
is invariant under the action of $\rho(\Gamma)$, the flow $\psi_t$, and the $\{\pm1\}$ action on $T^1 \Omega$ given by $(-1)\cdot v \mapsto -v$. Notice that 
$$
\psi_t( (-1) \cdot v) = (-1) \cdot \psi_{-t}(v)
$$
for all $v \in T^1 \Omega$. So the actions of $\Gamma$, $\psi_t$, and $\{\pm 1\}$ combine to yield an action of $\Gamma \times (\Rb \rtimes_{\psi}\{\pm 1\})$ on $\Gc$, and as above, $\psi : \{\pm 1\} \rightarrow \Aut(\Rb)$ is given by $\psi(-1)(t) = -t$.

Furthermore, the projection $\pi:T^1\Omega\to\Omega$ restricts to a map $\pi|_{\Gc}:\Gc\to\Cc$ that is equivariant with respect to the $\rho(\Gamma)$-actions on $\Gc$ and $\Cc$. (Recall that $\Cc=\overline{\Cc}-\xi_\rho^1(\partial_\infty\Gamma)$, where $\overline{\Cc}$ is the convex hull of $\xi_\rho^1(\partial_\infty\Gamma)$ in $\overline\Omega$.)

Using Theorem~\ref{thm:Gromov_uniqueness} we will deduce the existence of a homeomorphism $\Gc \rightarrow \Usf(\Gamma)$.

\begin{corollary}\label{cor:Gromov_model} With the notation above, there exists a homeomorphism $T : \Gc \rightarrow \Usf(\Gamma)$ with the following properties:
\begin{enumerate}
\item $T$ is equivariant relative to the $\Gamma$ and $\{\pm1\}$ actions,
\item for every $(x,y) \in \partial_\infty \Gamma^{(2)}$, $T$ maps the flow line $\ell_{x,y}$ in $\Gc$ to the flow line $L_{x,y}$ in $\Usf(\Gamma)$.
\end{enumerate}
\end{corollary}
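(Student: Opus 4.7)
The plan is to apply Theorem \ref{thm:Gromov_uniqueness} to $\Gc$, which requires endowing $\Gc$ with a natural metric and verifying the four hypotheses of that theorem.

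First I would put a metric on $\Gc$ coming from its inclusion in $T^1\Omega$, for instance by lifting the Hilbert metric on $\Omega$ with an added fiber distance (or equivalently, the Sasaki-type metric induced by $h_\Omega$). The $\rho(\Gamma)$-action and the involution $v \mapsto -v$ on $T^1\Omega$ both preserve $\Gc$ and act isometrically with respect to this metric. The $\Rb$-action $\psi_t$ restricted to $\Gc$ is free, since each $\ell_{x,y}$ is parametrized by arclength along a projective line segment, and moves $v \in \ell_{x,y}$ along $\ell_{x,y}$. Composing with the involution gives the $\Rb \rtimes_\psi \mathbb{Z}/2\mathbb{Z}$-action. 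This handles hypotheses (1), (2), and freeness in (4) of Theorem \ref{thm:Gromov_uniqueness}.

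Next, I would verify that $\Gc$ is proper Gromov hyperbolic and that the orbit maps $\gamma \mapsto \gamma \cdot v$ are quasi-isometries. Because $\rho(\Gamma)$ acts convex co-compactly on $\Omega$ with convex core $\Cc = \Omega \cap \mathrm{ConvHull}(\xi^{(1)}(\partial_\infty\Gamma))$, every basepoint of a vector in $\Gc$ lies in $\Cc$ (since each $\ell_{x,y}$ has basepoints on the segment joining $\xi^{(1)}(x)$ to $\xi^{(1)}(y)$, which lies in $\overline{\Cc}$). The results of Danciger--Gueritaud--Kassel \cite{DCG17} and the second author \cite{Zimmer17} show that $(\Cc, H_\Omega|_\Cc)$ is a proper Gromov hyperbolic metric space on which $\rho(\Gamma)$ acts isometrically and co-compactly, with Gromov boundary canonically identified with $\xi^{(1)}(\partial_\infty\Gamma) \cong \partial_\infty\Gamma$. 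Since $\Gc \to \Cc$ (forgetting direction) has uniformly bounded fibers and $\Gamma$ acts co-compactly on $\Gc$, the space $\Gc$ is quasi-isometric to $\Cc$, hence proper Gromov hyperbolic with $\partial_\infty\Gc \cong \partial_\infty\Gamma$; and by the Milnor--\v Svarc lemma every orbit map $\gamma \mapsto \gamma\cdot v$ is a quasi-isometry. This gives hypothesis (3).

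Finally, I would check the rest of (4): that every $\Rb$-orbit $\ell_{x,y}$ is a quasi-geodesic and that the induced map $\Gc/\Rb \to \partial_\infty\Gc^{(2)}$ is a homeomorphism. The orbit $\ell_{x,y}$ projects to the parametrized projective segment from $\xi^{(1)}(x)$ to $\xi^{(1)}(y)$, which is an actual $H_\Omega$-geodesic between these two points; under the identification $\partial_\infty\Gc \cong \partial_\infty\Gamma$ its forward and backward endpoints are $y$ and $x$. Uniqueness of the projective line between two boundary points (a property of properly convex domains) and the fact that $(x,y)$ ranges over $\partial_\infty\Gamma^{(2)} = \partial_\infty\Gc^{(2)}$ shows that this map is a continuous bijection, and the continuous inverse follows from continuity of the endpoint map. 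Theorem \ref{thm:Gromov_uniqueness} then produces a $\Gamma \times \mathbb{Z}/2\mathbb{Z}$-equivariant homeomorphism $T : \Gc \to \wt{U\Gamma}$ sending $\Rb$-orbits to $\Rb$-orbits. Since the $\Rb$-orbit $\ell_{x,y}$ has forward endpoint $y$ and backward endpoint $x$, and the $\phi_t$-orbit $L_{x,y}$ has the same endpoints, $T$ must carry $\ell_{x,y}$ onto $L_{x,y}$, giving property (2).

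The main obstacle is the identification of $\partial_\infty\Gc$ with $\partial_\infty\Gamma$ and the proof of Gromov hyperbolicity of $\Gc$; this rests on the convex co-compactness theorem and the analysis of the Hilbert geometry of $\Cc$ in \cite{DCG17, Zimmer17}. Once Gromov hyperbolicity and the boundary identification are in place, the rest is a routine verification of Theorem \ref{thm:Gromov_uniqueness}'s hypotheses.
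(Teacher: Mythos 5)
Your proposal is correct and takes essentially the same route as the paper: verify the hypotheses of Theorem \ref{thm:Gromov_uniqueness} for $\Gc$ by building a $\Gamma$- and $\mathbb{Z}/2\mathbb{Z}$-invariant metric, noting that projective segments are Hilbert geodesics, and using that $(\Cc,H_\Omega)$ is a geodesic space with cocompact $\Gamma$-action (Milnor--\v{S}varc) to get the orbit quasi-isometry. The only difference is in the metric on $\Gc$: where you suggest a Sasaki-type/fiber metric and a bounded-fiber argument for the quasi-isometry $\Gc \to \Cc$, the paper uses the explicit invariant metric $d(v,w)=\tfrac{1}{\sqrt{\pi}}\int_{\Rb}H_\Omega(\gamma_v(t),\gamma_w(t))e^{-t^2}\,dt$, for which the isometric actions, quasi-geodesic $\Rb$-orbits, and the quasi-isometry to $(\Cc,H_\Omega)$ follow from one short estimate.
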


\begin{proof} As observed above, $\Gamma \times (\Rb \rtimes_{\psi} \{\pm1\})$ acts on $\Gc$ and  further the $\Rb$ action is free.  By Theorem~\ref{thm:Gromov_uniqueness}, it suffices to construct a proper hyperbolic metric $d$ on $\Gc$ with the following properties: 
\begin{enumerate}[(a)]
\item the actions of $\Gamma$ and $\{\pm 1\}$ are isometric, 
\item for every $v \in \Gc$, the orbit map $\Gamma\to\Gc$ given by $\gamma\mapsto\gamma \cdot v$ is a quasi-isometry and this orbit map extends to a $\Gamma$-equivariant homeomorphism $\partial_\infty \Gamma \rightarrow \partial_\infty \Gc$,
\item every $\Rb$-orbit is a geodesic in $\Gc$, and
\item the induced map $\Gc / \Rb \rightarrow \partial_\infty \Gc^{(2)}$ is a homeomorphism.
\end{enumerate}

Let $\mu$ be a probability measure on $\Rb$ with finite first moment, i.e. 
\[\int_{\Rb}|t|\ \mathrm{d}\mu(t) <\infty.\] 
(For example, we may take $\mu$ given by $\mathrm{d}\mu=\frac{1}{\sqrt{\pi}}e^{-t^2}\ \mathrm{d} t$.) Note that
\[0\le H_\Omega(\pi(\psi_t(v)),\pi(\psi_t(w)))\le H_\Omega(\pi(v),\pi(w))+2|t|\]
for all $v,w\in \Gc$ and all $t\in\Rb$, so we may define $d:\mathcal{G}\times\mathcal{G}\to\Rb$ by
\begin{align*}
d(v,w) := \int_{\Rb} H_\Omega(\pi(\psi_t(v)),\pi(\psi_t(w)))\ \mathrm{d}\mu(t).
\end{align*}
It is straight forward to check that $d$ is a proper metric. Also, Conditions (a) and (c) follow immediately from the definition of $d$. It remains to show that $(\Gc,d)$ is hyperbolic and that conditions (b) and (d) hold. 

Since $(\Cc,H_\Omega)$ is a geodesic metric space, we may apply the Milnor-\v{S}varc Lemma to deduce that for every $p\in\Cc$, the orbit map $\Gamma\to\Cc$ given by $\gamma\mapsto\gamma\cdot p$ is a quasi-isometry. Since $\Gamma$ is a hyperbolic group, this implies that $(\Cc,H_\Omega)$ is a hyperbolic metric space. Further, the Gromov boundary $\partial_\infty \Cc$ of $(\Cc,H_\Omega)$  naturally identifies with $\xi^1_\rho(\partial_\infty \Gamma) = \overline{\Cc}-\Cc$. 

Notice that 
\begin{align*}
\abs{d(v,w)-H_\Omega(\pi(v), \pi(w))}&=\abs{\int_{\Rb} H_\Omega(\pi(\psi_t(v)),\pi(\psi_t(w)))-H_\Omega(\pi(v), \pi(w))\ \mathrm{d}\mu(t)}\\
&\le 2\int_{\Rb}|t|\ \mathrm{d}\mu(t)=:C
\end{align*}
for all $v,w\in\Gc$. So $\pi|_{\Gc} : (\Gc, d) \rightarrow (\Cc, H_\Omega)$ is a $(1,C)$-quasi-isometry. It is straightforward to verify from the four-point condition definition of Gromov hyperbolicity, see~\cite[Chapter III.H Definition 1.20]{BridsonHaefliger}, that $(1,C)$-quasi-isometries preserve Gromov hyperbolicity and so $(\Gc,d)$ is Gromov hyperbolic. Further, $\pi|_{\Gc}$ extends to a $\Gamma$-equivariant boundary map 
\begin{equation}
\label{eqn:identification of boundaries}
\partial_\infty \Gc \rightarrow \partial_\infty \Cc=\overline{\Cc}-\Cc
\end{equation}
(this is clear from the Gromov product definition of the Gromov boundary).

Since $\pi|_{\Gc}$ is $\rho(\Gamma)$-equivariant and orbits maps into $\Cc$ are quasi-isometries, we see that condition (b) holds. Further, by the construction of $\Gc$,  the map 
$$
v \in \Gc \mapsto (v^+, v^-) \in \partial_\infty \Cc^{(2)}
$$
descends to a homeomorphism $\Gc / \Rb \rightarrow \partial_\infty \Cc^{(2)}$. So by Equation~\eqref{eqn:identification of boundaries}, we see that condition (d) holds. 
\end{proof}

\subsubsection{Finishing the proof of Proposition~\ref{prop:rho_controlled_sets_projections}}

Let $\rho : \Gamma \rightarrow \PGL_d(\Rb)$ be a $P_1$-Anosov representation, and let $M \subset \Pb(\Rb^d)$ be $\rho$-controlled. By Proposition~\ref{prop :S_composition}, the representation 
\[S\circ\rho : \Gamma \rightarrow \PGL(\Sym_2(\Rb^d))\cong\PGL_D(\Rb)\] 
is $P_1$-Anosov with $\xi_{S\circ\rho}^1 : = \Phi \circ \xi_\rho^1$ and $\xi_{S\circ\rho}^{D-1} : = \Phi^* \circ \xi_\rho^{d-1}$, and
\begin{align*}
S\circ\rho(\Gamma) \subset \Aut(\Pc^+).
\end{align*}
Then by Theorem~\ref{thm:cc_action} there exists a properly convex domain $\Omega \subset \Pb(\Sym_2(\Rb^d))$ where $S\circ\rho(\Gamma)$ acts convex co-compactly on $\Omega$, and
\begin{align*}
\overline{\Cc}_{S\circ\rho} \cap \partial \Omega = \xi_{S\circ\rho}^1(\partial_\infty \Gamma).
 \end{align*}
 
% \begin{remark}
%For a general $P_1$-Anosov representation $\rho$, it is not true that 
%\[\overline{\Cc}_{S\circ\rho} \cap \partial \Pc^+ = \xi_\rho^1_S(\partial_\infty \Gamma).\]
%Indeed, as long as the image of $\xi_\rho_S^1$ contains a pair of points $a,b\in\Pb(\Sym_2(\Rb^d))$ of co-rank $2$, then all the points in the projective line segment through $a$ and $b$ are of co-rank at most $1$, and so do not lie in $\Pc^+$.
 %\end{remark}
 
For any distinct points $x,y \in \partial_\infty \Gamma$, define a projection (see Figure \ref{fig:projection})
\begin{align*}
p_{x,y} : \Pb\left(\Sym_2(\Rb^d)\right) - \left( \xi_{S\circ\rho}^{D-1}(x) \cap \xi_{S\circ\rho}^{D-1}(y) \right) \rightarrow \xi_{S\circ \rho}^1(x) + \xi_{S\circ\rho}^1(y)
\end{align*}
by 
\begin{align*}
p_{x,y}(b) = \Big( \xi_{S\circ \rho}^1(x) + \xi_{S\circ\rho}^1(y)\Big) \cap \Big( b + \xi_{S\circ\rho}^{D-1}(x) \cap \xi_{S\circ\rho}^{D-1}(y) \Big). 
\end{align*}

\begin{figure}[ht]
\centering
\includegraphics[scale=0.7]{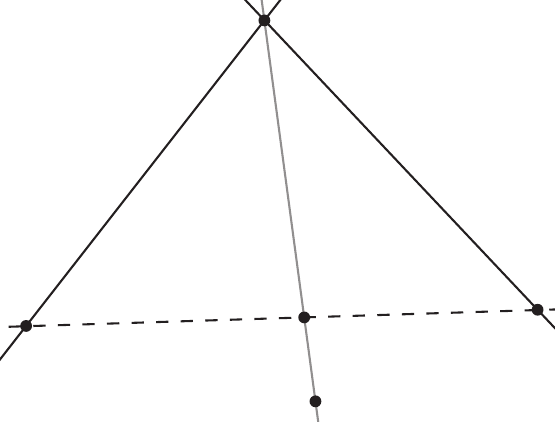}
\small
\put (-50, 87){\tiny$\xi_{S\circ\rho}^{D-1}(y)$}
\put (-7, 42){\tiny$\xi_{S\circ\rho}^1(y)$}
\put (-165, 87){\tiny$\xi_{S\circ\rho}^{D-1}(x)$}
\put (-205, 35){\tiny$\xi_{S\circ\rho}^1(x)$}
\put (-165, 134){\tiny $\xi_{S\circ\rho}^{D-1}(x)\cap \xi_{S\circ\rho}^{D-1}(y)$}
\put (-78, 5){\tiny$b$}
\put (-83, 39){\tiny$p_{x,y}(b)$}
\caption{The projection $p_{x,y}$.}\label{fig:projection}
\end{figure}

Since $\Phi$ and $\Phi^*$ are $S$-equivariant,
\begin{align}\label{eqn: PhiS}
p_{\gamma(x),\gamma(y)}\circ (S\circ\rho)(\gamma)=(S\circ\rho)(\gamma)\circ p_{x,y}.
\end{align}
Also, observe that for all distinct points $x,y\in\partial_\infty\Gamma$, we have
\begin{align}\label{eqn: PhiS1}
\begin{split}
\lim_{(u,v,b)\to(x,y,\xi_{S\circ\rho}^1(x))} & p_{u,v}(b)=\xi_{S\circ\rho}^1(x)\quad\text{and} \\
 \lim_{(u,v,b)\to(x,y,\xi_{S\circ\rho}^1(y))} & p_{u,v}(b)=\xi_{S\circ\rho}^1(y).
 \end{split}
\end{align}

\begin{observation}\label{obs: which segment} If $a \in M - \{\xi_\rho^1(x),\xi_\rho^1(y) \}$, then $p_{x,y}(\Phi(a))$ lies in the line segment joining $\xi_{S\circ\rho}^1(x)$ to $\xi_{S\circ\rho}^1(y)$ in $\Omega$. \end{observation}

\begin{proof} Since $M$ is $\rho$-controlled, 
\begin{align*}
a \notin \xi_\rho^{d-1}(x) \cup \xi_\rho^{d-1}(y).
\end{align*}
Hence 
\begin{align*}
\Phi(a) \notin \xi_{S\circ\rho}^{D-1}(x) \cup \xi_{S\circ\rho}^{D-1}(y).
\end{align*}
Observation~\ref{obs:PD_matrices} implies that $\Phi(a) \in \overline{\Pc^+}$ and part (3) of Theorem~\ref{thm:cc_action} says that $\Pc^+$ and $\Omega$ are in the same connected component of 
\begin{align*}
\Pb\left(\Sym_2(\Rb^d)\right) - \left( \xi_{S\circ\rho}^{D-1}(x) \cup \xi_{S\circ\rho}^{D-1}(y) \right).
\end{align*}
Hence $p_{x,y}(\Phi(a))$ lies in the line segment joining $\xi_{S\circ\rho}^1(x)$ to $\xi_{S\circ\rho}^1(y)$ in $\Omega$.
\end{proof}

Recall that $\Gc \subset T^1 \Omega$ is the projective model of the geodesic flow constructed in Section~\ref{sec:proj_geod_flow} and $T : \Gc \rightarrow \Usf(\Gamma)$ is the homeomorphism in Corollary~\ref{cor:Gromov_model}. By Observation \ref{obs: which segment}, for any distinct $x,y \in \partial_\infty \Gamma$, we may define a map
\begin{align*}
\wh{p}_{x,y} : M - \{\xi_\rho^1(x),\xi_\rho^1(y) \} \rightarrow \ell_{x,y}\subset\Gc
\end{align*}
by setting $\wh{p}_{x,y}(m)$ to be the unit vector at $p_{x,y}(\Phi(m))$ pointing towards $\xi_{S\circ\rho}^1(y)$. Then define
\begin{align*}
\pi_{x,y} : M - \left\{ \xi_\rho^1(x), \xi_\rho^1(y) \right\} \rightarrow L_{x,y}\subset\Usf(\Gamma)
\end{align*}
by $\pi_{x,y} = T \circ \wh{p}_{x,y}$.

By part (1) of Corollary~\ref{cor:Gromov_model}, $T$ is equivariant relative to the $\Gamma$ actions on $\Gc$ and $\Usf(\Gamma)$. Since $\Phi$ is also $S$-equivariant, it follows from \eqref{eqn: PhiS} that for all distinct $x,y\in \partial_\infty \Gamma$ and $\gamma\in\Gamma$
\begin{align*}
\gamma\circ \pi_{x,y}= \pi_{\gamma\cdot x,\gamma\cdot y}\circ \rho(\gamma)
\end{align*}
as maps from $M$ to $\Usf(\Gamma)$. The continuity of $T$, $\xi_{S\circ\rho}^1$, and $\xi_{S\circ\rho}^{D-1}$ imply that
\[\pi_{x,y}(a) = \lim_{(u,v,b)\to (x,y,a)} \pi_{u,v}(b)\]
for all $(x,y) \in \partial_\infty \Gamma^{(2)}$ and $a\in M - \left\{ \xi_\rho^1(x), \xi_\rho^1(y) \right\}$. Further, it follows from \eqref{eqn: PhiS1} and part (2) of Corollary~\ref{cor:Gromov_model} that
\begin{align*}
\lim_{(u,v,p)\to (x,y,\xi_\rho^1(x))} & \pi_{u,v}(p)=x \quad  \text{and} \quad \lim_{(u,v,p)\to (x,y,\xi_\rho^1(y))} & \pi_{x,y}(p)=y
\end{align*}
for all $(x,y) \in \partial_\infty \Gamma^{(2)}$. This proves Proposition~\ref{prop:rho_controlled_sets_projections}.

%%%%%%%%%%%%%%%%%%%%%%%%%%%%%%%%%%%%%%%%%%%%%%%%%%%%
%%%%%%%%%%%%%%%%%%%%%%%%%%%%%%%%%%%%%%%%%%%%%%%%%%%%
\section{Sufficient conditions for differentiability of $\rho$-controlled subsets}\label{sec:suff_cond_diff}
%%%%%%%%%%%%%%%%%%%%%%%%%%%%%%%%%%%%%%%%%%%%%%%%%%%%
%%%%%%%%%%%%%%%%%%%%%%%%%%%%%%%%%%%%%%%%%%%%%%%%%%%%

The goal of this section is to prove a common generalization (see Theorem \ref{thm:main_body}) of Theorem~\ref{thm:main} and the first part of Theorem~\ref{thm:regularity2}. For this generalization, we introduce a quantity  $\Fs_m(\rho)$  and prove that it is a lower bound for the supremum of the H\"older constants. This will later be used in the proofs of Theorem \ref{thm:regularity} and the second part of Theorem \ref{thm:regularity2}. Unlike $\Es_m(\rho)$ defined in the introduction, the quantity $\Fs_m(\rho)$ is not defined using the spectral data of $\rho$, but the contraction properties of the bundle $E_\rho$ defined in Section \ref{sec: dominated splittings}. However, we will later show that $\Es_m(\rho)=\Fs_m(\rho)$ if $\rho$ is $P_{1,m}$-Anosov and if the Zariski closure of its image is semisimple (see Theorem \ref{thm:alphas}).

%%%%%%%%%%%%%%%%%%%%%%%%%%%%%%%%%%%%%%%%%%%%%%%%%%%%
\subsection{The relative exponential growth rate}\label{sec:optimal1}
%%%%%%%%%%%%%%%%%%%%%%%%%%%%%%%%%%%%%%%%%%%%%%%%%%%%

Let $\rho:\Gamma\to\PGL_d(\Rb)$ be a  $P_{1,m}$-Anosov representation for some $m=2,\dots,d-1$. We will first describe the quantity $\Fs_m(\rho)$. Let $\overline{\rho}:\Gamma\to\SL_d(\Rb)$ be a lift of $\rho$ (see Remark~\ref{rem:lift}), and let $\norm{\cdot}_{v\in\Usf(\Gamma)}$ be a continuous family of norms on $\Rb^d$ parameterized by $\Usf(\Gamma)$, such that 
\[\norm{\overline{\rho}(\gamma)\cdot X}_{\gamma\cdot v}=\norm{X}_v\] 
for all $\gamma\in\Gamma$, $v\in\Usf(\Gamma)$, and $X\in \Rb^d$. For any $v=(v^+,v^-,s) \in \Usf(\Gamma)$, let 
\begin{align}
E_1(v) & := \xi_\rho^1(v^+),\nonumber \\
E_2(v) & := \xi_\rho^{d-1}(v^-) \cap \xi_\rho^m(v^+),\label{eqn:E}\\
E_3(v) & := \xi_\rho^{d-m}(v^-),\nonumber
\end{align}
and define $f:\Usf(\Gamma)\times\Rb\to\Rb$ by
\begin{align}\label{eqn:f}
f(v,t):=\inf_{X_i\in S_i(v)}\left\{\log\frac{\norm{X_3}_{\varphi_t(v)}}{\norm{X_1}_{\varphi_t(v)}}\Bigg/\log\frac{\norm{X_2}_{\varphi_t(v)}}{\norm{X_1}_{\varphi_t(v)}}\right\},
\end{align}
where $S_i(v):=\{X\in E_i(v):\norm{X}_v=1\}$ for all $i=1,2,3$ (note that $S_1(v)$ consists of two points). Then define the \emph{relative exponential growth rate}
\begin{equation}\label{eqn:alpham1}
\Fs_m(\rho):=\liminf_{t\to+\infty}\inf_{v\in \Usf(\Gamma)}f(v,t).
\end{equation}
Note that $\Fs_m(\rho)$ does not depend on the choice of $\overline{\rho}$. Furthermore, since $\rho$ is $P_{1,m}$-Anosov, it follows from Theorem \ref{prop:dom_split} that $\Fs_m(\rho)$ does not depend on $\norm{\cdot}_{v\in\Usf(\Gamma)}$.

We now prove that $1<\Fs_m(\rho)<\infty$. To do so, we use the following observation.

\begin{observation} \label{lem:weak flow}
There exist $C_1 \geq1$ and $\beta_1 \geq0$ such that 
\begin{align*}
\frac{1}{C_1} e^{-\beta_1 t} \norm{X}_{v} \leq \norm{X}_{\varphi_t(v)} \leq C_1 e^{\beta_1 t} \norm{X}_v
\end{align*}
for all $v \in \Usf(\Gamma)$, $t > 0$, and $X\in \Rb^d$.
\end{observation}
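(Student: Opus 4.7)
The plan is to reduce the statement to three facts about the pair $(\widetilde{U\Gamma}, E)$: (i) the continuous $\Gamma$-invariant norm $\norm{\cdot}$ is bi-Lipschitz equivalent to a constant norm on any compact fundamental domain; (ii) the geodesic flow $\phi_t$ moves points at distance at most linear in $\abs{t}$; and (iii) the operator norms $\norm{\overline{\rho}(\gamma)}_{\rm op}$ grow at most exponentially in the word length $d_S(\gamma,\id)$.

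First I would fix a compact fundamental domain $K \subset \widetilde{U\Gamma}$ for the $\Gamma$-action (which exists by cocompactness, as recalled in Section~\ref{sec:flowspace}). By continuity of $\norm{\cdot}$ and compactness of $K\times\{X\in\Rb^d:\norm{X}_2=1\}$, there exist constants $c_1,c_2>0$ with
\[ c_1\norm{X}_2 \leq \norm{X}_w \leq c_2\norm{X}_2 \]
for all $w\in K$ and all $X\in\Rb^d$. Next, given $v\in\widetilde{U\Gamma}$ and $t>0$, choose $\eta,\gamma\in\Gamma$ with $\eta^{-1}v\in K$ and $\gamma^{-1}\phi_t(v)\in K$. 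Using the $\Gamma$-invariance of $\norm{\cdot}$, I rewrite
\[ \frac{\norm{X}_{\phi_t(v)}}{\norm{X}_v} = \frac{\norm{\overline{\rho}(\gamma)^{-1}X}_{\gamma^{-1}\phi_t(v)}}{\norm{\overline{\rho}(\eta)^{-1}X}_{\eta^{-1}v}}, \]
and the two-sided bound from $K$ reduces the ratio, up to the multiplicative constant $c_2/c_1$, to $\norm{\overline{\rho}(\gamma^{-1}\eta)Y}_2 / \norm{Y}_2$ for $Y=\overline{\rho}(\eta)^{-1}X$; in particular it is sandwiched between $\norm{\overline{\rho}(\eta^{-1}\gamma)}_{\rm op}^{-1}$ and $\norm{\overline{\rho}(\gamma^{-1}\eta)}_{\rm op}$.

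It then remains to bound $d_S(\gamma^{-1}\eta,\id)$ linearly in $t$. Since the $\Rb$-orbits of $\phi_t$ are (uniformly) quasi-geodesics in a $\Gamma$-invariant metric $d_{\widetilde{U\Gamma}}$, one has $d_{\widetilde{U\Gamma}}(v,\phi_t(v)) \leq At+B$ for uniform $A,B\geq 0$. Fixing $v_0\in K$ and using that $v\in\eta K$ and $\phi_t(v)\in\gamma K$, the triangle inequality gives $d_{\widetilde{U\Gamma}}(\eta\cdot v_0,\gamma\cdot v_0)\leq At+B+2\diam(K)$. The fundamental lemma of geometric group theory (an orbit map of $\Gamma$ into $\widetilde{U\Gamma}$ is a quasi-isometry, cf.\ Section~\ref{sec:flowspace}) converts this into $d_S(\gamma^{-1}\eta,\id)\leq A't+B'$. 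Finally, writing $\gamma^{-1}\eta$ as a product of at most $d_S(\gamma^{-1}\eta,\id)$ generators and letting $M:=\max_{s\in S}\norm{\overline{\rho}(s)^{\pm 1}}_{\rm op}$ yields $\norm{\overline{\rho}(\gamma^{-1}\eta)^{\pm 1}}_{\rm op}\leq M^{A't+B'}$, which is of the form $\widetilde{C}e^{\beta_1 t}$. Combining all three ingredients produces the desired constants $C_1$ and $\beta_1$ on both sides.

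No step appears to be a serious obstacle; the main care needed is making sure the choice of $\gamma,\eta$ (which depends on $v$ and $t$) is controlled only through the uniform constants coming from $K$, the quasi-geodesic constants of the flow, and the operator norms of the finitely many generators, so that the final $C_1$ and $\beta_1$ do not depend on $v$ or $X$.
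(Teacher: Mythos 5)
Your proposal is correct, but it takes a genuinely different route from the paper. The paper's argument is a short "time-one plus iteration" argument: by $\Gamma$-invariance of $\norm{\cdot}$ and co-compactness, the continuous positive function $(v,t,[X]) \mapsto \norm{X}_{\phi_t(v)}/\norm{X}_v$ is uniformly bounded above and below for $t \in [0,1]$, giving $e^{-\beta_1}\norm{X}_v \leq \norm{X}_{\phi_t(v)} \leq e^{\beta_1}\norm{X}_v$ on $[0,1]$; one then concatenates unit time steps (using $\phi_{s+t}=\phi_s\circ\phi_t$) to get the exponential bound with $C_1=e^{\beta_1}$, with no reference to word length, operator norms, or the coarse geometry of flow lines. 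You instead transport both $v$ and $\phi_t(v)$ into a compact fundamental domain $K$, compare $\norm{\cdot}_w$ to $\norm{\cdot}_2$ on $K$, reduce the distortion to $\norm{\overline{\rho}(\gamma^{-1}\eta)}_{\rm op}^{\pm 1}$, and control $d_S(\gamma^{-1}\eta,\id)$ linearly in $t$ via the uniform quasi-geodesicity of flow lines and the Milnor--\v{S}varc lemma, finishing with submultiplicativity of operator norms over the generators. This works, and it has the mild virtue of exhibiting $\beta_1$ explicitly in terms of the quasi-isometry constants and $\max_{s\in S}\log\norm{\overline{\rho}(s)^{\pm1}}_{\rm op}$; the cost is that you invoke heavier input (uniformity of the quasi-geodesic constants over all flow lines, which you should justify from the $\Gamma$-invariance of $d_{\widetilde{U\Gamma}}$ and co-compactness, as well as the orbit-map quasi-isometry), whereas the paper's proof needs only continuity, $\Gamma$-invariance, co-compactness, and the flow property. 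Both yield constants independent of $v$ and $X$, as required.
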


\begin{proof} Since $\Gamma$ acts co-compactly on $\Usf(\Gamma)$ there exists $\beta_1 \geq 0$ such that 
\begin{align*}
e^{-\beta_1} \norm{X}_{v} \leq \norm{X}_{\varphi_t(v)} \leq e^{\beta_1} \norm{X}_v
\end{align*}
for all $v \in \Usf(\Gamma)$, $t \in [0,1]$, and $X\in \Rb^d$. Thus, if we let $C_1:=e^{\beta_1}$, then
\[
\frac{1}{C_1}e^{-t\beta_1} \norm{X}_{v} \leq \norm{X}_{\varphi_t(v)} \leq C_1e^{t\beta_1} \norm{X}_v
\]
for all $v \in \Usf(\Gamma)$, $t > 0$, and $X\in \Rb^d$.
\end{proof}

%\begin{proof} Since $\Gamma$ acts co-compactly on $\Usf(\Gamma)$ there exists $\beta_1 \geq 0$ such that 
%\begin{align*}
%e^{-\beta_1} \norm{X}_{v} \leq \norm{X}_{\varphi_t(v)} \leq e^{\beta_1} \norm{X}_v
%\end{align*}
%for all $v \in \Usf(\Gamma)$, $t \in [0,1]$, and $X\in \Rb^d$. Then for any $t>0$, let $k\in\Zb^+$ such that $t\in[k-1,k)$, and note that 
%\begin{align*}
%e^{-k\beta_1} \norm{X}_{v} \leq \norm{X}_{\varphi_t(v)} \leq e^{k\beta_1} \norm{X}_v.
%\end{align*}
%Thus, if we let $C_1:=e^{\beta_1}$, then
%\[
%\frac{1}{C_1}e^{-t\beta_1} \norm{X}_{v} \leq \norm{X}_{\varphi_t(v)} \leq C_1e^{t\beta_1} \norm{X}_v.\qedhere
%\]\end{proof}

By Theorem \ref{prop:dom_split}, the assumption that $\rho$ is $P_{1,m}$-Anosov ensures that there are constants $C_2,C_3\geq 1$ and $\beta_2,\beta_3>0$ such that for all $v \in \Usf(\Gamma)$, $X_i \in S_i(v)$, and $t \geq 0$, we have
\begin{align*}
\frac{\norm{X_2}_{\varphi_t(v)}}{\norm{X_1}_{\varphi_t(v)}} \geq \frac{1}{C_2} e^{\beta_2 t}\quad\text{and}\quad\frac{\norm{X_3}_{\varphi_t(v)}}{\norm{X_2}_{\varphi_t(v)}} \geq \frac{1}{C_3} e^{\beta_3 t}.
\end{align*}
This, together with Observation \ref{lem:weak flow}, then implies that for sufficiently large $t$,
\begin{align*}
\log\frac{\norm{X_3}_{\varphi_t(v)}}{\norm{X_1}_{\varphi_t(v)}}\Bigg/\log\frac{\norm{X_2}_{\varphi_t(v)}}{\norm{X_1}_{\varphi_t(v)}}&=1+\log\frac{\norm{X_3}_{\varphi_t(v)}}{\norm{X_2}_{\varphi_t(v)}}\Bigg/\log\frac{\norm{X_2}_{\varphi_t(v)}}{\norm{X_1}_{\varphi_t(v)}}\\
&\geq 1+\frac{\beta_3 t-\log C_3}{2\beta_1 t+2\log C_1}
\end{align*}
and
\begin{align*}
\log\frac{\norm{X_3}_{\varphi_t(v)}}{\norm{X_1}_{\varphi_t(v)}}\Bigg/\log\frac{\norm{X_2}_{\varphi_t(v)}}{\norm{X_1}_{\varphi_t(v)}}&\leq1+\frac{2\beta_1 t+2\log C_1}{\beta_2 t-\log C_2}.
\end{align*}
It follows that
\[1<1+\frac{\beta_3}{2\beta_1} \le \Fs_m(\rho) \le 1+\frac{2\beta_1}{\beta_2}<\infty.\]

%%%%%%%%%%%%%%%%%%%%%%%%%%%%%%%%%%%%%%%%%%%%%%%%%%%%
\subsection{Optimal bounds on H\"older constants}\label{sec:optimal1}
%%%%%%%%%%%%%%%%%%%%%%%%%%%%%%%%%%%%%%%%%%%%%%%%%%%%
A topological $(m-1)$-dimensional manifold $M \subset \Pb(\Rb^d)$ is \emph{$C^{\alpha}$ along a compact subset $N\subset M$} for some $\alpha >1$ if the following holds: For every $a \in N$ there exist local smooth coordinates around $a$ and a continuous map $f:U\to\Rb^{d-m}$ for some open $U\subset \Rb^{m-1}$, such that
\begin{enumerate}
\item $M$ coincides with the graph of $f$ near $a$.
\item For every $(u,f(u)) \in N$, $f$ is differentiable at $u$.
\item There are constants $C,\delta>0$ such that 
\[
\norm{f(u+h)-f(u)-df_u(h)}_2 \leq C\norm{h}_2^\alpha
\] 
for all $u\in U$ with $(u,f(u)) \in N$ and for all $h\in\Rb^{m-1}$ with $\norm{h}_2<\delta$.
\end{enumerate}
Observe that in the case when $N=M$, $M$ is $C^\alpha$ along $M$ if and only if $M\subset\Pb(\Rb^d)$ is a $C^\alpha$-submanifold. In Appendix~\ref{sec:regularity appendix} we will record some geometric consequences of the estimate in (3).

With this, we state the main theorem of this section.

\begin{theorem}\label{thm:main_body} Let $\rho:\Gamma\to\PGL_d(\Rb)$ be $P_1$-Anosov. Suppose  $M\subset\Pb(\Rb^d)$ is a $\rho$-controlled subset that is also a topological $(m-1)$-dimensional manifold for some $m=2,\dots, d-1$. If
\begin{enumerate}
\item[($\dagger$)] $\rho$ is $P_m$-Anosov and $M$ is $m$-hyperconvex, 
\end{enumerate}
then 
\begin{enumerate}
\item [($\ddagger$)] $M$ is $C^\alpha$ along $\xi_\rho^1(\partial_\infty \Gamma)$ for all $\alpha$ such that $1<\alpha<\Fs_m(\rho)$,
\end{enumerate}
where $\Fs_m(\rho)$ is the quantity defined by \eqref{eqn:alpham1} in Section \ref{sec:optimal1}. Moreover, for all $x\in\partial_\infty\Gamma$, the tangent space to $M$ at $\xi_\rho^1(x)$ is $\xi_\rho^m(x)$. 
\end{theorem}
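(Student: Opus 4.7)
The plan is to combine the dominated splitting description of $(1,m)$-Anosov representations with $m$-hyperconvexity, using the compact fundamental domain for the $\Gamma$-action on $P(M)$ from Observation \ref{obs:compact} to convert asymptotic flow estimates into uniform geometric bounds. First I would identify the tangent space. Fix $x\in\partial_\infty\Gamma$, $v\in\wt{U\Gamma}$ with $v^+=x$, and the splitting $\Rb^d=E_1(v)\oplus E_2(v)\oplus E_3(v)$ of \eqref{eqn:E}, so that $E_1(v)\oplus E_2(v)=\xi^{(m)}(x)$. If $p_n\in M\setminus\{\xi^{(1)}(x)\}$ converges to $\xi^{(1)}(x)$, then $m$-hyperconvexity forces the $2$-plane $\xi^{(1)}(x)+\Rb p_n$ to be in direct sum with $\xi^{(d-m)}(y)$ for every $y\in\partial_\infty\Gamma$ with $\xi^{(1)}(y)\notin\{\xi^{(1)}(x),p_n\}$. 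Any secant limit direction $w$ therefore avoids $\xi^{(1)}(x)+\xi^{(d-m)}(y)$ (outside $\xi^{(1)}(x)$) for every $y\neq x$. Since $\xi^{(m)}(x)\cap(\xi^{(1)}(x)+\xi^{(d-m)}(y))=\xi^{(1)}(x)$ by transversality of the $m$- and $(d-m)$-flag maps, a limiting argument using continuity of $\xi^{(d-m)}$, combined with the hypothesis that $M$ is an $(m-1)$-manifold and the equality $\dim\xi^{(m)}(x)/\xi^{(1)}(x)=m-1$, yields via invariance of domain that $T_{\xi^{(1)}(x)}M=\xi^{(m)}(x)$.

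Next I would express $M$ locally as a graph. Choose an affine chart at $\xi^{(1)}(x)$ adapted to the splitting by lifting $\xi^{(1)}(x)$ to a nonzero $e_1\in E_1(v)$ and parametrizing nearby projective points as $[e_1+u+w]$ with $u\in E_2(v)$, $w\in E_3(v)$. The tangent identification, the manifold hypothesis, and the projection $\pi_{v^+,v^-}$ of Proposition \ref{prop:rho_controlled_sets_projections} together give a continuous map $f\colon U_0\subset E_2(v)\to E_3(v)$ with $f(0)=0$ and $df_0=0$ whose graph coincides with $M$ in a neighborhood of $\xi^{(1)}(x)$.

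For the H\"older bound, given $p=[e_1+u+f(u)]\in M$ close to $\xi^{(1)}(x)$, Observation \ref{obs:compact}(3) provides $t=t(p)>0$, with $t(p)\to\infty$ as $p\to\xi^{(1)}(x)$, such that $(\phi_t(v),p)\in P(M)$; co-compactness of the $\Gamma$-action on $P(M)$ then gives $\gamma\in\Gamma$ with $(\gamma^{-1}\phi_t(v),\rho(\gamma)^{-1}p)$ in a fixed compact fundamental domain $\wh K$. Continuity and compactness imply that the $E_1$, $E_2$, $E_3$ components of $\rho(\gamma)^{-1}p$ measured in $\norm{\cdot}_{\gamma^{-1}\phi_t(v)}$ all have norm of order $1$, and $\Gamma$-invariance transfers this to the components of $p$ at base point $\phi_t(v)$. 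Pulling back from $\phi_t(v)$ to $v$ with the dominated splitting estimates of Theorem \ref{prop:dom_split} and the definition of $\alpha^m(\rho)$ in \eqref{eqn:f}--\eqref{eqn:alpham1}, for any $\alpha<\alpha^m(\rho)$ and $p$ sufficiently close to $\xi^{(1)}(x)$ one obtains
\begin{equation*}
\log\norm{f(u)}_v\geq\alpha\log\norm{u}_v+O(1).
\end{equation*}
Comparing $\norm{\cdot}_v$ with the Euclidean norm of the affine chart (bi-Lipschitz equivalent near the origin, uniformly in $v$ by co-compactness of the $\Gamma$-action on $\wt{U\Gamma}$) gives $\abs{f(u)}\leq C\abs{u}^\alpha$, which is precisely the $C^\alpha$-along condition of Section \ref{sec:terminology}; the uniformity in $x\in\partial_\infty\Gamma$ follows from the same co-compactness.

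The main obstacle is this last step: translating the abstract dynamical inequality at the shifted base point $\phi_t(v)$ into a concrete Euclidean estimate $\abs{f(u)}\leq C\abs{u}^\alpha$ in a fixed chart with constants independent of $x$. This requires both a careful comparison between the $\Gamma$-invariant norm on $E$ and projective distances in affine charts adapted to a varying splitting, and the simultaneous use of the $1$-Anosov and $m$-Anosov dominated splittings through the three-piece decomposition $E_1\oplus E_2\oplus E_3$, rather than the usual two-piece version of Theorem \ref{prop:dom_split}.
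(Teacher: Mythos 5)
Your third paragraph is essentially the paper's key inequality (Lemma \ref{thm:optimal_contraction}): the use of $P(M)$ and Observation \ref{obs:compact}, the uniform comparability of the $E_1,E_2,E_3$ components of points of $M$ at base points in a compact fundamental domain, and the defining inequality of $\alpha^m(\rho)$ is exactly the paper's route (modulo the reversed sign in your display, which should read $\log\norm{f(u)}_v\leq\alpha\log\norm{u}_v+O(1)$). The genuine problems are in the surrounding structure. The secant-limit identification of $T_{\xi^{(1)}(x)}M$ in your first paragraph does not work: the condition that $\xi^{(1)}(x)+p_n+\xi^{(d-m)}(y)$ is a direct sum is an open condition, so it is not inherited by limits of secant planes, and even granted, avoiding $\xi^{(1)}(x)+\xi^{(d-m)}(y)$ for every $y\neq x$ does not place a direction inside $\xi^{(m)}(x)$. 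More fundamentally, no soft argument of this kind can succeed: a pointwise ``no degenerate triples'' condition does not exclude corners (a strictly convex plane curve such as the graph of $\abs{t}+t^2$ near $0$ has no three collinear points yet is not $C^1$), so the tangent statement has to come out of the dynamical H\"older estimate itself, as it does in the paper, where it appears only at the end as the linear term of the graph expansion. Since your second paragraph feeds ``the tangent identification'' and $df_0=0$ into the graph construction, the argument as written is circular. The graph structure should instead be obtained as in Lemma \ref{lem:homeo}: injectivity on $M$ of the linear projection with kernel direction $\xi^{(d-m)}(y)$ is immediate from $m$-hyperconvexity, and invariance of domain together with the manifold hypothesis gives the local homeomorphism; the flow-line projections $\pi_{v^+,v^-}$ of Proposition \ref{prop:rho_controlled_sets_projections} play no role in this step.

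Second, your conclusion conflates the estimate at the center of each adapted chart with the definition of ``$C^\alpha$ along $\xi^{(1)}(\partial_\infty\Gamma)$'' in Section \ref{sec:terminology}: that definition requires, in a single chart around each $p$ in the limit set, differentiability and the $O(\norm{h}_2^\alpha)$ expansion at \emph{every} point of the limit set lying in that chart, with its (generally nonzero) derivative there. Having $\abs{f(u)}\leq C\abs{u}^\alpha$ with $f(0)=0$, $df_0=0$ at the center of an $x$-adapted chart, for each $x$, is not that statement, and ``uniformity in $x$ by co-compactness'' does not supply the missing computation. The paper closes this gap by fixing one chart $\Ab_y$, writing $\xi^{(m)}(z)\cap\Ab_y$ as the graph of an affine map $A_z$ with linear part $L_z$ over $\xi^{(m)}(x)\cap\Ab_y$, and applying the uniform inequality of Lemma \ref{thm:optimal_contraction} at each $z$ to obtain $f_x(u_z+h)=f_x(u_z)+L_z(h)+O(\norm{h}^\alpha)$. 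Some such chart-transfer step, using that $\xi^{(m)}(z)$ is transverse to $\xi^{(d-m)}(y)$ and that the constant in the key inequality is uniform over $\partial_\infty\Gamma$, is needed to finish, and it is absent from your plan. In short: your core dynamical estimate coincides with the paper's, but the tangent-space argument, the logical order of the graph construction, and the passage from center-point estimates to the actual $C^\alpha$-along conclusion all need to be repaired along the lines of Lemma \ref{lem:homeo} and the final computation in Section \ref{sec:proof_of_main_thm}.
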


As mentioned in the introduction, in the special case when $M = \xi_\rho^1(\partial_\infty \Gamma)$, a version of Theorem \ref{thm:main_body} was independently proven by Pozzetti-Sambarino-Wienhard \cite{PSW18}, where they prove that $\xi_\rho^1(\partial_\infty \Gamma)$ is a $C^1$-submanifold of $\Pb(\Rb^d)$. It is clear that Theorem~\ref{thm:main} and the first part of Theorem~\ref{thm:regularity2} follow immediately from Theorem \ref{thm:main_body}, see Examples \ref{eg:limitset} and \ref{eg:convex}.

In general there is no upper bound for $\Fs_m(\rho)$. However, as a consequence of Theorem \ref{thm:main_body}, we obtain an upper bound for $\Fs_m(\rho)$ when $\rho$ satisfies the hypothesis of Theorem \ref{thm:main_body} with $M=\xi_\rho^1(\partial_\infty \Gamma)$.

\begin{corollary}\label{cor:main_body1} Let $m=2,\dots,d-1$. Suppose  $\Gamma$ is a hyperbolic group such that $\partial_\infty\Gamma$ is a $(m-1)$-dimensional topological manifold, and $\rho:\Gamma\to\PGL_d(\Rb)$ is a $P_{1,m}$-Anosov representation. If $\xi_\rho^1(\partial_\infty\Gamma)$ is $m$-hyperconvex, then $\Fs_m(\rho)\le 2$. 
\end{corollary}

\begin{proof}
Suppose for the purpose of contradiction that $\Fs_m(\rho)>2$. By Theorem~\ref{thm:main_body} there is some $\alpha>2$ such that $\xi_\rho^1(\partial_\infty\Gamma)$ is $C^\alpha$-submanifold of $\Pb(\Rb^d)$, and the tangent space to $M$ at $\xi_\rho^1(x)$ is $\xi_\rho^m(x)$ for all $x\in\partial_\infty\Gamma$. 

 If $\alpha-1>1$, then the assigment $M\to \Gr_m(\Rb^d)$ given by $a\mapsto T_aM$ is constant (see Observation \ref{obs: appendix3 estimate 2}), so $\xi_\rho^m(x)=\xi_\rho^m(y)$ for all $y\in\partial_\infty\Gamma$ that is sufficiently close to $x$. Since this is true for all $x\in\partial_\infty\Gamma$, it follows that $\xi_\rho^m$ is a constant map. However, this violates the transverality of $\xi_\rho^m$ and $\xi_\rho^{d-m}$. 
\end{proof}

The remainder of this section is the proof of Theorem \ref{thm:main_body}.

%%%%%%%%%%%%%%%%%%%%%%%%%%%%%%%%%%%%%%%%%%%%%%%%%%%%
\subsection{The key inequality }\label{sec:optimal2}
%%%%%%%%%%%%%%%%%%%%%%%%%%%%%%%%%%%%%%%%%%%%%%%%%%%
Suppose  $\rho:\Gamma\to\PGL_d(\Rb)$ is $P_{1,m}$-Anosov for some $m=2,\dots,d-1$ (if $m=d-1$, then $\rho$ is just $P_1$-Anosov). Fix a Riemannian metric on $\Pb(\Rb^d)$, and let $d_{\Pb}$ be the induced distance function on $\Pb(\Rb^d)$. The following lemma is the key inequality needed to prove Theorem \ref{thm:main_body}.

\begin{lemma}\label{thm:optimal_contraction} 
Suppose  $M\subset\Pb(\Rb^d)$ is $\rho$-controlled and $m$-hyperconvex. Then for all $\alpha$ satisfying $0<\alpha<\Fs_m(\rho)$, there exists $D >0$ with the following property: for every $x \in \partial_\infty \Gamma$ and $a\in M$, we have
\begin{align}
\label{eq:inequality_main}
d_{\Pb}\left(a, \xi_\rho^m(x) \right) \leq D d_{\Pb}\left(a, \xi_\rho^1(x) \right)^{\alpha}.
\end{align}
\end{lemma}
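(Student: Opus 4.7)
The plan is to combine three inputs: the $\Gamma$-invariant continuous norm $\norm{\cdot}$ on the bundle $E=\wt{U\Gamma}\times\Rb^d$, the co-compact action of $\Gamma$ on $P(M)$ provided by Observation~\ref{obs:compact}, and the quantitative estimate defining $\alpha^m(\rho)$. Using the $(1,m)$-Anosov hypothesis and transversality, $\Rb^d=E_1(v)\oplus E_2(v)\oplus E_3(v)$ for each $v\in\wt{U\Gamma}$. For $p\in M$ near $\xi^{(1)}(x)$, I would work with a lift $\wt p=a_1+a_2+a_3$ with $a_i\in E_i(v)$ and express both projective distances as ratios of $\norm{a_i}_v$, then exploit the $\alpha^m(\rho)$ contraction to compare them.

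First, I would prove a uniform local comparison: for any compact $K\subset\wt{U\Gamma}$ there exist $C_K,r_K>0$ so that for $v\in K$ with $d_\Pb(p,\xi^{(1)}(v^+))\leq r_K$, the $\rho$-control condition forces $p\notin\xi^{(d-1)}(v^-)$, hence $a_1\neq 0$, and
\[d_\Pb(p,\xi^{(1)}(v^+))\asymp\frac{\norm{a_2}_v+\norm{a_3}_v}{\norm{a_1}_v},\qquad d_\Pb(p,\xi^{(m)}(v^+))\asymp\frac{\norm{a_3}_v}{\norm{a_1}_v}\]
with constants depending only on $K$. This follows from continuity of $E_i(v)$ and $\norm{\cdot}_v$ in $v$, plus the uniform transversality of the three subspaces on the compact set $K$.

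For the main step I would fix $\alpha<\alpha^m(\rho)$, a compact $K$ projecting onto the fundamental set $\wh{K}\subset P(M)$, and a local continuous section $x\mapsto v_0(x)\in K$ of the forward-endpoint map. Given $p$ sufficiently close to $\xi^{(1)}(x)$, Observation~\ref{obs:compact} produces $t>0$, $\gamma\in\Gamma$, and $(w,q)\in\wh{K}$ with $\phi_t v_0=\gamma\cdot w$ and $p=\rho(\gamma)\cdot q$. Since $E_i(\phi_t v_0)=E_i(v_0)$ depends only on $v_0^\pm$ and $\norm{\cdot}$ is $\Gamma$-invariant, one obtains $\norm{a_i}_{\phi_t v_0}=\norm{b_i}_w$, where $\wt q=b_1+b_2+b_3$ is the analogous decomposition at $w$. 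The $\rho$-control gives $b_1\neq 0$ and the $m$-hyperconvexity gives $b_2\neq 0$; the case $b_3=0$ makes $p\in\xi^{(m)}(x)$ and the inequality trivial. After normalising $\norm{b_1}_w=1$, compactness of $\wh{K}$ uniformly bounds $\norm{b_2}_w$ away from $0$ and $\norm{b_2}_w,\norm{b_3}_w$ from above.

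The defining inequality $f(v_0,t)>\alpha$, valid for $t\geq T_0$, applied to the unit vectors $a_i/\norm{a_i}_{v_0}\in S_i(v_0)$ and rewritten via $\norm{a_i}_{\phi_t v_0}=\norm{b_i}_w$, yields
\[\frac{\norm{a_3}_{v_0}}{\norm{a_1}_{v_0}}\leq \frac{\norm{b_3}_w\,\norm{b_1}_w^{\alpha-1}}{\norm{b_2}_w^{\alpha}}\left(\frac{\norm{a_2}_{v_0}}{\norm{a_1}_{v_0}}\right)^\alpha\leq M\left(\frac{\norm{a_2}_{v_0}}{\norm{a_1}_{v_0}}\right)^\alpha,\]
with $M$ depending only on $\wh{K}$; the hypothesis $t\geq T_0$ holds eventually because $t\to\infty$ as $p\to\xi^{(1)}(x)$ by Observation~\ref{obs:compact}(3). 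Plugging in the local comparison from the first step (and using $\norm{a_2}_{v_0}\leq C\norm{a_2+a_3}_{v_0}$, which again comes from uniform transversality) yields the inequality for $x$ in the image of the chosen section. Global uniformity follows by covering $\partial_\infty\Gamma$ by finitely many such sections, and the case where $d_\Pb(p,\xi^{(1)}(x))$ is bounded below is handled trivially by enlarging $D$. The main obstacle will be bookkeeping the two base-points $v_0$ and $\phi_t v_0$ where distance estimates and norm growth respectively are computed; the $\Gamma$-invariance of $\norm{\cdot}$ and the identity $\norm{a_i}_{\phi_t v_0}=\norm{b_i}_w$ are what bridge them.
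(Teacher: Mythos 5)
Your proposal is correct and is essentially the paper's own proof: the same splitting $E_1\oplus E_2\oplus E_3$, the same conversion of the two projective distances into ratios of the components $\norm{a_i}$ (the paper's Observation~\ref{obs:delta1}), the same use of co-compactness of the $\Gamma$-action on $P(M)$ to uniformly control the component ratios at the projected point (Observation~\ref{obs:C}), and the same application of the inequality defining $\alpha^m(\rho)$, with your explicit translation by $\gamma$ into the fundamental set $\wh{K}$ playing the role of the paper's quotient argument. The only cosmetic difference is that the paper absorbs the range $t\in[0,T]$ into a multiplicative constant (Observation~\ref{obs:B0}) rather than forcing $t\geq T_0$; your citation of Observation~\ref{obs:compact}(3) for ``$t\to\infty$ as $p\to\xi^{(1)}(x)$'' slightly overstates what that observation literally asserts, but the needed statement follows at once by applying it to the compact set $\phi_{T_0}(K)$, so this is not a genuine gap.
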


We prove Lemma \ref{thm:optimal_contraction} via a series of observations. 

\begin{observation}\label{obs:B0}
If $0<\alpha<\Fs_m(\rho)$, then there is a constant $B\geq 1$ such that 
\begin{equation}\label{eqn:C1}
\frac{\norm{X_1}_{\varphi_t(v)}}{\norm{X_3}_{\varphi_t(v)}} \leq B \left( \frac{\norm{X_1}_{\varphi_t(v)}}{\norm{X_2}_{\varphi_t(v)}} \right)^{\alpha}
\end{equation}
for all $v\in\Usf(\Gamma)$, $t\geq 0$, and $X_i\in S_i(v)$
\end{observation}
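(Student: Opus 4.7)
The strategy is to split the inequality \eqref{eqn:C1} into two regimes in the flow parameter $t$: a large-$t$ regime where the defining $\liminf$ of $\alpha^m(\rho)$ yields the inequality with $B=1$, and a bounded regime $t\in[0,T_0]$ where compactness yields a finite constant. Taking the larger of the two constants finishes the proof.

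For the large-$t$ regime, I would first fix $\alpha'$ with $\alpha<\alpha'<\alpha^m(\rho)$. By the $\liminf$ definition \eqref{eqn:alpham1}, there exists $T_0>0$ such that $f(v,t)\geq\alpha'$ for every $v\in\wt{U\Gamma}$ and $t\geq T_0$. Meanwhile, since $\rho$ is both $1$-Anosov and $m$-Anosov, Theorem~\ref{prop:dom_split} applied at $k=1$ and $k=m$, combined with the inclusions $E_2(v)\subset\xi^{(d-1)}(v^-)\cap\xi^{(m)}(v^+)$, gives uniform constants $C\geq 1$ and $\beta>0$ such that
\begin{equation*}
\frac{\norm{X_1}_{\phi_t v}}{\norm{X_2}_{\phi_t v}}\leq Ce^{-\beta t},\qquad \frac{\norm{X_2}_{\phi_t v}}{\norm{X_3}_{\phi_t v}}\leq Ce^{-\beta t}
\end{equation*}
for every $X_i\in S_i(v)$ and $t\geq 0$. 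Enlarging $T_0$ if necessary so that $Ce^{-\beta T_0}<1$ ensures that both logarithms in the definition of $f(v,t)$ are strictly positive for $t\geq T_0$. The inequality $f(v,t)\geq\alpha'$ then rearranges, by cross-multiplying two positive logarithms, to $\norm{X_3}_{\phi_t v}/\norm{X_1}_{\phi_t v}\geq(\norm{X_2}_{\phi_t v}/\norm{X_1}_{\phi_t v})^{\alpha'}$. Since $\norm{X_2}_{\phi_t v}/\norm{X_1}_{\phi_t v}\geq 1$ and $\alpha'>\alpha>0$, the right-hand side is at least $(\norm{X_2}_{\phi_t v}/\norm{X_1}_{\phi_t v})^{\alpha}$; taking reciprocals yields \eqref{eqn:C1} with $B=1$ for all $t\geq T_0$.

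For the bounded regime $t\in[0,T_0]$, I would appeal to compactness. Consider the positive continuous function
\begin{equation*}
g(v,X_1,X_2,X_3,t):=\frac{\norm{X_1}_{\phi_t v}^{1-\alpha}\,\norm{X_2}_{\phi_t v}^{\alpha}}{\norm{X_3}_{\phi_t v}}
\end{equation*}
on $S(E_1)\times_{\wt{U\Gamma}}S(E_2)\times_{\wt{U\Gamma}}S(E_3)\times[0,T_0]$, where $S(E_i)\subset E_i$ denotes the unit sphere bundle with respect to $\norm{\cdot}$. Using that $\norm{\cdot}$ is $\Gamma$-invariant, that each $E_i$ is $\Gamma$-equivariant (since it is built from $\rho$-equivariant flag maps), and that the $\Gamma$-action on $\wt{U\Gamma}$ commutes with $\phi_t$, the function $g$ descends to a continuous function on the quotient by the diagonal $\Gamma$-action. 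This quotient is compact because $\Gamma$ acts co-compactly on $\wt{U\Gamma}$ and each sphere fiber is compact; combined with the compactness of $[0,T_0]$, the descended $g$ is bounded above by some $B_0\geq 1$. Rearranging $g\leq B_0$ is exactly \eqref{eqn:C1} with $B=B_0$ on this regime.

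Combining the two regimes, \eqref{eqn:C1} holds for all $t\geq 0$ with $B:=\max(1,B_0)$. The main point of care is verifying compactness of the fiber-product sphere bundle modulo $\Gamma$ in the second step, which reduces to the standing facts that $\norm{\cdot}$ is $\Gamma$-invariant and that the sub-bundles $E_i$ are preserved by the $\Gamma$-action on the trivial bundle $E=\wt{U\Gamma}\times\Rb^d$.
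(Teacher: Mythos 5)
Your proof is correct and follows essentially the same route as the paper: use the $\liminf$ definition of $\alpha^m(\rho)$ to get the inequality with $B=1$ for $t$ large, then bound the remaining compact range $[0,T_0]$ by continuity of the relevant positive function on the compact quotient sphere bundle $S_{\overline{\rho}}\times[0,T_0]$. Your explicit verification (via Theorem~\ref{prop:dom_split}) that both logarithms in $f(v,t)$ are positive before cross-multiplying is a detail the paper leaves implicit, and it is a worthwhile addition rather than a deviation.
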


\begin{proof}
Since $0<\alpha <\Fs_m(\rho)$, there exists $T > 0$ such that $\alpha < f(v,t)$ for all $t \geq T$ and $v \in \Usf(\Gamma)$. So
\begin{align}\label{eqn:noconstant}
\frac{\norm{X_1}_{\varphi_t(v)}}{\norm{X_3}_{\varphi_t(v)}} < \left( \frac{\norm{X_1}_{\varphi_t(v)}}{\norm{X_2}_{\varphi_t(v)}} \right)^{\alpha}
\end{align}
for all $t \geq T$, $v \in \Usf(\Gamma)$, and $X_i \in S_i(v)$. On the other hand, if $0\leq t\leq T$, then the $\Gamma$-invariance of $\norm{\cdot}$ implies that both sides of the inequality \eqref{eqn:noconstant} descend to continuous positive functions on $[0,T]\times S$, where $\widetilde{S}\subset E:=\Usf(\Gamma)\times\Rb^d$ is the fiber bundle over $\Usf(\Gamma)$ whose fiber over $v\in \Usf(\Gamma)$ is $S_1(v)\times S_2(v)\times S_3(v)$, and $S:=\widetilde{S}/\Gamma$. Since $S$ is compact, there exists some $B \geq 1$ such that 
\eqref{eqn:C1} holds. 
\end{proof}

For $i=1,2,3$ and $v \in \Usf(\Gamma)$, let $P_{i,v} : \Rb^d \rightarrow E_i(v)$ be the projection with kernel $E_{i-1}(v)+E_{i+1}(v)$, where arithmetic in the subscripts is done modulo $3$. Also let
\begin{align*}
\pi_{x,y} : M - \left\{ \xi_\rho^1(x), \xi_\rho^1(y) \right\} \rightarrow L_{x,y}
\end{align*}
be a family of maps  which satisfy Proposition~\ref{prop:rho_controlled_sets_projections}. Then, as in Section~\ref{sec:rho_controlled_sets}, define
\begin{align*}
P(M):=\left\{ (v,a) \in \Usf(\Gamma) \times M : a\in \pi_{v^+,v^-}^{-1}(v) \right\}.
\end{align*}
Choose a compact set $K\subset\Usf(\Gamma)$ such that $\Gamma\cdot K=\Usf(\Gamma)$. By enlarging $K$ if necessary, we can ensure that $\{v^+:v\in K\}=\partial_\infty\Gamma$. Let $\norm{\cdot}_2$ denote the standard $\ell^2$-norm on $\Rb^d$. %Using the fact that $\Gamma \backslash P(M)$ is compact (see (1) of Observation~\ref{obs:compact}) and Observation~\ref{obs:nonzero}, we deduce the next two observations.

\begin{observation}\label{obs:C}
There is a constant $C\geq 1$ such that
 \begin{equation}\label{eqn:C2}
\frac{1}{C} \leq  \frac{ \norm{ P_{i,v}\left( X \right) }_v}{\norm{P_{j,v}\left( X \right)}_v} \leq C
\end{equation}
for all $(v,[X]) \in P(M)$ and all $i,j \in \{1,2,3\}$, and
\begin{equation}\label{eqn:C3}
\frac{1}{C}  \leq \frac{\norm{X}_v }{\norm{X}_{2}}\leq C 
\end{equation}
for all $v \in K$ and non-zero $X\in\Rb^d$.
\end{observation}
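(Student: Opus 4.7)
The plan is to obtain both bounds by exhibiting the relevant quantities as continuous $\Gamma$-invariant functions on spaces with compact $\Gamma$-quotient, and then invoking compactness.

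For the estimate (\ref{eqn:C2}), first I would lift $P(M)$ to the auxiliary set
\[\wt{Q} := \{(v,p,X) : (v,p) \in P(M),\ X \in p,\ \norm{X}_v = 1\},\]
which is a two-to-one cover of $P(M)$ (the fiber over $(v,p)$ consists of the pair $\pm X$ of $\norm{\cdot}_v$-unit vectors in the line $p$). Because $\norm{\cdot}_v$ is $\Gamma$-invariant and $\overline{\rho}(\gamma)$ sends the line $p$ to the line $\rho(\gamma)p$, the natural $\Gamma$-action on $P(M)$ lifts to $\wt{Q}$ via $\gamma\cdot(v,p,X) = (\gamma\cdot v,\rho(\gamma)p,\overline{\rho}(\gamma)X)$, and by Observation~\ref{obs:compact}(1) this action is proper and co-compact. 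Now for each pair $i,j \in \{1,2,3\}$ the function
\[F_{ij}(v,p,X) := \frac{\norm{P_{i,v}(X)}_v}{\norm{P_{j,v}(X)}_v}\]
is continuous on $\wt{Q}$, since the subspaces $E_i(v)$ depend continuously on $v$ via the continuous flag maps $\xi^{(k)}$ and $m$-hyperconvexity of $M$ combined with the $\rho$-controlled condition guarantees that $E_1(v) \oplus E_2(v) \oplus E_3(v) = \Rb^d$ at every point $v$ with $(v,p) \in P(M)$. It is strictly positive by Observation~\ref{obs:nonzero}, and $\Gamma$-invariant because the equivariance $P_{i,\gamma\cdot v}(\overline{\rho}(\gamma)X) = \overline{\rho}(\gamma)P_{i,v}(X)$ combines with the $\Gamma$-invariance of the norm to cancel $\overline{\rho}(\gamma)$. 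Hence each $F_{ij}$ descends to a continuous positive function on the compact quotient $\Gamma\backslash\wt{Q}$ and therefore attains a positive minimum and a finite maximum; the maximum of these nine bounds (and their reciprocals) over the finitely many pairs $(i,j)$ gives the constant for (\ref{eqn:C2}).

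For the estimate (\ref{eqn:C3}), I would simply use compactness of $K$: the function $(v,X) \mapsto \norm{X}_v/\norm{X}_2$ is continuous and positive on $K \times (\Rb^d \setminus \{0\})$ and scale-invariant in $X$, so it descends to the compact set $K \times S^{d-1}$ where $S^{d-1}$ is the $\ell^2$-unit sphere, and attains a positive minimum and a finite maximum. Enlarging $C$ from the first step to also bound this ratio yields the full statement.

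There is no substantial obstacle here; both estimates reduce to the same maneuver of recognizing a continuous nowhere-zero function on a space with compact quotient. The only point that deserves a moment of care is the continuity of the projections $P_{i,v}$ in $v$, but this is immediate from the continuity of the flag maps $\xi^{(1)}, \xi^{(m)}, \xi^{(d-m)}, \xi^{(d-1)}$ and the transversality encoded by the $\rho$-controlled and $m$-hyperconvex hypotheses on $M$.
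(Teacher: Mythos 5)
Your proof is correct and follows essentially the same route as the paper: both estimates come from recognizing continuous, strictly positive, $\Gamma$-invariant (resp.\ scale-invariant) functions on spaces with compact quotient --- using Observation~\ref{obs:nonzero} for positivity and Observation~\ref{obs:compact}(1) for co-compactness --- and taking extrema; the paper simply notes the ratio in \eqref{eqn:C2} is independent of the choice of $X\in p$ and so descends directly to $P(M)/\Gamma$, rather than passing to your unit-vector cover $\wt{Q}$, and uses $K\times\Pb(\Rb^d)$ in place of $K\times S^{d-1}$. (One cosmetic quibble: the direct sum $E_1(v)\oplus E_2(v)\oplus E_3(v)=\Rb^d$ already follows from the transversality of the $(1,m)$-Anosov boundary maps for every $v\in\wt{U\Gamma}$; hyperconvexity of $M$ is only needed, via Observation~\ref{obs:nonzero}, for the nonvanishing of the projections of $X\in p$.)
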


\begin{proof}
Since $M$ is $\rho$-controlled and $m$-hyperconvex, it follows that if $v \in \Usf(\Gamma)$ and $a\in M - \{\xi_\rho^1(v^+), \xi_\rho^1(v^-)\}$, then  $P_{i,v}(X) \neq 0$ for all $i=1,2,3$ and all non-zero $X\in a$. Since $\norm{\cdot}_{v\in\Usf(\Gamma)}$ and the sub-bundles $E_1$, $E_2$, and $E_3$ of $E$ are $\Gamma$-invariant, we may define the function $P(M)/\Gamma\to\Rb$ given by
\begin{align*}
[(v,a)]\mapsto\frac{ \norm{ P_{i,v}\left( X \right) }_v}{\norm{P_{j,v}\left( X \right)}_v}
\end{align*}
where $X\in a$ is any non-zero vector. Note that this is a continuous, positive function on $P(M)/\Gamma$. By (1) of Observation \ref{obs:compact}, $P(M)/\Gamma$ is compact, so there exists $C\geq 1$, such that \eqref{eqn:C2} holds. Also, the function $K\times\Pb(\Rb^d)\to\Rb$ defined by 
\[(v,a)\mapsto \frac{\norm{X}_v}{\norm{X}_{2}}\] 
where $X\in a$ is any non-zero vector, is also continuous, and positive, so by further enlarging $C$ if necessary, we may assume that \eqref{eqn:C3} holds.
\end{proof}

Using the fact that $d_{\Pb}$ is induced by a Riemannian metric on $\Pb(\Rb^d)$, we have the following estimates. 

\begin{observation} \label{obs:delta1}
For any sufficiently small $\delta>0$, there exists $A \geq 1$ such that: for all $v \in K$, $a\in M$ such that $d_{\Pb}\left(\xi_\rho^1(v^+), a\right) \leq \delta$ and $X\in a$ non-zero, we have
\begin{equation}\label{eqn:obs1}
\frac{1}{A} \frac{\norm{P_{3,v}(X)}_{2}}{\norm{P_{1,v}(X)}_{2}}\leq d_{\Pb}\left( a, \xi_\rho^m(v^+) \right) \leq  A \frac{\norm{P_{3,v}(X)}_{2}}{\norm{P_{1,v}(X)}_{2}}
\end{equation}
and 
\begin{equation}\label{eqn:obs2}
 \frac{1}{A} \frac{\norm{P_{2,v}(X)}_{2}}{\norm{P_{1,v}(X)}_{2}}\leq d_{\Pb}\left(a, \xi_\rho^1(v^+)\right) \leq A \frac{\norm{P_{2,v}(X)}_{2}+\norm{P_{3,v}(X)}_{2}}{\norm{P_{1,v}(X)}_{2}}.
\end{equation}
\end{observation}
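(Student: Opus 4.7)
The plan is to reduce the projective distances on the left-hand sides of \eqref{eqn:obs1} and \eqref{eqn:obs2} to Euclidean distances in $\Rb^d$, express those Euclidean distances via the direct sum $\Rb^d = E_1(v) \oplus E_2(v) \oplus E_3(v)$, and invoke compactness of $K$ to make all constants uniform. The key identifications are $\xi^{(1)}(v^+) = [E_1(v)]$ and $\xi^{(m)}(v^+) = [E_1(v) + E_2(v)]$, so the target subspaces are complemented by $E_2(v) + E_3(v)$ and $E_3(v)$ respectively, and $P_{3,v}(X)$ already equals $X - (P_{1,v}(X)+P_{2,v}(X))$.

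First, since $d_\Pb$ is induced by a Riemannian metric, I would establish the local bi-Lipschitz comparison: for any linear subspace $W \subset \Rb^d$ and any non-zero $X \in \Rb^d$ with $[X]$ sufficiently close to $[W]$,
\[ d_\Pb([X],[W]) \asymp \frac{\inf_{Y \in W} \norm{X-Y}_{2}}{\norm{X}_{2}}, \]
with comparison constants uniform over the compact family $\{\xi^{(1)}(v^+), \xi^{(m)}(v^+) : v \in K\}$. This reduces the problem to proving, with constants uniform in $v \in K$: (a)~$\inf_{Y \in E_1(v)+E_2(v)} \norm{X-Y}_{2} \asymp \norm{P_{3,v}(X)}_{2}$; (b)~$\norm{P_{2,v}(X)}_{2} \lesssim \inf_{Y \in E_1(v)} \norm{X-Y}_{2} \lesssim \norm{P_{2,v}(X)+P_{3,v}(X)}_{2}$; and (c)~$\norm{X}_{2} \asymp \norm{P_{1,v}(X)}_{2}$ whenever $d_\Pb(\xi^{(1)}(v^+),[X]) \le \delta$.

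Next, I would deduce (a)--(c) from compactness together with pointwise transversality of the decomposition. Since $v \mapsto (E_1(v), E_2(v), E_3(v))$ is continuous with pairwise transverse components on the compact set $K$, the operator norms $\norm{P_{i,v}}_{\mathrm{op}}$ and the reciprocals of the minimal angles between each $E_i(v)$ and $\sum_{j\ne i} E_j(v)$ are uniformly bounded. The upper bound of (a) follows from $X - (P_{1,v}(X)+P_{2,v}(X)) = P_{3,v}(X)$; the lower bound from the angle bound between $E_3(v)$ and $E_1(v)+E_2(v)$. Similarly for (b), the upper bound uses $X - P_{1,v}(X) = P_{2,v}(X) + P_{3,v}(X)$, and the lower bound combines the angle bound between $E_1(v)$ and $E_2(v)+E_3(v)$ with the fact that the projection $E_2(v)+E_3(v) \to E_2(v)$ along $E_3(v)$ has uniformly bounded operator norm (so $\norm{P_{2,v}(X)}_{2} \lesssim \norm{P_{2,v}(X)+P_{3,v}(X)}_{2}$). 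Finally (c) follows because, by (b) and the local Riemannian comparison, $d_\Pb(\xi^{(1)}(v^+),[X]) \le \delta$ forces $\norm{P_{2,v}(X)+P_{3,v}(X)}_{2}/\norm{P_{1,v}(X)}_{2}$ to be arbitrarily small when $\delta$ is small, uniformly on $K$.

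The main obstacle is purely bookkeeping: the smallness of $\delta$ must be calibrated simultaneously so that the local bi-Lipschitz linearization of $d_\Pb$ applies and so that $P_{1,v}(X)$ dominates the $\ell^2$-norm of $X$. Both calibrations reduce to continuous functions on the compact space $K$ being bounded above or below away from zero, so no delicate estimate is needed beyond the uniform control afforded by transversality and compactness.
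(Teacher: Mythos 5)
Your proposal is correct; the paper in fact gives no proof of this observation, asserting it as an immediate consequence of $d_{\Pb}$ being induced by a Riemannian metric, and your argument — comparing $d_{\Pb}([X],[W])$ with $\inf_{Y\in W}\norm{X-Y}_2/\norm{X}_2$ and then using continuity of $v\mapsto (E_1(v),E_2(v),E_3(v))$, transversality, and compactness of $K$ to make the operator norms of $P_{i,v}$ and the angle bounds uniform — is exactly the routine argument the authors intend. The one small point to watch is that your step (c) needs $\norm{X-P_{1,v}(X)}_2$ (not merely $\norm{P_{2,v}(X)}_2$) controlled by $\inf_{Y\in E_1(v)}\norm{X-Y}_2$, but this is already supplied by the uniform angle bound you invoke between $E_1(v)$ and $E_2(v)+E_3(v)$.
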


\begin{proof}
For each $v\in K$, let $\Ab_v$ denote the affine chart $\Pb(\Rb^d)-\xi_\rho^{d-1}(v^-)$, and note that $\xi_\rho^1(v^+)\in\Ab_v$. Let $\langle\cdot,\cdot\rangle_{v\in K}$ be a continuous family of inner products on $\Rb^d$ such that $P_{i,v}$ is an orthogonal projection for $i=1,2,3$, and let $\abs{\cdot}_v$ denote the induced norm on $\Rb^d$. By identifying 
\[\Ab_v\cong\{X\in\Rb^d:\abs{P_{1,v}(X)}_v=1\}/\pm 1,\]
the metric on $\Rb^d$ induced by the inner product $\langle\cdot,\cdot\rangle_v$ induces an Euclidean metric $d_{\Ab_v}$ on $\Ab_v$ where $\xi_\rho^m(v^+)\cap\Ab_v$ and $(\xi_\rho^1(v^+)+\xi_\rho^{d-m}(v^-))\cap\Ab_v$ are orthogonal.

Since $K$ is compact, there is some $\delta>0$ such that the $d_{\Pb}$-ball $B_v\subset\Pb(\Rb^d)$ of radius $\delta$ centered at $\xi_\rho^1(v^+)$ lies in $\Ab_v$. Also, since $d_{\Pb}$ is induced by a Riemannian metric on $\Pb(\Rb^d)$, it is bi-Lipschitz to $d_{\Ab_v}$ on $B_v$. Furthermore, since $K$ is compact, the bi-Lipschitz constants can be chosen to be uniform over all $v\in K$. Finally, since $K$ is compact, $\abs{\cdot}_v$ is uniformly bi-Lipschitz to $\norm{\cdot}_2$ over all $v\in K$. Thus, it suffices to prove 
\[d_{\Ab_v}\left( a, \xi_\rho^m(v^+)\cap \Ab_v \right) = \abs{P_{3,v}(X)}_v\]
and
\[  \abs{P_{2,v}(X)}_v\leq d_{\Ab_v}\left(a, \xi_\rho^1(v^+)\right) \leq  \abs{P_{2,v}(X)}_v+\abs{P_{3,v}(X)}_v.\]
for all $a\in B_v$ and $X\in a$ such that $\abs{P_{1,v}(X)}_v=1$. It is easy to see from basic Euclidean geometry that the former holds, and
\[d_{\Ab_v}\left(a, \xi_\rho^1(v^+)\right)=\sqrt{\abs{P_{2,v}(X)}_v^2+\abs{P_{3,v}(X)}_v^2}.\]
The latter then follows from the observation that $a\le\sqrt{a^2+b^2}\le a+b$ for all $a,b\ge 0$.
\end{proof}

Using Observations \ref{obs:B0}, \ref{obs:C} and \ref{obs:delta1}, we will now prove Lemma \ref{thm:optimal_contraction}.

\begin{proof}[Proof of Lemma \ref{thm:optimal_contraction} ] 
Let $\delta>0$ be sufficiently small so that Observation \ref{obs:delta1} holds. Using (3) of Observation~\ref{obs:compact} and possibly decreasing $\delta > 0$ we may also assume that for all $v \in K$ and $a \in M - \{\xi_\rho^1(v^+)\}$ satisfying $d_{\Pb}\left(\xi_\rho^1(v^+), a\right) \leq \delta$, there is some $t>0$ such that $(\varphi_t (v), a) \in P(M)$. 

A compactness argument implies that it is sufficient to prove Lemma \ref{thm:optimal_contraction} for all $x \in \partial_\infty \Gamma$ and $a\in M - \{\xi_\rho^1(x)\}$ such that $d_{\Pb}\left(\xi_\rho^1(x), a\right) \leq \delta$. By the assumptions on $K$, there exists some $v \in K$ such that $v^+ = x$. Further, by our choice of $\delta$, there exists $t > 0$ such that $(\varphi_t(v),a) \in P(M)$.

For any non-zero $X \in a$ and for $i=1,2,3$, let
\begin{align*}
X_i := \frac{P_{i,v}(X)}{\norm{P_{i,v}(X)}_v}\in S_i(v).
\end{align*} 
By \eqref{eqn:C2}, \eqref{eqn:C3}, and \eqref{eqn:obs1},
\begin{align}
d_{\Pb}\left( a, \xi_\rho^m(x) \right) 
&\leq  A \frac{\norm{P_{3,v}(X)}_{2}}{\norm{P_{1,v}(X)}_{2}} \nonumber \\
&\leq AC^3 \frac{\norm{P_{3,v}(X)}_{v}}{\norm{P_{1,v}(X)}_{v}}  \frac{\norm{P_{1,v}(X)}_{\varphi_t(v)}}{\norm{P_{3,v}(X)}_{\varphi_t(v)}} \label{eqn:1AC3}\\
&= AC^3 \frac{\norm{X_1}_{\varphi_t(v)}}{\norm{X_3}_{\varphi_t(v)}}\nonumber
\end{align}
Using a similar argument, but with \eqref{eqn:obs2} in place of \eqref{eqn:obs1}, proves
\begin{align}
 d_{\Pb}\left( a, \xi_\rho^1(x)\right)  \geq \frac{1}{AC^3} \frac{\norm{X_1}_{\varphi_t(v)}}{\norm{X_2}_{\varphi_t(v)}}.\label{eqn:1AC3-}
\end{align}

Finally, since $0<\alpha<\Fs_m(\rho)$, Observation \ref{obs:B0} and \eqref{eqn:1AC3} give
\begin{align*}
d_{\Pb}\left( a, \xi_\rho^m(x) \right) \leq A BC^3 \left( \frac{\norm{X_1}_{\varphi_t(v)}}{\norm{X_2}_{\varphi_t(v)}}\right)^{\alpha}.
\end{align*}
Combining this with \eqref{eqn:1AC3-} yields
\begin{align*}
d_{\Pb}\left( a, \xi_\rho^m(x) \right)  \leq D d_{\Pb}\left( a, \xi_\rho^1(x)\right)^{\alpha}
\end{align*}
where $D := A^{1+\alpha}B C^{3+3\alpha}$. 
\end{proof}

%%%%%%%%%%%%%%%%%%%%%%%%%%%%%%%%%%%%%%%%%%%%%%%%%%%%%
\subsection{The proof of Theorem \ref{thm:main_body}}\label{sec:proof_of_main_thm}
%%%%%%%%%%%%%%%%%%%%%%%%%%%%%%%%%%%%%%%%%%%%%%%%%%%%%
Fix $\Vc\in\Gr_{d-m}(\Rb^d)$, and $\Uc\in\Gr_m(\Rb^d)$ such that $\Uc+\Vc=\Rb^d$. Note that the projection $\Rb^d\to\Uc$ with kernel $\Vc$ is nowhere zero on $\Rb^d-\Vc$, and therefore projectivizes to a map
\[\Pi_{\Uc,\Vc}:\Pb(\Rb^d)-\Pb(\Vc)\to \Pb(\Uc).\] 
%(Here, we view $\Gr_m(\Rb^d)$ both as the set of $m$-dimensional linear subspaces in $\Rb^d$ and the set of $(m-1)$-dimensional projective subspaces in $\Pb(\Rb^d)$.) 
Then for any $\Hc\in\Gr_{d-1}(\Rb^d)$ such that $\Vc\subset\Hc$, $\Pi_{\Uc,\Vc}$ restricted to the affine chart $\Ab_{\Hc}:=\Pb(\Rb^d)-\Pb(\Hc)$ is an affine projection onto $\Ab_{\Hc}\cap\Pb(\Uc)$. Further, observe that the fibers of $\Pi_{\Uc,\Vc}|_{\Ab_{\Hc}}$ do not depend on $\Uc$, i.e. if $\Uc'\in\Gr_m(\Rb^d)$ is transverse to $\Vc$, then the fibers of $\Pi_{\Uc,\Vc}|_{\Ab_{\Hc}}$ and the fibers of $\Pi_{\Uc',\Vc}|_{\Ab_{\Hc}}$  are both the set of translates of $\mathbb{A}_{\mathcal H}\cap\Pb(\mathcal V)$ in $\mathbb{A}_{\mathcal H}$.

Now, fix $y\in\partial_\infty\Gamma$. We will specialize the observation in the previous paragraph to the case where $\Hc=\xi_\rho^{d-1}(y)$ and $\Vc=\xi_\rho^{d-m}(y)$. This yields the following statement, which we record as an observation.

Consider the affine chart 
\[
\Ab_y:=\Pb(\Rb^d) - \xi_\rho^{d-1}(y).
\]

\begin{observation}\label{obs:parallel}  If $x\in\partial_\infty\Gamma-\{y\}$, then
\begin{align*}
\Pi_{x,y}:=\Pi_{\xi_\rho^m(x),\xi_\rho^{d-m}(y)} : \Ab_y\rightarrow \xi_\rho^m(x)\cap\Ab_y
\end{align*}
is a projection whose fibers do not depend on $x$.
\end{observation}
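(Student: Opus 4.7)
The plan is to derive Observation~\ref{obs:parallel} by specializing the general fact about projections $\Pi_{\Uc,\Vc}$ recorded in the paragraph immediately preceding the statement. For this I would need to verify two structural properties of the flag maps: transversality of $\xi^{(m)}(x)$ with $\xi^{(d-m)}(y)$, and the nesting $\xi^{(d-m)}(y)\subset \xi^{(d-1)}(y)$ for every $y\in\partial_\infty\Gamma$.

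For the first, I would appeal directly to hypothesis $(\dagger)$: since $\rho$ is $m$-Anosov, the transversality of the $m$-flag maps gives $\xi^{(m)}(x)\oplus \xi^{(d-m)}(y)=\Rb^d$ for any distinct $x,y\in\partial_\infty\Gamma$, so $\xi^{(m)}(x)$ is an $m$-dimensional complement to $\xi^{(d-m)}(y)$, which is exactly what is needed to define $\Pi_{\xi^{(m)}(x),\xi^{(d-m)}(y)}$. For the second, my plan is to argue by density of attracting fixed points in $\partial_\infty\Gamma$: for any infinite-order $\gamma\in\Gamma$ with attracting fixed point $\gamma^+$, both $\xi^{(d-m)}(\gamma^+)$ and $\xi^{(d-1)}(\gamma^+)$ are attracting fixed subspaces of $\rho(\gamma)$ on their respective Grassmannians (by the dynamics-preserving property), and coincide with the spans of the top $d-m$ and top $d-1$ generalized eigenvectors of a lift of $\rho(\gamma)$, respectively. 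Since $d-m\leq d-1$, these nest as $\xi^{(d-m)}(\gamma^+)\subset \xi^{(d-1)}(\gamma^+)$, and the inclusion then propagates to all $y\in\partial_\infty\Gamma$ by continuity of the flag maps together with density of $\{\gamma^+:\gamma\in\Gamma\text{ infinite order}\}$ in $\partial_\infty\Gamma$.

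Having verified these two items, I would conclude by specializing the general fact in the preceding paragraph with $\Hc=\xi^{(d-1)}(y)$, $\Vc=\xi^{(d-m)}(y)$, and $\Uc=\xi^{(m)}(x)$. The nesting $\Vc\subset \Hc$ ensures $\Ab_y\subset \Ab_\Hc$, so $\Pi_{x,y}$ is defined on all of $\Ab_y$, and the explicit description of the fibers as $[\Rb\cdot X+\Vc]\cap \Ab_\Hc$ shows they depend only on $\Vc=\xi^{(d-m)}(y)$, which is independent of $x$. I do not expect any real obstacle here; the observation is essentially a bookkeeping step that packages a standard property of Anosov flag maps with the elementary linear-algebra fact already recorded in the previous paragraph, and its role is to set up the affine projection that the differentiability proof for Theorem~\ref{thm:main_body} will use to compare $M$ to its putative tangent space $\xi^{(m)}(x)$ as $x$ varies.
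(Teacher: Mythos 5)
Your proposal is correct and follows essentially the same route as the paper, which records this observation with no separate proof: it is simply the specialization $\Hc=\xi^{(d-1)}(y)$, $\Vc=\xi^{(d-m)}(y)$, $\Uc=\xi^{(m)}(x)$ of the linear-algebra paragraph immediately preceding it. Your only addition is to spell out the two prerequisites the paper leaves implicit — transversality $\xi^{(m)}(x)\oplus\xi^{(d-m)}(y)=\Rb^d$ from $m$-Anosovness, and the nesting $\xi^{(d-m)}(y)\subset\xi^{(d-1)}(y)$ via the dynamics-preserving property at attracting fixed points together with density and continuity — both of which are standard and correctly argued (note only that $\Ab_y$ equals, rather than merely sits inside, $\Ab_{\Hc}$ for this choice of $\Hc$).
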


Since $M$ is $\rho$-controlled, $M-\{\xi_\rho^1(y)\}\subset\Ab_y$, so we may define 
\[F_{x,y}:=\Pi_{x,y}|_{M-\{\xi_\rho^1(y)\}}.\]

\begin{lemma}\label{lem:homeo} If $x\in\partial_\infty\Gamma-\{y\}$, then the map
\begin{align*}
F_{x,y}:M-\{\xi_\rho^1(y)\} \rightarrow \xi_\rho^m(x) \cap \Ab_y.
\end{align*}
is a homeomorphism onto an open set $I(x,y)\subset \xi_\rho^m(x) \cap \Ab_y$.
\end{lemma}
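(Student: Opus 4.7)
My plan is to prove the lemma in four steps: continuity, injectivity, openness (via Brouwer's invariance of domain), and surjectivity. Continuity is immediate since $\Pi_{x,y}$ is a continuous (rational) map on the affine chart $\Ab_y=\Pb(\Rb^d)\setminus[\xi^{(d-1)}(y)]$, and by property (ii) of the $\rho$-controlled definition we have $M\cap \xi^{(d-1)}(y)=\xi^{(1)}(y)$, so $M\setminus\{\xi^{(1)}(y)\}\subset\Ab_y$.

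For injectivity I will use the $m$-hyperconvexity of $M$. If $F_{x,y}(p_1)=F_{x,y}(p_2)=q$ with $p_1,p_2\in M\setminus\{\xi^{(1)}(y)\}$, then both points lie in the fiber $\Pi_{x,y}^{-1}(q)$, which is contained in the projectivization of the $(d-m+1)$-dimensional subspace $\Rb\cdot q+\xi^{(d-m)}(y)$ of $\Rb^d$. If $p_1\neq p_2$, then $p_1,p_2,\xi^{(1)}(y)$ are pairwise distinct, and $m$-hyperconvexity (applied with $z=y$) forces $p_1+p_2+\xi^{(d-m)}(y)$ to be a direct sum of linear dimension $d-m+2$, contradicting the containment. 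Next, both the domain $M\setminus\{\xi^{(1)}(y)\}$ and the target $\xi^{(m)}(x)\cap\Ab_y$ are topological $(m-1)$-manifolds; for the target, the transversality $\xi^{(m)}(x)+\xi^{(d-1)}(y)=\Rb^d$ implies $\xi^{(m)}(x)\cap\xi^{(d-1)}(y)$ has linear dimension $m-1$, so $\xi^{(m)}(x)\cap\Ab_y$ is homeomorphic to $\Rb^{m-1}$. Brouwer's invariance of domain then upgrades the continuous injection $F_{x,y}$ to an open embedding.

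The main obstacle is surjectivity. The image is open in the connected target $\xi^{(m)}(x)\cap\Ab_y\cong\Rb^{m-1}$, so it suffices to show the image is also closed. Suppose $q_n=F_{x,y}(p_n)\to q\in \xi^{(m)}(x)\cap\Ab_y$; by compactness of $M$ we may pass to a subsequence with $p_n\to p\in M$, and if $p\neq\xi^{(1)}(y)$ then continuity yields $q=F_{x,y}(p)$, placing $q$ in the image. To rule out $p=\xi^{(1)}(y)$, I would lift $p_n=[X^{(n)}]$ with $X^{(n)}\to X\in\xi^{(1)}(y)$ (unit vectors) and decompose $X^{(n)}=U^{(n)}+V^{(n)}$ along $\xi^{(m)}(x)\oplus\xi^{(d-m)}(y)$; since $X\in\xi^{(1)}(y)\subset\xi^{(d-m)}(y)$ we have $U^{(n)}\to 0$, and convergence of $F_{x,y}(p_n)=[U^{(n)}]$ to a point of $\Ab_y$ would force $U^{(n)}/\norm{U^{(n)}}_2\to Y$ with $Y\in\xi^{(m)}(x)\setminus\xi^{(d-1)}(y)$, i.e., a limiting ``tangent direction'' of $M$ at $\xi^{(1)}(y)$ lying outside $\xi^{(d-1)}(y)$. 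I would rule this out by combining $m$-hyperconvexity applied to pairs of points in $M$ simultaneously approaching $\xi^{(1)}(y)$ with the topological manifold structure at $\xi^{(1)}(y)$, exploiting that $\xi^{(d-m)}(y)\subset\xi^{(d-1)}(y)$ and $M\cap[\xi^{(d-m)}(y)]=\{\xi^{(1)}(y)\}$ to force any such limit tangent direction to lie in $\xi^{(d-1)}(y)$, producing the desired contradiction and completing the proof.
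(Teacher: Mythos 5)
Your first three steps (continuity, injectivity via $m$-hyperconvexity with $z=y$, and invariance of domain) are exactly the paper's argument, and they are fine. The gap is in the surjectivity step. Reducing surjectivity to closedness of the image in the connected target is legitimate, and you correctly isolate the only bad case: $p_n\to\xi^{(1)}(y)$ in $M$ while $F_{x,y}(p_n)=[U^{(n)}]$ converges to a point of $\xi^{(m)}(x)\cap\Ab_y$, i.e.\ the directions $U^{(n)}/\norm{U^{(n)}}_2$ converge to some $Y\in\xi^{(m)}(x)\setminus\xi^{(d-1)}(y)$. But the claim that this cannot happen is precisely the crux, and the ingredients you propose do not yield it. The fact that $M\cap\xi^{(d-1)}(y)=\xi^{(1)}(y)$ (and a fortiori $M\cap[\xi^{(d-m)}(y)]=\{\xi^{(1)}(y)\}$) constrains where the \emph{points} of $M$ lie, not where limiting \emph{directions} of secants at $\xi^{(1)}(y)$ point: a topological manifold can approach a point while its chord directions accumulate anywhere, even if it avoids a given hyperplane through that point. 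Likewise, $m$-hyperconvexity applied to pairs $p_1,p_2\to\xi^{(1)}(y)$ gives no quantitative control without some renormalization. In effect, your missing claim is a weak tangency statement ("the tangent cone of $M$ at $\xi^{(1)}(y)$ lies in $\xi^{(d-1)}(y)$"), which in the paper is only obtained downstream from the Anosov contraction estimates (via the projections of Proposition~\ref{prop:rho_controlled_sets_projections} and Lemma~\ref{thm:optimal_contraction}), not from hyperconvexity and topology alone. As written, the proposal therefore has a genuine gap at its final step.

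For comparison, the paper proves surjectivity dynamically rather than by properness: writing $I(x,y)$ for the (open) image, for an infinite-order $\gamma$ the set $I(\gamma^+,\gamma^-)$ is $\rho(\gamma)$-invariant and contains $\xi^{(1)}(\gamma^+)$, which is an attracting fixed point of $\rho(\gamma)$ acting on the affine slice $\xi^{(m)}(\gamma^+)\cap\Ab_{\gamma^-}$; hence backward iterates $\rho(\gamma)^{-n}$ of any neighborhood of $\xi^{(1)}(\gamma^+)$ exhaust the slice, forcing $I(\gamma^+,\gamma^-)=\xi^{(m)}(\gamma^+)\cap\Ab_{\gamma^-}$, and the general case follows from density of axis pairs $(\gamma^+,\gamma^-)$ in $\partial_\infty\Gamma^{(2)}$. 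If you want to salvage your route, you would need to inject this kind of group-dynamical input (or the contraction estimates) to justify the escape-to-infinity of $F_{x,y}(p)$ as $p\to\xi^{(1)}(y)$; it does not come for free.
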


\begin{proof}

If $p_1, p_2 \in M - \{\xi_\rho^1(y)\}$ and $F_{x,y}(p_1) = F_{x,y}(p_2)$, then 
\begin{align*}
p_1 + \xi_\rho^{d-m}(y) = p_2 + \xi_\rho^{d-m}(y),
\end{align*}
so $p_1 + p_2 + \xi_\rho^{d-m}(y)$ is not direct. The assumption that $M$ is $m$-hypercovex implies that $p_1=p_2$. Thus, $F_{x,y}$ is injective. 

Since $F_{x,y}$ is continuous, we can now apply the invariance of domain theorem to deduce that $F_{x,y}$ is a homeomorphism onto an open set $I(x,y)$ in $\xi_\rho^m(x)\cap \Ab_y$ (recall that $M$ is a topological $(m-1)$-manifold). 
\end{proof}

Fix $x \in \partial_\infty \Gamma-\{y\}$ and choose an affine identification $\Ab_y\cong\Rb^{d-1}$ such that
\begin{itemize}
\item $\xi_\rho^1(x) = 0$,
\item $\xi_\rho^m(x)\cap\Ab_y = \Rb^{m-1} \times \{0\}$,
\item $\left(\xi_\rho^1(x)+ \xi_\rho^{d-m}(y) \right)\cap\Ab_y  = \{0\} \times \Rb^{d-m}$.
\end{itemize}
Fix a Riemannian metric on $\Pb(\Rb^d)$ that agrees with the standard Euclidean metric on $\Ab_y\cong \Rb^{d-1}$ when restricted to a neighborhood $U$ of $\xi_\rho^1(x)$. Let $d_{\Pb}$ be the induced distance function of this Riemannian metric.

For any $z\in\partial_\infty\Gamma-\{y\}$, let
\begin{align*}
A_z : \Rb^{m-1} \times \{0\} \rightarrow \{0\} \times \Rb^{d-m}
\end{align*}
be the map whose graph is $H_z:=\xi_\rho^m(z)\cap\Ab_y $, i.e.
\begin{align*}
H_z = \left\{ u+A_z(u) : u \in \Rb^{m-1} \times \{0\} \right\},
\end{align*}
see Figure \ref{fig:graph}. Note that $A_z$ is an affine map, and Observation \ref{obs:parallel} implies that 
$$
I(z,y)=\{u + A_z(u) : u \in I(x,y) \}.
$$

\begin{figure}[ht]
\centering
\includegraphics[scale=0.7]{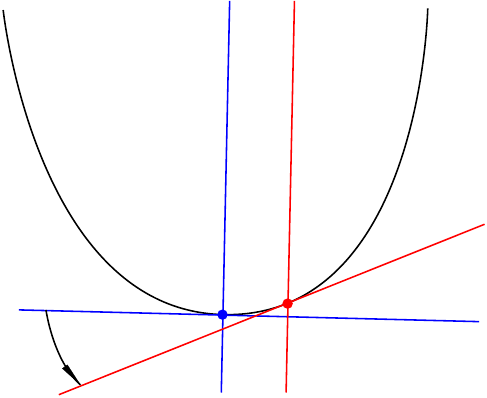}
\small
\put (-20, 29){\tiny\color{blue}$\xi_\rho^m(x)\cap\Ab_y$}
\put (-20, 57){\tiny\color{red}$H_z$}
\put (-160, 80){\tiny$M$}
\put (-70, 137){\tiny\color{red}$(\xi_\rho^1(z)+ \xi_\rho^{d-m}(y))\cap\Ab_y$}
\put (-165, 137){\tiny\color{blue}$(\xi_\rho^1(x)+ \xi_\rho^{d-m}(y))\cap\Ab_y$}
\put (-155, 15){\tiny$A_z$}
\caption{$M$ in the affine chart $\Ab_y$.}\label{fig:graph}
\end{figure}

Let 
\[L_z: \Rb^{m-1}\times\{0\} \rightarrow \{0\}\times\Rb^{d-m}\] 
denote the linear part of $A_z$ (in our choice of affine coordinates). Note also that the maps $z \mapsto A_z$ and $z \mapsto L_z$ are continuous.  Furthermore, by Lemma~\ref{lem:homeo}, there exists a map 
\begin{align*}
f_z : I(z,y) \rightarrow \{0\} \times \Rb^{d-m}
\end{align*}
whose graph is $M-\xi_\rho^1(y )$, i.e.
\begin{align*}
M-\xi_\rho^1(y )= \{ u + f_z(u) : u \in I(z,y)\}.
\end{align*}
Notice that for any $z \in \partial_\infty \Gamma$ and $u \in I(x,y)\subset\Rb^{m-1} \times \{0\}$, we have
\begin{align*}
u + f_{x}(u) = \Big( u+ A_z(u) \Big) + f_z\Big( u+ A_z(u) \Big).
\end{align*}

Fix $\alpha$ satisfying $1\leq\alpha <\Fs_m(\rho)$ and fix $x \in \partial_\infty \Gamma -\{y\}$. Then fix a neighborhood $U$ of $\xi^1_\rho(x)$ in $I(x,y)$ and $\delta > 0$ such that: if $u \in U$, $h \in \Rb^{m-1} \times \{0\}$, and $\norm{h}_2 < \delta$, then $u+h \in I(x,y)$. By possibly decreasing $\delta$ and using Lemma~\ref{thm:optimal_contraction}, there exists $C > 0$ such that for all $z \in F_{x,y}^{-1}(U) \cap \xi^1(\partial_\infty \Gamma)$ and $h\in\Rb^{m-1} \times \{0\}$ with $\norm{h}_2 < \delta$, we have
\begin{align}\label{eqn: littleo1}
\norm{f_z\Big(\xi_\rho^1(z)+h+L_z(h) \Big)} \leq C\norm{h}_2^{\alpha}.
\end{align}

Fix $u \in U$ with $u+f_x(u) \in \xi^1_\rho(\partial_\infty \Gamma)$. Then let $z \in \partial_\infty \Gamma$ be the unique point with $u+f_x(u) = \xi_\rho^1(z)$. Then by definition, $f_x(u) = A_z(u)$. Now, if $h \in \Rb^{m-1} \times \{0\}$ and $\norm{h}_2 < \delta$, then
\begin{align*}
f_{x}(u+h) 
&= A_z(u+h) + f_z\Big( u+h+ A_z(u+h) \Big) \\
& = A_z(u)+L_z(h) + f_z\Big( u+h+ A_z(u)+L_z(h) \Big) \\
&= f_{x}(u) + L_z(h) + f_z\Big(\xi_\rho^1(z)+h+L_z(h) \Big).
\end{align*}
So Equation~\eqref{eqn: littleo1} implies that $f_x$ is $C^1$ at $u$, $d(f_x)_u = L_z$, and 
$$
\norm{ f_x(u+h) - f_x(u) - d(f_x)_u(h)}_2 \leq C\norm{h}_2^{\alpha}. 
$$
for all $h \in \Rb^{m-1} \times \{0\}$ with $\norm{h}_2 < \delta$.

Since $y \in \partial_\infty \Gamma$ and $x \in \partial_\infty \Gamma$ were arbitrary, this implies that $M$ is $C^\alpha$ along $\xi_\rho^1(\partial_\infty \Gamma)$. Since  $1\leq\alpha <\Fs_m(\rho)$ was arbitrary, this completes the proof of Theorem~\ref{thm:main_body}.

%%%%%%%%%%%%%%%%%%%%%%%%%%%%%%%%%%%%%%%%%%%%%%%%%%%%
%%%%%%%%%%%%%%%%%%%%%%%%%%%%%%%%%%%%%%%%%%%%%%%%%%%%
\section{Eigenvalue description of $\Fs_m(\rho)$}\label{sec:optimal}
%%%%%%%%%%%%%%%%%%%%%%%%%%%%%%%%%%%%%%%%%%%%%%%%%%%%
%%%%%%%%%%%%%%%%%%%%%%%%%%%%%%%%%%%%%%%%%%%%%%%%%%%%
Recall that in the introduction, we defined 
\begin{align*}
\Es_m(\rho) := \inf_{\gamma\in\Gamma}\left\{\log\frac{\lambda_1(\rho(\gamma))}{\lambda_{m+1}(\rho(\gamma))}\Bigg/\log\frac{\lambda_1(\rho(\gamma))}{\lambda_{m}(\rho(\gamma))}: \frac{\lambda_1(\rho(\gamma))}{\lambda_{m}(\rho(\gamma))} \neq 1 \right\}.
\end{align*}
The main result of this section is the following theorem.

\begin{theorem}\label{thm:alphas} If $\rho:\Gamma\to\PGL(d,\Rb)$ is $P_{1,m}$-Anosov and the Zariski closure of $\rho(\Gamma)$ in $\PGL(d,\Rb)$ is semisimple, then
\begin{align*}
 \Fs_m(\rho)=\Es_m(\rho),
\end{align*}
where $\Fs_m(\rho)$ is the quantity defined by \eqref{eqn:alpham1}.
\end{theorem}

\begin{remark} Recall, from Proposition~\ref{prop:Zclosure}, that if $\rho$ is irreducible, then the Zariski closure of $\rho(\Gamma)$ in $\PGL(d,\Rb)$ is semisimple.
\end{remark} 

%The proof of Theorem \ref{thm:alphas} will be given in the following two parts. First, we will use general properties of singular values to relate the quantity $f(v,t)$ (the function $f$ was defined by \eqref{eqn:f}) to the ratios of eigenvalues of $\rho(\gamma)$ when $v^\pm=\gamma^\pm$. Then, we will use a deep result due to Benoist to finish the proof.

Theorem \ref{thm:alphas} has the following consequences.

\begin{corollary}\label{cor: d-1 bound}
If $\rho:\Gamma\to\PGL(d,\Rb)$ is $P_{1,m}$-Anosov and the Zariski closure of $\rho(\Gamma)$ in $\PGL(d,\Rb)$ is semisimple, then $\Fs_{d-1}(\rho)=\Es_{d-1}(\rho)\le 2$.
\end{corollary}

\begin{proof}
By Theorem \ref{thm:alphas}, we need to show that $\Es_{d-1}(\rho)\le 2$. For any $\gamma\in\Gamma$, set 
\[k(\gamma):=\log\frac{\lambda_1(\rho(\gamma))}{\lambda_d(\rho(\gamma))}\Bigg/\log\frac{\lambda_1(\rho(\gamma))}{\lambda_{d-1}(\rho(\gamma))}\]
and note that
\[k(\gamma^{-1})=\log\frac{\lambda_1(\rho(\gamma))}{\lambda_d(\rho(\gamma))}\Bigg/\log\frac{\lambda_2(\rho(\gamma))}{\lambda_d(\rho(\gamma))}.\]
Thus, 
\[\frac{1}{k(\gamma)}+\frac{1}{k(\gamma^{-1})}=\left(\log\frac{\lambda_1(\rho(\gamma))}{\lambda_{d-1}(\rho(\gamma))}+\log\frac{\lambda_2(\rho(\gamma))}{\lambda_d(\rho(\gamma))}\right)\Bigg/\log\frac{\lambda_1(\rho(\gamma))}{\lambda_d(\rho(\gamma))}\ge 1.\]
In particular, either $k(\gamma)\le 2$ or $k(\gamma^{-1})\le 2$, which implies that $\Es_{d-1}(\rho)\le 2$. 
\end{proof}

\begin{corollary}\label{cor:main_body} Let $m=2,\dots,d-1$. Suppose $\Gamma$ is a hyperbolic group such that $\partial_\infty\Gamma$ is a $(m-1)$-dimensional topological manifold, $\rho:\Gamma\to\PGL_d(\Rb)$ is a $P_{1,m}$-Anosov representation, and the Zariski closure of $\rho(\Gamma)$ in $\PGL(d,\Rb)$ is semisimple. If $\xi_\rho^1(\partial_\infty\Gamma)$ is $m$-hyperconvex, then $\Es_m(\rho)\le 2$. 
\end{corollary}

\begin{proof}
This follows immediately from Corollary \ref{cor:main_body1} and Theorem \ref{thm:alphas}.
\end{proof}

The remainder of this section is devoted to the proof of Theorem \ref{thm:alphas}. First, we make several reductions. Since $\Es_m(\rho)$ and $\Fs_m(\rho)$ are invariant under passing to a finite index subgroup (see Remark~\ref{rem:stablility}), we can assume that $\rho$ admits a lift $\overline{\rho}:\Gamma\to\SL_d(\Rb)$ (see Remark \ref{rem:lift}). Then, by passing to another finite index subgroup, we may also assume that the Zariski closure of $\rho(\Gamma)$ is connected (and semisimple by assumption).

%By Proposition~\ref{prop:strongly_irreducible} this representation is still irreducible. 

We will prove that $\Fs_m(\rho)\le\Es_m(\rho)$ and $\Fs_m(\rho)\ge\Es_m(\rho)$ separately.

%%%%%%%%%%%%%%%%%%%%%%%%%%%%%%%%%%%%%%%%%%%%%%%%%%%%
\subsection{The proof of $\Fs_m(\rho)\le\Es_m(\rho)$}
%%%%%%%%%%%%%%%%%%%%%%%%%%%%%%%%%%%%%%%%%%%%%%%%%%%%

Let $E:=\Usf(\Gamma)\times\Rb^d$, and for $i=1,2,3$, let $E_i$ be the $\Gamma$-invariant sub-bundle of $E$ defined by \eqref{eqn:E}. Also, choose a $\rho$-equivariant family of inner products $\langle\cdot,\cdot\rangle_{v\in\Usf(\Gamma)}$ on $\Rb^d$ such that for all $v\in\Usf(\Gamma)$, $(E_1)_v\oplus (E_2)_v\oplus (E_3)_v$ is an orthogonal splitting of $\Rb^d$, and let $\norm{\cdot}_v=\sqrt{\langle\cdot,\cdot\rangle_v}$.

For any $(v,t)\in \Usf(\Gamma)\times\Rb$, let $\sigma_i(v,t)$ denote the $i$-th singular value of 
\[\id=\id_{v,t}:(\Rb^d,\norm{\cdot}_v)\to(\Rb^d,\norm{\cdot}_{\varphi_t(v)}).\] 
Then define the function $h:\Usf(\Gamma)\times\Rb\to\Rb$ by 
\[h(v,t):=\log\frac{\sigma_{d-m}(v,t)}{\sigma_d(v,t)}\Bigg/\log\frac{\sigma_{d-m+1}(v,t)}{\sigma_{d}(v,t)}.\]
The functions $h$ and $f$ (recall that $f$ is defined by \eqref{eqn:f}) are related by the following lemma.

\begin{lemma} \label{lem:f and g} For all $v\in \Usf(\Gamma)$ and for sufficiently large $t$, we have
\[f(v,t)=h(v,t).\]
In particular, $\displaystyle\Fs_m(\rho)=\liminf_{t\to+\infty}\inf_{v\in \Usf(\Gamma)}h(v,t)$.
\end{lemma}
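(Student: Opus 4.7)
The plan is to exploit two structural features of the subbundles $E_1, E_2, E_3$: they are simultaneously invariant under the flow and orthogonal under $\langle \cdot, \cdot \rangle$. Since $v^+ = \phi_t(v)^+$ and $v^- = \phi_t(v)^-$, each subspace $E_i(v)$ equals $E_i(\phi_t(v))$ as a subspace of $\Rb^d$. And since $\langle \cdot, \cdot \rangle$ was chosen so that $E_1 \oplus E_2 \oplus E_3$ is orthogonal at every point of $\wt{U\Gamma}$, this same decomposition is orthogonal with respect to both $\langle \cdot, \cdot \rangle_v$ and $\langle \cdot, \cdot \rangle_{\phi_t(v)}$. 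A standard block-diagonalization argument then identifies the singular values of $\id_{v, \phi_t(v)}$ with the union (with multiplicity) of the singular values of its three restrictions $\id|_{E_i(v)}$; these restricted singular values are simply the extremal stretch factors $\|X\|_{\phi_t(v)}$ for $X \in S_i(v)$.

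Next I would apply Theorem~\ref{prop:dom_split} twice---once to the $1$-Anosov splitting $E_1 \oplus (E_2 \oplus E_3)$, once to the $m$-Anosov splitting $(E_1 \oplus E_2) \oplus E_3$---to obtain uniform constants and a threshold $T_0 > 0$ such that for every $v \in \wt{U\Gamma}$ and every $t \geq T_0$: the stretch factor on $E_1(v)$ is strictly smaller than every stretch factor on $E_2(v)$, which in turn is strictly smaller than every stretch factor on $E_3(v)$. This strict ordering pins down which of the overall $\sigma_j(v,t)$ comes from which block, giving
\begin{align*}
\sigma_d(v,t) &= \|X_1\|_{\phi_t(v)} \quad \text{for any } X_1 \in S_1(v), \\
\sigma_{d-m+1}(v,t) &= \max_{X_2 \in S_2(v)} \|X_2\|_{\phi_t(v)}, \\
\sigma_{d-m}(v,t) &= \min_{X_3 \in S_3(v)} \|X_3\|_{\phi_t(v)}.
\end{align*}

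Finally, for $t \geq T_0$ and every choice of unit $X_i \in S_i(v)$, both ratios $\|X_3\|_{\phi_t(v)} / \|X_1\|_{\phi_t(v)}$ and $\|X_2\|_{\phi_t(v)} / \|X_1\|_{\phi_t(v)}$ strictly exceed $1$ by the dominated splitting, so the quotient of logarithms appearing in the definition of $f(v,t)$ is a ratio of two positive quantities, monotonically increasing in the numerator and decreasing in the denominator. Because $E_1(v)$ is one-dimensional, $\|X_1\|_{\phi_t(v)}$ is independent of the choice of $X_1 \in S_1(v)$, and since $X_2$ and $X_3$ vary independently, the infimum defining $f(v,t)$ is realized by choosing $X_3$ to minimize $\|X_3\|_{\phi_t(v)}$ and $X_2$ to maximize $\|X_2\|_{\phi_t(v)}$. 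Substituting the singular value identifications above yields $f(v,t) = h(v,t)$. Since the threshold $T_0$ is uniform in $v$, taking $\liminf_{t \to \infty} \inf_v$ of both sides then gives the ``in particular'' statement. The main obstacle I anticipate is being careful about uniformity of the threshold $T_0$ across $\wt{U\Gamma}$; this follows from the global nature of the constants in Theorem~\ref{prop:dom_split}, but needs to be recorded explicitly to justify the exchange of $\liminf_{t\to\infty}$ with $\inf_v$ at the end.
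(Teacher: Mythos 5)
Your argument is correct and is essentially the paper's own proof: both use flow-invariance and orthogonality of $E_1\oplus E_2\oplus E_3$ to block-diagonalize $\id_{v,\phi_t(v)}$, invoke the dominated splittings from Theorem~\ref{prop:dom_split} to order the stretch factors for $t$ past a uniform threshold and thereby identify $\sigma_d$, $\sigma_{d-m+1}$, $\sigma_{d-m}$ with the extremal stretch factors on $E_1$, $E_2$, $E_3$, and then use $\dim E_1(v)=1$ to see that the infimum defining $f(v,t)$ is attained at those extremizers. Your explicit remark that the threshold is uniform in $v$ (so the infima over $v$ and the liminf can be compared) is a point the paper leaves implicit, but it is the same argument.
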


\begin{proof}
Recall that $S_i(v):=\{X\in E_i(v):\norm{X}_v=1\}$ for $i=1,2,3$. Since $E=E_1\oplus E_2\oplus E_3$ is an orthogonal splitting, Theorem \ref{prop:dom_split} implies that for all $v\in \Usf(\Gamma)$ and for sufficiently large $t$, 
\begin{itemize}
\item $\sigma_d(v,t)=\norm{X}_{\varphi_t(v)}$ for all $X\in S_1(v)$,
\item $\sigma_{d-m+1}(v,t)=\sup_{X\in S_2(v)} \norm{X}_{\varphi_t(v)}$,
\item $\sigma_{d-m}(v,t)=\inf_{X\in S_3(v)} \norm{X}_{\varphi_t(v)}$.
\end{itemize}
Thus,
\begin{eqnarray*}
h(v,t)&=&\log\frac{\sigma_{d-m}(v,t)}{\sigma_d(v,t)}\Bigg/\log\frac{\sigma_{d-m+1}(v,t)}{\sigma_{d}(v,t)}\\
&=&\log\frac{\inf_{X\in S_3(v)} \norm{X}_{\varphi_t(v)}}{\sup_{X\in S_1(v)}\norm{X}_{\varphi_t(v)}}\Bigg/\log\frac{\sup_{X\in S_2(v)} \norm{X}_{\varphi_t(v)}}{\inf_{X\in S_1(v)}\norm{X}_{\varphi_t(v)}}\\
&=&\inf_{X_i\in S_i(v)}\left\{\log\frac{\norm{X_3}_{\varphi_t(v)}}{\norm{X_1}_{\varphi_t(v)}}\Bigg/\log\frac{\norm{X_2}_{\varphi_t(v)}}{\norm{X_1}_{\varphi_t(v)}}\right\}\\
&=&f(v,t).
\end{eqnarray*}
Notice that in the third equality, we used the fact that $\dim E_1(v)=1$.
\end{proof}

The following observation is a straightforward consequence of the min-max definition of singular values (see Definition \ref{def: singular}).

\begin{observation}\label{prop:easy comp}
Suppose  for $i=1,\dots,4$, $\norm{\cdot}_{(i)}$ are norms on $\Rb^d$ induced by inner products such that for all $X\in\Rb^d$, $\frac{1}{A}\leq\frac{\norm{X}_{(1)}}{\norm{X}_{(2)}}\leq A$ and $\frac{1}{A'}\leq\frac{\norm{X}_{(3)}}{\norm{X}_{(4)}}\leq A'$ for some $A,A'>1$. Let $L:\left(\Rb^d,\norm{\cdot}_{(1)}\right)\to\left(\Rb^d,\norm{\cdot}_{(3)}\right)$ and $L':\left(\Rb^d,\norm{\cdot}_{(2)}\right)\to\left(\Rb^d,\norm{\cdot}_{(4)}\right)$ denote the identity maps. Then
\[\frac{1}{AA'}\leq\frac{\sigma_i(L)}{\sigma_i(L')}\leq AA'.\]
\end{observation}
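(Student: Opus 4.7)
The plan is to reduce the comparison of singular values to a pointwise comparison of stretch factors, and then invoke the min-max characterization.

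First I would compute the stretch factors directly. Since $L$ and $L'$ are both the identity map on $\Rb^d$, for any nonzero $X \in \Rb^d$ one has
\[
\sigma_X(L) = \frac{\norm{X}_{(3)}}{\norm{X}_{(1)}} \quad \text{and} \quad \sigma_X(L') = \frac{\norm{X}_{(4)}}{\norm{X}_{(2)}},
\]
so
\[
\frac{\sigma_X(L)}{\sigma_X(L')} = \frac{\norm{X}_{(3)}}{\norm{X}_{(4)}} \cdot \frac{\norm{X}_{(2)}}{\norm{X}_{(1)}}.
\]
The two hypotheses then give the pointwise bound
\[
\frac{1}{AA'} \,\sigma_X(L') \;\leq\; \sigma_X(L) \;\leq\; AA' \,\sigma_X(L')
\]
for every nonzero $X$.

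The second (and final) step would be to upgrade this pointwise bound to the singular values using the min-max formula
\[
\sigma_i(L) = \max_{\dim W = i}\ \min_{X \in W} \sigma_X(L).
\]
For any $i$-dimensional subspace $W \subset \Rb^d$, pick $X_0 \in W$ minimizing $\sigma_X(L')$; then $\min_{X \in W}\sigma_X(L) \le \sigma_{X_0}(L) \le AA'\sigma_{X_0}(L') = AA'\min_{X\in W}\sigma_X(L')$. Taking the max over $W$ yields $\sigma_i(L) \le AA'\sigma_i(L')$, and the symmetric argument applied to the lower bound gives $\sigma_i(L) \ge \frac{1}{AA'}\sigma_i(L')$. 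Dividing yields the claimed two-sided estimate. There is no real obstacle here; the argument is purely formal once one observes that both maps are the identity, so $\sigma_X(L)$ and $\sigma_X(L')$ are just ratios of norms of the same vector, directly controlled by the hypotheses.
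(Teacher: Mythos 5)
Your argument is correct: since both maps are the identity, the stretch factors are ratios of norms of the same vector, the hypotheses give the pointwise bound $\frac{1}{AA'}\sigma_X(L')\leq\sigma_X(L)\leq AA'\sigma_X(L')$, and the max-min characterization of $\sigma_i$ transfers this to the singular values. The paper omits the proof as a ``straightforward calculation,'' and yours is exactly the intended one.
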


The next lemma relates the function $h$ to the eigenvalues of $\rho(\gamma)$.

\begin{lemma}\label{lem:g and eig}
Let $\gamma\in\Gamma$ be an infinite order element, and let $v\in \Usf(\Gamma)$ such that $v^\pm=\gamma^\pm$. Then
\begin{align}\label{eqn: evalue description}
\lim_{t\to+\infty}h(v,t)=\log\frac{\lambda_1(\rho(\gamma))}{\lambda_{m+1}(\rho(\gamma))}\Bigg/\log\frac{\lambda_1(\rho(\gamma))}{\lambda_{m}(\rho(\gamma))}.
\end{align}
\end{lemma}

\begin{proof}
Let $T:=T_\gamma$ denote the period of $\gamma$ (see Section \ref{sec:flowspace}). Then 
\[\norm{X}_{\varphi_{kT}(v)}=\norm{X}_{\gamma^k\cdot v}=\norm{\overline{\rho}(\gamma^{-k})\cdot X}_v\]
for all $k \in \Zb^+$ and $X \in \Rb^d$. Hence, for all $i=1,\dots,d$, the $i$-th singular value of
\[\id:(\Rb^d,\norm{\cdot}_v)\to(\Rb^d,\norm{\cdot}_{\varphi_{kT}(v)})\]
agrees with $\mu_i(\rho(\gamma^{-k}))$.

It is well-known that 
\begin{align}\label{eqn:Ben}
\lim_{k\to+\infty}\frac{1}{k}\log\mu_i(g^k)=\log\lambda_i(g).
\end{align}
for any $g\in\PGL_d(\Rb)$. Thus, we can deduce that
\begin{equation}\label{eqn:Benoist}
\lim_{k\to+\infty}\sigma_i(v,kT)^{\frac{1}{k}}=\lim_{k\to+\infty}\sigma_i(\rho(\gamma^{-k}))^{\frac{1}{k}}=|\lambda_i(\rho(\gamma^{-1}))|=\frac{1}{|\lambda_{d+1-i}(\rho(\gamma))|},\end{equation}
which implies that
\begin{eqnarray}\label{eqn:period contraction}
\lim_{k\to+\infty}h(v,kT)&=&\lim_{k\to+\infty}\left(\log\frac{\sigma_{d-m}(v,kT)}{\sigma_d(v,kT)}\Bigg/\log\frac{\sigma_{d-m+1}(v,kT)}{\sigma_{d}(v,kT)}\right)\\
&=&\log\frac{\lambda_1(\rho(\gamma))}{\lambda_{m+1}(\rho(\gamma))}\Bigg/\log\frac{\lambda_1(\rho(\gamma))}{\lambda_{m}(\rho(\gamma))}.\nonumber
\end{eqnarray}

For any $t>0$, let $k\in\Zb^+$ such that $t\in[kT,(k+1)T)$. Then Observation \ref{lem:weak flow} implies that there are constants $C\geq 1$ and $\beta\geq 0$ such that 
\[\frac{1}{C}e^{-\beta T}\leq\frac{\norm{X}_{\varphi_tv}}{\norm{X}_{\varphi_{kT}v}}\leq Ce^{\beta T}\]
for all $t \in \Rb$ and $X\in\Rb^d$. This, together with Observation \ref{prop:easy comp}, implies that 
\[\frac{1}{C}e^{-\beta T}\leq\frac{\sigma_i(v,kT)}{\sigma_i(v,t)}\leq Ce^{\beta T}\] 
for all $i=1,\dots,d$. Also, since $\rho$ is $P_{1,m}$-Anosov, we know that
\[\lim_{k\to+\infty}\log\frac{\sigma_{d-m}(v,kT)}{\sigma_d(v,kT)}=\infty=\lim_{k\to+\infty}\log\frac{\sigma_{d-m+1}(v,kT)}{\sigma_{d}(v,kT)}.\]
Hence,
\begin{eqnarray*}
\limsup_{t\to+\infty}h(v,t)&=&\limsup_{t\to+\infty}\log\frac{\sigma_{d-m}(v,t)}{\sigma_d(v,t)}\Bigg/\log\frac{\sigma_{d-m+1}(v,t)}{\sigma_{d}(v,t)}\\
&\leq&\limsup_{k\to+\infty}\frac{\displaystyle2\log C+2\beta T+\log\frac{\sigma_{d-m}(v,kT)}{\sigma_d(v,kT)}}{\displaystyle-2\log C-2\beta T+\log\frac{\sigma_{d-m+1}(v,kT)}{\sigma_{d}(v,kT)}}\\
&=&\lim_{k\to+\infty}h(v,kT).
\end{eqnarray*}
By a similar argument, $\displaystyle\liminf_{t\to+\infty}h(v,t)\geq\lim_{k\to+\infty}h(v,kT)$. Thus, 
\[\lim_{t\to+\infty}h(v,t)=\lim_{k\to+\infty}h(v,kT).\] 
This, together with \eqref{eqn:period contraction} implies \eqref{eqn: evalue description}. 
\end{proof}

It follows from Lemma \ref{lem:f and g} and Lemma \ref{lem:g and eig}  that 
\[\Fs_m(\rho)\le\inf_{v\in\Usf(\Gamma)}\lim_{t\to+\infty}h(v,t)\le\Es_m(\rho).\qedhere\]

\begin{remark}
Note that this part of the proof does not use the assumption that the Zariski closure of $\rho(\Gamma)$ in $\PGL_d(\Rb)$ is semisimple.
\end{remark}

%%%%%%%%%%%%%%%%%%%%%%%%%%%%%%%%%%%%%%%%%%%%%%%%%%%%
\subsection{The proof of $\Fs_m(\rho)\ge\Es_m(\rho)$}\label{sec:cones}
%%%%%%%%%%%%%%%%%%%%%%%%%%%%%%%%%%%%%%%%%%%%%%%%%%%%

Recall that $\lambda,\mu:\PGL_d(\Rb)\to\Rb^d$ respectively denote the Jordan and Cartan projections defined in Section \ref{sec:properties}. For any subgroup $G \leq \PGL_d(\Rb)$, let $\Cc_\lambda(G) \subset \Rb^d$ denote the smallest closed cone containing $\lambda(G)$, that is
\begin{align*}
\Cc_\lambda(G) := \overline{\bigcup_{g \in G} \Rb_{>0} \cdot \lambda(g)}.
\end{align*}
Also, let $\Cc_\mu(G)$ denote the \emph{asymptotic cone of $\mu(G)$}, that is
\begin{align*}
\Cc_\mu(G) := \{ x \in \Rb^d : \exists g_n \in G, \exists t_n \searrow 0, \text{ with } \lim_{n \rightarrow +\infty} t_n \mu(g_n) =x\}.
\end{align*}

We will use the following result of  Benoist~\cite{B1997}.

\begin{theorem}\label{thm:cones}\cite{B1997}.
If $G \leq \PGL_d(\Rb)$ is a connected semisimple real algebraic subgroup and $\Gamma \leq G$ is a Zariski dense subgroup, then 
\begin{align*}
\Cc_\mu(\Gamma) = \Cc_\lambda(\Gamma).
\end{align*}
\end{theorem}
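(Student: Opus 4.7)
The plan is to prove the two inclusions $\Cc_\lambda(\Lambda)\subseteq\Cc_\mu(\Lambda)$ and $\Cc_\mu(\Lambda)\subseteq\Cc_\lambda(\Lambda)$ separately. Only the second requires the full force of Benoist's work; the first is elementary.

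For the easy inclusion $\Cc_\lambda(\Lambda)\subseteq\Cc_\mu(\Lambda)$, I would use the classical identity
\begin{equation*}
\lim_{k\to\infty}\frac{1}{k}\mu(\overline{g}^{k})=\lambda(\overline{g}),
\end{equation*}
which is precisely equation~\eqref{eqn:Ben} used in the proof of Lemma~\ref{lem:g and eig}. Applied to an arbitrary $\overline{g}\in\Lambda$ with $t_k=1/k\searrow 0$, this exhibits $\lambda(\overline{g})$ as an element of $\Cc_\mu(\Lambda)$. Hence $\lambda(\Lambda)\subseteq\Cc_\mu(\Lambda)$, and since $\Cc_\mu(\Lambda)$ is by definition a closed cone, it contains the closed cone generated by $\lambda(\Lambda)$, namely $\Cc_\lambda(\Lambda)$. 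This direction uses neither irreducibility nor Zariski density.

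For the hard inclusion $\Cc_\mu(\Lambda)\subseteq\Cc_\lambda(\Lambda)$, which is Benoist's theorem proper, take $x\in \Cc_\mu(\Lambda)\setminus\{0\}$ with $x=\lim_{n\to\infty}t_{n}\mu(\overline{g_{n}})$ for some $\overline{g_{n}}\in\Lambda$ and $t_{n}\searrow 0$; necessarily $\norm{\mu(\overline{g_{n}})}_{2}\to\infty$. Write the Cartan decomposition $\overline{g_{n}}=k_{n}a_{n}\ell_{n}$ with $k_{n},\ell_{n}$ in a maximal compact subgroup $K\leq G$ and $a_{n}\in\exp(\overline{\aL^{+}})$; after passing to a subsequence, assume $k_{n}\to k_{\infty}$ and $\ell_{n}\to\ell_{\infty}$. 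The goal is to perturb $\overline{g_{n}}$ by auxiliary elements of $\Lambda$ so that the result is \emph{proximal} in every fundamental representation of $G$, with Jordan projection matching $\mu(\overline{g_{n}})$ up to a bounded error. Concretely, the key input is Benoist's proximality lemma: there are nonempty Zariski open subsets $U_{1},U_{2}\subseteq G$ (defined by requiring $k_{\infty}\overline{h}_{1}$ and $\overline{h}_{2}\ell_{\infty}$ to place the attracting/repelling Cartan flags of the sequence $\overline{g_{n}}$ in general position with respect to Bruhat-opposite flags) such that for all $\overline{h}_{1}\in U_{1}$, $\overline{h}_{2}\in U_{2}$ and all sufficiently large $n$, the product $\overline{h}_{2}\overline{g_{n}}\overline{h}_{1}$ is proximal and satisfies
\begin{equation*}
\lambda(\overline{h}_{2}\overline{g_{n}}\overline{h}_{1})=\mu(\overline{g_{n}})+O(1).
\end{equation*}
Granting this, Zariski density of $\Lambda$ gives such $\overline{h}_{1},\overline{h}_{2}\in\Lambda$; multiplying by $t_{n}$ and letting $n\to\infty$ yields $x=\lim t_{n}\lambda(\overline{h}_{2}\overline{g_{n}}\overline{h}_{1})\in\Cc_{\lambda}(\Lambda)$.

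The main obstacle is the proximality lemma, which is the technical heart of Benoist's 1997 paper. Its proof proceeds one fundamental representation of $G$ at a time: choose a highest-weight realization $\tau_{\omega}:G\to\GL(V_{\omega})$, transport the general-position condition into transversality of the attracting line and repelling hyperplane of $\tau_{\omega}(\overline{g_{n}})$ with images of fixed flags under $\tau_{\omega}(\overline{h}_{i})$, and run a Perron--Frobenius/projective-contraction argument to show $\tau_{\omega}(\overline{h}_{2}\overline{g_{n}}\overline{h}_{1})$ has a dominant eigenvalue comparable to $\sigma_{1}(\tau_{\omega}(\overline{g_{n}}))$. Semisimplicity of $G$ and connectedness then let one impose these conditions simultaneously for a set of fundamental weights spanning $\aL^{*}$, and irreducibility of $G$ on $\Rb^{d}$ converts this simultaneous control back into the claimed estimate for $\lambda,\mu$ viewed as $\Rb^{d}$-valued maps. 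Since this is a theorem cited from the literature, in practice I would simply quote Benoist's result and only carry out the bookkeeping (steps 1--3 above) needed to deduce the statement as phrased here.
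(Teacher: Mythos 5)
Your proposal is correct in substance and, at the top level, does what the paper does: reduce everything to Benoist's 1997 theorem. The organization differs, though. You split the statement into the easy inclusion $\Cc_\lambda(\Lambda)\subseteq\Cc_\mu(\Lambda)$ (via $\lim_k \frac1k\mu(\overline{g}^k)=\lambda(\overline{g})$, which the paper only records in the remark following the theorem) and the hard inclusion, for which you sketch the internal mechanism of Benoist's proof (the proximality/perturbation lemma producing $\overline{h}_2\overline{g_n}\overline{h}_1\in\Lambda$ proximal in all fundamental representations with $\lambda=\mu+{\rm O}(1)$), before saying you would ultimately just cite Benoist. The paper instead cites Benoist's equality of the \emph{intrinsic} limit cones $\Cc_{\lambda_G}=\Cc_{\mu_G}$ as a black box and spends its proof entirely on the dictionary between the intrinsic projections $\mu_G,\lambda_G$ (valued in a closed Weyl chamber $\overline{\aL^+}$ of $G$) and the $\Rb^d$-valued maps $\mu,\lambda$ of the statement: it first checks both cones are invariant under conjugation in $\SL_d(\Rb)$ (for $\Cc_\mu$ this uses the bounded-difference estimate $\norm{\mu(g)-\mu(hgh^{-1})}_2\leq C$ coming from the symmetric-space interpretation), then conjugates $G$ using Mostow's theorem so that its Cartan decomposition is compatible with the standard one of $\sL_d(\Rb)$ and $\aL$ lies in the diagonal matrices, and finally uses algebraicity of $G$ to identify the Jordan decomposition in $G$ with the one in $\SL_d(\Rb)$, so that $\mu_G$ and $\lambda_G$ agree with $\mu$ and $\lambda$ up to permutation of entries.

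The one place where your write-up is thinner than it should be is exactly this dictionary. In your hard-inclusion step you use the $G$-Cartan decomposition $\overline{g_n}=k_na_n\ell_n$ with $k_n,\ell_n\in K\leq G$, but then state the conclusion of the proximality lemma for the $\SL_d$-valued $\mu$ and $\lambda$; and your closing sentence ("irreducibility of $G$ on $\Rb^d$ converts this simultaneous control back into the claimed estimate for $\lambda,\mu$ viewed as $\Rb^d$-valued maps") is precisely the step the paper's appendix writes out. It is not automatic: a priori $K$ need not sit inside $\SO(d)$ nor $\aL$ inside the symmetric matrices, so the singular values of $\overline{g_n}$ for the standard Euclidean norm are only related to $\mu_G(\overline{g_n})$ after either a conjugation (Mostow compatibility, as in the paper) or a bounded-error comparison of norms, together with the conjugation-invariance of both cones. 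This is standard and fixable, so it is not a fatal gap, but if you intend to quote Benoist's intrinsic statement you must include this translation explicitly rather than fold it into "bookkeeping."
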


\begin{remark} \ 
\begin{enumerate}
\item To be precise, the main result in~\cite{B1997} is stated slightly differently and in Appendix~\ref{sec:Benoists appendix} we will explain how the above result follows from Benoist's theorem. 
\item Notice that for any subgroup $\Gamma\leq \PGL_d(\Rb)$, the fact that $\Cc_\lambda(\Gamma) \subset \Cc_\mu(\Gamma)$ is a consequence of \eqref{eqn:Ben}.
\end{enumerate}
\end{remark}

Using Theorem \ref{thm:cones}, we prove the following lemma.

\begin{lemma} \label{lem:preliminary-inequality}
For any $\epsilon > 0$ there exists $R > 0$ such that 
\begin{align*}
\Es_m(\rho)-\epsilon < \log\frac{\mu_1(\rho(\gamma))}{\mu_{m+1}(\rho(\gamma))}\Bigg/\log\frac{\mu_1(\rho(\gamma))}{\mu_{m}(\rho(\gamma))}
\end{align*}
for all $\gamma \in \Gamma$ with $\norm{\mu(\rho(\gamma))}_2\geq R$. 
\end{lemma}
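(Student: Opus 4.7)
The plan is to recast the inequality in terms of the continuous, degree-zero homogeneous function
\[ F(x) := \frac{x_1 - x_{m+1}}{x_1 - x_m} \]
defined on $\{x \in \Rb^d : x_1 > x_m\}$, and then to invoke Benoist's theorem (Theorem \ref{thm:cones}) to transfer information about the Jordan projection (which controls $\alpha_m(\rho)$) to information about the Cartan projection (which appears on the right-hand side of the inequality). As arranged in the paragraphs preceding the lemma, we may assume $\overline{\rho}(\Gamma)$ is Zariski dense in a connected semisimple real algebraic subgroup $G \leq \SL_d(\Rb)$ acting irreducibly on $\Rb^d$, so Theorem \ref{thm:cones} applies. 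Note that, by definition,
\[ \alpha_m(\rho) = \inf\{ F(\lambda(\overline{\rho}(\gamma))) : \gamma \in \Gamma,\ \lambda_1(\overline{\rho}(\gamma)) > \lambda_m(\overline{\rho}(\gamma)) \}, \]
while the desired inequality is the analogous lower bound for $F(\mu(\overline{\rho}(\gamma)))$.

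Let $S$ denote the unit sphere in $(\Rb^d, \norm{\cdot}_2)$ and set $K := \Cc_\mu(\overline{\rho}(\Gamma)) \cap S$. This is compact, and the first key step is to show $y_1 > y_m$ for every $y \in K$. Writing $y = \lim t_n \mu(\overline{\rho}(\gamma_n))$ with $t_n \searrow 0$, the normalization $\norm{y}_2 = 1$ forces $\norm{\mu(\overline{\rho}(\gamma_n))}_2 \to \infty$, so Corollary \ref{thm:QI_Anosov} yields $d_S(\gamma_n, \id) \to \infty$ with $t_n$ comparable to $1/d_S(\gamma_n, \id)$. The quantitative $1$-Anosov estimate in Theorem \ref{thm:SV_char_of_Anosov} gives $\log(\mu_1/\mu_2)(\rho(\gamma_n)) \geq C d_S(\gamma_n, \id) - c$, and multiplying by $t_n$ and passing to the limit yields $y_1 - y_2 > 0$, hence $y_1 > y_2 \geq y_m$. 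Consequently $F$ is defined and continuous on an open neighborhood of $K$ in $S$.

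Next, I apply Theorem \ref{thm:cones} to obtain $K \subset \Cc_\mu(\overline{\rho}(\Gamma)) = \Cc_\lambda(\overline{\rho}(\Gamma))$. For $y \in K$, write $y = \lim s_n \lambda(\overline{\rho}(\delta_n))$ with $s_n > 0$. Since $y_1 > y_m$, eventually $\lambda_1(\overline{\rho}(\delta_n)) > \lambda_m(\overline{\rho}(\delta_n))$, so $F(\lambda(\overline{\rho}(\delta_n))) \geq \alpha_m(\rho)$ by definition; the continuity and degree-zero homogeneity of $F$ then give $F(y) \geq \alpha_m(\rho)$. Since $\{F > \alpha_m(\rho) - \epsilon\}$ is an open condition containing the compact set $K$, there is an open neighborhood $V \subset S$ of $K$ on which $F > \alpha_m(\rho) - \epsilon$. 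A standard compactness argument then produces $R > 0$ such that $\mu(\overline{\rho}(\gamma))/\norm{\mu(\overline{\rho}(\gamma))}_2 \in V$ whenever $\norm{\mu(\overline{\rho}(\gamma))}_2 \geq R$; otherwise a subsequential limit of normalized vectors would lie in $K \setminus V$, contradicting $K \subset V$. Homogeneity of $F$ then yields the desired inequality.

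The main obstacle is the verification that $y_1 > y_m$ throughout $K$: this is the step that genuinely uses the Anosov hypothesis, and it requires pairing the linear lower bound on singular-value gaps from Theorem \ref{thm:SV_char_of_Anosov} with the quasi-isometry comparison $\norm{\mu(\overline{\rho}(\gamma))}_2 \asymp d_S(\gamma, \id)$ of Corollary \ref{thm:QI_Anosov}. A secondary subtlety is that Benoist's theorem is needed in its full two-sided form, since the $\lambda$-approximants $\delta_n$ of a fixed $\mu$-direction $y$ need not come from the same group elements as the $\mu$-approximants $\gamma_n$.
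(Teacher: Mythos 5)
Your proof is correct and follows essentially the same route as the paper: both arguments rest on Benoist's equality $\Cc_\mu(\overline{\rho}(\Gamma)) = \Cc_\lambda(\overline{\rho}(\Gamma))$ (available since the Zariski closure is semisimple and acts irreducibly) and on bounding the homogeneous ratio $F$ below by $\alpha_m(\rho)$ on limit directions of normalized Cartan projections; the paper merely phrases this as a sequential contradiction, extracting a limit direction from a putative bad sequence, whereas you run the compactness argument directly on $\Cc_\mu \cap S$. Your explicit verification that $y_1 > y_m$ on the cone (via Theorem~\ref{thm:SV_char_of_Anosov} and Corollary~\ref{thm:QI_Anosov}) is a detail the paper leaves implicit, and it is a welcome addition rather than a deviation.
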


\begin{proof} 
By Theorem~\ref{thm:cones}, $\Cc_\mu(\rho(\Gamma)) = \Cc_\lambda(\rho(\Gamma))$. Fix $\epsilon > 0$ and suppose for the purpose of contradiction that there exists a sequence $\{\gamma_n\}_{n \geq 1} \subset \Gamma$ such that for all $n$, $\norm{\mu(\rho(\gamma_n))}_2\geq n$ and
\begin{align*}
\Es_m(\rho)-\epsilon \geq \log\frac{\mu_1(\rho(\gamma_n))}{\mu_{m+1}(\rho(\gamma_n))}\Bigg/\log\frac{\mu_1(\rho(\gamma_n))}{\mu_{m}(\rho(\gamma_n))}.
\end{align*}
By passing to a subsequence we can suppose that 
\begin{align*}
\frac{1}{\norm{\mu(\rho(\gamma_n))}_2} \mu(\rho(\gamma_n)) \rightarrow x =(x_1,\dots,x_d)\in \Cc_\mu(\rho(\Gamma)) = \Cc_\lambda(\rho(\Gamma)).
\end{align*}
It follows that $\Es_m(\rho)-\epsilon \geq\frac{x_1-x_{m+1}}{x_1-x_m}$. On the other hand, the definition of $\Es_m(\rho)$ and $\Cc_\lambda(\rho(\Gamma))$ implies that
\[\Es_m(\rho) \leq \frac{x_1-x_{m+1}}{x_1-x_m},\] 
which is a contradiction. 
 \end{proof}

We now turn to the proof of $\Fs_m(\rho)\ge\Es_m(\rho)$. Fix a compact set $K \subset \Usf(\Gamma)$ such that $\Gamma \cdot K = \Usf(\Gamma)$. Since $h(v,t)=h(\gamma\cdot v,t)$ for all $\gamma\in\Gamma$ and all $v\in\Usf(\Gamma)$, by Lemma \ref{lem:f and g}, it is enough to show that 
\begin{align*}
\Es_m(\rho) \leq \liminf_{t\to+\infty}\inf_{v\in K}h(v,t).
\end{align*}

Fix $C > 1$ such that $\frac{1}{C} \norm{X}_{2} \leq \norm{X}_{v} \leq C \norm{X}_{2}$ for all $v\in K$ and $X \in \Rb^d$. By Lemma \ref{lem:preliminary-inequality},  for every $\epsilon>0$, there exists a positive number $R' > 0$ such that
\begin{align*}
\Es_m(\rho)-\epsilon < \log\frac{\mu_1(\rho(\gamma))}{\mu_{m+1}(\rho(\gamma))}\Bigg/\log\frac{\mu_1(\rho(\gamma))}{\mu_{m}(\rho(\gamma))}
\end{align*}
for all $\gamma \in \Gamma$ with $\norm{\mu(\rho(\gamma))}_2\geq R'$. Since $\rho$ is $P_1$-Anosov and
\begin{align*}
\log\frac{\mu_1(\rho(\gamma))}{\mu_{2}(\rho(\gamma))} \leq \log\frac{\mu_1(\rho(\gamma))}{\mu_{k}(\rho(\gamma))}
\end{align*}
for $k > 1$, Theorem~\ref{thm:SV_char_of_Anosov} and Corollary~\ref{thm:QI_Anosov} together imply that 
\[\log\frac{\mu_1(\rho(\gamma))}{\mu_{m+1}(\rho(\gamma))}\,,\,\log\frac{\mu_1(\rho(\gamma))}{\mu_{m}(\rho(\gamma))}\geq \frac{1}{A''}\norm{\mu(\rho(\gamma))}_2-B''\]
for some $A''\geq 1$ and $B''\geq 0$. Hence, there exists $R \geq R'$ such that 
 \begin{align}\label{eqn: 2eps}
\Es_m(\rho)-2\epsilon < \left(\log\frac{\mu_1(\rho(\gamma))}{\mu_{m+1}(\rho(\gamma))}-4\log C\right)\bigg/\left(\log\frac{\mu_1(\rho(\gamma))}{\mu_{m}(\rho(\gamma))}+4\log C\right)
\end{align} 
for all $\gamma \in \Gamma$ with $\norm{\mu(\rho(\gamma))}_2\geq R$.

Let $d:=d_{\Usf(\Gamma)}$ denote the $\Gamma$-invariant metric on $\Usf(\Gamma)$ specified in Section \ref{sec:flowspace}, and let $D$ be the diameter of $K$. By Corollary~\ref{thm:QI_Anosov} and the fact that $\Gamma$ acts on $\Usf(\Gamma)$ cocompactly and by isometries, there exists $A \geq 1$ and $B \geq 0$ such that 
\begin{align*}
\frac{1}{A} \norm{\mu(\rho(\gamma))}_2 - B \leq d(v, \gamma \cdot v) \leq A \norm{\mu(\rho(\gamma))}_2 + B
\end{align*}
for all $v \in \Usf(\Gamma)$ and all $\gamma\in\Gamma$. Also, since every $\varphi_t$-orbit in $\Usf(\Gamma)$ is a quasi-isometric embedding, there exist $A'\geq1$ and $B'\geq0$ such that ,
\begin{align*}
\frac{1}{A'} |t|- B' \leq d(v,\varphi_t(v))\leq A' |t| + B'
\end{align*}
for all $t\in\Rb$ and $v \in \Usf(\Gamma)$.

Let $t > A'(B'+D + AR + B)$, $v \in K$, and $\gamma \in \Gamma$ such that $\gamma^{-1}\cdot\varphi_t (v) \in K$. Then 
\[d(\gamma\cdot v,v)\geq d(v,\varphi_t(v))-d( \varphi_t (v), \gamma\cdot v) \geq \frac{1}{A'}t-B'-D,\] 
which implies that
\begin{align}\label{eqn: larger than R}
\norm{\mu(\rho(\gamma))}_2 \geq \frac{1}{A} \left(d(\gamma\cdot v, v) - B\right) \geq \frac{\frac{1}{A'}t-B'-D-B}{A}  \geq R.
\end{align}

By the definition of $C$, we see that for any $X\in\Rb^d$,
\[\frac{1}{C}\leq\frac{\norm{X}_v}{\norm{X}_{2}}\leq C\quad \text{and}\quad \frac{1}{C}\leq\frac{\norm{\overline{\rho}(\gamma)^{-1}X}_{\gamma^{-1}\cdot \varphi_t(v)}}{\norm{\overline{\rho}(\gamma)^{-1}X}_{2}} \leq C.\]
Note that $X\mapsto \norm{\overline{\rho}(\gamma)^{-1}X}_{\gamma^{-1}\cdot \varphi_t (v)}$ and  $X\mapsto\norm{\overline{\rho}(\gamma)^{-1}X}_{2}$ are both norms on $\Rb^d$, $\sigma_i(v,t)$ is the $i$-th singular value of the map
\[\id:(\Rb^d,\norm{\cdot}_v)\to(\Rb^d,\norm{\cdot}_{\varphi_t (v)})=\left(\Rb^d,\norm{\overline{\rho}(\gamma)^{-1}\cdot}_{\gamma^{-1}\cdot \varphi_t (v)}\right),\]
and $\mu_i(\rho(\gamma)^{-1})$ is the $i$-th singular value of the map
\[\id:(\Rb^d,\norm{\cdot}_2)\to\left(\Rb^d,\norm{\overline{\rho}(\gamma)^{-1}\cdot}_2\right)\]
It thus follows from Observation \ref{prop:easy comp} that 
\begin{align}\label{eqn: Csquare}
\frac{1}{C^2}\frac{1}{\mu_{d+1-i}(\rho(\gamma))}=\frac{1}{C^2}\mu_{i}(\rho(\gamma)^{-1})\leq\sigma_i(v,t)\leq  C^2\frac{1}{\mu_{d+1-i}(\rho(\gamma))}.\end{align}
Hence, by \eqref{eqn: 2eps}, \eqref{eqn: larger than R}, and \eqref{eqn: Csquare},
\begin{align*}
\Es_m(\rho)-2\epsilon &<   \left(\log\frac{\mu_1(\rho(\gamma))}{\mu_{m+1}(\rho(\gamma))}-4\log C\right)\bigg/\left(\log\frac{\mu_1(\rho(\gamma))}{\mu_{m}(\rho(\gamma))}+4\log C\right)\\
& \leq \log\frac{\sigma_{d-m}(v,t)}{\sigma_d(v,t)}\Bigg/\log\frac{\sigma_{d-m+1}(v,t)}{\sigma_{d}(v,t)} = h(v,t).
\end{align*}
Since $v \in K$ and $t > A'(B'+D + AR+ B)$ were arbitrary, 
\begin{align*}
\Es_m(\rho)-2\epsilon \leq \liminf_{t\to+\infty}\inf_{v\in K}h(v,t).
\end{align*}
Then since $\epsilon > 0$ was also arbitrary,
\begin{equation*}
\Es_m(\rho) \leq \liminf_{t\to+\infty}\inf_{v\in K}h(v,t). \qedhere
\end{equation*}

 %%%%%%%%%%%%%%%%%%%%%%%%%%%%%%%%%%%%%%%%%%%%%%%%%%%%
  %%%%%%%%%%%%%%%%%%%%%%%%%%%%%%%%%%%%%%%%%%%%%%%%%%%%
 \section{Optimal regularity}\label{sec:regularity}
  %%%%%%%%%%%%%%%%%%%%%%%%%%%%%%%%%%%%%%%%%%%%%%%%%%%%
   %%%%%%%%%%%%%%%%%%%%%%%%%%%%%%%%%%%%%%%%%%%%%%%%%%%%
   
 In this section we prove Theorem~\ref{thm:regularity} and the second part of Theorem~\ref{thm:regularity2}. In light of Corollaries \ref{cor:main_body} and \ref{cor: d-1 bound} (see Examples \ref{eg:limitset} and \ref{eg:convex}), it is sufficient to prove the following theorem.
 
 \begin{theorem} \label{thm:regularity_body}Suppose  $\rho: \Gamma \rightarrow \PGL_{d}(\Rb)$ is an irreducible, $P_{1,m}$-Anosov representation for some $m=2,\dots,d-1$, and suppose  $M\subset\Pb(\Rb^d)$ is a $\rho$-controlled, $m$-hyperconvex, topological $(m-1)$-dimensional submanifold. Then 
\begin{align*}
\Es_m(\rho)\leq \sup\left\{ \alpha\in (1,+\infty) : M \text{ is } C^{\alpha} \text{ along }\xi_\rho^1(\partial_\infty \Gamma) \right\}
\end{align*} 
with equality if 
\begin{itemize}
\item[($\ast$)] $M \cap \left(a_1 + a_2 +  \xi_\rho^{d-m}(y)\right)$ spans $a_1 + a_2 +  \xi_\rho^{d-m}(y)$ for all pairwise distinct $a_1,a_2,\xi_\rho^1(y)\in M$.
\end{itemize}
\end{theorem}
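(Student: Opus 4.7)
The first inequality is immediate from earlier results: Theorem \ref{thm:main_body} yields that $M$ is $C^\alpha$ along $\xi^{(1)}(\partial_\infty \Gamma)$ for every $1 < \alpha < \alpha^m(\rho)$, and Theorem \ref{thm:alphas} identifies $\alpha^m(\rho) = \alpha_m(\rho)$ under the irreducibility hypothesis. So the plan focuses on the converse under $(*)$: whenever $\alpha > \alpha_m(\rho)$, I will produce a sequence in $M$ whose behavior violates $C^\alpha$ regularity near some point of $\xi^{(1)}(\partial_\infty\Gamma)$.

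Since $\alpha_m(\rho)$ is an infimum there exists an infinite-order $\gamma \in \Gamma$ with $\log(\lambda_1/\lambda_{m+1})(\rho(\gamma))/\log(\lambda_1/\lambda_m)(\rho(\gamma)) < \alpha$. For expository simplicity I assume $\rho(\gamma)$ is diagonalizable with distinct eigenvalue moduli $\lambda_1>\dots>\lambda_d$ and eigenvectors $e_1,\dots,e_d$ (the general case only adds polynomial corrections absorbed in $\asymp$-notation). By the dynamics-preserving property $\xi^{(1)}(\gamma^+) = [e_1]$, $\xi^{(m)}(\gamma^+) = [e_1,\dots,e_m]$, and $\xi^{(d-m)}(\gamma^-) = [e_{m+1},\dots,e_d]$. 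In the affine chart $\Ab_{\gamma^-} = \Pb(\Rb^d)\setminus \xi^{(d-1)}(\gamma^-)$ with coordinates $(a_2,\dots,a_d)$, $\rho(\gamma)$ acts diagonally as $a_i\mapsto (\nu_i/\nu_1)a_i$. For any $p\in M\setminus\{\xi^{(1)}(\gamma^\pm)\}$ the iterates $p_n = \rho(\gamma)^n p$ satisfy
\[
d_{\Pb}(p_n,\xi^{(1)}(\gamma^+)) \asymp (\lambda_{i_0}/\lambda_1)^n,\qquad d_{\Pb}(p_n,\xi^{(m)}(\gamma^+)) \asymp (\lambda_{j_0}/\lambda_1)^n,
\]
where $i_0=\min\{i\geq 2:a_i(p)\neq 0\}$ and $j_0=\min\{i\geq m+1:a_i(p)\neq 0\}$. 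Therefore $d_{\Pb}(p_n,\xi^{(m)}(\gamma^+))/d_{\Pb}(p_n,\xi^{(1)}(\gamma^+))^\alpha$ diverges to $\infty$ whenever $\alpha > \log(\lambda_1/\lambda_{j_0})/\log(\lambda_1/\lambda_{i_0})$, contradicting the local $C^\alpha$-graph bound near $\xi^{(1)}(\gamma^+)$.

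The heart of the proof is to exhibit $p_0 \in M$ realizing the optimal exponents $i_0=m$ and $j_0=m+1$, since then the relevant ratio becomes the one $< \alpha$ that was guaranteed by our choice of $\gamma$. The first half is easy: by Theorem \ref{thm:main_body} the tangent space $T_{\xi^{(1)}(\gamma^+)}M$ equals $\xi^{(m)}(\gamma^+)$, so $M$ is locally a $C^\alpha$-graph over a neighborhood of $0$ in $\xi^{(m)}(\gamma^+)/\xi^{(1)}(\gamma^+)$, and points parametrized along the $e_m$-direction automatically have vanishing $a_2,\dots,a_{m-1}$-coordinates, i.e.\ $i_0 = m$. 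The second half, $j_0 = m+1$, demands that the graph map $f$ have a nonzero $e_{m+1}$-component along this $e_m$-curve, and this is where hypothesis $(*)$ enters: roughly, if $f$ stayed inside the $\rho(\gamma)$-invariant hyperplane $\{a_{m+1}=0\}$ along the curve, one produces a $(d-m+2)$-dimensional subspace $S = \xi^{(1)}(\gamma^+) + p_2 + \xi^{(d-m)}(\gamma^-)$ (for an appropriate $p_2\in M$ with nonzero $e_m$-component) inside which $M\cap S$ would be forced into a proper hyperplane of $S$, contradicting the spanning assertion of $(*)$.

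With such $p_0$ in hand, iterating yields $d_{\Pb}(p_n,\xi^{(m)}(\gamma^+))/d_{\Pb}(p_n,\xi^{(1)}(\gamma^+))^\alpha \to \infty$, contradicting $C^\alpha$ regularity and closing the argument. The main obstacle is the extraction of $p_0$ in the previous paragraph: $(*)$ is a spanning statement about linear intersections with $M$, whereas what is needed is a single point of $M$ with very specific Jordan coordinates, so the argument must be delicate enough to rule out all the ways $M$ might ``dodge'' the proper invariant hyperplane while still behaving badly on the $e_m$-curve. Handling this carefully is the technical crux.
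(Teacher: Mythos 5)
Your first inequality and your overall plan for the converse (push a suitable point of $M$ toward $\xi^{(1)}(\gamma^+)$ by iterating $\rho(\gamma)$ and compare its decay rates toward $\xi^{(1)}(\gamma^+)$ and toward the tangent space $\xi^{(m)}(\gamma^+)$ against the $C^\alpha$ bound) are the same as the paper's. The genuine gap is exactly the step you flag as the ``technical crux'': you insist that the special point $p_0$ lie on the local graph of $M$ near $\xi^{(1)}(\gamma^+)$, along the $e_m$-curve, with nonzero $e_{m+1}$-coordinate, and you propose to force this by contradiction from ($\ast$). But ($\ast$) is a statement about all of $M\cap S$, not about the germ of $M$ at $\xi^{(1)}(\gamma^+)$: even if the local $e_m$-curve were entirely contained in $\{a_{m+1}=0\}$, points of $M\cap S$ far away from $\gamma^+$ could carry the $e_{m+1}$-component needed for spanning, so no contradiction with ($\ast$) arises and the extraction of $p_0$ is not established. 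As written, the proof cannot be completed in that form.

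The paper's resolution is to drop the requirement that the point be near $\gamma^+$. Take any graph point $p\in M$ lying over $e_1+\epsilon e_m$ (its $e_2,\dots,e_{m-1}$-coordinates vanish; this only uses the $C^1$ conclusion of Theorem \ref{thm:main_body}), set $S=\xi^{(1)}(\gamma^+)+p+\xi^{(d-m)}(\gamma^-)=\Span\{e_1,e_m,e_{m+1},\dots,e_d\}$, and apply ($\ast$) to this triple: since $M\cap S$ spans $S$, some $q\in M\cap S$ has nonzero $e_{m+1}$-coefficient; automatically $q$ has zero $e_2,\dots,e_{m-1}$-coefficients (it lies in $S$), and its $e_1$- and $e_m$-coefficients are nonzero because $q+\xi^{(d-1)}(\gamma^-)$ and $\xi^{(1)}(\gamma^+)+q+\xi^{(d-m)}(\gamma^-)$ are direct sums ($\rho$-controlledness and $m$-hyperconvexity). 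In your notation $q$ has $i_0=m$ and $j_0=m+1$, and since its $e_1$-coefficient is nonzero, $\rho(\gamma)^n\cdot q\to\xi^{(1)}(\gamma^+)$, so your asymptotic comparison applied to $\rho(\gamma)^n\cdot q$ gives $\alpha\leq\log\frac{\lambda_1}{\lambda_{m+1}}(\rho(\gamma))\big/\log\frac{\lambda_1}{\lambda_m}(\rho(\gamma))$ for every infinite-order $\gamma$, hence $\alpha\leq\alpha_m(\rho)$. (A further caution: your reduction to a diagonalizable $\rho(\gamma)$ with distinct moduli is not harmless as stated, since a $(1,m)$-Anosov element is only guaranteed gaps at positions $1$ and $m$; the growth-rate statements must be phrased for generalized eigenspaces, as in \eqref{eqn:P2stretch} of the paper.)
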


As mentioned in the introduction (see (2) of Remark \ref{rem:stablility}), the condition ($\ast$) is trivial when $m=2$ and $m=d-1$. In Section \ref{sec:stability}, we show that when $M=\xi_\rho^1(\partial_\infty\Gamma)$, ($\ast$) is an open condition in the space of $P_1$-Anosov representations in $\Hom(\Gamma,\PSL_d(\Rb))$. Then, in Section \ref{sec:proof_regularity_body}, we prove Theorem \ref{thm:regularity_body}.

%%%%%%%%%%%%%%%%%%%%%%%%%%%%%%%%%%%%%%%%%%%%%%%%%%%%%
\subsection{Stability of hypotheses}\label{sec:stability}
%%%%%%%%%%%%%%%%%%%%%%%%%%%%%%%%%%%%%%%%%%%%%%%%%%%%%

To show that ($\ast$) is an open condition when $M=\xi_\rho^1(\partial_\infty\Gamma)$, we  use two well-known results. The first is a standard fact about hyperbolic groups, see \cite[Proposition 1.13]{B2011} for a proof.

\begin{proposition}\label{prop:3_point_action}  The $\Gamma$-action on $\partial_\infty \Gamma^{(3)}: = \{ (x,y,z)  \in \partial_\infty \Gamma^3 : x,y,z \text{ distinct} \}$ is properly discontinuous and co-compact.
\end{proposition}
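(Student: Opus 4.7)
The plan is to reduce to the cocompactness of $\Gamma$ on its Cayley graph via a coarse barycenter construction. I would fix a finite symmetric generating set $S$ and let $X$ be the associated Cayley graph, which is a proper $\delta$-hyperbolic geodesic space on which $\Gamma$ acts geometrically, with $\partial_\infty X$ canonically identified with $\partial_\infty \Gamma$.

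For each triple $(x,y,z) \in \partial_\infty \Gamma^{(3)}$, I would use the standard thin-ideal-triangle fact to define the barycenter set
\[
C(x,y,z) := \bigl\{q \in X : d(q,(x,y)),\ d(q,(y,z)),\ d(q,(x,z)) \leq 100\delta\bigr\},
\]
where $(a,b)$ denotes any bi-infinite geodesic with endpoints $a, b$. For the constant $100\delta$ chosen large enough this set is nonempty with bounded diameter, and since $\Gamma$ acts by isometries carrying geodesics to geodesics, $\gamma \cdot C(x,y,z) = C(\gamma x, \gamma y, \gamma z)$, so the assignment is $\Gamma$-equivariant.

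Next, fix a basepoint $p \in X$ and choose $R > 0$ with $\Gamma \cdot B(p,R) = X$, which exists since $\Gamma$ acts cocompactly on $X$. Set
\[
K := \bigl\{(x,y,z) \in \partial_\infty \Gamma^{(3)} : C(x,y,z) \cap \overline{B(p,R)} \neq \emptyset\bigr\}.
\]
Given any triple $(x,y,z)$, picking $q \in C(x,y,z)$ and $\gamma \in \Gamma$ with $d(q,\gamma \cdot p) \leq R$ gives $\gamma^{-1} q \in C(\gamma^{-1} x, \gamma^{-1} y, \gamma^{-1} z) \cap \overline{B(p,R)}$, so $\Gamma \cdot K = \partial_\infty \Gamma^{(3)}$. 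It then suffices to show $K$ is closed in $\partial_\infty \Gamma^{(3)}$, for then $K$ is a compact subset of $(\partial_\infty \Gamma)^3$ surjecting onto the quotient $\partial_\infty \Gamma^{(3)}/\Gamma$, which is therefore compact.

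The remaining step, and the main (minor) obstacle, is the closedness of $K$. Take a sequence $(x_n, y_n, z_n) \in K$ converging to $(x,y,z) \in (\partial_\infty \Gamma)^3$; I need to rule out coincidences like $x = y$. If $x = y$, the visual metric on $\partial_\infty \Gamma$ forces the Gromov products $(x_n | y_n)_p \to \infty$. In a proper hyperbolic space, $(x_n | y_n)_p$ agrees up to an additive $O(\delta)$ with the distance from $p$ to any bi-infinite geodesic with endpoints $x_n, y_n$, so $d(p, (x_n,y_n)) \to \infty$. On the other hand, by definition of $K$, some point of $C(x_n,y_n,z_n)$ lies within $R$ of $p$ and within $100\delta$ of the geodesic $(x_n,y_n)$, bounding $d(p,(x_n,y_n)) \leq R + 100\delta$, a contradiction. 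Thus $x,y,z$ are pairwise distinct, $K$ is closed, and the proposition follows. The content is really just the classical dictionary between Gromov products, the visual metric, and distances from a basepoint to bi-infinite geodesics in a Gromov hyperbolic space; for this I would cite Gromov~\cite{Gromov1987} or a standard text such as Bridson--Haefliger.
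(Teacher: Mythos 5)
The paper does not actually prove this proposition: it is invoked as ``a standard fact about hyperbolic groups'' with no argument given, so there is no paper proof to compare against. Your quasi-center argument is the classical proof of this fact and is essentially correct: nonemptiness of $C(x,y,z)$ follows from thinness of ideal triangles, equivariance is clear, $\Gamma\cdot K=\partial_\infty\Gamma^{(3)}$ follows from cocompactness on the Cayley graph, and the Gromov-product computation correctly rules out coincidences in the limit. One small bookkeeping point: what your limiting argument literally establishes is that the closure of $K$ inside the compact space $(\partial_\infty\Gamma)^3$ consists of pairwise distinct triples, not that $K$ itself is closed (for that you would also need to produce a quasi-center of the limit triple within distance $R$ of $p$, e.g.\ by an Arzel\`a--Ascoli argument on the geodesics). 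But this does not matter: simply replace $K$ by its closure $\overline{K}$ in $(\partial_\infty\Gamma)^3$; by your argument $\overline{K}\subset\partial_\infty\Gamma^{(3)}$, it is compact, and $\Gamma\cdot\overline{K}\supset\Gamma\cdot K=\partial_\infty\Gamma^{(3)}$, which gives cocompactness. With that one-line adjustment (and, if one is fussy, taking the comparison constant to be $\max\{100\delta,1\}$ so the tree case is covered and the choice of bi-infinite geodesic is immaterial up to $2\delta$), the proof is complete.
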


The second is a result about Anosov representations due to Guichard-Wienhard. In the case when $\Gamma$ is the fundamental group of a negatively curved Riemannian manifold, this was established by Labourie~\cite[Proposition 2.1]{L2006}. 

\begin{theorem}\label{thm:gwstable}
\cite[Theorem 5.13]{GW2012}\label{thm:continuous_limit_curve} Let
\begin{align*}
\Oc_k : = \{ \rho \in \Hom(\Gamma, \PGL_d(\Rb)) : \rho \text{ is $P_k$-Anosov} \}.
\end{align*}
Then $\Oc_k$ is open, and the map 
\begin{align*}
\rho \in \Oc_k \mapsto \xi_\rho^k \in C\left( \partial_\infty \Gamma, \Gr_k(\Rb^d)\right)
\end{align*}
is continuous. 
\end{theorem}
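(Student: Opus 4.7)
The plan is to deduce both openness and continuity from the singular value characterization of $k$-Anosov representations (Theorem \ref{thm:SV_char_of_Anosov}), combined with a local-to-global principle for singular value growth and a dynamical recipe for recovering the flag maps.

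For openness, fix $\rho_0 \in \Oc_k$, so by Theorem \ref{thm:SV_char_of_Anosov} there exist $C_0 > 0$ and $c_0 \geq 0$ with
\[
\log \tfrac{\mu_k}{\mu_{k+1}}(\rho_0(\gamma)) \geq C_0\, d_S(\gamma, \id) - c_0 \qquad \text{for every } \gamma \in \Gamma.
\]
For each fixed $\gamma$ the function $\rho \mapsto \log \tfrac{\mu_k}{\mu_{k+1}}(\rho(\gamma))$ is continuous, so on any neighborhood of $\rho_0$ in $\Hom(\Gamma, \PGL_d(\Rb))$ the inequality persists, with slightly worse constants, on any prescribed finite window $\{\gamma : d_S(\gamma,\id) \leq N\}$. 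I would then invoke a local-to-global principle: if the singular value gap grows linearly along every element up to some sufficiently large length $N = N(C_0, c_0)$, then linear growth propagates to all of $\Gamma$. The underlying mechanism is quasi-additivity of the Cartan projection along paths once a strong enough gap is present, which is essentially the Morse lemma of Kapovich-Leeb-Porti and is cleanly encoded in the dominated-splitting language of Bochi-Potrie-Sambarino. This gives an open neighborhood of $\rho_0$ contained in $\Oc_k$.

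For continuity of $\rho \mapsto \xi_\rho^{(k)}$, I would use a quantitative dynamical description of the flag map. For $g \in \GL_d(\Rb)$ with $\mu_k(g)/\mu_{k+1}(g) \geq e^{\tau}$, let $U_k(g) \in \Gr_k(\Rb^d)$ denote the span of the top $k$ singular directions; a standard singular value perturbation estimate gives that for every $V \in \Gr_k(\Rb^d)$ transverse to the bottom $(d-k)$-plane of $g$, the image $g\cdot V$ lies within distance $O(e^{-\tau})$ of $U_k(g)$. When $(\gamma_n)$ is a geodesic ray in $\Gamma$ converging to $x \in \partial_\infty\Gamma$, the uniform gap inequality from the openness step forces $U_k(\rho(\gamma_n))$ to converge to $\xi_\rho^{(k)}(x)$ at a rate depending only on $(C_0, c_0)$ and not on $\rho$ in a neighborhood of $\rho_0$. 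Since attracting fixed points of infinite-order elements are dense in $\partial_\infty\Gamma$ and on these points $\xi_\rho^{(k)}$ is literally an eigenflag of $\rho(\gamma)$, which varies continuously in $\rho$, a three-$\varepsilon$ argument combining this pointwise continuity at a fixed finite set with the uniform approximation rate yields continuity of $\rho \mapsto \xi_\rho^{(k)}$ into $C(\partial_\infty\Gamma, \Gr_k(\Rb^d))$ with the uniform topology.

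The main obstacle is the local-to-global principle underlying openness: singular values are not multiplicative, so one cannot simply ``concatenate'' the inequality on short elements. The heart of the argument is showing that a sufficient gap on a large enough window forces the Cartan projection to behave additively along words, which is where the Anosov / dominated splitting machinery really earns its keep. Everything else reduces to routine singular value perturbation and equicontinuity arguments on the compact space $\partial_\infty\Gamma$.
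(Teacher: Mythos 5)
The paper does not prove this statement at all: it is quoted verbatim from Guichard--Wienhard \cite[Theorem 5.13]{GW2012}, so there is no in-paper argument to measure you against; the relevant comparison is with the original proof and with the later singular-value literature. Your route is genuinely different from Guichard--Wienhard's. They work with the original Anosov definition over the flow space $\wt{U\Gamma}$: openness comes from structural stability of the contraction property of the associated flat bundle automorphism (a dominated splitting of a cocycle over a compact base persists under perturbation of the cocycle), and continuity of $\rho \mapsto \xi^{(k)}_\rho$ comes from the continuous dependence of the invariant sections on the bundle data. You instead start from the singular-value characterization (the paper's Theorem \ref{thm:SV_char_of_Anosov}), pass the gap inequality to a finite window by continuity of $\rho\mapsto\mu_k/\mu_{k+1}(\rho(\gamma))$ for each fixed $\gamma$, and then invoke a local-to-global principle (Kapovich--Leeb--Porti's Morse-lemma machinery, or the finite-scale domination criteria of Bochi--Potrie--Sambarino) to upgrade linear gap growth on a ball of radius $N(C_0,c_0)$ to all of $\Gamma$; continuity of the limit maps is then recovered from Cartan attractors $U_k(\rho(\gamma_n))$ along geodesic rays with rates uniform in $\rho$. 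This is a legitimate and by now standard alternative proof, and it buys uniformity of constants on a neighborhood of $\rho_0$, which the bundle-stability argument delivers less explicitly; what it costs is that the entire difficulty is concentrated in the local-to-global step, which you do not prove but outsource to KLP/BPS -- results of comparable depth to the theorem being proved, so the argument is a reduction rather than a self-contained proof. Two smaller points to tighten if you write this up: the estimate that $g\cdot V$ lies within $O(e^{-\tau})$ of $U_k(g)$ requires a quantitative lower bound on the transversality of $V$ to the bottom $(d-k)$-plane of $g$ (this has to be propagated along the ray, which is exactly what the uniform gaps provide, but it should be said); and in the three-$\varepsilon$ argument you need equicontinuity of the family $\{\xi^{(k)}_\rho\}$ over the neighborhood of $\rho_0$, which again follows from the uniform convergence rate but is the point where uniformity, not just pointwise continuity, is used.
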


Using these, we prove the required stability results.

\begin{corollary} Suppose $\partial_\infty \Gamma$ is a topological $(m-1)$-manifold, and $\rho_0: \Gamma \rightarrow \PGL_{d}(\Rb)$ is a $P_{1,m}$-Anosov representation. If $\xi_{\rho_0}^1(x) + \xi_{\rho_0}^1(z) +  \xi_{\rho_0}^{d-m}(y)$ is a direct sum and
\begin{align*}
\xi_{\rho_0}^1(\partial_\infty \Gamma) \cap \left(\xi_{\rho_0}^1(x) + \xi_{\rho_0}^1(z) +  \xi_{\rho_0}^{d-m}(y)\right)
\end{align*}
spans $\xi_{\rho_0}^1(x) + \xi_{\rho_0}^1(z) +  \xi_{\rho_0}^{d-m}(y)$ for all $x,y,z \in \partial_\infty \Gamma$ distinct, then any sufficiently small deformation of $\rho_0$ also has these properties. 
\end{corollary}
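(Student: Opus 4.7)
The plan is to establish openness of the direct sum and spanning conditions separately, using cocompactness of the $\Gamma$-action on $\partial_\infty\Gamma^{(3)}$ (Proposition~\ref{prop:3_point_action}) and continuity of the flag maps in $\rho$ (Theorem~\ref{thm:continuous_limit_curve}). Fix a compact fundamental domain $K \subset \partial_\infty\Gamma^{(3)}$ for the $\Gamma$-action.

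For the direct sum condition: the set of $(1,m)$-Anosov representations is open (Theorem~\ref{thm:continuous_limit_curve}), and on it the map $(\rho,x,y,z) \mapsto (\xi^{(1)}_\rho(x),\xi^{(1)}_\rho(z),\xi^{(d-m)}_\rho(y))$ is jointly continuous, while directness of the sum is an open condition in the product of Grassmannians. Compactness of $K$ therefore produces a neighborhood $\Uc_1$ of $\rho_0$ on which the direct sum condition holds for all $(x,y,z) \in K$, and hence, by $\Gamma$-equivariance of the condition, for all $(x,y,z) \in \partial_\infty\Gamma^{(3)}$.

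For the spanning condition I would argue by contradiction. Suppose $\rho_n \to \rho_0$ inside $\Uc_1$ intersected with the set of $(1,m)$-Anosov representations, and that for each $n$ spanning fails at some $(x_n,y_n,z_n) \in \partial_\infty\Gamma^{(3)}$; after translating by $\Gamma$ these may be taken in $K$. Passing to subsequences, $(x_n,y_n,z_n) \to (x_0,y_0,z_0) \in K$; and choosing for each $n$ a hyperplane $H_n \subset V_n := \xi^{(1)}_{\rho_n}(x_n)+\xi^{(1)}_{\rho_n}(z_n)+\xi^{(d-m)}_{\rho_n}(y_n)$ of dimension $d-m+1$ that contains $\xi^{(1)}_{\rho_n}(\partial_\infty\Gamma) \cap V_n$, I may further assume $H_n \to H$, a subspace of $V_0 := \xi^{(1)}_{\rho_0}(x_0)+\xi^{(1)}_{\rho_0}(z_0)+\xi^{(d-m)}_{\rho_0}(y_0)$ of dimension at most $d-m+1$. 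The desired contradiction is $\xi^{(1)}_{\rho_0}(\partial_\infty\Gamma) \cap V_0 \subset H$. For this, given $u_0 \in \partial_\infty\Gamma$ with $\xi^{(1)}_{\rho_0}(u_0) \subset V_0$, it suffices to construct $u_n \to u_0$ with $\xi^{(1)}_{\rho_n}(u_n) \subset V_n$, since then $\xi^{(1)}_{\rho_n}(u_n) \in H_n$ and taking limits gives $\xi^{(1)}_{\rho_0}(u_0) \in H$.

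The existence of the $u_n$ rests on transversality of $M_\rho := \xi^{(1)}_\rho(\partial_\infty\Gamma)$ with the projective subspace $\Pb(V_\rho(x,y,z))$. Theorem~\ref{thm:main_body}, whose hypotheses hold at $\rho_0$ and, by the previous step, at each $\rho_n$ for $n$ large, gives that $M_\rho$ is a $C^1$-submanifold with $T_{\xi^{(1)}_\rho(u)} M_\rho = \xi^{(m)}_\rho(u)$. Since $\rho_0$ is $m$-Anosov, $\xi^{(m)}_{\rho_0}(u_0)+\xi^{(d-m)}_{\rho_0}(y_0) = \Rb^d$, so $\xi^{(m)}_{\rho_0}(u_0) + V_0 = \Rb^d$; that is, $M_{\rho_0}$ meets $\Pb(V_0)$ transversally at $\xi^{(1)}_{\rho_0}(u_0)$. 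Combined with continuity of the tangent distribution $\xi^{(m)}_\rho(u)$ and of $V_\rho(x,y,z)$ in the parameters, the implicit function theorem yields the desired $u_n$. The main technical obstacle is making this ``continuous transverse intersection'' argument rigorous, given that the flag map $\xi^{(1)}$ is only continuous and that the smooth parameterization of $M_\rho$ varies with $\rho$; one proceeds by extracting from the proof of Theorem~\ref{thm:main_body} that $M_\rho$ is locally the graph of a $C^1$-function depending continuously on $\rho$ in the $C^1$-topology, which in turn follows from continuity of $\xi^{(m)}_\rho$, $\xi^{(d-m)}_\rho$, and the projections $\Pi_{x,y}$ constructed in Section~\ref{sec:proof_of_main_thm}.
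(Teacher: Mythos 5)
Your reduction of the direct-sum condition to openness plus cocompactness on $\partial_\infty\Gamma^{(3)}$ is exactly the paper's first step and is fine. The problem is the pivotal step of your contradiction argument for the spanning condition: the construction of $u_n \to u_0$ with $\xi^{(1)}_{\rho_n}(u_n) \subset V_n$. As written it has two genuine gaps. First, your transversality claim $\xi^{(m)}_{\rho_0}(u_0)+\xi^{(d-m)}_{\rho_0}(y_0)=\Rb^d$ only holds for $u_0 \neq y_0$ (transversality of the flag maps is a statement about distinct boundary points), and the case $u_0=y_0$ always occurs, since $\xi^{(1)}_{\rho_0}(y_0)\subset\xi^{(d-m)}_{\rho_0}(y_0)\subset V_0$; this case is easy to patch (take $u_n=y_n$), but your argument does not address it. Second, and more seriously, the implicit-function-theorem step requires the local graph representations of $M_\rho$ to depend continuously on $\rho$ in the $C^1$ topology, and you assert this ``follows'' from continuity of $\xi^{(m)}_\rho$, $\xi^{(d-m)}_\rho$ and the projections. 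But Theorem~\ref{thm:continuous_limit_curve} only gives $C^0$-continuity of $\rho\mapsto\xi^{(k)}_\rho$, and nothing in the paper (nor in your sketch) establishes that the graphing functions converge together with their derivatives; identifying $df_\rho$ at a graph point with $\xi^{(m)}_\rho$ at the corresponding boundary point requires controlling, uniformly in $\rho$, the boundary-point parameterization of the graph, which is precisely the nontrivial content being assumed. So the key step is not proved.

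The paper avoids all of this. After normalizing by a continuously varying $g_{\rho,(x,y,z)}\in\PGL_d(\Rb)$ taking the flags at $(x,y,z)$ to a standard configuration, Theorem~\ref{thm:main_body} writes $\xi^{(1)}_\rho(\partial_\infty\Gamma\setminus\{y\})$ as the graph of $f_{\rho,(x,y,z)}:\Rb^{m-1}\to\Rb^{d-m}$, with $(\rho,(x,y,z))\mapsto f_{\rho,(x,y,z)}$ continuous merely in $C^0$; in these coordinates the intersection with $\xi^{(1)}_\rho(x)+\xi^{(1)}_\rho(z)+\xi^{(d-m)}_\rho(y)$ is exactly the slice $\{[1:te_2:f_{\rho,(x,y,z)}(te_2)]:t\in\Rb\}$. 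Spanning at $(\rho_0,(x_0,y_0,z_0))$ is then witnessed by finitely many points on this slice, these witnesses vary continuously, and spanning by finitely many vectors is an open condition; no contradiction argument, no limits of hyperplanes, no transversality or $C^1$-dependence on $\rho$ is needed. Note also that this slice identity would hand you the points $u_n$ you need (take the same parameter values $t$ for nearby $(\rho_n,(x_n,y_n,z_n))$), so your scheme can be repaired, but only by importing this observation rather than the IFT argument you propose.
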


\begin{proof} 
%Let 
%\[d_{(x,y,z)\in\partial\Gamma^{(3)}}:\Pb(\Rb^d)\times\Pb(\Rb^d)\to\Rb\] 
%be a $\Gamma$-invariant, continuous family of continuous metrics parameterized by $\partial_\infty\Gamma^{(3)}$. Then define the function
%\[f:\Oc_1\cap\Oc_m\to\Rb\]
%by 
%\[f(\rho):=\min\left\{d_{(x,y,z)}\left(\xi_\rho^1(x),(\xi_\rho^{d-m}(y)+\xi_\rho^1(z))\cap\xi_\rho^m(x)\right):(x,y,z)\in\partial_\infty\Gamma^{(3)}\right\}.\] 
%Note that this minimum exists by Proposition \ref{prop:3_point_action} and the observation that 
%\[(x,y,z)\mapsto d_{(x,y,z)}\left(\xi_\rho^1(x),(\xi_\rho^{d-m}(y)+\xi_\rho^1(z))\cap\xi_\rho^m(x)\right)\]
%is a $\Gamma$-invariant function. Also, observe that $f$ is continuous, so 
%\[\Oc:=f^{-1}(\Rb^+)\subset \Oc_1\cap\Oc_m\] 
%is an open neighborhood of $\rho_0$. By the definition of $f$, every $\rho\in\Oc$ has the property that $\xi_\rho^1(x) + \xi_\rho^1(z) +  \xi_\rho^{d-m}(y)$ is a direct sum for all $(x,y,z)\in\partial_\infty\Gamma^{(3)}$.
Observe that the required properties are invariant under the $\Gamma$-action on $\partial_\infty\Gamma^{(3)}$, which is cocompact by Proposition~\ref{prop:3_point_action}. It thus suffices to fix $(x_0,y_0,z_0) \in \partial_\infty \Gamma^{(3)}$ and prove that there exist a neighborhood $U\subset\partial_\infty \Gamma^{(3)}$ of $(x_0,y_0,z_0)$ and a neighborhood $\Oc\subset\Oc_1\cap\Oc_m$ of $\rho_0$ such that $\xi_\rho^1(x) + \xi_\rho^1(z) +  \xi_\rho^{d-m}(y)$ is a direct sum and
\begin{align*}
\xi_\rho^{1}(\partial_\infty \Gamma) \cap \left(\xi_\rho^{1}(x) + \xi_\rho^{1}(z) +  \xi_\rho^{d-m}(y)\right)
\end{align*}
spans $\xi_\rho^{1}(x) + \xi_\rho^{1}(z) +  \xi_\rho^{d-m}(y)$ for all $(x,y,z) \in U$ and any $\rho\in \Oc$.

It is a consequence of Theorem \ref{thm:gwstable} that the $\rho$-equivariant map 
\[\Xi:(\Oc_1\cap\Oc_m)\times\partial_\infty\Gamma^{(3)}\to\Gr_1(\Rb^d)\times\Gr_{d-m}(\Rb^d)\times\Gr_1(\Rb^d)\]
given by $\Xi:(\rho,(x,y,z))\mapsto(\xi_\rho^1(x),\xi_\rho^{d-m}(y),\xi_\rho^1(z))$ is continuous.
Let $V$ denote the open subset of triples $(F^1,G^{d-m},H^1)$ in $\Gr_1(\Rb^d)\times\Gr_{d-m}(\Rb^d)\times\Gr_1(\Rb^d)$ such that $F^1+G^{d-m}+H^1$ is a direct sum. Then $\Xi^{-1}(V)\subset (\Oc_1\cap\Oc_m)\times\partial_\infty\Gamma^{(3)}$ is an open set that contains $(\rho_0,(x_0,y_0,z_0))$, so there is some neighborhood $U\subset\partial_\infty \Gamma^{(3)}$ of $(x_0,y_0,z_0)$ and some neighborhood $\Oc\subset\Oc_1\cap\Oc_m$ of $\rho_0$ such that $\Oc\hspace{-.07cm}\times U\subset \Xi^{-1}(V)$. It is clear that $\xi_\rho^1(x) + \xi_\rho^1(z) +  \xi_\rho^{d-m}(y)$ is a direct sum for all $(x,y,z)\in U$ and all $\rho\in\Oc$.

%It thus follows easily from Theorem~\ref{thm:gwstable} that there exists a neighborhood $\Oc\subset\Hom(\Gamma, \PGL_d(\Rb))$ of $\rho_0$ with the following property: if $\rho \in \Oc$, then $\rho$ is a $P_{1,m}$-Anosov representation and $\xi_\rho^1(x) + \xi_\rho^1(z) +  \xi_\rho^{d-m}(y)$ is a direct sum for all $x,y,z \in \partial_\infty \Gamma$ distinct.

%For the same reasons, it is enough to fix $(x_0,y_0,z_0) \in \partial_\infty \Gamma^{(3)}$ and prove that there exists a neighborhood $U$ of $(x_0,y_0,z_0)$ in $\partial_\infty \Gamma^{(3)}$ such that 
%\begin{align*}
%\xi_\rho^{1}(\partial_\infty \Gamma) \cap \left(\xi_\rho^{1}(x) + \xi_\rho^{1}(z) +  \xi_\rho^{d-m}(y)\right)
%\end{align*}
%spans $\xi_\rho^{1}(x) + \xi_\rho^{1}(z) +  \xi_\rho^{d-m}(y)$ for all $(x,y,z) \in U$ and any $\rho$ that is a sufficiently small deformation of $\rho_0$. 

Let $e_1,\dots, e_d$ be a basis of $\Rb^d$ such that 
\begin{align*}
\xi_{\rho_0}^1(x_0) & = \Rb \cdot e_1, \quad \xi_{\rho_0}^m(x_0) = \Span\{e_1,\dots,e_m\}, \quad \xi_{\rho_0}^{d-m}(y_0) = \Span\{ e_{m+1},\dots, e_d\}, \\
& \xi_{\rho_0}^{d-1}(y_0) = \Span\{e_2,\dots, e_d\}, \quad \text{and}\quad\xi_{\rho_0}^1(z_0) = \Rb\cdot(e_1+e_2+e_d).
\end{align*}
Then there exists a continuous map 
\[\Oc\hspace{-.07cm}\times U\to\PGL_d(\Rb)\]
given by $(\rho, (x,y,z)) \mapsto g_{\rho,(x,y,z)}$, where  $g_{\rho_0,(x_0,y_0,z_0)} =\id$, 
 \begin{align*}
g_{\rho,(x,y,z)}\cdot\xi_\rho^1(x) & = \Rb \cdot e_1, \\
g_{\rho,(x,y,z)}\cdot\xi_\rho^m(x) & = \Span\{e_1,\dots,e_m\},\\
g_{\rho,(x,y,z)}\cdot \xi_\rho^{d-m}(y) &= \Span\{ e_{m+1},\dots, e_d\},\\
g_{\rho,(x,y,z)}\cdot \xi_\rho^{d-1}(y) &= \Span\{e_2,\dots, e_d\}, \text{ and} \\ 
 g_{\rho,(x,y,z)}\cdot\xi_\rho^1(z) &= \Rb\cdot(e_1+e_2+e_d).
 \end{align*} 
 
 %{\color{red} Attempt to show that the domain $U_{\rho, (x,y,z)}$ is all of $\Rb^{m-1}$.... } 
 
% {\color{teal} 
% \emph{Claim:} $U_{\rho,(x,y,z)} = \Rb^{m-1}$. 
 
% Suppose for a contradiction that $U_{\rho, (x,y,z)} \neq \Rb^{m-1}$ for some $ (\rho, (x,y,z)) \in \Oc \times U$. Then pick $v \in \Rb^{m-1} \setminus U_{\rho, (x,y,z)}$ such that $w \in U_{\rho, (x,y,z)}$ for all $w \in \Rb^{m-1}$ with $\norm{w} < \norm{v}$. Then 
% $$
% \lim_{t \nearrow 1} \norm{f_{\rho,(x,y,z)}(tv)}_2= \infty
% $$
% and so by Gr\"onwall's inequality, there exists a sequence $t_n \nearrow 1$ such that 
% $$
% \left. \frac{d}{dt} \right|_{t=t_n}\left(  \norm{f_{\rho,(x,y,z)}(tv)}_2\right) \geq n \norm{f_{\rho,(x,y,z)}(t_nv)}_2. 
% $$
% So by Cauchy-Schwarz, 
 % $$
%\norm{ \left. \frac{d}{dt} \right|_{t=t_n}f_{\rho,(x,y,z)}(tv) }_2 \geq n \norm{f_{\rho,(x,y,z)}(t_nv)}_2. 
% $$
% Next for each $n \geq 1$, let $z_n \in \partial_\infty \Gamma$ be the unique point with 
% $$
  % g_{\rho,(x,y,z)}\cdot\xi_\rho^1(z_n ) = [1:t_n v:f_{\rho, (x,y,z)}(t_n v)].
 %  $$ 
  % Then 
 %  $$
 %  \left\{ \left[ 1: t_nv + sv : f_{\rho, (x,y,z)}(t_n v) +s\left. \frac{d}{dt} \right|_{t=t_n}f_{\rho,(x,y,z)}(tv)\right] : s \in \Rb \right\} \subset     g_{\rho,(x,y,z)} T_{\xi^1_\rho(z_n)} \xi^1_\rho(\partial_\infty \Gamma) = g_{\rho,(x,y,z)}  \xi^m_\rho(z_n). 
  % $$

% }

Consider the affine chart 
\[\mathbb{A}:=\left\{[1:v:v']\in\Pb(\Rb^d):v\in\Rb^{m-1}\text{ and }v'\in\Rb^{d-m}\right\}\] 
of $\Pb(\Rb^d)$. For all $(x,y,z)\in U$ and all $\rho\in\Oc$, the sum $\xi_\rho^1(x) + \xi_\rho^1(z) +  \xi_\rho^{d-m}(y)$ is direct. Hence the projection $\pi:\Ab\to \Ab$ given by $\pi([1:v:v'])=[1:v:0]$ restricts to an injection on $g_{\rho,(x,y,z)}\cdot\xi_\rho^1( \partial_\infty \Gamma - \{y\})$. By the invariance of domain theorem, 
\[W_{\rho,(x,y,z)}:=\pi\big(g_{\rho,(x,y,z)}\cdot\xi_\rho^1( \partial_\infty \Gamma - \{y\})\big)\]
is an open subset of $\{[1:v:0]\in\Pb(\Rb^d):v\in\Rb^{m-1}\}\cong\Rb^{m-1}$. Then define $f_{\rho, (x,y,z)} : W_{\rho,(x,y,z)} \rightarrow \Rb^{d-m}$ so that 
 \begin{align*}
  g_{\rho,(x,y,z)}\cdot\xi_\rho^1( \partial_\infty \Gamma - \{y\}) = \left\{ [1:v:f_{\rho, (x,y,z)}(v)] : v \in W_{\rho,(x,y,z)} \right\},
\end{align*}
and note that
%By Theorem~\ref{thm:main}, for each $(\rho, (x,y,z)) \in \Oc\hspace{-.07cm} \times U$, there exists an open set $W_{\rho,(x,y,z)}\subset\Rb^{m-1}$ and a $C^1$ function $f_{\rho, (x,y,z)} : W_{\rho,(x,y,z)} \rightarrow \Rb^{d-m}$ such that 
 %\begin{align*}
%  g_{\rho,(x,y,z)}\cdot\xi_\rho^1( \partial_\infty \Gamma - \{y\}) = \left\{ [1:v:f_{\rho, (x,y,z)}(v)] : v \in W_{\rho,(x,y,z)} \right\}.
%\end{align*}
%Thus,
 \begin{align*}
\xi_\rho^1( \partial_\infty & \Gamma - \{y\}) \cap \left(\xi_\rho^{1}(x) + \xi_\rho^{1}(z) +  \xi_\rho^{d-m}(y)\right) \\
& =   g_{\rho,(x,y,z)}^{-1}\cdot\left\{ [1:te_2:f_{\rho, (x,y,z)}(te_2)] : t\in\Rb\text{ and } te_2 \in W_{\rho,(x,y,z)} \right\}.
\end{align*}
So by hypothesis, there exist $t_1,\dots,t_{d-m+2}\in\Rb$ such that $t_ie_2 \in W_{\rho_0,(x_0,y_0,z_0)}$ for all $i\in\{1,\dots,d-m+2\}$, and
\begin{align*}
[1:t_1e_2:f_{\rho_0, (x_0,y_0,z_0)}(t_1e_2)], \dots, [1:t_{d-m+2}e_2:f_{\rho_0, (x_0,y_0,z_0)}(t_{d-m+2}e_2)] 
\end{align*}
spans $\xi_{\rho_0}^1(x_0) + \xi_{\rho_0}^1(y_0) +  \xi_{\rho_0}^{d-m}(z_0)$. 

By Theorem~\ref{thm:continuous_limit_curve}, the open set $W_{\rho,(x,y,z)}$ varies continuously with $(\rho,(x,y,z))\in\Oc\hspace{-.07cm}\times U$. Thus, by shrinking $\Oc$ and $U$ if necessary, we may ensure that there is some open $W\subset \Rb^{m-1}$ such that $W\subset W_{\rho,(x,y,z)}$ and $t_ie_2 \in W$ for all $i\in\{1,\dots,d-m+2\}$ and $(\rho,(x,y,z))\in\Oc\hspace{-.07cm}\times U$. Then Theorem~\ref{thm:continuous_limit_curve} implies that the map 
\[\Oc\hspace{-.07cm} \times U\to C\left(W, \Rb^{d-m}\right)\] 
given by $(\rho, (x,y,z)) \mapsto f_{\rho,(x,y,z)}|_W$ is continuous. Hence, by further shrinking $\Oc$ and $U$, we may assume that 
\begin{align*}
g_{\rho,(x,y,z)}^{-1}[1:t_1e_2:f_{\rho, (x,y,z)}(t_1e_2)], \dots, g_{\rho,(x,y,z)}^{-1}[1:t_{d-m+2}e_2:f_{\rho, (x,y,z)}(t_{d-m+2}e_2)] 
\end{align*}
spans $\xi_\rho^{1}(x) + \xi_\rho^{1}(y) +  \xi_\rho^{d-m}(z)$ for all $\rho\in\Oc$ and $(x,y,z)\in U$.
\end{proof}

 %%%%%%%%%%%%%%%%%%%%%%%%%%%%%%%%%%%%%%%%%%%%%%%%%%%%
 \subsection{The proof of Theorem \ref{thm:regularity_body}}\label{sec:proof_regularity_body}
  %%%%%%%%%%%%%%%%%%%%%%%%%%%%%%%%%%%%%%%%%%%%%%%%%%%%
From Theorem~\ref{thm:main_body} and Theorem \ref{thm:alphas}, we see that 
 \begin{align*}
\Es_m(\rho) \leq \sup\left\{ \alpha\in(1,+\infty): M \text{ is } C^{\alpha} \text{ along }\xi_\rho^1(\partial_\infty \Gamma) \right\}.
\end{align*}
To prove the equality case, we first make the following observation. Let $e_1,\dots, e_d$ denote the standard basis of $\Rb^d$, and let $g \in \PGL_{d}(\Rb)$ be a proximal element such that 
\begin{itemize}
\item $e_1$ spans the eigenspace corresponding to $\lambda_1(g)$, 
\item $e_m$ lies in the generalized eigenspace corresponding to $\lambda_m(g)$,
\item $e_{m+1}$ lies in the generalized eigenspace corresponding to $\lambda_{m+1}(g)$, and
\item $e_{m+2},\dots,e_d$ lie in the sum of the generalized eigenspaces corresponding to $\lambda_{m+2}(g),\dots,\lambda_d(g)$. 
\end{itemize} 
Then observe that
\begin{align}\label{eqn:P2stretch}
\log\lambda_1(g)&=\lim_{n \rightarrow +\infty} \frac{1}{n} \log \norm{\overline{g}^n\cdot e_1}_2,\nonumber\\
\log\lambda_m (g)&= \lim_{n \rightarrow +\infty} \frac{1}{n} \log \norm{\overline{g}^n\cdot e_m}_2,\text{ and }\\
\log\lambda_{m+1}(g) &= \lim_{n \rightarrow +\infty} \frac{1}{n} \log \norm{\overline{g}^n\cdot \sum_{j=m+1}^d v_j e_j}_2\text{ when }v_{m+1} \neq 0,\nonumber
\end{align}
where $\overline{g}\in\GL_d(\Rb)$ is a linear representative of $g$ with unit determinant and $\norm{\cdot}_2$ is the standard $\ell^2$-norm on $\Rb^d$.

Now, fix some $\gamma \in \Gamma$ with infinite order and let $\gamma^{\pm} \in \partial_\infty \Gamma$ denote the attracting and repelling fixed points of $\gamma$. We can make a change of basis and assume that $\xi_\rho^1(\gamma^+) = [e_1]$, $\xi_\rho^m(\gamma^+) = \Span\{ e_1,\dots, e_m\}$, $\xi_\rho^1(\gamma^-) = [e_d]$, $\xi_\rho^{d-m}(\gamma^-) = \Span\{ e_{m+1}, \dots, e_d\}$, and $\xi_\rho^{d-1}(\gamma^-) = \Span\{e_2,\dots, e_d\}$. %Now fix a lift $\overline{g}\in\GL_d(\Rb)$ of $\rho(\gamma)\in\PGL_d(\Rb)$. 
Then
 \begin{align*}
\rho(\gamma) = \begin{bmatrix} \lambda & & \\ & U & \\ & & V \end{bmatrix}
 \end{align*}
where $\lambda \in \Rb$, $U \in \GL_{m-1}(\Rb)$, and $V \in \GL_{d-m}(\Rb)$. By a further change of basis, we can assume that $e_m$ lies in the generalized eigenspace corresponding to $\lambda_m(\rho(\gamma))$, and $e_{m+1}$ lies in the generalized eigenspace corresponding to $\lambda_{m+1}(\rho(\gamma))$. 
 
By Theorem \ref{thm:main_body}, $M$ is $C^1$ along $\xi_\rho^1(\partial_\infty \Gamma)$, and the tangent space to $M$ at $\xi_\rho^1(\gamma^+)$ is $\xi_\rho^m(\gamma^+)$. Thus, for any $\epsilon > 0$ sufficiently small there exists some $a \in M$ such that 
\begin{align*}
a = \left[e_1 + \epsilon e_m + \sum_{j=m+1}^d y_j e_j \right].
\end{align*}
Then 
\begin{align*}
\xi_\rho^1(\gamma^+) + a +  \xi_\rho^{d-m}(\gamma^-) = \Span\{e_1,e_m, e_{m+1}, \dots, e_d\}
\end{align*}
and by hypothesis ($\ast$), there exists some $b \in M$ such that 
\begin{align*}
b= \left[z_1e_1 + z_m e_m + \sum_{j=m+1}^d z_j e_j \right]
\end{align*}
and $z_{m+1} \neq 0$. Note that $\xi_\rho^1(\gamma^-)\neq b\neq\xi_\rho^1(\gamma^+)$, so by transversality and $m$-hyperconvexity, the sums 
\[
b+\xi_\rho^{d-1}(\gamma^-)\quad\text{and}\quad \xi_\rho^1(\gamma^+) + b +  \xi_\rho^{d-m}(\gamma^-)
\] 
are both direct. In particular, $z_1 \neq 0\neq z_m$. 

Next fix a distance $d_{\Pb}$ on $\Pb(\Rb^d)$ induced by a Riemannian metric on $\Pb(\Rb^d)$. Recall that for any $v=(v^+,v^-,s) \in \Usf(\Gamma)$, we denote
\begin{align}
E_1(v) & := \xi_\rho^1(v^+),\nonumber \\
E_2(v) & := \xi_\rho^{d-1}(v^-) \cap \xi_\rho^m(v^+),\label{eqn:E}\\
E_3(v) & := \xi_\rho^{d-m}(v^-),\nonumber
\end{align}
and for any $i=1,2,3$, we denote by $P_{i,v} : \Rb^d \rightarrow E_i(v)$ the projection with kernel $E_{i-1}(v)+E_{i+1}(v)$ (as before, arithmetic in the subscripts is done modulo $3$).

Let $v\in\Usf(\Gamma)$ be a vector such that $v^\pm=\gamma^\pm$. Since 
\[\lim_{n\to+\infty}\rho(\gamma^n)\cdot b= \xi_\rho^1(\gamma^+),\]  
Observation~\ref{obs:delta1} implies that if 
\[X_n:=\overline{\rho}(\gamma^n)\cdot \left(z_1e_1 + z_m e_m + \sum_{j=m+1}^d z_j e_j\right),\] 
then there is some $A\geq 1$ such that for sufficiently large $n$,
\[\frac{1}{A} \frac{\norm{P_{3,v}(X_n)}_{2}}{\norm{P_{1,v}(X_n)}_{2}}\leq d_{\Pb}\left( \rho(\gamma^n)\cdot b, \xi_\rho^m(\gamma^+) \right) \leq  A \frac{\norm{P_{3,v}(X_n)}_{2}}{\norm{P_{1,v}(X_n)}_{2}}\]
and 
\[\frac{1}{A} \frac{\norm{P_{2,v}(X_n)}_{2}}{\norm{P_{1,v}(X_n)}_{2}}\leq d_{\Pb}\left( \rho(\gamma^n)\cdot b, \xi_\rho^1(\gamma^+) \right) \leq  A \frac{\norm{P_{2,v}(X_n)}_{2}+\norm{P_{3,v}(X_n)}_{2}}{\norm{P_{1,v}(X_n)}_{2}}.\]
It then follows from \eqref{eqn:P2stretch} that
\begin{align*}
\lim_{n \rightarrow +\infty} \frac{1}{n} \log d_{\Pb}\left(\rho(\gamma^n)\cdot b, \xi_\rho^m(\gamma^+) \right)=\log \frac{\lambda_{m+1}(\rho(\gamma))}{\lambda_1(\rho(\gamma))}
\end{align*}
and 
\begin{align*}
\lim_{n \rightarrow +\infty} \frac{1}{n} \log d_{\Pb}\left(\rho(\gamma^n)\cdot b, \xi_\rho^1(\gamma^+) \right)= \log \frac{\lambda_{m}(\rho(\gamma))}{\lambda_1(\rho(\gamma))},
\end{align*}
where the second equality uses the fact that $\lambda_m(\rho(\gamma))>\lambda_{m+1}(\rho(\gamma))$, which is a consequence of the assumption that $\rho$ is $P_m$-Anosov.

Finally, if $M$ is $C^\alpha$ along $\xi_\rho^1(\partial_\infty \Gamma)$, then by Observation~\ref{obs:appendix3 estimate 1} there exists $C > 0$ such that
\begin{align*}
d_{\Pb}\left( \xi_\rho^m(\gamma^+), \rho(\gamma^n)\cdot b \right) \leq C d_{\Pb}\left( \xi_\rho^1(\gamma^+), \rho(\gamma^n)\cdot b \right)^{\alpha}
\end{align*}
for any $n$ and any $b\in M-\xi_\rho^1(\gamma^+)$. By taking the logarithm on both sides, dividing by $n$, and then taking the limit, we see that
\begin{align*}
\alpha \leq \log \frac{\lambda_{1}(\rho(\gamma))}{\lambda_{m+1}(\rho(\gamma))}\bigg/\log \frac{\lambda_{1}(\rho(\gamma))}{\lambda_m(\rho(\gamma))}.
\end{align*}
Since $\gamma \in \Gamma$ was arbitrary, we see that $\alpha \leq \Es_m(\rho)$.

%%%%%%%%%%%%%%%%%%%%%%%%%%%%%%%%%%%%%%%%%%%%%%%%%%%%%
%%%%%%%%%%%%%%%%%%%%%%%%%%%%%%%%%%%%%%%%%%%%%%%%%%%%%
\section{Necessary conditions for differentiability of $\rho$-controlled subsets}
%%%%%%%%%%%%%%%%%%%%%%%%%%%%%%%%%%%%%%%%%%%%%%%%%%%%%
%%%%%%%%%%%%%%%%%%%%%%%%%%%%%%%%%%%%%%%%%%%%%%%%%%%%%
In this section, we establish Theorem~\ref{thm:nec_general}. By Example \ref{eg:limitset}, it is sufficient to prove the following theorem.

\begin{theorem} \label{thm:nec_general_body}Suppose  $\rho: \Gamma \rightarrow \PGL_{d}(\Rb)$ is an irreducible $P_1$-Anosov representation such that $\bigwedge^m \rho: \Gamma \rightarrow \PGL\big(\bigwedge^m \Rb^d\big)$ is also irreducible. Also, suppose  $M$ is a $\rho$-controlled, $(m-1)$ dimensional topological manifold. If
\begin{enumerate}
\item[($\ddagger$)] $M$ is $C^\alpha$ along $\xi_\rho^1(\partial_\infty\Gamma)$ for some $\alpha>1$,
\end{enumerate}
then
\begin{enumerate}
\item[($\dagger$')] $\rho$ is $P_m$-Anosov and $\xi_\rho^1(x) + p + \xi_\rho^{d-m}(y)$
is a direct sum for all pairwise distinct $\xi_\rho^1(x),p,\xi_\rho^1(y) \in M$.
\end{enumerate}
\end{theorem}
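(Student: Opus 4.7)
\emph{Proof plan.} Since $M$ is $C^\alpha$ with $\alpha > 1$ along $\xi^{(1)}(\partial_\infty \Gamma)$, in particular $M$ is $C^1$ at each $\xi^{(1)}(x)$, so the projective tangent space $T_{\xi^{(1)}(x)}M$ is a well-defined $(m-1)$-dimensional projective subspace containing $\xi^{(1)}(x)$; let $\eta^{(m)}(x) \in \Gr_m(\Rb^d)$ denote the corresponding $m$-dimensional linear subspace. Then $\eta^{(m)}$ is continuous (from $C^1$ regularity) and $\rho$-equivariant (since $\rho(\Gamma)$ acts on $M$ by projective transformations, hence on tangent spaces), and $\xi^{(1)}(x) \subset \eta^{(m)}(x)$ for all $x$. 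Composing with the Pl\"ucker embedding $\iota: \Gr_m(\Rb^d) \to \Pb(\bigwedge^m \Rb^d)$ yields a continuous $\bigwedge^m \rho$-equivariant map $\tilde{\xi} := \iota \circ \eta^{(m)}: \partial_\infty \Gamma \to \Pb(\bigwedge^m \Rb^d)$, the candidate $1$-flag map for $\bigwedge^m \rho$.

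The main step is to prove that $\bigwedge^m \rho$ is $1$-Anosov; since $\mu_1/\mu_2$ of $\bigwedge^m(g)$ equals $\mu_m/\mu_{m+1}$ of $g$, this is equivalent via Theorem~\ref{thm:SV_char_of_Anosov} to $\rho$ being $m$-Anosov with $\xi^{(m)} = \eta^{(m)}$. Fix an infinite order $\gamma \in \Gamma$ with attracting fixed point $\gamma^+$. In affine coordinates around $\xi^{(1)}(\gamma^+)$, write $M$ locally as a graph $u \mapsto (u,f(u))$ with $u \in \Rb^{m-1}$ coordinatizing the tangent plane $\eta^{(m)}(\gamma^+)$, $f(0)=0$, $df_0=0$, and $\norm{f(u)}_2 = \mathrm{O}(\norm{u}_2^\alpha)$. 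Since $\rho(\gamma)$ preserves both $M$ and $\eta^{(m)}(\gamma^+)$, it acts in these coordinates through a linear map respecting a splitting $\eta^{(m)}(\gamma^+) \oplus W_\gamma$ into $\rho(\gamma)$-invariant subspaces, and the identity $\rho(\gamma^n) \cdot M = M$ yields $f \circ A_1^n = A_2^n \circ f$ for all $n \geq 0$, where $A_1, A_2$ denote the linearized actions on $\eta^{(m)}(\gamma^+)/\xi^{(1)}(\gamma^+)$ and $W_\gamma$ respectively. Probing this functional equation along dominant eigendirections of $A_1$ and using the $C^\alpha$ bound (in the spirit of, but dually to, the derivation of $\alpha_m(\rho) > 1$ in Section~\ref{sec:optimal}) forces the spectral radius of $A_2$ to be strictly smaller than the top non-trivial eigenvalue of the $\rho(\gamma)$-action on $\eta^{(m)}(\gamma^+)$. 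This identifies $\eta^{(m)}(\gamma^+)$ with the sum of the top $m$ generalized eigenspaces of $\rho(\gamma)$ and yields the strict gap $\lambda_m(\rho(\gamma)) > \lambda_{m+1}(\rho(\gamma))$; consequently $\bigwedge^m \rho(\gamma)$ is proximal on $\Pb(\bigwedge^m \Rb^d)$ with attracting fixed point $\tilde{\xi}(\gamma^+)$. Combining this pointwise proximality with the irreducibility of $\bigwedge^m \rho$ via Benoist's asymptotic cone theorem (Section~\ref{sec:cones}) yields the uniform singular value growth $\log(\mu_1/\mu_2)(\bigwedge^m\rho(\gamma)) \gtrsim d_S(\gamma,\id)$, so by Theorem~\ref{thm:SV_char_of_Anosov}, $\bigwedge^m \rho$ is $1$-Anosov and $\rho$ is $m$-Anosov with $\xi^{(m)} = \eta^{(m)}$.

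For the transversality claim, $\rho$ being both $1$- and $m$-Anosov with compatible flags (from dynamics preservation at attracting fixed points plus density) gives $\xi^{(d-m)}(y) \subset \xi^{(d-1)}(y)$. Fix pairwise distinct $\xi^{(1)}(x), p, \xi^{(1)}(y) \in M$: then $p \notin \xi^{(d-m)}(y)$ (else $p \in M \cap \xi^{(d-1)}(y) = \xi^{(1)}(y)$, contradicting $p \neq \xi^{(1)}(y)$), and similarly $\xi^{(1)}(x) \notin \xi^{(d-m)}(y)$. To show $p \notin \xi^{(1)}(x) + \xi^{(d-m)}(y)$, observe that $\xi^{(m)}(x) \cap \xi^{(d-m)}(y) = 0$ (from $m$-Anosov transversality) gives $\xi^{(m)}(x) \cap (\xi^{(1)}(x) + \xi^{(d-m)}(y)) = \xi^{(1)}(x)$, so the fiber $\Pi_{x,y}^{-1}(\xi^{(1)}(x)) = \xi^{(1)}(x) + \xi^{(d-m)}(y)$ is transverse to $T_{\xi^{(1)}(x)}M = \xi^{(m)}(x)$; hence $M \cap (\xi^{(1)}(x) + \xi^{(d-m)}(y)) = \{\xi^{(1)}(x)\}$ locally near $\xi^{(1)}(x)$. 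When $(x,y) = (\gamma^+, \gamma^-)$ for some infinite order $\gamma \in \Gamma$, the subspace $\xi^{(1)}(x) + \xi^{(d-m)}(y)$ is $\rho(\gamma)$-invariant, so $\rho(\gamma^n) \cdot p$ stays in the intersection $M \cap (\xi^{(1)}(x) + \xi^{(d-m)}(y))$ and tends to $\xi^{(1)}(x)$ (using $p \notin \xi^{(d-1)}(y)$ from $\rho$-controlledness), forcing $\rho(\gamma^n) \cdot p = \xi^{(1)}(x)$ eventually and contradicting $p \neq \xi^{(1)}(x)$. The density of such pairs $(\gamma^+, \gamma^-)$ in $\partial_\infty \Gamma^{(2)}$ combined with a continuity/approximation argument (applied to nearby intersections $M \cap (\xi^{(1)}(\gamma^+) + \xi^{(d-m)}(\gamma^-))$) handles general $(x,y)$.

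\emph{Main obstacle.} The crux is the second paragraph: converting the $C^\alpha$ estimate into the strict spectral gap $\lambda_m > \lambda_{m+1}$ and identifying $\eta^{(m)}(\gamma^+)$ with the top $m$ eigenspaces. This requires a careful analysis of Jordan block structure and of the functional equation $f \circ A_1^n = A_2^n \circ f$ to rule out eigenvalue configurations consistent with $C^\alpha$ regularity but lacking the gap, and the irreducibility of $\bigwedge^m \rho$ enters critically in upgrading pointwise proximality to the uniform singular value growth needed to conclude via Theorem~\ref{thm:SV_char_of_Anosov}.
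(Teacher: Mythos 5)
Your overall architecture (tangent/Gauss map composed with the Pl\"ucker embedding, per-element proximality of $\bigwedge^m\rho(\gamma)$ with attracting point the tangent $m$-plane, Benoist's cone theorem to upgrade to a singular value gap, then a dynamical transversality argument) is the same as the paper's, but there are two genuine gaps. The first and most serious is in the passage from your second paragraph to the Anosov conclusion: you only extract from the $C^\alpha$ hypothesis a \emph{strict} per-element gap $\lambda_m(\rho(\gamma))>\lambda_{m+1}(\rho(\gamma))$, and then claim that ``pointwise proximality'' plus Benoist's theorem yields $\log\tfrac{\mu_m}{\mu_{m+1}}(\rho(\gamma))\gtrsim d_S(\gamma,\id)$. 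That implication fails: the asymptotic cone $\Cc_\mu=\Cc_\lambda$ is closed, and a pointwise strict inequality on $\lambda(\Gamma)$ does not prevent limit directions from landing on the wall $\{x_m=x_{m+1}\}$, in which case no uniform singular value gap (hence no $m$-Anosovness via Theorem~\ref{thm:SV_char_of_Anosov}) follows. What the paper actually proves (Proposition~\ref{prop:eigenvalue_est}) is the \emph{quantitative} inequality $\log\tfrac{\lambda_{m+1}}{\lambda_m}(\rho(\gamma))\le(\alpha-1)\log\tfrac{\lambda_2}{\lambda_1}(\rho(\gamma))$ for every $\gamma$, obtained not from a local graph functional equation but from the $(\alpha-1)$-H\"older continuity of the tangent map $\Phi$ (Remark~\ref{rem:d1d2}) together with the contraction-rate estimates of Observations~\ref{obs:dynamics} and~\ref{obs:slow}; irreducibility of $\bigwedge^m\rho$ enters there to guarantee that $\Phi(\xi^{(1)}(\partial_\infty\Gamma))$ spans $\bigwedge^m\Rb^d$, so one can test the dynamics on points outside the exceptional subspaces (Lemma~\ref{lem:proximal}). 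This closed, homogeneous inequality passes to the limit cone, and strictness there comes from $x_2-x_1<0$, i.e.\ from $\rho$ being $1$-Anosov (Lemma~\ref{lem:est_on_sing_values}, which also uses Proposition~\ref{prop:Zclosure} and Theorem~\ref{thm:cones}). Your plan omits exactly this quantitative step, and you yourself flag the functional-equation analysis as the unproved ``main obstacle,'' so the heart of the theorem is missing. (A smaller issue: in a general affine chart the $\rho(\gamma)$-action is projective, not linear, so $f\circ A_1^n=A_2^n\circ f$ needs the chart $\Ab_{\xi^{(d-1)}(\gamma^-)}$ to even make sense.)

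The second gap is in the transversality step. Your argument for pairs $(x,y)=(\gamma^+,\gamma^-)$ is fine, but the extension to arbitrary $(x,y)\in\partial_\infty\Gamma^{(2)}$ cannot be done by ``density plus continuity'': the condition ``$\xi^{(1)}(x)+p+\xi^{(d-m)}(y)$ is direct for \emph{all} $p\in M\setminus\{\xi^{(1)}(x),\xi^{(1)}(y)\}$'' is not an open condition in $(x,y)$, since failure of directness is closed in the triple and nothing prevents bad points $p$ from appearing arbitrarily close to $\xi^{(1)}(x)$ or $\xi^{(1)}(y)$ as $(x,y)$ varies. The paper resolves this with a genuinely uniform compactness argument (Proposition~\ref{prop:nec_general}): using the projections $\pi_{x,y}$ of Proposition~\ref{prop:rho_controlled_sets_projections} and the co-compact $\Gamma$-action on $P(M)$ (Observation~\ref{obs:compact}), it produces constants $\epsilon,\delta,\theta_0$ so that any triple can be translated by some $\gamma\in\Gamma$ into a region $\Uc$ where directness holds, and then pulls back. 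Some replacement for this uniformization is needed in your write-up; as stated, the approximation argument for general $(x,y)$ does not go through.
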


\begin{remark}
Note that ($\dagger'$) in Theorem \ref{thm:nec_general_body} is a weaker than the $m$-hyperconvexity condition ($\dagger$) in Theorem \ref{thm:main_body}. However, when $M=\xi_\rho^1(\partial_\infty\Gamma)$, then the two conditions are identical.
\end{remark}

In Section \ref{sec:wedge}, we recall the definition of the representation 
\[\bigwedge^m\rho:\Gamma\to\PGL\bigg(\bigwedge^m\Rb^d\bigg),\] 
whose irreducibility appears as a hypothesis in the statements of Theorem~\ref{thm:nec_general_body}. Then, in Section \ref{sec:irredeg}, we give an example to demonstrate the necessity of the irreducibility of $\bigwedge^m\rho$ as a hypothesis of Theorem \ref{thm:nec_general_body} (and also in Theorem~\ref{thm:nec_general}). Finally, we give the proof of Theorem \ref{thm:nec_general_body} in Section \ref{thm: lkj}.

%%%%%%%%%%%%%%%%%%%%%%%%%%%%%%%%%%%%%%%%%%%%%%%%%%%%%
\subsection{The exterior representation}\label{sec:wedge}
%%%%%%%%%%%%%%%%%%%%%%%%%%%%%%%%%%%%%%%%%%%%%%%%%%%%%

Observe that for any $m\leq d-1$, there is a natural linear $\GL_d(\Rb)$-action on $\bigwedge^m\Rb^d$ given by 
\[g\cdot(u_1\wedge\dots\wedge u_m):=(g\cdot u_1)\wedge\dots\wedge (g\cdot u_m),\] 
where $u_i\in\Rb^d$ for all $i$. Furthermore, the standard basis of $\Rb^d$ induces a basis of $\bigwedge^m\Rb^d$, which gives an identification
\[\bigwedge^m\Rb^d\cong\Rb^D,\]
where $D:=\dim\big(\bigwedge^m\Rb^d\big)={d\choose m}$. Thus, this $\GL_d(\Rb)$-action defines a representation 
\[\overline{\iota_{d,m}}:\GL_d(\Rb)\to\GL\bigg(\bigwedge^m\Rb^d\bigg)\cong\GL_D\left(\Rb\right),\]
which in turn defines a representation
\[\iota_{d,m}:\PGL_d(\Rb)\to\PGL\bigg(\bigwedge^m\Rb^d\bigg)\cong\PGL_D\left(\Rb\right).\]
Using this, we may define the \emph{$m$-exterior representation} of $\overline{\rho}:\Gamma\to\GL_d(\Rb)$ (resp. $\rho:\Gamma\to\PGL_d(\Rb)$) to be
\[\bigwedge^m\overline{\rho}:=\overline{\iota_{d,m}}\circ\overline{\rho}:\Gamma\to\GL_D\left(\Rb\right)\quad\left(\text{resp. }\bigwedge^m\rho:=\iota_{d,m}\circ\rho:\Gamma\to\PGL_D\left(\Rb\right)\right).\]

%Also, if $\langle\cdot,\cdot\rangle_{\Rb^d}$ denotes the standard inner product on $\Rb^d$ with orthonormal basis $e_1,\dots,e_d$, then we may define a bilinear pairing $\langle\cdot,\cdot\rangle_{\bigwedge^m\Rb^d}$ on $\bigwedge^m\Rb^d$ by first defining 
%\[\langle u_{i_1}\wedge\dots\wedge u_{i_m},v_{j_1}\wedge\dots\wedge v_{j_m}\rangle_{\bigwedge^m\Rb^d}:=\sum_{\sigma\in S_m}\sgn(\sigma)\prod_{k=1}^m\langle u_{i_k},v_{j_{\sigma(k)}}\rangle\]
%for all $u_{i_k},v_{j_k}\in\Rb^d$, and then extending it linearly to all of $\bigwedge^m\Rb^d$. Observe that \[\{e_{i_1}\wedge\dots\wedge e_{i_m}:1\leq i_1<\dots<i_k\leq d\}\]
%is an orthonormal basis of $\bigwedge\Rb^d$, so $\langle\cdot,\cdot\rangle_{\bigwedge^m\Rb^d}$ is an inner product. %Using this, we may define the norm $\norm{\cdot}_{\bigwedge^m\Rb^d}$ on $\bigwedge^m\Rb^d$ associated to $\langle\cdot,\cdot\rangle_{\bigwedge^m\Rb^d}$.

Next, let $g\in\PGL_d(\Rb)$. %For all $i$, let $\mu_i(\iota_{d,m}(g))$ denote the $i$-th singular value of $\iota_{d,m}(\overline{g})$ with respect to the norm $\norm{\cdot}_{\bigwedge^m\Rb^d}$ on $\bigwedge^m\Rb^d$. 
One can verify from the definition of the $\GL_d(\Rb)$-action on $\bigwedge^m\Rb^d$ that for all $i$, there exists $1\le i_1 < i_2 < \dots < i_m\le d$ and  $1\le j_1 < j_2 < \dots < j_m\le d$ such that
\begin{equation}\label{eqn:wedge eignevalues and singular values}
\lambda_i(\iota_{d,m}(g))=\lambda_{i_1}(g)\dots\lambda_{i_m}(g)\quad\text{and}\quad\mu_i(\iota_{d,m}(g))=\mu_{j_1}(g)\dots\mu_{j_m}(g).
\end{equation}
This implies that
\begin{align}\label{eqn:evalue1}
\lambda_1(\iota_{d,m}(g))=\prod_{i=1}^m\lambda_{i}(g)\quad\text{and}\quad\lambda_2(\iota_{d,m}(g))=\lambda_{m+1}(g)\prod_{i=1}^{m-1}\lambda_{i}(g),
\end{align}
and
\begin{align}\label{eqn:0evalue1}
\mu_1(\iota_{d,m}(g))=\prod_{i=1}^m\mu_{i}(g)\quad\text{and}\quad\mu_2(\iota_{d,m}(g))=\mu_{m+1}(g)\prod_{i=1}^{m-1}\mu_{i}(g).
\end{align}
Hence, for any $\gamma\in\Gamma$,
\begin{align}\label{eqn:evalue2}
\frac{\lambda_1}{\lambda_2}\bigg(\bigwedge^m\rho(\gamma)\bigg)=\frac{\lambda_m(\rho(\gamma))}{\lambda_{m+1}(\rho(\gamma))}
\end{align}
and
\begin{align}\label{eqn:0evalue2}
\frac{\mu_1}{\mu_2}\bigg(\bigwedge^m\rho(\gamma)\bigg)=\frac{\mu_m(\rho(\gamma))}{\mu_{m+1}(\rho(\gamma))}.
\end{align}

\subsection{The irreducibility condition}\label{sec:irredeg}
%%%%%%%%%%%%%%%%%%%%%%%%%%%%%%%%%%%%%%%%%%%%%%%%%%%%%
Now, we will discuss an example to demonstrate that the irreducibility of $\bigwedge^m\rho$ is a necessary hypothesis for Theorem~\ref{thm:nec_general_body} to hold.

The identification of $\Cb^3$ with $\Rb^6$ given by 
\[(z_1,z_2,z_3)\mapsto(\Re(z_1),\Im(z_1),\Re(z_2),\Im(z_2),\Re(z_3),\Im(z_3))\]
defines an inclusion $j:\SL_3(\Cb)\to\SL_6(\Rb)$. The image of $j$ can be characterized as the subgroup of $\SL_6(\Rb)$ that commutes with the linear endomorphism $J$ on $\Rb^6$ defined by $J(x_1,y_1,x_2,y_2,x_3,y_3):=(-y_1,x_1,-y_2,x_2,-y_3,x_3)$. Notice that $J^2 = -\id$.

Let $G\subset\SL_3(\Cb)$ be the subgroup that leaves invariant the sesquilinear pairing that is represented in the standard basis of $\Cb^3$ by the matrix
\[\left(\begin{array}{ccc}
0&0&1\\
0&1&0\\
1&0&0
\end{array}\right).\]
Note that $G$ is conjugate to the group $\SU(2,1)$. Define 
\[\tau:=(\iota_{6,2}\circ j)|_{G} : G \rightarrow \SL\bigg(\bigwedge^2 \Rb^6\bigg),\] 
where $\iota_{d,m}$ was defined in Section \ref{sec:wedge}. 

Let $\bigwedge^2J$ be the linear endomorphism on $\bigwedge^2\Rb^6$ given by 
\[\bigg(\bigwedge^2J\bigg)(u_1\wedge u_2)=J(u_1)\wedge J(u_2).\]
Notice that $\big(\bigwedge^2 J\big)^2 = \id$ and so $\bigwedge^2J$ has eigenvalues $\pm 1$. Consider the $\tau$-invariant subspace 
\begin{align*}
E := \left\{ v \in \bigwedge^2 \Rb^6: \bigg(\bigwedge^2J\bigg)(v) = v\right\},
\end{align*}
and let $\tau : G \rightarrow \GL(E)$ be the representation defined by the $\tau_0$ action on $E$. Observe that if $e_1,\dots,e_6$ is the standard basis for $\Rb^6$, then 
\begin{align*}
&f_1:=e_1\wedge e_2,&&f_4:=e_3\wedge e_4,&&f_7:=e_3\wedge e_5+e_4\wedge e_6,\\ 
&f_2:=e_2\wedge e_3-e_1\wedge e_4,&&f_5:=e_2\wedge e_5-e_1\wedge e_6,&&f_8:=e_4\wedge e_5-e_3\wedge e_6,\\
&f_3:=e_1\wedge e_3+e_2\wedge e_4,&&f_6:=e_1\wedge e_5+e_2\wedge e_6,&&f_9:=e_5\wedge e_6
\end{align*}
is a basis of $E$. One can then explicitly verify that $\tau$ is irreducible. 

If $g \in G$, then there exists some $\lambda \geq 1$ such that the (complex) eigenvalues of $g \in G$ have moduli $\lambda, 1, \lambda^{-1}$. By conjugating $g$ by an appropriate element in $G$, we may also assume that the generalized eigenvectors of $g$ whose eigenvalues have moduli $\lambda,1,\lambda^{-1}$ are $(1,0,0)^T$, $(0,1,0)^T$ and $(0,0,1)^T$ respectively. This implies that the eigenvalues of $j(g) \in \SL_6(\Rb)$ have moduli $\lambda,1,\lambda^{-1}$ (each with multiplicity $2$), and the corresponding invariant subspaces are $\Span_{\Rb}\{e_1,e_2\}$, $\Span_{\Rb}\{e_3,e_4\}$, and $\Span_{\Rb}\{e_5,e_6\}$ respectively. Using the basis of $E$ described above, one can then compute that the eigenvalues of $\tau(g)$ have moduli
\begin{align}\label{eqn: evalue calc}
\lambda^2, \lambda, \lambda, 1,1,1, \lambda^{-1}, \lambda^{-1}, \lambda^{-2}.
\end{align}
In particular, the image of $\tau$ lies in $\SL(E)$.

With this set up, we can now give our example. It describes a $P_1$-Anosov, irreducible representation of a co-compact lattice $\Gamma\subset G$ to $\SL(E)$ where the $P_1$-limit set of this representation is a $3$-dimensional, $C^\infty$-submanifold of $\Pb(E)$, but $\rho$ is not $4$-Anosov.

\begin{example}\label{ex:irred_bad_example} Fix a co-compact lattice $\Gamma \leq G$, and let $\rho:=\tau|_{\Gamma}$. Since $G$ is a rank-one Lie group, it acts transitively and by isometries on the negatively curved Riemannian symmetric space $\Hb_{\Cb}^2$ (the $2$-dimensional complex hyperbolic space), whose visual boundary $\partial_\infty\Hb_{\Cb}^2$ has the structure of a $3$-dimensional smooth sphere. Thus, the inclusion of $\Gamma$ into $G$ specifies an identification of $\partial_\infty\Gamma$ with $\partial_\infty\Hb_{\Cb}^2$.

As $G$-spaces, $\partial_\infty\Hb_{\Cb}^2\simeq G/B$, where $B\subset G$ is the subgroup of upper triangular matrices. It is straightforward to check that $\tau(B)\subset\SL(E)$ lies in $P\cap Q$, where $P\subset\SL(E)$ is the subgroup that preserves the line spanned by $f_1$, and $Q\subset\SL(E)$ is the subgroup that preserves $\Span_{\Rb}(f_1,\dots,f_8)$. In particular, there are smooth, $\tau$-equivariant maps
\[\xi_\rho^1:\partial_\infty\Gamma=G/B\to\SL(E)/P=\Pb(E)\]
and
\[\xi_\rho^{8}:\partial_\infty\Gamma=G/B\to\SL(E)/Q=\Pb^*(E)=\Gr_8(E).\]
Furthermore, a result of Guichard-Wienhard \cite[Proposition 4.4]{GW2012} imply that $\rho:\Gamma\to\SL(E)$ is a $P_1$-Anosov representation whose $P_1$-limit map and $P_8$-limit map are $\xi_\rho^1$ and $\xi_\rho^{8}$ respectively. The eigenvalue calculation \eqref{eqn: evalue calc} implies that $\rho$ is not $4$-Anosov. However, the $P_1$-limit set $\xi_\rho^1(\partial_\infty \Gamma)$ is a $3$-dimensional $C^\infty$-submanifold of $\Pb(E)$. As a consequence of Theorem~\ref{thm:nec_general_body}, $\bigwedge^4E$ is not irreducible.
\end{example}

\subsection{The proof of Theorem \ref{thm:nec_general_body}}\label{thm: lkj}
Let $\rho:\Gamma\to\PGL_d(\Rb)$ be a $P_1$-Anosov representation and $M$ is a $\rho$-controlled subset which is $C^1$ along $\xi_\rho^1(\partial_\infty\Gamma)$. To prove Theorem \ref{thm:nec_general_body}, we will use the map
\begin{align}\label{eqn:Phi}
\Phi: \xi_\rho^1(\partial_\infty\Gamma) \rightarrow \Pb\bigg(\bigwedge^{m} \Rb^{d}\bigg).
\end{align}
defined as follows.
Let 
\begin{align*}
F_{d,m}:\Gr_m(\Rb^d)\to\Pb\bigg(\bigwedge^m\Rb^d\bigg)
\end{align*}
denote Pl\"ucker embedding, that is the map defined by
\begin{align}\label{eqn:Fdm}
F_{d,m}(V)=\left[v_1 \wedge \cdots \wedge v_m \right],
\end{align}
where $v_1,\dots,v_m$ is some (any) basis of $V$. Note that $F_{d,m}$ is smooth and $\iota_{d,m}$-equivariant. Since $M$ is differentiable along $\xi_\rho^1(\partial_\infty\Gamma)$, we can define 
\[\overline{\Phi}:\xi_\rho^1(\partial_\infty\Gamma)\to\Gr_m(\Rb^d)\] 
to be the map that associates to every point in $x \in \xi_\rho^1(\partial_\infty\Gamma)$ the tangent space to $M$ at $x$ (see part (3) of Remark \ref{rmk:open}). Then define $\Phi:= F_{d,m}\circ\overline{\Phi}$.

 First, we prove the following lemma, which tells us that when $\rho$ is irreducible, the conclusion of Theorem~\ref{thm:nec_general_body} can be deduced from certain eigenvalue and eigenspace assumptions. 

\begin{lemma}\label{lem:nec_general}
Suppose  $\rho$ is irreducible, that there exists $\alpha>1$ such that 
\begin{align}
\label{eqn:comparing ratios}
\frac{ \lambda_{m+1}(\rho(\gamma))}{\lambda_m(\rho(\gamma))} \leq \left(\frac{ \lambda_{2}(\rho(\gamma))}{\lambda_1(\rho(\gamma))} \right)^{\alpha-1} 
\end{align}
for all $\gamma\in\Gamma$, and that $\Phi(\xi_\rho^1(\gamma^+))\in\Pb\big(\bigwedge^m\Rb^d\big)$ is the attracting fixed point of $\left(\bigwedge^{m} \rho\right)(\gamma)$ for all $\gamma\in\Gamma$ with infinite order. Then $\rho$ is $P_m$-Anosov and $\xi_\rho^1(x) + p +  \xi_\rho^{d-m}(y)$ is a direct sum for all pairwise distinct $\xi_\rho^1(x), p,  \xi_\rho^1(y) \in M$.
\end{lemma}

\begin{remark} \
\begin{enumerate}
\item Notice that in the context of Lemma~\ref{lem:nec_general}, since $\rho$ is $P_1$-Anosov, any infinite order element has proximal image. Then Equation~\eqref{eqn:comparing ratios} implies that $\left(\bigwedge^{m} \rho\right)(\gamma)$ is also proximal and hence has an attracting fixed point in $\Pb\big(\bigwedge^m\Rb^d\big)$. 
\item After the first version of this paper was written, Kassel and Potrie proved a general result \cite[Proposition 1.2]{PK22} from which one can deduce that $\rho$ is $P_m$-Anosov from the assumptions of Lemma \ref{lem:nec_general}.
\end{enumerate}
\end{remark} 

Thus, to prove Theorem \ref{thm:nec_general_body}, one now needs to establish that the hypotheses of Lemma \ref{lem:nec_general} hold. This is an immediate consequence of the following lemma.

\begin{lemma}\label{lem:proximal} \label{prop:eigenvalue_est}
Suppose  $\bigwedge^m \rho: \Gamma \rightarrow \PGL\big(\bigwedge^m \Rb^d\big)$ is irreducible and $M$ is an $(m-1)$-dimensional topological manifold that is $C^{\alpha}$ along the $P_1$-limit set of $\rho$ for some $\alpha>1$. Then:
\begin{enumerate}
\item If $\gamma \in \Gamma$ has infinite order, then $g:=\left(\bigwedge^{m} \rho\right)(\gamma)$ is proximal and $\Phi(\xi_\rho^1(\gamma^+))\in\Pb\big(\bigwedge^m\Rb^d\big)$ is the attracting fixed point of $g$. 
\item If $\gamma \in \Gamma$, then 
\begin{align*}
\frac{ \lambda_{m+1}(\rho(\gamma))}{\lambda_m(\rho(\gamma))} \leq \left(\frac{ \lambda_{2}(\rho(\gamma))}{\lambda_1(\rho(\gamma))} \right)^{\alpha-1}. 
\end{align*}
%In particular, $\log\frac{ \lambda_m(\rho(\gamma))}{\lambda_{m+1}(\rho(\gamma))}$ grows linearly with the word-length of $\gamma$.
\end{enumerate}
\end{lemma}

The remainder of this section is the proof of Lemma \ref{lem:nec_general} and Lemma \ref{lem:proximal}.

%%%%%%%%%%%%%%%%%%%%%%%%%%%%%%%%%%%%%%%%%%%%%%%%%%%%%
\subsubsection{The proof of Lemma \ref{lem:nec_general}}\label{sec:sgap}
%%%%%%%%%%%%%%%%%%%%%%%%%%%%%%%%%%%%%%%%%%%%%%%%%%%%%

First, notice that the reduction made in Remark~\ref{rem:lift} does not impact the hypotheses nor conclusion of the lemma. So we may assume that there exists a lift $\overline{\rho}:\Gamma\to\SL_d(\Rb)$ of $\rho$. Second, notice that passing to a finite index subgroup also does not impact the hypotheses or conclusion of the Lemma (see Proposition~\ref{prop:strongly_irreducible}). Hence we may also assume that the Zariski closure of $\overline{\rho}(\Gamma)$ is connected. 

We first prove the following preliminary lemma.

\begin{lemma}\label{lem:est_on_sing_values} 

If $1<\beta < \alpha$, then there exists $C > 0$ such that 
\begin{align*}
\log \frac{ \mu_{m+1}(\rho(\gamma))}{\mu_m(\rho(\gamma))} \leq (\beta-1) \log \frac{ \mu_2(\rho(\gamma))}{\mu_1(\rho(\gamma))} +C
\end{align*}
for all $\gamma \in \Gamma$. 
\end{lemma}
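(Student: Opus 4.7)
The plan is to invoke Benoist's asymptotic cone theorem (Theorem \ref{thm:cones}) to lift the hypothesized Jordan-projection inequality to a Cartan-projection inequality, gaining the required $\beta<\alpha$ buffer from a compactness argument.

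First I would reformulate everything additively on $\Rb^d$. Taking logarithms, the hypothesis of Proposition \ref{prop:nec_general} says exactly that $\lambda(\overline{\rho}(\gamma))$ lies in the closed convex cone
\[ H := \left\{ x \in \Rb^d : x_m - x_{m+1} \geq (\alpha-1)(x_1 - x_2) \right\} \]
for every $\gamma \in \Gamma$ (the finite-order case is trivial since $\lambda(\overline{\rho}(\gamma))=0$ there). Hence $\Cc_\lambda(\overline{\rho}(\Gamma)) \subseteq H$. By the reductions made at the start of the proof of Proposition \ref{prop:nec_general}, $\overline{\rho}$ is irreducible with connected Zariski closure, so Proposition \ref{prop:Zclosure} and Theorem \ref{thm:cones} together give
\[ \Cc_\mu(\overline{\rho}(\Gamma)) \;=\; \Cc_\lambda(\overline{\rho}(\Gamma)) \;\subseteq\; H. \]

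Next, set $\phi(x) := (x_m - x_{m+1}) - (\beta-1)(x_1 - x_2)$, and suppose for contradiction that no constant $C$ as in the lemma exists: then there is a sequence $\{\gamma_n\}\subset\Gamma$ with $\phi(\mu(\overline{\rho}(\gamma_n))) \to -\infty$. Since $\phi$ is a bounded linear functional on the unit sphere, this forces $\norm{\mu(\overline{\rho}(\gamma_n))}_2 \to \infty$, and along a subsequence the normalizations $y_n := \mu(\overline{\rho}(\gamma_n))/\norm{\mu(\overline{\rho}(\gamma_n))}_2$ converge to some $y \in \Cc_\mu(\overline{\rho}(\Gamma))\cap S^{d-1} \subseteq H$. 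Linearity gives $\phi(y_n) = \phi(\mu(\overline{\rho}(\gamma_n)))/\norm{\mu(\overline{\rho}(\gamma_n))}_2 < 0$, so continuity yields $\phi(y)\leq 0$. Combined with $y\in H$, this forces $(\alpha - \beta)(y_1 - y_2) \leq 0$; since $\alpha > \beta$ and $y$ lies in the closed Weyl chamber ($y_1\geq y_2$), we conclude $y_1 = y_2$.

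To close, I would contradict $y_1=y_2$ using the $1$-Anosov property: Theorem \ref{thm:SV_char_of_Anosov} and Corollary \ref{thm:QI_Anosov} together give constants $c>0$ and $c'\geq 0$ with $\log \tfrac{\mu_1}{\mu_2}(\rho(\gamma)) \geq c\norm{\mu(\overline{\rho}(\gamma))}_2 - c'$ for all $\gamma$, so dividing by $\norm{\mu(\overline{\rho}(\gamma_n))}_2$ and passing to the limit gives $y_1 - y_2 \geq c > 0$, a contradiction. The main obstacle is the invocation of Benoist's theorem; once the asymptotic cone equality $\Cc_\mu = \Cc_\lambda$ is in hand, the remainder is a standard compactness argument in which the gap $\alpha>\beta>1$ furnishes the buffer absorbing the discrepancy between the Jordan projection (which sits in $H$ exactly) and the Cartan projection (which only sits in $H$ asymptotically).
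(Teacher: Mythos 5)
Your proof is correct and takes essentially the same route as the paper: both pass to the asymptotic cone via Proposition~\ref{prop:Zclosure} and Benoist's theorem (Theorem~\ref{thm:cones}) and then run a normalization/compactness contradiction in which the gap $\beta<\alpha$ absorbs the discrepancy between Jordan and Cartan projections. The only (harmless) differences are cosmetic: you fold the paper's separate finiteness step for $\norm{\mu(\overline{\rho}(\gamma))}_2<R$ into the initial contradiction setup, and you spell out the strict inequality $y_1>y_2$ at the limit point using Theorem~\ref{thm:SV_char_of_Anosov} and Corollary~\ref{thm:QI_Anosov}, where the paper simply asserts $x_2-x_1<0$ on $\Cc_\lambda$ from $1$-Anosovness.
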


\begin{proof} 
Let $\Cc_\mu:=\Cc_\mu(\rho(\Gamma))$ and $\Cc_\lambda:=\Cc_\lambda(\rho(\Gamma))$ be the cones defined in Section~\ref{sec:cones}. Then $\Cc_\mu = \Cc_\lambda$ by Proposition~\ref{prop:Zclosure} and Theorem~\ref{thm:cones}. By hypothesis, if $x=(x_1, \dots, x_d) \in \Cc_\lambda$, then 
\begin{align*}
x_{m+1}-x_m \leq (\alpha-1) (x_2 - x_1).
\end{align*}
Further, since $\rho$ is $P_1$-Anosov, $x_2 - x_1 <0$ for all $x=(x_1, \dots, x_d) \in \Cc_\lambda$.

We will now prove that there exists $R > 0$ with the following property: if $\norm{\mu(\rho(\gamma))}_2 \geq R$, then 
 \begin{align*}
\log \frac{ \mu_{m+1}(\rho(\gamma))}{\mu_m(\rho(\gamma))} \leq (\beta-1) \log \frac{ \mu_2(\rho(\gamma))}{\mu_1(\rho(\gamma))}.
\end{align*}
Suppose for contradiction that there exists $\{\gamma_n\}_{n \geq 1} \subset \Gamma$ with $\norm{\mu(\rho(\gamma_n))}_2 \rightarrow +\infty$ and
 \begin{align*}
\log \frac{ \mu_{m+1}(\rho(\gamma_n))}{\mu_m(\rho(\gamma_n))} > (\beta-1) \log \frac{ \mu_2(\rho(\gamma_n))}{\mu_1(\rho(\gamma_n))}.
\end{align*}
By passing to a subsequence, we can assume that 
\begin{align*}
\frac{1}{\norm{\mu(\rho(\gamma_n))}_2} \mu(\rho(\gamma_n)) \rightarrow x=(x_1, \dots, x_d).
\end{align*}
Then $x \in \Cc_\mu  = \Cc_\lambda$ and
\begin{align*}
x_{m+1}-x_m \geq (\beta-1)( x_2 -x_1) > (\alpha-1) (x_2 - x_1)
\end{align*}
so we have a contradiction. 

The lemma then follows from the observation that since $\rho$ is $P_1$-Anosov, the set $\{ \gamma \in \Gamma : \norm{\mu(\rho(\gamma))}_2 < R\}$ is finite.
\end{proof}

%\begin{lemma}\label{lem:final_steps} $\rho$ is a $P_m$-Anosov representation and 
%\begin{align*}
%\xi_\rho^1(x) + \xi_\rho^1(y) + \xi_\rho^{d-m}(z) 
%\end{align*}
%is a direct sum for any $x,y,z \in \partial_\infty \Gamma$ distinct.
%\end{lemma}

%\begin{proof}[Proof of Proposition~\ref{prop:nec_general}] 
Since $\rho$ is $P_1$-Anosov, Theorem~\ref{thm:SV_char_of_Anosov} implies that there exists $C_0, c_0  > 0$ such that 
\begin{align*}
\frac{ \mu_2(\rho(\gamma))}{\mu_1(\rho(\gamma))} \leq C_0 e^{-c_0 d_S(\gamma, \id)}
\end{align*}
for all $\gamma \in \Gamma$. Then by Lemma \ref{lem:est_on_sing_values}, there exists $C,c > 0$ such that 
\begin{align*}
\frac{ \mu_{m+1}(\rho(\gamma))}{\mu_m(\rho(\gamma))} \leq C e^{-c d_S(\gamma, \id)}
\end{align*}
for all $\gamma \in \Gamma$. Thus, Theorem~\ref{thm:SV_char_of_Anosov}  implies that $\rho$ is $P_m$-Anosov. 

To finish the proof, it suffices to show that $\xi_\rho^1(x) + p + \xi_\rho^{d-m}(y)$
is a direct sum for all pairwise distinct $\xi_\rho^1(x),p,\xi_\rho^1(y) \in M$. Equivalently, we need to show that if 
\[\wh{M}: = \{ (\xi_\rho^1(x),p,\xi_\rho^1(y)) \in M^3 : \xi_\rho^1(x),p,\xi_\rho^1(y) \text{ are pairwise distinct} \}\] 
and 
\[\Oc :=\left\{(\xi_\rho^1(x),p,\xi_\rho^1(y))\in \wh{M}:\xi_\rho^1(x) + p+ \xi_\rho^{d-m}(y)\text{ is a direct sum}\right\},\]
then $\wh M= \Oc$. 

Given a pair of non-zero linear subspaces $V,W\subset\Rb^d$, we denote
\[\angle_{\min}(V,W):=\min\left\{\angle(v,w):v\in V-\{0\},w\in W-\{0\}\right\}\]
and 
\[\angle_{\max}(V,W):=\max\left\{\angle(v,w):v\in V-\{0\},w\in W-\{0\}\right\}.\]

Recall that $\Gamma$ acts co-compactly on $\Usf(\Gamma)$, the flow space associated to $\Gamma$ described in Section~\ref{sec:flowspace}. Hence, there exists a compact set $K \subset \Usf(\Gamma)$ such that $\Gamma \cdot K = \Usf(\Gamma)$. Then define
\begin{align*}
0 < \epsilon := \min\{ d_{\Pb}(\xi_\rho^1(v^+), \xi_\rho^1(v^-)) : v \in K\},
\end{align*}
where $d_{\Pb}$ is a distance on $\Pb(\Rb^d)$ induced by a Riemannian metric. Since 
\[\{(x,y)\in\partial_\infty\Gamma^2:d_{\Pb}(\xi_\rho^1(x), \xi_\rho^1(y)) \geq \epsilon\}\] 
is compact and $\xi_\rho^m(x) + \xi_\rho^{d-m}(y)=\Rb^d$ when $x \neq y$, there exists $\theta_0 > 0$ with the following property: if $x,y \in \partial_\infty \Gamma$ and $d_{\Pb}(\xi_\rho^1(x), \xi_\rho^1(y)) \geq \epsilon$, then 
\begin{align*}
\angle_{\min}(\xi_\rho^m(x),\xi_\rho^{d-m}(y))\geq\theta_0.
\end{align*}

By hypothesis, if $\gamma \in \Gamma$ has infinite order and $\gamma^+\in \partial_\infty \Gamma$ is the attracting fixed point of $\gamma$, then $\Phi(\xi_\rho^1(\gamma^+))$ is the attracting fixed point of $(\bigwedge^m\rho)(\gamma)$. This implies that $T_{\xi_\rho^1(\gamma^+)} M$, when viewed as an element in $\Gr_m(\Rb^d)$, is the attracting fixed point of $\rho(\gamma)$, so 
\begin{align*}
\xi_\rho^m(\gamma^+) =T_{\xi_\rho^1(\gamma^+)} M.
\end{align*}
Then by the continuity of $\xi_\rho^m$ and the density of $\{ \gamma^+: \gamma \in \Gamma \text{ has infinite order}\}$ in $\partial_\infty \Gamma$ we see that $\xi_\rho^m(x) = T_{\xi_\rho^1(x)} M$ for all $x \in \partial_\infty \Gamma$. Thus, the compactness of $\partial_\infty \Gamma$ implies that there exists $\delta > 0$ with the following property: if $\xi_\rho^1(x)$, $p \in M$ satisfy $d_{\Pb}(\xi_\rho^1(x), p) \leq \delta$, then
\begin{align*}
\angle_{\max} \left( \xi_\rho^1(x)+p,\xi_\rho^m(x)  \right) < \theta_0/2.
\end{align*}

With this, define
\[\Uc :=\left\{(\xi_\rho^1(x),p,\xi_\rho^1(y))\in \wh{M}:d_{\Pb}(\xi_\rho^1(x),\xi_\rho^1(y)) \geq \epsilon\text{ and } d_{\Pb}(\xi_\rho^1(x), p) \leq \delta\right\},\]  
and note that $\Uc\subset \Oc$. Indeed, if $(\xi_\rho^1(x),p,\xi_\rho^1(y)) \in \Uc$, then 
\begin{align*}
\angle_{\min}\left( \xi_\rho^1(x)+p,\xi_\rho^{d-m}(y)  \right)  & >  \angle_{\min}\left( \xi_\rho^m(x),\xi_\rho^{d-m}(y)\right)-\angle_{\max}\left( \xi_\rho^1(x)+p,\xi_\rho^m(x)  \right)\\
&\ge\theta_0/2.
\end{align*}
which implies that $\xi_\rho^1(x) +p+ \xi_\rho^{d-m}(y)$ is direct. 

Next, let $P(M) \subset \Usf(\Gamma) \times M$ be the set defined by \eqref{eqn:P(M)}, and recall that $\varphi_t$ denotes the geodesic flow on $\Usf(\Gamma)$.
Notice that if $v \in \Usf(\Gamma)$, then 
$$
\lim_{t \rightarrow +\infty} \max\left\{ d_{\Pb}\left(p, \xi_\rho^1(v^+) \right): (\varphi_t(v),p) \in P(M) \right\} = 0 
$$
and the convergence is uniform on compact subsets of $\Usf(\Gamma)$. So  there exists $T \geq 0$ such that if $v \in K$, $t \geq T$, and $(\varphi_t(v),p) \in P(M)$, then $(\xi_\rho^1(v^+),p,\xi_\rho^1(v^-)) \in \Uc$. 

Now, choose any $(\xi_\rho^1(x),p,\xi_\rho^1(y)) \in \wh{M}$. From the definition of $P(M)$, there exists $v \in \Usf(\Gamma)$ such that $v^+ = x$, $v^- = y$, and $(v,p) \in P(M)$. Further, there exists $\gamma \in \Gamma$ such that $w := \gamma \cdot\varphi_{-T}(v) \in K$. Since the $\Gamma$-action on $\Usf(\Gamma)$ commutes with the geodesic flow,
\begin{align*}
(\varphi_T(w),\rho(\gamma)\cdot p) = \gamma\cdot ( v, p) \in P(M)
\end{align*}
and so $\gamma\cdot (\xi_\rho^1(x),p,\xi_\rho^1(y))=(\xi_\rho^1(w^+),\rho(\gamma)\cdot p,\xi_\rho^1(w^-))\in \Uc\subset\Oc$. Since $\Oc$ is $\Gamma$-invariant, $(\xi_\rho^1(x),p,\xi_\rho^1(y)) \in \Oc$. Thus, $\wh{M}\subset\Oc$.

%%%%%%%%%%%%%%%%%%%%%%%%%%%%%%%%%%%%%%%%%%%%%%%%%%%%%
\subsubsection{The proof of Lemma \ref{lem:proximal}}\label{sec:egap} 
%%%%%%%%%%%%%%%%%%%%%%%%%%%%%%%%%%%%%%%%%%%%%%%%%%%%%
Let $d_{\Pb}$ be a distance on $\Pb(\Rb^d)$ induced by a Riemannian metric. Suppose  $g\in\PGL_d(\Rb)$ has a fixed point $l\in\Pb(\Rb^d)$ corresponding to $\lambda_1(g)$. We first observe two different estimates for 
\[\limsup_{n \rightarrow +\infty} \frac{1}{n} \log d_{\Pb}\Big(g^{n}\cdot p, l\Big),\]
where $p\in\Pb(\Rb^d)$ is generic. The first (Observation \ref{obs:dynamics}) holds when the generalized eigenspace of $g$ corresponding to $\lambda_1(g)$ has dimension $1$, while the second (Observation \ref{obs:slow}) holds when this generalized eigenspace has dimension strictly larger than $1$ but strictly less than $d$.

\begin{observation}\label{obs:dynamics} Suppose  $g \in \PGL_{d}(\Rb)$ is proximal, and let $g^+ \in \Pb(\Rb^d)$ and $g^-\in\Gr_{d-1}(\Rb^d)$ be the attracting fixed point and repelling fixed hyperplane of $g$ respectively. If $p\in\Pb(\Rb^d)$ satisfies $p\neq g^+$ and $p\notin g^-$, then
\begin{align*}
\log\frac{\lambda_2(g)}{\lambda_1(g)} \geq \limsup_{n \rightarrow +\infty} \frac{1}{n} \log d_{\Pb}\Big(g^n\cdot p, g^+ \Big).
\end{align*}
Moreover, if $V\subset\Rb^d$ is the sum of $g^+$ and all the generalized eigenspaces of $g$ corresponding to the eigenvalues with modulus strictly less than $\lambda_2(g)$, then the above inequality holds as equality when $p$ does not lie in $V\cup g^-$.
\end{observation}

\begin{proof} 
Note that the affine chart 
$$
\Ab_{g^-} = \Pb(\Rb^d) - g^-
$$ 
contains both $p$ and $g^+$. Equip $\Ab_{g^-}$ with an Euclidean metric $d_{\Ab}$, and let $\Bb$ be the unit ball in $\Ab_{g^-}$ centered at $g^+$. Since $p\notin g^-$, $g^n\cdot p\in\Bb$ for sufficiently large $n$. On $\Bb$, $d_{\Pb}$ and $d_{\Ab}$ are bi-Lipschitz, so there is a constant $A$ such that for sufficiently large $n$,
\begin{align}\label{eqn:approx}
\frac{1}{A}\frac{\norm{P_2(\overline{g}^n\cdot X)}_{2}}{\norm{P_1(\overline{g}^n\cdot X)}_{2}}\leq d_{\Pb}(g^n\cdot p,g^+)\leq A\frac{\norm{P_2(\overline{g}^n\cdot X)}_{2}}{\norm{P_1(\overline{g}^n\cdot X)}_{2}},
\end{align}
where $X\in p$ is a non-zero vector, $P_1:\Rb^d\to g^+$ is the projection with kernel $g^-$, $P_2:\Rb^d\to g^-$ is the projection with kernel $g^+$, and $\overline{g}\in\GL_d(\Rb)$ is a linear representative of $g$. On the other hand, it is straightforward that 
\begin{align}\label{eqn:proj}
\log\frac{\lambda_2(g)}{\lambda_1(g)} \geq \limsup_{n \rightarrow +\infty} \frac{1}{n} \log \frac{\norm{P_2(\overline{g}^n\cdot X)}_{2}}{\norm{P_1(\overline{g}^n\cdot X)}_{2}},
\end{align}
thus giving the desired inequality.

Choose a basis $\{e_1,\dots,e_d\}$ for $\Rb^d$ such that $g$ is in real Jordan normal form in this basis. We may assume that $e_1$ is an eigenvector of $g$ corresponding to $\lambda_1(g)$, and there is some $m$ such that $e_2,\dots,e_m$ spans the invariant subspace corresponding to $\lambda_2(g)$. Then $V$ is the span of $e_1,e_{m+1},\dots,e_d$, and it is easy to see that the inequality \eqref{eqn:proj} holds with equality when $p$ does not lie in $V$. 
 \end{proof}

\begin{observation}\label{obs:slow}
Let $g\in\PGL_d(\Rb)$, let $V_1$ be the generalized eigenspace of $g$ corresponding to $\lambda_1(g)$, and let $V_2$ be the sum of the generalized eigenspaces of $g$ corresponding to the eigenvalues of $g$ with modulus strictly less than $\lambda_1(g)$. Suppose  $g$ has a fixed point $l \in \Pb(\Rb^d)$ with $l \subset V_1$. Then for all $p\in \Pb(\Rb^d)-(l+V_2)$, 
\begin{align}
\label{eq:W_subspace_zero_limit}
0  = \lim_{n \rightarrow +\infty} \frac{1}{n} \log d_{\Pb}\Big(g^{n}\cdot p, l\Big).
\end{align}
\end{observation}

\begin{proof}
Fix $p \in \Pb(\Rb^d)-(l+V_2)$. First, note that since $d_{\Pb}$ has bounded diameter, 
\begin{align*}
 \limsup_{n \rightarrow +\infty} \frac{1}{n} \log d_{\Pb}\Big(g^{n}\cdot p, l\Big) \leq 0. 
\end{align*}
To establish the opposite inequality, fix a sequence $\{n_k\}_{k \geq 1}$ such that 
\begin{align*}
\liminf_{n \rightarrow +\infty} \frac{1}{n} \log d_{\Pb}\Big(g^{n}\cdot p, l\Big)=\lim_{k \rightarrow +\infty} \frac{1}{n_k} \log d_{\Pb}\Big(g^{n_k}\cdot p, l\Big).
\end{align*}
If $g^{n_k}\cdot p$ does not converge to $l$ as $k\to+\infty$, then we have 
$$
\lim_{k \rightarrow +\infty} \frac{1}{n_k} \log d_{\Pb}\Big(g^{n_k}\cdot p, l\Big) = 0
$$
and the proof is complete. So we can suppose that $g^{n_k}\cdot p \rightarrow l$ as $k\to+\infty$. 

Using the real Jordan normal form of $g$, we can decompose $V_1 = \bigoplus_{j=1}^r V_{1,j}$ where
\begin{enumerate}
\item $V_{1,1} = l$,
 \item for $2 \leq j \leq r$ 
 \begin{enumerate}
 \item $V_{1,j}$ is either one or two dimensional,
 \item there exists a linear transformation $L_j : V_{1,j} \rightarrow V_{1,j}$ such that
\begin{align*}
g \cdot Y \in \Rb \cdot (L_j \cdot Y) + V_{1,j-1}
\end{align*}
for all $Y\in V_{1,j}$,
\item $\norm{L_j \cdot Y}_{2} = \lambda_1(g)\norm{Y}_{2}$ for all $Y \in V_{1,j}$. 
 \end{enumerate}

\end{enumerate}
Also, let $P_{1,j} : \Rb^d \rightarrow V_{1,j}$ and $P_2 : \Rb^d \rightarrow V_{2}$ be the projections relative to the decomposition $\Rb^d = V_{1,1} \oplus \dots \oplus V_{1,r}\oplus V_2$. 

Choose an affine chart $\Ab\subset\Pb(\Rb^d)$ containing $l$, and let $d_{\Ab}$ denote an Euclidean metric on $\Ab$. Let $\Bb\subset\Ab$ be the unit ball in the metric $d_{\Ab}$ centered at $l$. Since $d_{\Pb}$ and $d_{\Ab}$ are bi-Lipschitz when restricted to $\Bb$ and $g^{n_k}\cdot p$ converges to $l$, there exists $A \geq 1$ such that if $\overline{g}\in\GL_d(\Rb)$ is a linear representative of $g$, then
\begin{align}
\label{eq:affine_chart_estimate_non_proximal}
\frac{1}{A} \left( \frac{\sum_{j=1}^\ell \norm{P_{1,j}(\overline{g}^{n_k}\cdot X)}_{2} + \norm{P_2(\overline{g}^{n_k}\cdot X)}_{2}}{\norm{P_{1,1}(\overline{g}^{n_k}\cdot X)}_{2}} \right) \leq d_{\Pb}\Big(g^{n_k}\cdot p, l \Big)
\end{align}
for all non-zero $X\in p$ and all sufficiently large $k$. Since $X \notin l+V_2$, there exists $2 \leq j_0 \leq r$ such that $P_{1,j_0}(X) \neq 0$. By increasing $j_0$ if necessary, we can also assume that $P_{1,j}(X) = 0$ for $j_0 < j \leq r$. This implies that
\begin{align}
\label{eq:P1j0_estimate}
 \norm{P_{1,j_0}(\overline{g}^{n_k}\cdot X)}_{2} = \lambda_1(g)^{n_k} \norm{P_{1,j_0}(X)}_{2}.
 \end{align}
Further, observe that
\begin{align}
\label{eq:P11_estimate}
\norm{P_{1,1}(\overline{g}^{n_k}\cdot X)}_{2} \leq \norm{\overline{g}^{n_k}\cdot X}_{2} \leq \mu_1(g^{n_k}) \norm{X}_{2}.
\end{align}

By Equations~\eqref{eq:affine_chart_estimate_non_proximal},~\eqref{eq:P1j0_estimate}, and~\eqref{eq:P11_estimate},
\begin{align*}
\lim_{k \rightarrow +\infty} \frac{1}{n_k} \log d_{\Pb}\Big(g^{n_k}\cdot p, l\Big) &\geq \limsup_{k \rightarrow +\infty} \frac{1}{n_k} \log  \frac{\norm{P_{1,j_0}(\overline{g}^{n_k}\cdot X)}_{2}}{A\norm{P_{1,1}(\overline{g}^{n_k}\cdot X)}_{2}} \\
& \geq \log \lambda_1(g)+\limsup_{k \rightarrow +\infty} \frac{1}{n_k} \log  \frac{\norm{P_{1,j_0}(X)}_{2}}{A\mu_1(g^{n_k})\norm{X}_{2}} \\
& \geq \log \lambda_1(g)-\liminf_{k \rightarrow +\infty} \frac{1}{n_k} \log(\mu_1(g^{n_k}))=0,
\end{align*}
where the last equality is the well known fact that
\begin{align*}
\lim_{n \rightarrow +\infty} \frac{1}{n} \log \mu_1(g^{n})= \log \lambda_1(g).
\end{align*}
Thus,
\[
\liminf_{n \rightarrow +\infty} \frac{1}{n} \log d_{\Pb}\Big(g^{n}\cdot p, l\Big)= \lim_{k \rightarrow +\infty} \frac{1}{n_k} \log d_{\Pb}\Big(g^{n_k}\cdot p, l \Big) \geq 0.\qedhere
\]
\end{proof}

\begin{proof}[Proof of  part (1) of Lemma \ref{lem:proximal}]
Let $\lambda_i := \lambda_i(\rho(\gamma))$ for $i=1,\dots,d$. By \eqref{eqn:evalue1}, $\lambda_1(g)=\lambda_1\cdots\lambda_m$ (recall that $g:=\left(\bigwedge^{m} \rho\right)(\gamma))$. Thus it is equivalent to prove that $g$ is proximal and $\Phi(\xi_\rho^1(\gamma^+))$ is an eigenline of $g$ whose eigenvalue has modulus $\lambda_1 \cdots \lambda_{m}$.

We first show that $\Phi(\xi_\rho^1(\gamma^+))$ is an eigenline of $g$ whose eigenvalue has modulus $\lambda_1 \cdots \lambda_{m}$. Let $\overline{g}\in \GL\big(\bigwedge^m \Rb^d\big)$ be a linear representative of $g$, and let $\{n_k\}_{k=1}^\infty$ be an increasing sequence of integers such that 
\begin{align*}
\frac{1}{\norm{\overline{g}^{n_k}}}\overline{g}^{n_k}
\end{align*}
converges to some $T \in \End\big(\bigwedge^{m} \Rb^{d}\big)$. Also, let $\bigwedge^m \Rb^d = V_1 \oplus V_2$ be the $\overline{g}$-invariant decomposition such that every eigenvalue of $\overline{g}|_{V_1}$ has modulus $\lambda_1 \cdots \lambda_m$ and every eigenvalue of $\overline{g}|_{V_2}$ has modulus strictly less than $\lambda_1 \cdots \lambda_m$. Observe that the image of $T$ is contained in $V_1$. Since 
\[g\cdot\Phi(\xi_\rho^1(\gamma^+))=\Phi(\xi_\rho^1(\gamma\cdot \gamma^+))=\Phi(\xi_\rho^1(\gamma^+)),\] 
$\Phi(\xi_\rho^1(\gamma^+))$ is an eigenline of $g$. Thus, we only need to show that $\Phi(\xi_\rho^1(\gamma^+))$ is contained in the image of $T$. 

We claim that the image of $T$ is exactly $\Phi(\xi_\rho^1(\gamma^+))$. Notice that if $p \in \Pb\big(\bigwedge^{m} \Rb^{d}\big) - \ker T$, then 
\begin{align}
\label{eqn:limit of g n k is T}
T(p) = \lim_{k \rightarrow +\infty} g^{n_k}\cdot p.
\end{align}
Since $\bigwedge^{m} \rho : \Gamma \rightarrow \PGL\big(\bigwedge^{m} \Rb^d\big)$ is irreducible, the set $\{ \Phi(x) : x \in \xi_\rho^1(\partial_\infty\Gamma)\}$ spans $\bigwedge^{m} \Rb^d$. Thus, if we set $D := \dim \bigwedge^{m} \Rb^{d}$, then there exists $x_1, \dots, x_D \in \partial_\infty \Gamma$ such that 
\begin{align*}
\Phi(\xi_\rho^1(x_1)), \dots, \Phi(\xi_\rho^1(x_D))
\end{align*}
span $\bigwedge^{m} \Rb^d$. Since $\partial_\infty \Gamma$ is perfect, by perturbing the $x_i$, we can assume that $\gamma^- \notin \{x_1, \dots, x_D\}$. Then by relabelling the $x_i$ we can also assume that there exists $1 \leq \ell \leq D$ such that
\begin{align*}
\Phi(\xi_\rho^1(x_1)) + \dots + \Phi(\xi_\rho^1(x_\ell)) + \ker T = \bigwedge^{m} \Rb^d
\end{align*}
is a direct sum. For $1 \leq i \leq \ell$, Equation~\eqref{eqn:limit of g n k is T} implies
\begin{align*}
T ( \Phi(\xi_\rho^1(x_i)) ) = \lim_{k \rightarrow +\infty} g^{n_k} \Phi(\xi_\rho^1(x_i)) =  \lim_{k \rightarrow +\infty} \Phi( \xi_\rho( \gamma^{n_k}\cdot x_i)) = \Phi(\xi_\rho^1(\gamma^+)),
\end{align*}
so the image of $T$ is $\Phi(\xi_\rho^1(\gamma^+))$. 

We next argue that $g$ is proximal, or equivalently that $\dim V_1 = 1$. Fix distances $d_1$ on $\Pb(\Rb^{d})$ and $d_2$ on $\Pb\big(\bigwedge^{m} \Rb^{d}\big)$ which are induced by Riemannian metrics. Since $M$ is $C^\alpha$ along $\xi_\rho^1(\partial_\infty\Gamma)$ and $F_{d,m}$ is smooth, a calculation shows that there is some $C\geq 1$ such that (see Observation \ref{obs: appendix3 estimate 2})
\begin{align}\label{eqn: d1d2}
d_2(\Phi(q_1), \Phi(q_2)) \leq C d_1(q_1, q_2)^{\alpha-1}
\end{align}
for all $q_1, q_2 \in \xi_\rho^1(\partial_\infty\Gamma)$.

Now, suppose for contradiction that $\dim V_1 > 1$.  Then
\[W := V_2 + \Phi(\xi_\rho^1(\gamma^+))\subset\bigwedge^{m}\Rb^d\] 
is a proper subspace. By Observation \ref{obs:slow},
\begin{align*}
0  = \lim_{n \rightarrow +\infty} \frac{1}{n} \log d_2\Big(g^{n}\cdot p, \Phi(\xi_\rho^1(\gamma^+))\Big)
\end{align*}
when $p \in \Pb\big(\bigwedge^{m} \Rb^d\big) - W$.

Since $\left\{ \Phi(x) : x \in \xi_\rho^1(\partial_\infty\Gamma)\right\}$ spans $\bigwedge^{m} \Rb^d$, there exists $x \in \partial_\infty \Gamma$ such that $\Phi(\xi_\rho^1(x)) \notin \Pb(W)$. By perturbing $x$ (if necessary) we can assume that $x \neq \gamma^-$. Then 
\begin{align*}
\lim_{n\to+\infty}\rho(\gamma)^n\cdot \xi_\rho^1(x)=\xi_\rho^1(\gamma^+) \quad \text{and} \quad \lim_{n\to+\infty}g^{n} \cdot\Phi(\xi_\rho^1(x))=\Phi(\xi_\rho^1(\gamma^+)).
\end{align*}
So, by Observation~\ref{obs:dynamics},
\begin{align*}
0 > \log \frac{\lambda_2}{\lambda_1} & \geq \limsup_{n \rightarrow +\infty} \frac{1}{n} \log d_1\Big(\rho(\gamma)^n\cdot \xi_\rho^1(x), \xi_\rho^1(\gamma^+)\Big) \\
&\geq  \limsup_{n \rightarrow +\infty} \frac{1}{(\alpha-1) n} \log d_2\Big( \Phi(\xi_\rho^1(\gamma^n\cdot x)),\Phi( \xi_\rho^1(\gamma^+))\Big) \\
& = \limsup_{n \rightarrow +\infty} \frac{1}{(\alpha-1) n} \log d_2\Big( g^n\cdot\Phi(\xi_\rho^1(x)),\Phi( \xi_\rho^1(\gamma^+))\Big) =0,
\end{align*}
where the last inequality is \eqref{eqn: d1d2}. This is a contradiction, so $g$ is proximal.
\end{proof}

\begin{proof}[Proof of  part (2) of Lemma \ref{lem:proximal}]  Fix some $\gamma \in \Gamma$. If $\gamma$ has finite order, then 
\begin{align*}
\frac{\lambda_i(\rho(\gamma))}{\lambda_j(\rho(\gamma))} = 1
\end{align*}
for all $1 \leq i,j \leq d$ and there is nothing to prove. So suppose that $\gamma$ has infinite order and let $\gamma^+ \in \partial_\infty \Gamma$ be the attracting fixed point of $\gamma$. By part (1), $g:=\bigwedge^m \rho(\gamma)$ is proximal and $\Phi(\xi_\rho^1(\gamma^+)) = g^+$.

By \eqref{eqn:evalue2} and Observation~\ref{obs:dynamics}, there exists a proper subspace $V \subset \bigwedge^m \Rb^d$ such that:
if $p\in \Pb(\bigwedge^m \Rb^{d}) - V$ and $p$ does not belong to the repelling hyperplane of $g$, then
\begin{align*}
\log \frac{ \lambda_{m+1}}{\lambda_m}(\rho(\gamma)) =  \lim_{n \rightarrow +\infty} \frac{1}{n} \log d_2\Big( g^n\cdot p, \Phi(\xi_\rho^1(\gamma^+)) \Big).
\end{align*}
Since $\{ \Phi(q) : q \in \xi_\rho^1(\partial_\infty\Gamma)\}$ spans $\bigwedge^m \Rb^d$ we can find $x \in \partial_\infty \Gamma$ such that $\Phi(\xi_\rho^1(x)) \notin \Pb(V)$. Since $\partial_\infty \Gamma$ is perfect, by perturbing $x$, we can also assume that $x \neq \gamma^-$. Then 
\begin{align*}
\lim_{n \rightarrow +\infty} g^n\cdot \Phi(\xi_\rho^1(x)) = \Phi(\xi_\rho^1(\gamma^+)),
\end{align*}
so $\Phi(\xi_\rho^1(x))$ does not lie in the repelling hyperplane of $g$. Thus, by \eqref{eqn: d1d2} and Observation~\ref{obs:dynamics},
\begin{align*}
\log \frac{ \lambda_{m+1}}{\lambda_m}(\rho(\gamma))  &\leq (\alpha-1) \lim_{n \rightarrow +\infty} \frac{1}{n} \log d_1\Big( \rho(\gamma)^n\cdot \xi_\rho^1(x), \xi_\rho^1(\gamma^+) \Big)\\
& \leq (\alpha-1) \log \frac{ \lambda_2}{\lambda_1}(\rho(\gamma)). \qedhere
\end{align*}
\end{proof}

%%%%%%%%%%%%%%%%%%%%%%%%%%%%%%%%%%%%%%%%%%%%%%%%%%%%%
%%%%%%%%%%%%%%%%%%%%%%%%%%%%%%%%%%%%%%%%%%%%%%%%%%%%%
\section{Necessary conditions for differentiability of $1$-dimensional $\rho$-controlled subsets}\label{sec:nec_surface}
%%%%%%%%%%%%%%%%%%%%%%%%%%%%%%%%%%%%%%%%%%%%%%%%%%%%%
%%%%%%%%%%%%%%%%%%%%%%%%%%%%%%%%%%%%%%%%%%%%%%%%%%%%%

In this section we prove Theorem~\ref{thm:nec_surface}. By Example \ref{eg:limitset}, it is sufficient to prove the following theorem.

\begin{theorem} \label{thm:nec_surface_body} Suppose $\Gamma$ is a hyperbolic group and $\rho: \Gamma \rightarrow \PGL_{d}(\Rb)$ is an irreducible $P_1$-Anosov representation. Also, suppose  $M$ is a $\rho$-controlled, topological circle. If
\begin{enumerate}
\item[($\ddagger$)] $M$ is $C^\alpha$ along $\xi_\rho^1(\partial_\infty\Gamma)$ for some $\alpha>1$,
\end{enumerate}
then
\begin{enumerate}
\item[($\dagger$')] $\rho$ is $P_2$-Anosov and $\xi_\rho^1(x) + p + \xi_\rho^{d-2}(y)$
is a direct sum for all pairwise distinct $\xi_\rho^1(x),p,\xi_\rho^1(y) \in M$.
\end{enumerate}
\end{theorem}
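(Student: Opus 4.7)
The plan is to follow the template of the proof of Theorem~\ref{thm:nec_general_body}, replacing the two ingredients that relied on $\bigwedge^{m}\rho$ being irreducible---namely Lemma~\ref{lem:proximal} and Proposition~\ref{prop:eigenvalue_est}---with a direct dynamical argument that exploits the one-dimensionality of $M$. After the standard reductions of Remark~\ref{rem:lift} and Proposition~\ref{prop:strongly_irreducible}, I assume that $\overline{\rho}:\Gamma\to\SL_d(\Rb)$ lifts $\rho$ and that $\rho(\Gamma)$ has connected Zariski closure. The $C^\alpha$-regularity of $M$ along the dense subset $\xi^{(1)}(\partial_\infty\Gamma)\subset M$ ensures that
\[
\xi^{(2)}(x):=T_{\xi^{(1)}(x)}M\in\Gr_2(\Rb^d)
\]
is a well-defined, continuous, $\rho$-equivariant assignment, where the tangent plane is viewed as a $2$-dimensional linear subspace of $\Rb^d$.

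The core step is the following analogue of Lemma~\ref{lem:proximal}: for every infinite-order $\gamma\in\Gamma$, $\bigwedge^2\rho(\gamma)$ is proximal, with attracting fixed point $\Phi(\xi^{(1)}(\gamma^+))=F_{d,2}(\xi^{(2)}(\gamma^+))$. This is equivalent to the strict gap $|\lambda_2(\rho(\gamma))|>|\lambda_3(\rho(\gamma))|$ together with the statement that $\xi^{(2)}(\gamma^+)$ is spanned by the $\lambda_1$-eigenline and the generalized $\lambda_2$-eigenspace of $\rho(\gamma)$. The invariance of $\xi^{(2)}(\gamma^+)$ under $\rho(\gamma)$ forces a decomposition $\xi^{(2)}(\gamma^+)=\xi^{(1)}(\gamma^+)\oplus\Rb v_{i_0}$, where $v_{i_0}$ is an eigenvector of $\rho(\gamma)$ with eigenvalue of absolute value $|\lambda_{i_0}|$ for some $i_0\geq 2$; it suffices to rule out $i_0>2$. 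To do so, I work in an affine chart around $\xi^{(1)}(\gamma^+)$ adapted to a $\rho(\gamma)$-invariant decomposition $\Rb^d=\Rb v_1\oplus\Rb v_{i_0}\oplus W$, and parametrize $M$ locally as a graph $t\mapsto(t,f(t))$ with $f(0)=0$, $df_0=0$, and $\|f(t)\|=O(|t|^\alpha)$. The $\rho(\gamma)$-invariance of $M$ yields the functional equation $f((\lambda_{i_0}/\lambda_1)^n t)=(A/\lambda_1)^n f(t)$ with $A:=\rho(\gamma)|_W$, which combined with the H\"older estimate produces
\[
\bigl\|(A/\lambda_1)^n f(t)\bigr\|\leq C\,(\lambda_{i_0}/\lambda_1)^{n\alpha}|t|^\alpha.
\]
If there exists $t_0\neq 0$ for which $f(t_0)$ has nonzero projection to the generalized $\lambda_2$-eigenspace of $A$ (note that this eigenspace sits inside $W$ precisely when $i_0>2$), then the left-hand side grows like $(|\lambda_2|/|\lambda_1|)^n$, and taking $n$-th roots and letting $n\to\infty$ gives $|\lambda_2/\lambda_1|\leq|\lambda_{i_0}/\lambda_1|^\alpha$, which is impossible since $|\lambda_{i_0}|<|\lambda_2|$ and $\alpha>1$ force $|\lambda_{i_0}/\lambda_1|^\alpha<|\lambda_{i_0}/\lambda_1|<|\lambda_2/\lambda_1|$.

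The hard part I expect is ruling out the degenerate case where no such $t_0$ exists, equivalently where $M$ is locally contained in the proper $\rho(\gamma)$-invariant hyperplane $V'\subsetneq\Rb^d$ obtained by deleting the $\lambda_2$-eigenspace. I plan to propagate this local containment using the continuity and $\rho$-equivariance of $\xi^{(1)}$ together with the density of $\xi^{(1)}(\partial_\infty\Gamma)$ in $M$ and the connectedness of the circle $M$, so as to force $M$ to lie inside a proper $\rho(\Gamma)$-invariant projective subvariety of $\Pb(\Rb^d)$; this contradicts the irreducibility of $\rho$, since the $\Rb$-span of $\xi^{(1)}(\partial_\infty\Gamma)$ must then be a proper $\rho(\Gamma)$-invariant linear subspace of $\Rb^d$. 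Once the analogue of Lemma~\ref{lem:proximal} is established, the proof of Proposition~\ref{prop:eigenvalue_est} uses the irreducibility of $\bigwedge^m\rho$ only through Lemma~\ref{lem:proximal} and through the spanning property of $\{\Phi(q)\}_{q\in\xi^{(1)}(\partial_\infty\Gamma)}$; the latter can be circumvented in our situation by restricting attention to the $\rho(\Gamma)$-invariant subspace $V\subset\bigwedge^2\Rb^d$ spanned by these vectors and applying Observation~\ref{obs:dynamics} inside $V$ to the proximal element $\bigwedge^2\rho(\gamma)$. This yields the estimate $|\lambda_3/\lambda_2|\leq|\lambda_2/\lambda_1|^{\alpha-1}$ for every infinite-order $\gamma\in\Gamma$, and Proposition~\ref{prop:nec_general} then converts this into the conclusion that $\rho$ is $2$-Anosov and that $\xi^{(1)}(x)+p+\xi^{(d-2)}(y)$ is a direct sum for all pairwise distinct $\xi^{(1)}(x),p,\xi^{(1)}(y)\in M$.
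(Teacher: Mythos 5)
Your substitute for Proposition~\ref{prop:eigenvalue_est} is the step that fails. If you restrict Observation~\ref{obs:dynamics} to the $\rho(\Gamma)$-invariant subspace $V\subset\bigwedge^2\Rb^d$ spanned by the tangent lines $\Phi(q)$, the generic decay rate you can read off is $\log\bigl(\lambda_2(\bigwedge^2\rho(\gamma)|_V)/(\lambda_1\lambda_2)\bigr)$, so the comparison with the H\"older estimate only yields $\lambda_2(\bigwedge^2\rho(\gamma)|_V)/(\lambda_1\lambda_2)\le(\lambda_2/\lambda_1)^{\alpha-1}$. Since the second eigenvalue of the restriction can be strictly smaller than $\lambda_1\lambda_3$ (the tangent lines may simply miss the $\lambda_1\lambda_3$-generalized eigenspace), this is weaker than the inequality $\lambda_3/\lambda_2\le(\lambda_2/\lambda_1)^{\alpha-1}$ that Lemma~\ref{lem:est_on_sing_values} and Proposition~\ref{prop:nec_general} actually need: those arguments run Benoist's cone comparison on the eigenvalue data of $\rho$ itself, and $2$-Anosovness is a statement about the full gap $\mu_2/\mu_3$ of $\rho(\gamma)$, not about a subrepresentation of $\bigwedge^2\rho$. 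The paper gets around this with a different device: it replaces the one-variable tangent map $\Phi$ by the two-variable secant map $\Psi(p,q)=[p\wedge q]$ on pairs of limit points, extended across the diagonal by the tangent line; irreducibility of $\rho$ alone makes $\Psi\bigl(\xi^{(1)}(\partial_\infty\Gamma)\times\xi^{(1)}(\partial_\infty\Gamma)\bigr)$ span $\bigwedge^2\Rb^d$, and the estimate $d_2(\Psi(p,p),\Psi(p,q))\le C\,d_1(p,q)^{\alpha-1}$ lets both the proximality statement and Proposition~\ref{prop:eigenvalue_est} go through verbatim with $\Psi(\xi^{(1)}(x),\xi^{(1)}(\gamma^+))$ in place of $\Phi(\xi^{(1)}(x))$ (Lemmas~\ref{lem:surface1} and~\ref{lem:surface2}). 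If you want to keep your local functional-equation argument, you would have to aim it directly at the $\lambda_3$-generalized eigenspace so as to extract $\lambda_3/\lambda_1\le(\lambda_2/\lambda_1)^{\alpha}$, rather than routing the estimate through the span of the tangent lines.

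There are also problems in your core step. Your treatment of the degenerate case assumes $\xi^{(1)}(\partial_\infty\Gamma)$ is dense in $M$; that is not part of the hypotheses (for instance $M$ may be the boundary of an invariant properly convex domain while $\partial_\infty\Gamma$ is a Cantor set), and the proposed propagation via connectedness of $M$ to a $\rho(\Gamma)$-invariant subvariety is not justified, since the subspace you produce is only $\rho(\gamma)$-invariant. The repair is simpler than what you propose and needs neither density in $M$ nor connectedness: if a neighborhood of $\xi^{(1)}(\gamma^+)$ in $M$ lies in $[V']$ with $V'\subsetneq\Rb^d$ a $\rho(\gamma)$-invariant subspace, then for every $x\in\partial_\infty\Gamma\setminus\{\gamma^-\}$ one has $\gamma^n\cdot x\to\gamma^+$, hence $\rho(\gamma)^n\cdot\xi^{(1)}(x)\in[V']$ for large $n$ and therefore $\xi^{(1)}(x)\in[V']$; by continuity the whole limit set lies in $[V']$, contradicting irreducibility of $\rho$. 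Finally, two smaller gaps: the invariant splitting $\Rb^d=\Rb v_1\oplus\Rb v_{i_0}\oplus W$ need not exist when $v_{i_0}$ sits at the bottom of a nontrivial Jordan block, so the clean functional equation acquires an ${\rm O}(|t|^\alpha)$ correction in the first coordinate that you must control; and ruling out $i_0>2$ is not yet proximality of $\bigwedge^2\rho(\gamma)$ --- you also need the strict gap $\lambda_2>\lambda_3$ and simplicity of $\lambda_1\lambda_2$, which forces you to run the same dichotomy for every generalized eigenspace of modulus $\lambda_2$ inside $W$, not just for the one containing $v_{i_0}$.
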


In Section \ref{sec:rhoirred}, we give an example to illustrate that the irreducibility of $\rho$ is a necessary hypothesis in Theorem \ref{thm:nec_surface_body} (and also in Theorem~\ref{thm:nec_surface}). Then we prove Theorem \ref{thm:nec_surface_body} in Section \ref{sec: dfgklj}.

%%%%%%%%%%%%%%%%%%%%%%%%%%%%%%%%%%%%%%%%%%%%%%%%%%%%%
\subsection{The irreducibility condition}\label{sec:rhoirred}
%%%%%%%%%%%%%%%%%%%%%%%%%%%%%%%%%%%%%%%%%%%%%%%%%%%%%
For $d \in \Nb$, let $\overline{\tau}_d : \GL_2(\Rb) \rightarrow \GL_d(\Rb)$ be the standard irreducible representation, which is constructed as follows. First, identify $\Rb^d$ with the space of homogeneous degree $d-1$ polynomials in two variables with real coefficients by 
\[(a_1,\dots,a_d)\mapsto \sum_{i=1}^{d}a_iX^{d-i}Y^{i-1}.\] 
Using this, we may define a linear representation $\overline{\tau}_d : \GL_2(\Rb) \rightarrow \GL_d(\Rb)$ by 
\begin{align*}
\overline{\tau}_d(g)P = P \circ g^{-1}. 
\end{align*}

One can verify that if $\lambda, \lambda^{-1}$ are the modulus of the eigenvalues of $g \in \GL_2(\Rb)$, then 
\begin{align}
\label{eq:eigenvalues_std_repn}
\lambda^{d-1}, \lambda^{d-3}, \dots, \lambda^{1-d}
\end{align}
are the moduli of the eigenvalues of $\overline{\tau}_d(g)$. Further, if $B_k\subset\GL_k(\Rb)$ denotes the subgroup of upper triangular matrices, then $\overline{\tau}_d(B_2)\subset B_d$. In particular, $\overline{\tau}_d$ induces a smooth map 
\[\Psi_d:\Pb(\Rb^2)\simeq\GL_2(\Rb)/B_2\to\GL_d(\Rb)/B_d.\] 
Since $\GL_d(\Rb)/B_d$ is the space of complete flags in $\Rb^d$, there is an obvious smooth projection $p_m:\GL_d(\Rb)/B_d\to\Gr_m(\Rb^d)$ for each $m=1,\dots,d-1$. Using this, define the smooth map $\Psi_{d,m}:=p_m\circ\Psi_d:\Pb(\Rb^2)\to\Gr_m(\Rb^d)$. One can verify that
\begin{enumerate}
\item $\Psi_{d,m}$ is $\overline{\tau}_d$-equivariant.
\item If $x,y \in \Pb(\Rb^2)$ are distinct, then $\Phi_{d,m}(x)$ and $\Phi_{d,d-m}(y)$ are transverse.
\item If $g \in \SL^\pm_2(\Rb)$ is proximal with attracting/repelling fixed points $g^+, g^- \in \Pb(\Rb^2)$, then $\overline{\tau}_d(g)$ is proximal with attracting fixed point $\Psi_{d,1}(g^+) \in \Pb(\Rb^d)$ and repelling fixed point $\Psi_{d,d-1}(g^-) \in \Gr_{d-1}(\Rb^d)$. 
\end{enumerate} 
Notice that (1) is by definition. Further, using (1), it is enough to verify (2) when $x=[1:0]$, $y=[0:1]$ and  to verify (3) when $g$ is diagonal. 

\begin{example}\label{ex:surface_bad_example} 
Fix a co-compact lattice $\Gamma \leq \PGL_2(\Rb)$. The inclusion of $\Gamma$ into $\PGL_2(\Rb)$ induces an identification $\partial_\infty\Gamma\simeq\Pb(\Rb^2)$, and thus equips $\partial_\infty\Gamma$ with the structure of a smooth manifold. Consider the representation 
\[\rho:\Gamma\to\GL(\Rb^{d+2}\oplus\Rb^{d})\]
defined by $\rho(\gamma) = \overline{\tau}_{d+2} \oplus \overline{\tau}_d(\gamma)$. Define boundary maps $\xi_\rho^1 : \partial_\infty\Gamma \rightarrow \Pb(\Rb^{d+2} \oplus \Rb^d)$ and $\xi_\rho^{2d-1} : \partial_\infty \Gamma \rightarrow \Gr_{2d-1}(\Rb^{d+2} \oplus \Rb^d)$ by 
$$
\xi_\rho^1(x) = \Psi_{d+2,1}(x) \oplus \{0\} \quad \text{and} \quad \xi_\rho^{2d-1}(x) = \Psi_{d+2,d-1}(x) \oplus \Rb^d.
$$
By the discussion above and \eqref{eq:eigenvalues_std_repn}, this is a pair of smooth, dynamics preserving, $\rho$-equivariant, transverse maps. Thus, one deduces from \eqref{eq:eigenvalues_std_repn} that $\rho$ is $P_1$-Anosov, but it is not $P_2$-Anosov because
\begin{align*}
\frac{\lambda_2(\rho(\gamma)) }{\lambda_3(\rho(\gamma)) }=1
\end{align*}
for any $\gamma \in \Gamma$.  However, since $\Psi_{d+2,1}$ is a smooth map, the $P_1$-limit set of $\rho$ is a $1$-dimensional, $C^{\infty}$-submanifold of $\Pb(\Rb^{2d+2})$. This shows that the conclusion of Theorem \ref{thm:nec_surface_body} does not necessarily hold if we do not assume the irreducibility hypothesis of Theorem \ref{thm:nec_surface_body}.
\end{example} 

%%%%%%%%%%%%%%%%%%%%%%%%%%%%%%%%%%%%%%%%%%%%%%%%%%%%%
\subsection{Proof of Theorem \ref{thm:nec_surface_body}}\label{sec: dfgklj}
%%%%%%%%%%%%%%%%%%%%%%%%%%%%%%%%%%%%%%%%%%%%%%%%%%%%%
Let $\rho:\Gamma\to\PGL_d(\Rb)$ be a $P_1$-Anosov representation and $M$ is a $\rho$-controlled subset which is $C^\alpha$ along $\xi_\rho^1(\partial_\infty\Gamma)$ for some $\alpha > 1$. Recall that 
\[\Phi:M\to\Pb\bigg(\bigwedge^2\Rb^d\bigg)\] 
was defined in Section \ref{thm: lkj}, see \eqref{eqn:Phi}. We will use the same strategy used to prove Theorem~\ref{thm:nec_general_body}. More precisely, by Lemma \ref{lem:nec_general}, it suffices to prove the following analog of Lemma~\ref{lem:proximal}.

%Lemma \ref{lem:surface1} and Lemma \ref{lem:surface2} stated below are respectively the analogs of Lemma \ref{lem:proximal} and Proposition \ref{prop:eigenvalue_est} in the case when $M$ is a $1$-dimensional topological manifold. With these two lemmas, we can replicate the proof of Theorem \ref{thm:nec_general_body} to prove Theorem~\ref{thm:nec_surface_body}

\begin{lemma} \label{lem:surface1}\label{lem:surface2}
Suppose  $\rho$ is irreducible and $M$ is a topological circle that is $C^{\alpha}$ along the $P_1$-limit set of $\rho$ for some $\alpha>1$.
\begin{enumerate}
\item If $\gamma \in \Gamma$ has infinite order, then $\bigwedge^{2} \rho(\gamma)$ is proximal and $\Phi\left(\xi_\rho^1(\gamma^+)\right)\in\Pb\big(\bigwedge^2\Rb^d\big)$ is the attracting fixed point of $\bigwedge^{2} \rho(\gamma)$. 
\item If $\gamma \in \Gamma$, then 
\begin{align*}
\frac{ \lambda_{3}(\rho(\gamma))}{\lambda_2(\rho(\gamma))} \leq \left(\frac{ \lambda_{2}(\rho(\gamma))}{\lambda_1(\rho(\gamma))} \right)^{\alpha-1}. 
\end{align*}
\end{enumerate}
\end{lemma}

\begin{remark} In Lemma \ref{lem:proximal}, we assumed that $\bigwedge^2\rho$ is irreducible. However, in Lemma \ref{lem:surface1}, we assumed that $\rho$ is irreducible.
\end{remark}

\begin{proof}[Proof of Lemma \ref{lem:surface1}] 
Define the map
\[\overline{\Psi}: \xi_\rho^1(\partial_\infty\Gamma)\times \xi_\rho^1(\partial_\infty\Gamma)\rightarrow \Gr_2(\Rb^d)\] 
by letting $\overline{\Psi}(p,q)$ be the projective line containing $p,q$ when $p \neq q$ and letting $\overline{\Psi}(p,p)$ be the projective line tangent to $M$ at $p$. Notice that $\overline{\Psi}$ is continuous since $M$ is  $C^1$ along the $P_1$-limit set of $\rho$. Then define 
\[\Psi:=F_{d,2}\circ\overline{\Psi}: \xi_\rho^1(\partial_\infty\Gamma)\times \xi_\rho^1(\partial_\infty\Gamma)\to\Pb\bigg(\bigwedge^2\Rb^d\bigg),\] 
where $F_{d,2}$ is defined by \eqref{eqn:Fdm}. Observe that $\Psi$ is continuous and $\Phi(p)=\Psi(p,p)$ for all $p\in \xi_\rho^1(\partial_\infty\Gamma)$.

Fix distances $d_1$ on $\Pb(\Rb^d)$ and $d_2$ on $\Pb\big(\bigwedge^2\Rb^d\big)$ that are induced by Riemannian metrics. Since $M$ is $C^{\alpha}$ along the $P_1$-limit set of $\rho$ for some $\alpha>1$, there exists $C > 0$ such that 
\begin{align}\label{eqn:m=2}
d_2\Big( \Psi(p,p), \Psi(p, q) \Big) \leq Cd_1(p, q)^{\alpha-1} 
\end{align}
for all $p,q \in \xi_\rho^1(\partial_\infty\Gamma)$ (see Observation \ref{obs: appendix3 estimate 3}). Also, since $\rho$ is irreducible, the elements of $ \xi_\rho^1(\partial_\infty\Gamma)$ span $\Rb^d$, so 
\begin{align*}
\Psi\left( \xi_\rho^1(\partial_\infty\Gamma) \times  \xi_\rho^1(\partial_\infty\Gamma)\right)
\end{align*} 
spans $\bigwedge^2 \Rb^d$. 

The rest of the proof closely follows the proof of Lemma~\ref{lem:proximal}, except that we use $\Psi(\xi_\rho^1(\gamma^+),\xi_\rho^1(\gamma^+))$ and $\Psi(\xi_\rho^1(x),\xi_\rho^1(\gamma^+))$ in place of $\Phi(\xi_\rho^1(\gamma^+))$ and $\Phi(\xi_\rho^1(x))$ respectively.
%
%To prove (2), we use the same argument as we did in the proof of Proposition~\ref{prop:eigenvalue_est}, but with $\Psi(\xi_\rho^1(\gamma^+),\xi_\rho^1(\gamma^+))$ and $\Psi(\xi_\rho^1(x),\xi_\rho^1(\gamma^+))$ in place of $\Phi(\xi_\rho^1(\gamma^+))$ and $\Phi(\xi_\rho^1(x))$ respectively, and part (1) of Lemma \ref{lem:surface1} in place of  part (1) of Lemma~\ref{lem:proximal}.
\end{proof}

\begin{remark}
In the case when $M$ is a topological $(m-1)$-dimensional manifold with $m>2$, it is not true that $\xi_\rho^1(x_1)+\dots+\xi_\rho^1(x_m)$ converges to $\xi_\rho^m(x)$ as $x_i\to x$, so the direct analog of \eqref{eqn:m=2} cannot hold. As such, we need the additional assumption that $\bigwedge^m\rho$ is irreducible in Theorem \ref{thm:nec_general_body}.
\end{remark}

%%%%%%%%%%%%%%%%%%%%%%%%%%%%%%%%%%%%%%%%%%%%%%%%%%%%%%%%%%%%%%%%%%%%%%%%%%%%%%%%%%%%%%%%%%%%%%%%%%%%%%%%%%
\section{$\PGL_d(\Rb)$-Hitchin representations}\label{sec:Hitchin}
%%%%%%%%%%%%%%%%%%%%%%%%%%%%%%%%%%%%%%%%%%%%%%%%%%%%%%%%%%%%%%%%%%%%%%%%%%%%%%%%%%%%%%%%%%%%%%%%%%%%%%%%%%

 In this section, let $\Gamma:=\pi_1(\Sigma)$, where $\Sigma$ is a closed, orientable, connected hyperbolic surface. Let $\tau_d:\PGL_2(\Rb)\to\PGL_d(\Rb)$ be the projectivization of the representation $\overline{\tau}_d : \GL_2(\Rb) \rightarrow \GL_d(\Rb)$ defined in Section \ref{sec:rhoirred}.
 
 \begin{definition}\label{defn:hitchin_reps}
 A \emph{$\PGL_d(\Rb)$-Hitchin representation} is a representation that lies in a connected component of $\Hom(\Gamma,\PGL_d(\Rb))$ that contains $\tau_d\circ j$ for some Fuchsian representation $j:\Gamma\to\PGL_2(\Rb)$.
 \end{definition}

%Denote the set of $\PGL_d(\Rb)$-Hitchin representations by $\Hc_d(\Sigma)$
The goal of this section is to show that if $\rho$ is a $\PGL_d(\Rb)$-Hitchin representation, then for all $k=1,\dots,d-1$, $\bigwedge^k\rho:\Gamma\to\PGL(\bigwedge^k\Rb^d)$ satisfies the hypothesis of Theorem \ref{thm:main} (see Example \ref{cor:hitchin}). The following proposition is a consequence of Labourie's deep work on the Hitchin component~\cite{L2006} and has also been observed by Pozzetti-Sambarino-Wienhard \cite{PSW18}.

\begin{proposition}\label{prop:3_hyperconvex_exterior_prod} Let $\rho$ be a $\PGL_d(\Rb)$-Hitchin representation, let $k \in \{1,\dots, d-1\}$ and let $D := \dim \big(\bigwedge^k \Rb^d\big)={d\choose k}$. Then $\bigwedge^k \rho : \Gamma \rightarrow \PGL\big(\bigwedge^k \Rb^d\big)$ is $P_{1,2}$-Anosov, and its $P_1$-limit map $\zeta^{1}$ and $P_{D-2}$-limit map $\zeta^{D-2}$ satisfy the property that
\begin{align*}
\zeta^{1}(x)+\zeta^{1}(y) + \zeta^{D-2}(z),
\end{align*}
is a direct sum for all $x,y,z \in \partial_\infty \Gamma$ distinct. 
\end{proposition}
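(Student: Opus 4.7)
The plan is to separately verify that $\bigwedge^k\rho$ is $(1,2)$-Anosov and that the three-point configuration $\zeta^{(1)}(x)+\zeta^{(1)}(y)+\zeta^{(D-2)}(z)$ is a direct sum. Both arguments rest on Labourie's fundamental result \cite{L2006}: any $\PGL_d(\Rb)$-Hitchin representation is $m$-Anosov for every $m\in\{1,\dots,d-1\}$, and the resulting flag map $\xi=(\xi^{(i)})_{i=1}^{d-1}$ is hyperconvex, in the sense that $\xi^{(k_1)}(x_1)+\cdots+\xi^{(k_n)}(x_n)$ is a direct sum whenever $x_1,\dots,x_n\in\partial_\infty\Gamma$ are pairwise distinct and $k_1+\cdots+k_n\leq d$.

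For the $(1,2)$-Anosov property, I would use the bookkeeping of products of singular values from Section~\ref{sec:wedge}. Identifying the three largest products $\mu_{i_1}(\rho(\gamma))\cdots\mu_{i_k}(\rho(\gamma))$ yields
\begin{align*}
\frac{\mu_1}{\mu_2}\!\left(\bigwedge\nolimits^{k}\!\rho(\gamma)\right)=\frac{\mu_k}{\mu_{k+1}}(\rho(\gamma)),\qquad \frac{\mu_2}{\mu_3}\!\left(\bigwedge\nolimits^{k}\!\rho(\gamma)\right)=\min\!\left(\frac{\mu_{k-1}}{\mu_k}(\rho(\gamma)),\,\frac{\mu_{k+1}}{\mu_{k+2}}(\rho(\gamma))\right),
\end{align*}
with the convention that any out-of-range ratio is omitted when $k=1$ or $k=d-1$. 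Since Labourie's theorem makes each meaningful ratio on the right grow exponentially in word length (via Theorem~\ref{thm:SV_char_of_Anosov}), applying Theorem~\ref{thm:SV_char_of_Anosov} in the other direction gives that $\bigwedge^k\rho$ is $(1,2)$-Anosov.

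For the three-point condition, I would first identify $\zeta^{(1)}$ and $\zeta^{(D-2)}$ in terms of $\xi$. Diagonalizing $\rho(\gamma)$ in a basis $e_1,\dots,e_d$ ordered by decreasing eigenvalue modulus, the attracting line of $\bigwedge^k\rho(\gamma)$ is $[e_1\wedge\cdots\wedge e_k]$, so continuity and dynamics-preservation force $\zeta^{(1)}(u)=F_{d,k}(\xi^{(k)}(u))$, with $F_{d,k}$ the Plücker map from~\eqref{eqn:Fdm}. Dually, the attracting $2$-plane of $\bigwedge^{d-k}\rho(\gamma)$ is $\omega_{d-k-1}(u)\wedge\xi^{(d-k+1)}(u)$ for any generator $\omega_{d-k-1}(u)$ of $\bigwedge^{d-k-1}\xi^{(d-k-1)}(u)$, and $\zeta^{(D-2)}(u)$ is its annihilator under the pairing $\bigwedge^k\Rb^d\times\bigwedge^{d-k}\Rb^d\to\bigwedge^d\Rb^d$. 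Unpacking the $2\times 2$ matrix of pairings encoding directness, the claim reduces to
\begin{align*}
L_x+L_y+\xi^{(d-k-1)}(z)=\xi^{(d-k+1)}(z) \quad\text{(as a direct sum)},
\end{align*}
where $L_w:=\xi^{(k)}(w)\cap\xi^{(d-k+1)}(z)$. Labourie's hyperconvexity immediately gives $\dim L_w=1$ and $L_w\cap\xi^{(d-k-1)}(z)=0$ for $w\neq z$.

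The main obstacle is the remaining statement that $L_x\not\equiv L_y\pmod{\xi^{(d-k-1)}(z)}$ when $x\neq y$, which is genuinely stronger than Labourie's direct-sum condition once $k\geq 2$, since the dimensions $k+k+(d-k-1)$ then exceed $d$. I would resolve this by first carrying out an explicit computation in the Fuchsian case $\rho_0=\tau_d\circ j$: in the polynomial model $\xi^{(k)}(w)$ is the space of polynomials of degree $\leq d-1$ vanishing to order $d-k$ at $w$, so $L_w=\Rb\cdot(X-wY)^{d-k}(X-zY)^{k-1}$, and a binomial expansion around $z$ shows that the image of $L_w$ in $\xi^{(d-k+1)}(z)/\xi^{(d-k-1)}(z)$ is the projective class $[(z-w):(d-k)]$, manifestly injective in $w\neq z$. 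To propagate this to every Hitchin representation I would invoke the positivity of the Hitchin flag map of Labourie--Fock--Goncharov, which makes the relevant Plücker-type nonvanishing uniform across the Hitchin component, or equivalently cite the parallel hyperconvexity result of Pozzetti--Sambarino--Wienhard~\cite{PSW18}.
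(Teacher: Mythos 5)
Your proposal is correct in outline and, for two of its three steps, it follows essentially the same route as the paper. The $(1,2)$-Anosov part is the paper's computation verbatim: with $\chi_1=\sigma_1\cdots\sigma_k$ and $\chi_2=\sigma_1\cdots\sigma_{k-1}\sigma_{k+1}$, the third largest product of $k$ singular values is one of $\sigma_1\cdots\sigma_{k-2}\sigma_k\sigma_{k+1}$ or $\sigma_1\cdots\sigma_{k-1}\sigma_{k+2}$, so $\chi_2/\chi_3\geq\min\bigl(\sigma_{k-1}/\sigma_k,\ \sigma_{k+1}/\sigma_{k+2}\bigr)$, and Theorem~\ref{thm:SV_char_of_Anosov} finishes; this is exactly the paper's Case 1/Case 2 analysis. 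Likewise your identification $\zeta^{(1)}=F_{d,k}\circ\xi^{(k)}$ and the dual description of $\zeta^{(D-2)}$ as the annihilator of $\bigl(\bigwedge^{d-k-1}\xi^{(d-k-1)}\bigr)\wedge\xi^{(d-k+1)}$ agree with the paper's explicit flag maps (which it verifies on attracting fixed points), and your reduction of the triple direct sum to the assertion that $L_x=\xi^{(k)}(x)\cap\xi^{(d-k+1)}(z)$ and $L_y$ have distinct images in $\xi^{(d-k+1)}(z)/\xi^{(d-k-1)}(z)$ is equivalent to the paper's wedge-nonvanishing computation.

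The divergence, and the only weak point, is how you source that last transversality. It is not ``genuinely stronger than Labourie's direct-sum condition'': Labourie's Property (H), recorded as item~(3) in Section~\ref{sec:prelim_Hitchin} and taken from \cite{L2006}, says that $\xi^{(k)}(w_1)+\xi^{(d-k-1)}(z)+\bigl(\xi^{(k)}(w_2)\cap\xi^{(d-k+1)}(z)\bigr)$ is a direct sum for distinct points, and a one-line check shows this is precisely your missing statement: if $\ell_1=\ell_2+v$ with $\ell_i\in L_{w_i}$ nonzero and $v\in\xi^{(d-k-1)}(z)$, directness of that sum forces $\ell_2=0$ and then $\ell_1\in\xi^{(k)}(w_1)\cap\xi^{(d-k-1)}(z)=\{0\}$. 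The paper simply quotes this and concludes. Your substitute --- an explicit Fuchsian computation (which is correct) followed by ``propagation by positivity'' --- does not work as stated: a nondegeneracy condition, being open, does not automatically persist across the whole Hitchin component, and you never identify the relevant wedge as a minor controlled by Fock--Goncharov positivity, so that step is a gesture rather than an argument. Your fallback of citing \cite{PSW18} does close the gap, since they record exactly this hyperconvexity for Hitchin representations, but their proof, like the paper's, rests on Labourie's Frenet-curve/Property (H) theorem, so the Fuchsian detour and the positivity appeal buy nothing; the clean route is to quote item~(3) directly.
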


For the rest of the section fix some $\PGL_d(\Rb)$-Hitchin representation $\rho$ and some finite symmetric generating set $S$ of $\Gamma$.

%%%%%%%%%%%%%%%%%%%%%%%%%%%%%%%%%%%%%%%%%%%%%%%%%%%%%
\subsection{Preliminaries}\label{sec:prelim_Hitchin}
%%%%%%%%%%%%%%%%%%%%%%%%%%%%%%%%%%%%%%%%%%%%%%%%%%%%%  
Before proving the proposition, we recall some results of Labourie. By Theorem 4.1 and Proposition 3.2 in~\cite{L2006},
 \begin{enumerate}
 \item\label{item:hitchin1} $\rho$ is $P_k$-Anosov for every $1 \leq k \leq d$. Denote the $P_k$-limit map of $\rho$ by $\xi_\rho^k$.
 \item\label{item:hitchin2} If $x,y,z \in \partial_\infty \Gamma$ are pairwise distinct, $k_1,k_2,k_3 \geq 0$, and $k_1+k_2+k_3 =d$, then 
 \begin{align*}
 \xi_\rho^{k_1}(x) + \xi_\rho^{k_2}(y) + \xi_\rho^{k_3}(z) = \Rb^d
 \end{align*}
 is a direct sum.
  \item\label{item:hitchin3} If $x,y,z \in \partial_\infty \Gamma$ are pairwise distinct and $0\leq k < d-2$, then
 \begin{align*}
 \xi_\rho^{k+1}(y) + \xi_\rho^{d-k-2}(z) + \Big(\xi_\rho^{k+1}(x) \cap \xi_\rho^{d-k}(z) \Big)= \Rb^d
 \end{align*}
 is a direct sum.
  % \item\label{item:hitchin3.5} $\rho$ admits a lift $\overline{\rho}:\Gamma\to\GL_d(\Rb)$ whose image lies in $\SL_d(\Rb)$.
 \item\label{item:hitchin4} If $\gamma \in \Gamma - \{1\}$, then %the moduli of the eigenvalues of $\overline{\rho}(\gamma)$ satisf
 \begin{align*}
 \lambda_1(\rho(\gamma)) > \dots > \lambda_d(\rho(\gamma)).
 \end{align*}
 \item\label{item:hitchin5} If $\gamma \in \Gamma - \{1\}$, then $ \xi_\rho^k(\gamma^+)$ is the span of the eigenspaces of $\rho(\gamma)$ corresponding to 
 \begin{align*}
  \lambda_1(\rho(\gamma)), \dots, \lambda_k(\rho(\gamma)).
  \end{align*} 
\end{enumerate}

%%%%%%%%%%%%%%%%%%%%%%%%%%%%%%%%%%%%%%%%%%%%%%%%%%%%%
\subsection{The proof of Proposition~\ref{prop:3_hyperconvex_exterior_prod}}
%%%%%%%%%%%%%%%%%%%%%%%%%%%%%%%%%%%%%%%%%%%%%%%%%%%%%

Since $\rho$ is $P_k$-Anosov, Theorem~\ref{thm:SV_char_of_Anosov} implies that there exist $C,c>0$ such that 
\begin{align}\label{eqn: Anosov singular values}
\log  \frac{\mu_k(\rho(\gamma))}{\mu_{k+1}(\rho(\gamma))} \geq C d_S(1,\gamma) -c
 \end{align}
for all $\gamma \in \Gamma$ and $1 \leq k \leq d$. 

\begin{lemma}\label{lem: Hitchin 1} $\bigwedge^k \rho$ is $P_{1,2}$-Anosov. \end{lemma}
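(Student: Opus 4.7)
The plan is to verify the singular-value criterion of Theorem~\ref{thm:SV_char_of_Anosov} for $\bigwedge^k\rho$ at indices $1$ and $2$, by translating the required gap estimates for $\bigwedge^k\rho(\gamma)$ into gap estimates for consecutive singular values of $\rho(\gamma)$ itself. The key observation is that, in an orthonormal basis $e_1,\dots,e_d$ of $\Rb^d$ with respect to which $\overline{\rho}(\gamma)$ is in singular value form, the wedge basis $\{e_{i_1}\wedge\cdots\wedge e_{i_k}\}_{1\leq i_1<\cdots<i_k\leq d}$ of $\bigwedge^k\Rb^d$ is orthonormal (relative to $\langle\cdot,\cdot\rangle_{\bigwedge^k\Rb^d}$) and puts $\bigwedge^k\overline{\rho}(\gamma)$ in singular value form, with singular values exactly the products $\mu_{i_1}(\rho(\gamma))\cdots\mu_{i_k}(\rho(\gamma))$ arranged in decreasing order.

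Reading off the three largest such products gives
$$\mu_1(\bigwedge^k\rho(\gamma))=\prod_{i=1}^k\mu_i(\rho(\gamma)),\qquad \mu_2(\bigwedge^k\rho(\gamma))=\mu_{k+1}(\rho(\gamma))\prod_{i=1}^{k-1}\mu_i(\rho(\gamma))$$
(which is \eqref{eqn:0evalue1}) and
$$\mu_3(\bigwedge^k\rho(\gamma))=\max\left\{\mu_{k+2}(\rho(\gamma))\prod_{i=1}^{k-1}\mu_i(\rho(\gamma)),\ \mu_k(\rho(\gamma))\mu_{k+1}(\rho(\gamma))\prod_{i=1}^{k-2}\mu_i(\rho(\gamma))\right\},$$
with the convention that any factor involving $\mu_0$ or $\mu_{d+1}$ is dropped when $k\in\{1,d-1\}$. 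Consequently
$$\frac{\mu_1}{\mu_2}\left(\bigwedge^k\rho(\gamma)\right)=\frac{\mu_k}{\mu_{k+1}}(\rho(\gamma)),\qquad \frac{\mu_2}{\mu_3}\left(\bigwedge^k\rho(\gamma)\right)=\min\left\{\frac{\mu_{k+1}}{\mu_{k+2}}(\rho(\gamma)),\ \frac{\mu_{k-1}}{\mu_k}(\rho(\gamma))\right\},$$
again with the degenerate cases at $k=1$ and $k=d-1$ interpreted by dropping the undefined ratio.

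By item~\ref{item:hitchin1} of Section~\ref{sec:prelim_Hitchin}, $\rho$ is $m$-Anosov for every $1\leq m\leq d-1$, so Theorem~\ref{thm:SV_char_of_Anosov} supplies constants $C,c>0$ with $\log\frac{\mu_m}{\mu_{m+1}}(\rho(\gamma))\geq Cd_S(1,\gamma)-c$ for all such $m$ and all $\gamma\in\Gamma$. Combining this with the two identities above, the same linear lower bound (with possibly smaller constants) holds for $\log\frac{\mu_1}{\mu_2}(\bigwedge^k\rho(\gamma))$ and $\log\frac{\mu_2}{\mu_3}(\bigwedge^k\rho(\gamma))$, and a second application of Theorem~\ref{thm:SV_char_of_Anosov} concludes that $\bigwedge^k\rho$ is both $1$- and $2$-Anosov, i.e.\ $(1,2)$-Anosov. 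The only point requiring care is the identification of $\mu_3(\bigwedge^k\rho(\gamma))$ as the \emph{maximum} of two competing products and its degeneration to a single term at $k=1$ or $k=d-1$; this is the main (but quite minor) bookkeeping obstacle.
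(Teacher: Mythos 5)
Your proposal is correct and follows essentially the same route as the paper: both verify the criterion of Theorem~\ref{thm:SV_char_of_Anosov} at indices $1$ and $2$ by expressing the top three singular values of $\bigwedge^k\overline{\rho}(\gamma)$ as products of the $\mu_i(\rho(\gamma))$ and then invoking that $\rho$ is $m$-Anosov for every $m$. The only cosmetic difference is that you record $\mu_3\bigl(\bigwedge^k\rho(\gamma)\bigr)$ as an explicit maximum of the two competing products (which is indeed correct), whereas the paper reaches the same bound by a two-case analysis on the index set realizing the third singular value.
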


\begin{proof}
By Theorem~\ref{thm:SV_char_of_Anosov} it is enough to prove that there exist $A,a>0$ such that 
\begin{align}
\label{eqn:12}
\log  \frac{\mu_1}{\mu_{2}}\bigg(\bigwedge^k\rho(\gamma)\bigg) \geq A d_S(1,\gamma) -a
 \end{align}
 and
\begin{align}
\label{eqn:23}
\log  \frac{\mu_2}{\mu_{3}}\bigg(\bigwedge^k \rho(\gamma)\bigg) \geq A d_S(1,\gamma) -a
 \end{align}
for all $\gamma \in \Gamma$. Here, recall that we identify $\bigwedge^k\Rb^d\cong\Rb^D$ using the basis of $\bigwedge^k\Rb^d$ induced by the standard basis of $\Rb^d$.

%Fix $\gamma \in \Gamma$ and let %$\overline{g} \in \SL_d(\Rb)$ be a lift of $\rho(\gamma)$. Then let
%\begin{align*}
%\sigma_1 \geq \dots \geq \sigma_d
%\end{align*}
%denote the singular values of $\overline{g}$ (in the Euclidean norm on $\Rb^d$), and let 
%\begin{align*}
%\chi_1 \geq \dots \geq \chi_D
%\end{align*}
%denote the singular values of $\bigwedge^k\rho(\gamma)$. 
%Equation \ref{eqn:12} is an immediate consequence of Equations~\eqref{eqn:0evalue1} and \eqref{eqn: Anosov singular values}%that says that 
%\begin{align*}
%\chi_1 = \sigma_1 \cdots \sigma_k \text{ and } \chi_2 = \sigma_1 \cdots \sigma_{k-1} \sigma_{k+1}.
%\end{align*}
%Hence 
%\begin{align*}
%\log  \frac{\mu_1}{\mu_{2}}\left(\bigwedge^k\rho(\gamma)\right) = \log \frac{\mu_k(\rho(\gamma))}{\mu_{k+1}(\rho(\gamma))} \geq C d_S(1,\gamma) -c.
% \end{align*}

The inequality \eqref{eqn:12} is an immediate consequence of \eqref{eqn:0evalue1} and \eqref{eqn: Anosov singular values}. To verify the inequality \eqref{eqn:23}, fix $\gamma \in \Gamma$. For convenience, we denote
\[\chi_i:=\mu_i\bigg(\bigwedge^k\rho(\gamma)\bigg)\] 
for all $1\le i\le D$, and
\[\sigma_i:=\mu_i(\rho(\gamma))\]
for all $ 1\le i\le d$. We previously observed, see Equation~\eqref{eqn:wedge eignevalues and singular values}, that there exist $1 \leq i_1 < \dots <i_k \leq d$ such that
\begin{align*}
\chi_3 = \sigma_{i_1}\cdots \sigma_{i_k}.
\end{align*}
Note that $i_j\geq j$ for all $j$. Further, 
$$
\chi_1 = \sigma_1 \cdots \sigma_k \quad \text{and} \quad \chi_2 = \sigma_1 \cdots \sigma_{k-1} \sigma_{k+1}. 
$$
So $(i_1,\dots i_k) \notin \{ (1,\dots,k), (1,\dots, k-1, k+1)\}$.

We consider two cases based on the value of $i_{k-1}$. 

\medskip

\noindent \textbf{Case 1:} Suppose $i_{k-1} = k-1$. Then $i_j = j$ for $j \leq k-1$ and $i_k \geq k$. Since 
\begin{align*}
(i_1,\dots, i_k) \notin \{ (1,\dots, k), (1,\dots,k-1,k+1)\}
\end{align*}
we must have $i_k \geq k+2$. So
\begin{align*}
\log \frac{\chi_2}{\chi_3} = \log  \left(\frac{\sigma_1} {\sigma_{i_1}}\cdots  \frac{\sigma_{k-1}}{\sigma_{i_{k-1}}} \frac{\sigma_{k+1}}{\sigma_{i_{k}}}\right) = \log \frac{\sigma_{k+1}}{\sigma_{i_k}} \geq \log  \frac{\sigma_{k+1}}{\sigma_{k+2}} \geq C \ell_S(\gamma)-c.
\end{align*}

\noindent \textbf{Case 2:} Suppose $i_{k-1} \geq k$. Then $i_k \geq k+1$. Since $i_j \geq j$ for all $j$, we have
 \begin{align*}
\log \frac{\chi_2}{\chi_3} = \log  \left(\frac{\sigma_1}{\sigma_{i_1}} \cdots \frac{\sigma_{k-2}}{\sigma_{i_{k-2}}}  \frac{\sigma_{k-1}}{\sigma_{i_{k-1}}} \frac{\sigma_{k+1}}{\sigma_{i_{k}}}\right) \geq \log \frac{\sigma_{k-1}}{\sigma_{i_{k-1}}} \geq \log \frac{\sigma_{k-1}}{\sigma_{k}} \geq C \ell_S(\gamma)-c.
\end{align*}
In either case 
\[
\log  \frac{\mu_2}{\mu_{3}}\bigg(\bigwedge^k \rho(\gamma)\bigg)=\log \frac{\chi_2}{\chi_3} \geq C \ell_S(\gamma) -c.\qedhere
\]
%Then since $\gamma \in \Gamma$ was arbitrary, we see that $\bigwedge^k \rho$ is $P_{1,2}$-Anosov. 
\end{proof}

Given subspaces $V_1,\dots,V_k\subset\Rb^d$, we will let $V_1\wedge\dots\wedge V_k$ denote the subspace of $\bigwedge^k\Rb^d$ that is spanned by $\{X_1\wedge\dots\wedge X_k:X_i\in V_i\}$. For $j\in\{1,2,D-2, D-1\}$ define maps 
\begin{align*}
\zeta^{j} : \partial_\infty \Gamma \rightarrow \Gr_j\bigg(\bigwedge^k \Rb^d\bigg)
\end{align*}  
by
\begin{align*}
\zeta^{1}(x) = \bigwedge^k \xi_\rho^k(x),
\end{align*}
\begin{align*}
\zeta^{2}(x) =\bigg( \bigwedge^{k-1} \xi_\rho^{k-1}(x) \bigg) \wedge \xi_\rho^{k+1}(x), 
\end{align*}
\begin{align*}
\zeta^{D-2}(x) =\xi_\rho^{d-k-1}(x) \wedge \bigg( \bigwedge^{k-1} \Rb^d \bigg)  + \xi_\rho^{d-k}(x) \wedge \xi_\rho^{d-k+1}(x) \wedge \bigg( \bigwedge^{k-2} \Rb^d \bigg), 
\end{align*}
\begin{align*}
\zeta^{D-1}(x) = \xi_\rho^{d-k}(x) \wedge  \bigg(\bigwedge^{k-1} \Rb^d \bigg).
\end{align*}
These maps are clearly continuous and $\bigwedge^k \rho$-equivariant.

\begin{lemma}\label{lem: Hitchin 2}
$\zeta^{1}, \zeta^{2}, \zeta^{D-2}, \zeta^{D-1}$ are the limit maps of $\bigwedge^k \rho$. 
\end{lemma}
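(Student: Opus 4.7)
The plan is to exploit the uniqueness of flag maps for Anosov representations. By the previous lemma, $\bigwedge^k\rho$ is $(1,2)$-Anosov, hence also $(D-2, D-1)$-Anosov. Since the set of attracting fixed points $\{\gamma^+ : \gamma \in \Gamma \setminus \{1\}\}$ is dense in $\partial_\infty\Gamma$, any continuous $\bigwedge^k\rho$-equivariant map $\partial_\infty\Gamma \to \Gr_\ell(\bigwedge^k\Rb^d)$ which is dynamics-preserving must agree with the genuine $\ell$-flag map of $\bigwedge^k\rho$. Since each $\zeta^{(\ell)}$ is patently continuous and $\bigwedge^k\rho$-equivariant, the task reduces to verifying dynamics-preservation at each attracting fixed point $\gamma^+$.

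Fix an infinite order $\gamma \in \Gamma$ and a lift $\overline{g} := \overline{\rho}(\gamma)$. By Labourie's properties listed in Section~\ref{sec:prelim_Hitchin}, $\overline{g}$ has simple real spectrum with $|\lambda_1| > \dots > |\lambda_d|$, and for each $j$ the subspace $\xi^{(j)}(\gamma^+)$ is the span of the eigenvectors $v_1,\dots,v_j$ for $\lambda_1,\dots,\lambda_j$. The induced operator $\bigwedge^k\overline{g}$ then has a basis of eigenvectors $v_I := v_{i_1}\wedge\dots\wedge v_{i_k}$, indexed by $k$-subsets $I=\{i_1<\dots<i_k\}\subset\{1,\dots,d\}$, with corresponding eigenvalues $\lambda_I := \lambda_{i_1}\cdots\lambda_{i_k}$, and the lexicographic order on $I$ coincides with the decreasing order on $|\lambda_I|$ only for the few extremal indices — it suffices to identify the $I$'s giving the top two and bottom two values of $|\lambda_I|$, and these are respectively $\{1,\dots,k\},\{1,\dots,k-1,k+1\}$ (top) and $\{d-k+1,\dots,d\},\{d-k,d-k+2,\dots,d\}$ (bottom).

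For $\zeta^{(1)}$ and $\zeta^{(2)}$, the verification is immediate: $\zeta^{(1)}(\gamma^+) = \bigwedge^k\xi^{(k)}(\gamma^+)$ is the line spanned by $v_{\{1,\dots,k\}}$, and $\zeta^{(2)}(\gamma^+) = \bigl(\bigwedge^{k-1}\xi^{(k-1)}(\gamma^+)\bigr)\wedge \xi^{(k+1)}(\gamma^+)$ is the plane spanned by $v_{\{1,\dots,k\}}$ and $v_{\{1,\dots,k-1,k+1\}}$, which are exactly the top two eigenlines of $\bigwedge^k\overline{g}$. For $\zeta^{(D-1)}(\gamma^+) = \xi^{(d-k)}(\gamma^+)\wedge\bigwedge^{k-1}\Rb^d$, note that $v_I$ lies in this sum iff the multi-index $I$ meets $\{1,\dots,d-k\}$; the unique $I$ for which this fails is $I=\{d-k+1,\dots,d\}$, which corresponds to the smallest eigenvalue of $\bigwedge^k\overline{g}$. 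Hence $\zeta^{(D-1)}(\gamma^+)$ is the attracting hyperplane of $\bigwedge^k\overline{g}$ acting on $\Gr_{D-1}(\bigwedge^k\Rb^d)$.

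The only genuinely combinatorial step is $\zeta^{(D-2)}$. One must show that
\[ \xi^{(d-k-1)}(\gamma^+)\wedge\Bigl(\bigwedge^{k-1}\Rb^d\Bigr) + \xi^{(d-k)}(\gamma^+)\wedge \xi^{(d-k+1)}(\gamma^+)\wedge\Bigl(\bigwedge^{k-2}\Rb^d\Bigr) \]
is the span of all $v_I$ except those two corresponding to the bottom two eigenvalues. The first summand contains exactly those $v_I$ with $\min I \leq d-k-1$; among the remaining $v_I$ (those with $I \subset \{d-k,\dots,d\}$, of which there are $k+1$), the second summand picks out precisely those with $\{d-k,d-k+1\}\subset I$, accounting for $k-1$ of them and missing exactly $\{d-k+1,\dots,d\}$ and $\{d-k,d-k+2,\dots,d\}$. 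This matches the bottom two eigenvalues identified above, so $\zeta^{(D-2)}(\gamma^+)$ is the attracting $(D-2)$-subspace, completing the dynamics-preservation check. The main (minor) obstacle is this last bookkeeping; everything else is direct application of uniqueness plus Labourie's simple-spectrum description of the $\xi^{(j)}(\gamma^+)$.
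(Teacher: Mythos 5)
Your proposal is correct and follows essentially the same route as the paper: reduce, via density of attracting fixed points and continuity, to checking that each $\zeta^{(\ell)}(\gamma^+)$ is the attracting fixed point of $\bigwedge^k\rho(\gamma)$, then use Labourie's simple-spectrum eigenbasis $v_1,\dots,v_d$ with $\xi^{(j)}(\gamma^+)=\Span\{v_1,\dots,v_j\}$ and identify the extremal eigenvalues of $\bigwedge^k\overline{\rho}(\gamma)$. Your bookkeeping for $\zeta^{(D-2)}$ (and the identification of $I_1=\{d-k+1,\dots,d\}$, $I_2=\{d-k,d-k+2,\dots,d\}$) matches the computation the paper leaves implicit.
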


\begin{proof} By the density of attracting fixed points in $\partial_\infty \Gamma$ and the continuity of the maps, it is enough to fix an infinite order element $\gamma \in \Gamma$ and then verify that if $j\in\{1,2,D-2, D-1\}$ and $\gamma^+ \in \partial_\infty \Gamma$ is the attracting fixed point of $\gamma$, then $\zeta^{j}(\gamma^+)$ is the attracting fixed point of $\bigwedge^k \rho(\gamma)$ in $\Gr_j(\bigwedge^k \Rb^d)$. 

%Let $\overline{\rho} : \Gamma \rightarrow \SL_d(\Rb)$ denote the lift of $\rho$ in Property~\eqref{item:hitchin3.5}. 
By Property~\eqref{item:hitchin5} in Section~\ref{sec:prelim_Hitchin}, there exists a basis $v_1, \dots, v_d$ of $\Rb^d$ of eigenvectors of $\rho(\gamma)$ such that 
\begin{align*}
\xi_\rho^{j}(\gamma^+) = \Span\{ v_1,\dots, v_j\} \quad \text{for all} \quad j=1,\dots, d.
\end{align*}
Let $I_1 = \{ d-k+1, d-k+2, \dots, d\}$ and $I_2 = \{ d-k, d-k+2, d-k+3,\dots, d\}$. Then a calculation shows that 
\begin{align*}
\zeta^{1}(\gamma^+) = \left[ v_1 \wedge \dots \wedge v_k\right],
\end{align*}
\begin{align*}
\zeta^{2}(\gamma^+) = \left\{ v_1 \wedge \dots \wedge v_{k-1} \wedge (av_k+bv_{k+1}) : a,b \in \Rb\right\},
\end{align*}
\begin{align*}
\zeta^{D-2}(\gamma^+) = \Span\left\{ v_{i_1} \wedge \dots \wedge v_{i_k} : \{ i_1, \dots, i_k\} \notin \{ I_1, I_2\} \right\},
\end{align*}
\begin{align*}
\zeta^{D-1}(\gamma^+) = \Span\left\{ v_{i_1} \wedge \dots \wedge v_{i_k} : \{ i_1, \dots, i_k\} \neq I_1 \right\}.
\end{align*}
Since each $v_{i_1} \wedge \dots \wedge v_{i_k}$ is an eigenvector of $\bigwedge^k\rho(\gamma)$ with eigenvalue having modulus $\lambda_{i_1}(\rho(\gamma)) \cdots \lambda_{i_k}(\rho(\gamma))$, by Property~\eqref{item:hitchin4} in Section~\ref{sec:prelim_Hitchin}, we see that for each $j\in\{1,2,D-2, D-1\}$, the point $\zeta^{j}(\gamma^+)$ is the attracting fixed point of $\bigwedge^k \rho(\gamma)$ in $\Gr_j(\bigwedge^k \Rb^d)$. 
\end{proof}

\begin{lemma} \label{lem: Hitchin 3}
$\zeta^{1}(x) + \zeta^{1}(y) + \zeta^{D-2}(z)$ is a direct sum for all $x,y,z \in \partial_\infty \Gamma$ pairwise distinct.  
\end{lemma}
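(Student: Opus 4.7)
The plan is to reduce proving directness of $\zeta^{(1)}(x)+\zeta^{(1)}(y)+\zeta^{(D-2)}(z)$ to proving the sum exhausts $\bigwedge^k\Rb^d$; since the three dimensions already sum to $1+1+(D-2)=D$, directness is automatic once surjectivity is established. To prove surjectivity I pass to the dual: under the natural perfect pairing $\bigwedge^k\Rb^d\otimes\bigwedge^{d-k}\Rb^d\to\bigwedge^d\Rb^d\cong\Rb$, it is enough to show that the annihilators of the three summands meet only at $0$ inside $\bigwedge^{d-k}\Rb^d$.

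A Plücker-coordinate computation, in the spirit of the dimension count the paper already makes for $\zeta^{(D-2)}(z)$, identifies the annihilator of $\zeta^{(1)}(x)=\bigwedge^k\xi^{(k)}(x)$ as the hyperplane $B(x):=\xi^{(k)}(x)\wedge\bigwedge^{d-k-1}\Rb^d$, and the annihilator of $\zeta^{(D-2)}(z)$ as the $2$-dimensional subspace $A(z):=\bigwedge^{d-k-1}\xi^{(d-k-1)}(z)\wedge\xi^{(d-k+1)}(z)$, whose elements all take the form $\eta_z\wedge v$ with $\eta_z$ a fixed generator of the line $\bigwedge^{d-k-1}\xi^{(d-k-1)}(z)$ and $v\in\xi^{(d-k+1)}(z)$. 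Property (2) from Section~\ref{sec:prelim_Hitchin} guarantees that $L(x,z):=\xi^{(k)}(x)\cap\xi^{(d-k+1)}(z)$ is one-dimensional and transverse to $\xi^{(d-k-1)}(z)$; projecting to $\Rb^d/\xi^{(k)}(x)$ then shows $B(x)\cap A(z)=\eta_z\wedge L(x,z)$.

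I would then chase an element $\omega=\eta_z\wedge v\in B(x)\cap B(y)\cap A(z)$. The previous step supplies $v_x\in L(x,z)$, $v_y\in L(y,z)$, and $u,u'\in\xi^{(d-k-1)}(z)$ with $v=u+v_x=u'+v_y$, so $v_x\in\xi^{(k)}(x)\cap\bigl(\xi^{(d-k-1)}(z)+L(y,z)\bigr)$. For $1\leq k\leq d-2$, property (3) with $\ell=k-1$ and with the substitution $(y',x',z')=(x,z,y)$ reads
\[
\xi^{(k)}(x)+\xi^{(d-k-1)}(z)+\bigl(\xi^{(k)}(y)\cap\xi^{(d-k+1)}(z)\bigr)=\Rb^d
\]
as a direct sum, forcing $v_x=0$ and hence $\omega=\eta_z\wedge u=0$ since $\eta_z\wedge\xi^{(d-k-1)}(z)=0$.

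The remaining case $k=d-1$ lies outside the range of property (3). Here the wedge pairing identifies $\bigwedge^{d-1}\Rb^d$ with $(\Rb^d)^*$ and realizes $\bigwedge^{d-1}\rho$ as the contragredient representation $\rho^\vee:\gamma\mapsto\rho(\gamma)^{-T}$, which is again $\PGL_d(\Rb)$-Hitchin since the Hitchin component is preserved by $g\mapsto g^{-T}$ (the image of a Fuchsian representation under this involution is again Fuchsian, and the component is connected). Under this identification the required statement becomes the $k=1$ case applied to $\rho^\vee$, which is just property (2) with indices $(1,1,d-2)$. The principal bookkeeping hurdle I anticipate is the annihilator computation for $\zeta^{(D-2)}(z)$ and the correct relabeling of $x,y,z$ in property (3); the geometric content is that the family of lines $L(\cdot,z)\subset\xi^{(d-k+1)}(z)/\xi^{(d-k-1)}(z)$ separates points of $\partial_\infty\Gamma\setminus\{z\}$, which property (3) supplies.
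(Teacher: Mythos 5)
Your proposal is correct, and it is essentially the dual formulation of the paper's argument, pivoting on exactly the same input. The paper fixes the basis $v_\ell$ with $[v_\ell]=\xi^{(\ell)}(x)\cap\xi^{(d-\ell+1)}(y)$, observes that $\zeta^{(1)}(x)+\zeta^{(D-2)}(y)$ is the span of all Pl\"ucker monomials except the one indexed by $\{1,\dots,k-1,k+1\}$, and then uses the property (3) instance $\xi^{(k)}(z)+\bigl(\xi^{(k)}(x)\cap\xi^{(d-k+1)}(y)\bigr)+\xi^{(d-k-1)}(y)=\Rb^d$ to see that $\zeta^{(1)}(z)$ pairs nontrivially against the missing monomial; you instead compute annihilators in $\bigwedge^{d-k}\Rb^d$ and kill the triple intersection with the same instance of property (3), with the roles of the three points permuted. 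Your intermediate claims check out: $B(x)$ is the annihilating hyperplane of $\zeta^{(1)}(x)$, $A(z)$ is the two-dimensional annihilator of $\zeta^{(D-2)}(z)$, $B(x)\cap A(z)=\eta_z\wedge L(x,z)$ with $L(x,z)=\xi^{(k)}(x)\cap\xi^{(d-k+1)}(z)$ a line transverse to $\xi^{(d-k-1)}(z)$, and the chase using the direct sum $\xi^{(k)}(x)+\xi^{(d-k-1)}(z)+\bigl(\xi^{(k)}(y)\cap\xi^{(d-k+1)}(z)\bigr)=\Rb^d$ forces the triple intersection to vanish. What your version buys is an explicit treatment of the boundary case $k=d-1$, where the needed transversality (equivalently $\xi^{(d-1)}(x)\cap\xi^{(d-1)}(y)\cap\xi^{(2)}(z)=0$) is not literally covered by property (3) as listed, since its index is restricted to $0\le\cdot<d-2$; the paper's proof quietly needs the same fact, because for $k=d-1$ its appeal to the direct sum $\xi^{(d-1)}(z)+\bigl(\xi^{(d-1)}(x)\cap\xi^{(2)}(y)\bigr)=\Rb^d$ also falls outside that stated range. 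Your reduction via $\bigwedge^{d-1}\Rb^d\cong(\Rb^d)^*$ to property (2) for the contragredient is a legitimate patch; the only point to write out carefully is that the contragredient of a Hitchin representation is again Hitchin (or, equivalently and more directly, that the dual curve $x\mapsto{\rm Ann}\bigl(\xi^{(d-j)}(x)\bigr)$ inherits property (2)), which is standard since $\tau_d$ is self-dual.
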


\begin{proof} 
Fix $x,y,z \in \partial_\infty \Gamma$ pairwise distinct, and choose a basis $v_1,\dots, v_d \in \Rb^d$ such that 
\begin{align*}
[v_j] = \xi_\rho^j(x) \cap \xi_\rho^{d-j+1}(z)
\end{align*}
for $1 \leq j \leq d$. Pick $u_1, \dots, u_k \in \Rb^d$ such that 
\begin{align*}
\xi_\rho^k(z) = \Span\{ u_1,\dots, u_k\}.
\end{align*}
Then $\zeta^{1}(y) = [ u_1 \wedge \dots \wedge u_k]$.

If $I = \{ 1,\dots, k-1, k+1\}$, then a computation shows that
\begin{align*}
\zeta^{1}(x) + \zeta^{D-2}(z) = \Span\left\{ v_{i_1} \wedge \dots \wedge v_{i_k} : \{ i_1, \dots, i_k\} \neq I \right\}.
\end{align*}
It is perhaps worth noting that we can also describe this sum as a kernel of a certain linear map: 
\begin{align*}
\zeta^{1}(x) + \zeta^{D-2}(z) = \ker \bigg( w \in \bigwedge^{k} \Rb^d \mapsto w \wedge v_k \wedge  v_{k+2} \wedge \dots \wedge v_{d} \in \bigwedge^{d} \Rb^d \bigg). 
\end{align*}

By Property~\eqref{item:hitchin3} in Section~\ref{sec:prelim_Hitchin},
\begin{align*}
\xi_\rho^k(y) + \left( \xi_\rho^k(x) \cap \xi_\rho^{d-k+1}(z) \right) + \xi_\rho^{d-k-1}(z) = \Rb^d
\end{align*}
is a direct sum. Since 
\begin{align*}
\left( \xi_\rho^k(x) \cap \xi_\rho^{d-k+1}(z) \right) + \xi_\rho^{d-k-1}(z) = \Span \{ v_k, v_{k+2}, \dots, v_d\}
\end{align*}
we see that 
\begin{align*}
(u_1 \wedge \dots \wedge u_k) \wedge (v_k \wedge v_{k+2} \wedge \dots \wedge v_d) \neq 0.
\end{align*}
This implies that  
\begin{equation*}
\zeta^{1}(x) +\zeta^{1}(y) + \zeta^{D-2}(z) = \bigwedge^k \Rb^d.\qedhere
\end{equation*}
\end{proof}
Together, Lemmas \ref{lem: Hitchin 1}, \ref{lem: Hitchin 2}, and \ref{lem: Hitchin 3} immediately imply Proposition~\ref{prop:3_hyperconvex_exterior_prod}.
 
%%%%%%%%%%%%%%%%%%%%%%%%%%%%%%%%%%%%%%%%%%%%%%%%%%%%%%%%%%%%%%%%%%%%%%%%%%%%%%%%%%%%%%%%%%%%%%%%%%%%%%%%%%
\section{Real hyperbolic lattices}\label{sec:real_hyp_lattices}
%%%%%%%%%%%%%%%%%%%%%%%%%%%%%%%%%%%%%%%%%%%%%%%%%%%%%%%%%%%%%%%%%%%%%%%%%%%%%%%%%%%%%%%%%%%%%%%%%%%%%%%%%%

The goal of this section is to justify Example \ref{cor:hyperbolic_lattices}. More precisely, we will prove the following proposition.

\begin{proposition}\label{thm:hyperbolic} Suppose $\tau: {\rm Isom}(\Hb^m_{\Rb}) \rightarrow \PGL_d(\Rb)$ is a representation, $\Gamma \leq {\rm Isom}(\Hb^m_{\Rb})$ is a convex co-compact subgroup, and $\rho = \tau|_{\Gamma} : \Gamma \rightarrow \PGL_d(\Rb)$ is the representation obtained by restriction. If $\tau$ is irreducible and proximal, then $\rho$ is $P_{1,m}$-Anosov and 
\begin{align*}
\xi_\rho^1(x) + \xi_\rho^1(y) + \xi_\rho^{d-m}(z)
\end{align*}
is a direct sum for all $x,y,z \in \partial_\infty \Gamma$ distinct. Thus, the same is true for sufficiently small deformations of $\rho$.
 \end{proposition}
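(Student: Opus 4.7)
The plan is to upgrade the $\rho$-equivariance of $\xi^{(1)}$ to full $\tau$-equivariance, then use the homogeneity of $\partial\Hb^m_{\Rb}\cong\PO(m,1)/P_+$ together with the weight theory of $\tau$ on $\Rb^d$ to read off both halves of $(\dagger)$.

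By Borel density, $\Gamma$ is Zariski dense in $\PO(m,1)$, so $\rho(\Gamma)=\tau(\Gamma)$ is Zariski dense in $\tau(\PO(m,1))$; together with irreducibility of $\rho$ this forces $\tau$ itself to be irreducible as a representation of $\PO(m,1)$. Fix a maximal $\Rb$-split torus $A\leq \PO(m,1)$ whose positive Weyl chamber points to a basepoint $x_0\in\partial\Hb^m_{\Rb}$, let $P_+=MAN$ be the corresponding minimal parabolic (so $\partial\Hb^m_{\Rb}=\PO(m,1)/P_+$), and decompose $\Rb^d=\bigoplus_{j=1}^s V_{w_j}$ into $A$-weight spaces with $w_1>\cdots>w_s$. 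Since $\rho$ is $1$-Anosov, $\dim V_{w_1}=1$, so $M$ acts on $V_{w_1}$ through a character and $\mathfrak{n}$ annihilates it; hence $[V_{w_1}]\in\Pb(\Rb^d)$ is fixed by $\tau(P_+)$, and the orbit map descends to a smooth $\tau$-equivariant embedding $\tilde\xi:\partial\Hb^m_{\Rb}\to\Pb(\Rb^d)$ sending $\gamma^+$ to the attracting line of $\rho(\gamma)$ for every infinite-order $\gamma\in\Gamma$. By density of such fixed points and uniqueness of the Anosov boundary map, $\xi^{(1)}=\tilde\xi$ is $\tau$-equivariant.

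Irreducibility of $\tau$ makes $\sum_{k\geq 0}\tau(\mathfrak{n}_-)^k V_{w_1}$ a $\tau(\mathfrak{g})$-invariant subspace (using $[\mathfrak{n},\mathfrak{n}_-]\subseteq\mathfrak{g}_0$ and $\tau(\mathfrak{n})V_{w_1}=0$), hence equal to $\Rb^d$, and $V_{w_j}=\tau(\mathfrak{n}_-)^{j-1}V_{w_1}$. The tangent space of $\xi^{(1)}(\partial_\infty\Gamma)$ at $[V_{w_1}]$, viewed in $\Rb^d/V_{w_1}$, is $V_{w_2}$, which forces $\dim V_{w_2}=m-1$. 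Arranging $\tau(K)\subseteq \PSO(d)$ up to conjugation, writing $\gamma=k_1 e^{\mathbf{v}} k_2$ in the $\PO(m,1)$-Cartan decomposition puts $\rho(\gamma)$ in $\PGL_d(\Rb)$-Cartan form with singular values $\mu_i(\rho(\gamma))=e^{w_i(\mathbf{v})}$ (multiplicities $1,m-1,\ldots$), so $\mu_1>\mu_2=\cdots=\mu_m>\mu_{m+1}$ with $\log\mu_m/\mu_{m+1}=(w_2-w_3)(\mathbf{v})$. By cocompactness, $\|\mathbf{v}\|$ is comparable to $d_S(\gamma,\id)$, so Theorem~\ref{thm:SV_char_of_Anosov} yields $m$-Anosovness. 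The same weight analysis identifies $\xi^{(m)}(x_0)=V_{w_1}\oplus V_{w_2}$ and $\xi^{(d-m)}(x_0)=\bigoplus_{j\leq s-2} V_{w_j}$, the $\tau(P_+)$-invariant codimension-$m$ subspace complementary to the ``bottom pair'' $V_{-w_2}\oplus V_{-w_1}$.

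The main obstacle I anticipate is the hyperconvexity condition, because unlike $m$-Anosovness it involves three points playing asymmetric roles. My approach is an orbit argument: fix $z\in\partial_\infty\Gamma$ and set
\[
B_z=\Bigl\{(x,y)\in\bigl(\partial_\infty\Gamma\setminus\{z\}\bigr)^2 : x\neq y \text{ and } \xi^{(1)}(x)\subseteq \xi^{(1)}(y)+\xi^{(d-m)}(z)\Bigr\}.
\]
The set $B_z$ is closed, and by $\tau$-equivariance of $\xi^{(1)}$ together with $\tau(P_z)\xi^{(d-m)}(z)=\xi^{(d-m)}(z)$ it is invariant under the diagonal action of $P_z=\mathrm{Stab}_{\PO(m,1)}(z)$. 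Since $\PO(m,1)$ is $3$-transitive on $\partial\Hb^m_{\Rb}$, $P_z$ acts transitively on ordered distinct pairs in $\partial_\infty\Gamma\setminus\{z\}$, forcing $B_z$ to be empty or the whole pair space. The latter is impossible: fixing $y\neq z$ and letting $x$ vary would give by continuity $\xi^{(1)}(\partial_\infty\Gamma)\subseteq \xi^{(1)}(y)+\xi^{(d-m)}(z)$, but $\xi^{(1)}(\partial_\infty\Gamma)$ spans $\Rb^d$ by irreducibility of $\tau$, while the right-hand side has dimension $d-m+1<d$. Therefore $B_z=\emptyset$, and combined with the transversalities from $m$-Anosovness this yields the required direct sum. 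The ``same is true for small deformations'' clause then follows from Remark~\ref{rmk:open}(2), since $(\dagger)$ is open in $\Hom(\Gamma,\PGL_d(\Rb))$.
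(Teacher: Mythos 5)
Your proposal is correct, and its skeleton is the same as the paper's: pass from $\rho$-equivariance to $\tau$-equivariance of the flag maps via the fixed point $[V_{w_1}]$ of $\tau(P_+)$, use the restricted weight decomposition of $\tau$ to see that the second-highest weight space has dimension $m-1$, convert the resulting eigenvalue gap into a uniform singular value gap through the $KAK$ decomposition (Observation~\ref{obs:KAK}) and conclude $m$-Anosovness from Theorem~\ref{thm:SV_char_of_Anosov}, and finally get the triple direct sum from $3$-transitivity of $\PO(m,1)$ on $\partial\Bb_m$ together with the fact that $\xi^{(1)}(\partial_\infty\Gamma)$ spans $\Rb^d$; your $P_z$-orbit dichotomy is just a repackaging of the paper's ``one triple suffices, and one triple exists by irreducibility'' argument. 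The one genuinely different ingredient is how you obtain $\dim V_{w_2}=m-1$: the paper proves this algebraically (Lemma~\ref{lem:2espace}, Appendix~\ref{app:lem}), by showing the kernel of $Y\in\gL_{-1}\mapsto d\tau(Y)v_0$ is $\Ad(M)$-invariant and hence trivial, whereas you read it off geometrically as the tangent space of the smooth orbit $\tilde\xi(\partial\Hb^m_{\Rb})$ at $[V_{w_1}]$. That works, but be explicit about two glossed steps. First, your claim that $\tilde\xi$ is an \emph{embedding} is doing the real work: injectivity you get for free from transversality of $\xi^{(1)},\xi^{(d-1)}$, but you also need it to be an immersion, which is exactly the statement $\dim V_{w_2}=m-1$; the non-circular way to argue is that the orbit map has constant rank by equivariance, and an injective constant-rank map has full rank. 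Second, ``since $\rho$ is $1$-Anosov, $\dim V_{w_1}=1$'' requires the transfer argument the paper records as Proposition~\ref{prop:conj} and Lemma~\ref{lem:prox}: an infinite-order $\gamma\in\Gamma$ is conjugate to $ke^{sH}$ with $k\in M$ elliptic and commuting with $e^{sH}$, so proximality of $\rho(\gamma)$ forces proximality of $\tau(e^{sH})$, i.e.\ a one-dimensional top weight space. With those two remarks supplied, your argument is complete and buys a slightly more geometric substitute for the paper's weight-space lemma, while the paper's version avoids any appeal to smoothness of the boundary map.
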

 
 \begin{remark} Recall that a representation is proximal if its image contains a proximal element. \end{remark}

 \subsection{Preliminaries}

Consider the unit ball $\Bb_m \subset \Rb^m$. By realizing $\Rb^m$  as the affine chart 
\[\left\{\begin{bmatrix}
X\\
1
\end{bmatrix}\in\Pb(\Rb^{m+1}):X\in\Rb^n\right\}\]
of $\Pb(\Rb^{m+1})$, we may view $\Bb_m$ as a properly convex domain in $\Pb(\Rb^{m+1})$, and so we may endow $\Bb_m$ with its Hilbert metric $d = H_{\Bb_m}$. Then $(\Bb_m, d)$ is the \emph{Klein-Beltrami model} of the real hyperbolic $m$-space.

As usual, let $\PO(m,1)\leq\PGL_{m+1}(\Rb)$ denote the subgroup that leaves invariant the bilinear pairing  
 \[
\ip{X,Y} = X_1 Y_1 + \dots + X_m Y_m - X_{m+1} Y_{m+1} 
\]
on $\mathbb{R}^{m+1}$. Then we can identify $\PO(m,1)$ with ${\rm Isom}(\Hb^m_{\Rb})$ via fractional linear transformations, i.e. $\PO(m,1)$ acts by isometries on $\Bb_m\subset\Rb^m$ via the action
\begin{align*}
\begin{bmatrix} A & u \\ {^tv} & a \end{bmatrix} \cdot x = \frac{ Ax + u}{{^tv}x + a}.
\end{align*}

If $e_1,\dots, e_{m}$ is the standard basis on $\Rb^{m}\subset\Rb^{m+1}$ and $H$ is the $(m+1)$-by-$(m+1)$ matrix 
\[
H= \begin{bmatrix} 0 & e_1 \\ {^t e_1} & 0 \end{bmatrix},
\] 
then $e^{sH} \in \PO(m,1)$ for all $s \in \Rb$. Using the formula for the Hilbert metric, one can compute that 
$$
d(e^{sH} \cdot 0, 0) = \abs{s}.
$$
In fact, one can verify that the map $\gamma_0:\Rb\to\Bb_m$ given by 
\[
\gamma_0(s)=\tanh(s)e_1=e^{sH}\cdot 0
\] 
is a unit-speed geodesic in $\Bb_m$ with $-e_1$ and $e_1$ as its backward and forward endpoints respectively. %Here, $H$ is the matrix given in (1) of Observation \ref{obs:rootspace}.

A computation also verifies that $K:=\{g\in\PO(m,1):g\cdot 0=0\}$ is given by
\begin{align}\label{eqn:K}
K =  \left\{ \begin{bmatrix} A & 0 \\ 0 & \sigma \end{bmatrix}\in\PO(m,1): \sigma \in \{-1,1\}, \ A \in {\rm O}(m)\right\}.
 \end{align}
In particular, $K$ acts transitively on the set of unit vectors in $T_0\Bb_m$. Since $\PO(m,1)$ acts transitively on $\Bb_m$, this implies that $\PO(m,1)$ acts transitively on the unit tangent bundle of $\Bb_m$. Also, if $p\in\Bb_m$, then $d(0,p)=d(0,k\cdot p)$ for all $k\in K$. This, together with the $KAK$-decomposition theorem \cite[Theorem 7.39]{knapp}, implies the following observation.

\begin{observation}\label{obs:KAK} If $g \in \PO(m,1)$, then there exist $k_1, k_2 \in K$ such that 
\begin{align*}
g = k_1 e^{d(g \cdot 0, 0)H} k_2.
\end{align*}
\end{observation}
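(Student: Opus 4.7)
The approach is short: invoke the cited $KAK$-decomposition theorem to produce a factorization $g = k_1 a k_2$ with $k_1,k_2\in K$ and $a$ in a one-parameter subgroup, and then identify the $A$-factor with $e^{d(g\cdot 0,0)H}$ using the two facts established just before the observation, namely that $K$ stabilizes $0\in\Bb_m$ and that $d(e^{sH}\cdot 0,0)=s$ for $s\geq 0$.

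First I would observe that the explicit description of $K$ in \eqref{eqn:K} realizes $\PO(m,1)/K$ as the rank-one symmetric space $\Bb_m$. Because the real rank equals $1$, the Cartan subspace may be taken to be $\Rb\cdot H$, and the closed positive Weyl chamber corresponds to $s\geq 0$. The $KAK$-decomposition theorem therefore gives $g = k_1 e^{sH} k_2$ for some $s\geq 0$ and some $k_1,k_2\in K$. (If the version of $KAK$ being cited does not impose a sign restriction, one exploits the fact that the Weyl group contains a representative $w\in K$ with $wHw^{-1}=-H$, so that $e^{-sH}=we^{sH}w^{-1}$; absorbing $w$ into $k_1,k_2$ then forces $s\geq 0$.)

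Next I would compute $s$ in terms of $d(g\cdot 0,0)$. Since $K$ fixes $0$ and acts by isometries on $(\Bb_m,d)$,
\[
d(g\cdot 0,0) = d(k_1 e^{sH} k_2\cdot 0, 0) = d(k_1 e^{sH}\cdot 0, 0) = d(e^{sH}\cdot 0, k_1^{-1}\cdot 0) = d(e^{sH}\cdot 0, 0) = s,
\]
where the last equality is the identity recorded earlier in the section, valid for $s\geq 0$. Substituting this value of $s$ back into the decomposition produces the desired formula $g = k_1 e^{d(g\cdot 0,0)H}k_2$.

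There is no genuine obstacle: the substantive content is entirely packaged into the cited Cartan decomposition, after which identifying $s$ with the hyperbolic distance is a one-line computation relying only on the elementary facts already established. The only subtle point, handled above, is ensuring the nonnegativity of the exponent $s$ via the choice of positive Weyl chamber.
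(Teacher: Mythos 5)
Your proposal is correct and follows essentially the same route as the paper: the paper also deduces the observation directly from the $KAK$-decomposition theorem of Knapp together with the facts that $K$ fixes $0$, acts by isometries, and that $d(e^{sH}\cdot 0,0)=s$. Your write-up merely makes explicit the one-line distance computation and the Weyl-chamber sign normalization that the paper leaves implicit.
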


\subsection{Proof of Proposition~\ref{thm:hyperbolic}}
Let $\tau$, $\rho$, and $\Gamma$ satisfy the hypothesis of Proposition~\ref{thm:hyperbolic}. As described above, we identify $\Hb_{\Rb}^m = \Bb_m$ and ${\rm Isom}(\Hb_{\Rb}^m) = \PO(m,1)$. 

Let $\tau_0 := \tau|_{e^{\Rb \cdot H}}$ and let $\overline{\tau}_0 : e^{\Rb \cdot H} \rightarrow \SL_d(\Rb)$ be the lift of $\tau_0$ (since $\Rb$ is simply connected, such a lift exists). 

\begin{lemma}\label{lem:prox+2epsace}
After conjugating $\tau$, we may assume that $\overline{\tau}_0(e^{\Rb \cdot H})$ is a subgroup of the diagonal matrices. Moreover there exists $\lambda \in \frac{1}{2} \Nb$  such that the set of eigenvalues of $\overline{\tau}_0\left(e^{sH}\right)$ is 
\[
\left\{e^{s\lambda}, e^{s(\lambda-1)}, \dots, e^{-s\lambda} \right\}, 
\] 
the multiplicity of $e^{s\lambda}$ is 1, and the multiplicity of $e^{s(\lambda-1)}$ is $m-1$. 
\end{lemma} 

Since $\tau$ is irreducible, the proof of Lemma \ref{lem:prox+2epsace} is a standard argument from the theory of weight spaces. We give this argument in Appendix \ref{app:lem}. Assuming this lemma, we can prove Proposition \ref{thm:hyperbolic}.

%By Lemma \ref{lem:prox+2epsace}, we may assume that $\overline{\tau}_0(e^{\Rb \cdot H})$ is a subgroup of the diagonal matrices, the eigenvalues of $\overline{\tau}_0(e^{sH})$ are
%\[
%\left\{e^{s\lambda}, e^{s(\lambda-1)}, \dots, e^{-s\lambda} \right\},
%\] 
%the multiplicity of $e^{s\lambda}$ is $1$, and the multiplicity of $e^{s(\lambda-1)}$ is $m-1$. 

Since $\Gamma$ is convex co-compact, we can identify $\partial_\infty \Gamma$ with its limit set in $\partial \Bb_m \simeq \partial_\infty \Hb^m_{\Rb}$. Lemma \ref{lem:prox+2epsace} and~\cite[Propositions 4.4 and 4.7]{GW2012} imply that:
\begin{enumerate}
\item $\rho = \tau|_{\Gamma}$ is $P_{1,m}$-Anosov, 
\item for $i=1,d-1,m,d-m$ there exists a $\tau$-equivariant map 
\[
\xi^{i}_\tau:\partial \Bb_m \rightarrow \Gr_i(\Rb^d)
\] 
such that $\xi_\rho^i = \xi^i_\tau|_{\partial_\infty \Gamma}$.
\end{enumerate} 

Further,  $\PO(m,1)$ acts transitively on triples of pairwise distinct points $x,y,z \in \partial \Bb_m$. Thus, to show that 
\begin{align*}
\xi_\rho^1(x) + \xi_\rho^1(y) + \xi_\rho^{d-m}(z)
\end{align*}
is a direct sum for all pairwise distinct $x,y,z \in \partial_\infty \Gamma$,  it is enough to show that 
\begin{align*}
\xi_\tau^1(x) + \xi_\tau^1(y) + \xi_\tau^{d-m}(z)
\end{align*}
is direct for some pairwise distinct $x,y,z \in \partial \Bb_m$. Fix $y,z \in \partial \Bb_m$ distinct. Then since $\tau$ is irreducible we must have
\begin{align*}
\Rb^d = \Span \{ \xi_\tau^1(x) : x \in \partial \Bb_m\}
\end{align*}
and so there exists some $x \in \partial \Bb_m$ such that 
\begin{align*}
\xi_\tau^1(x) + \xi_\tau^1(y) + \xi_\tau^{d-m}(z)
\end{align*}
is direct.

\appendix

\section{Theorem \ref{thm:cones}}\label{sec:Benoists appendix}

\begin{proof} First notice that $\Cc_\lambda(\Gamma)$ is invariant under conjugating $\Gamma$ in $\SL_d(\Rb)$, i.e. $\Cc_\lambda(\Gamma)=\Cc_\lambda(g\Gamma g^{-1})$ for all $g\in \SL_d(\Rb)$. Further, if $h \in \SL_d(\Rb)$, then from the geometric description of the Cartan projection given in Section \ref{sec:properties0}, there exists some $C > 0$ such that 
\begin{align*}
\norm{\mu(g) - \mu(hgh^{-1}) }_2 \leq C
\end{align*}
for all $g \in \SL_d(\Rb)$. Hence $\Cc_\mu(\Gamma)$ is also invariant under conjugating $\Gamma$ in $\SL_d(\Rb)$. 

Let $\sL_d(\Rb) = \kL + \pL$ denote the standard Cartan decomposition of $\sL_d(\Rb)$, that is 
\begin{align*}
\kL = \{ X \in \sL_d(\Rb) : {^tX} = -X\}\quad\text{ and }\quad \pL = \{ X \in \sL_d(\Rb) : {^tX} = X\}.
\end{align*}
Let $\gL$ denote the Lie algebra of $G$. Using Theorem 7 in~\cite{M1955} and conjugating $G$ we may assume that 
\begin{align*} 
\gL = \kL \cap \gL + \pL \cap \gL
\end{align*}
is a Cartan decomposition of $\gL$. Fix a maximal abelian subspace $\aL \subset \pL \cap \gL$.  By~\cite[Chapter V, Lemma 6.3(ii)]{H2001}, we may further conjugate $G$ by an element in $\SO(d)$ to ensure that $\aL$ is itself a subspace of the diagonal matrices. %By further conjugating $G$ by an element in $\SO(d)$, we may use ~\cite[Chapter V, Lemma 6.3(ii)]{H2001} , there exists some $k \in \SO(d)$ such that $\Ad(k) \aL$ is a subspace of the vector space $\mathfrak d$ of diagonal matrices in $\sL_d(\Rb)$. Since $\Ad(k) \pL = \pL$, by replacing $G$ with $kGk^{-1}$ we can assume that $\aL$ is itself a subspace of the diagonal matrices. 
Finally fix a Weyl chamber $\aL^+$ of $\aL$. 

Next let $K \subset G$ denote the subgroup corresponding to $\kL \cap \gL$, let $A = \exp(\aL)$, and let $A^+ = \exp(\aL^+)$. By~\cite[Chapter IX, Theorem 1.1]{H2001}, each $g \in G$ can be written as 
\begin{align*}
g = k_1 \exp( \mu_G(g) ) k_2
\end{align*}
where $k_1, k_2 \in K$ and $\mu_G(g) \in \overline{\aL^+}$ is unique. The map $\mu_G : G \rightarrow \overline{\aL^+}$ is called the \emph{Cartan projection of $G$ relative to the decomposition $G = K \overline{A}^+ K$.} Since $K \subset \SO(d)$ and $\aL$ is a subspace of the diagonal matrices,  the diagonal entries of $\mu_G(g)$ coincide with the entries of $\mu(g)$ up to permuting indices. 

More precisely, let $\mathfrak d \subset \mathfrak{sl}_d(\Rb)$ denote the subspace of diagonal matrices and let $f: \mathfrak d \rightarrow \mathfrak d$ be the map 
$$
f({\rm diag}(x_1,\dots, x_d)) = {\rm diag}(x_{i_1}, \dots, x_{i_d})
$$
where $\{ i_1,\dots, i_d\} = \{1,\dots, d\}$ and $x_{i_1} \geq x_{i_2} \geq \cdots \geq x_{i_d}$. Then $f$ is continuous and 
$$
\mu(g) = f(\mu_G(g))
$$
for all $g \in G$. 

%{\color{red}
%Next, let $\mathfrak d^+$ denote the set of diagonal matrices in $\mathfrak d$ whose entries are strictly decreasing down the diagonal. Since $\mathfrak a\subset\mathfrak d$, by further conjugating $G$ by a permutation matrix, we can ensure that the intersection of $\overline{\mathfrak d^+}$ with $\aL$ contains an open subset of $\aL$. Then, we may fix a Weyl chamber $\aL^+$ of $\aL$ that intersects the interior of $\overline{\mathfrak d^+}\cap\aL$. Let $K \subset G$ denote the subgroup corresponding to $\kL \cap \gL$, let $A = \exp(\aL)$, and let $A^+ = \exp(\aL^+)$. By~\cite[Chapter IX, Theorem 1.1]{H2001}, each $g \in G$ can be written as 
%\begin{align*}
%g = k_1 \exp( \mu_G(g) ) k_2
%\end{align*}
%where $k_1, k_2 \in K$ and $\mu_G(g) \in \overline{\aL^+}$ is unique. The map $\mu_G : G \rightarrow \overline{\aL^+}$ is called the \emph{Cartan projection of $G$ relative to the decomposition $G = K \overline{A}^+ K$.} Since $K \subset \SO(d)$, it follows that the faces of $\mathfrak a^+$ must lie in the faces of $\mathfrak d^+$

%the diagonal entries of $\mu_G(g)$ coincide with the entries of $\mu(g)$ up to permuting indices.}

Every $g \in G$ can be written as a product $g=g_e g_h g_u$ of commuting elements, where $g_e$ is elliptic, $g_h$ is hyperbolic, and $g_u$ is unipotent. This is called the \emph{Jordan decomposition of $g$ in $G$}. The element $g_h$ is conjugate to a unique element $\exp(\lambda_G(g)) \in \overline{A^+}$ and the map $\lambda_G : G \rightarrow \overline{\aL^+}$ is called the \emph{Jordan projection}. Since $G$ is a semisimple real algebraic subgroup of $\SL_d(\Rb)$, the Jordan decomposition in $G$ coincides with the standard Jordan decomposition in $\SL_d(\Rb)$ \cite[Theorem~9.20]{FH}. Then, since $\aL$ is a subspace of the diagonal matrices, we must have 
$$
\lambda(g) = f(\lambda_G(g))
$$
for all $g \in G$.

Next define cones $\Cc_1, \Cc_2 \subset \aL^+$ as follows: 
\begin{align*}
\Cc_1 := \overline{\bigcup_{\gamma \in \Gamma} \Rb_{>0} \cdot \lambda_G(\gamma)}
\end{align*}
and 
\begin{align*}
\Cc_2 := \{ x \in \Rb^d : \exists \gamma_n \in \Gamma, \exists t_n \searrow 0, \text{ with } \lim_{n \rightarrow +\infty} t_n \mu_G(\gamma_n) =x\}.
\end{align*}
Then the main result in~\cite{B1997} says that $\Cc_1 = \Cc_2$. Then 
\begin{equation*}
\Cc_\mu(\Gamma) = f(\Cc_2) = f(\Cc_1) = \Cc_\lambda(\Gamma). \qedhere
\end{equation*}
\end{proof}

\section{Proof of Lemma \ref{lem:prox+2epsace}\label{app:lem} }

As in Section~\ref{sec:real_hyp_lattices}, suppose $\tau:\PO(m,1)\to\PGL_d(\Rb)$ is a proximal irreducible representation, let $H$ denote the $(m+1)$-by-$(m+1)$ matrix
\[
H= \begin{bmatrix} 0 & e_1 \\ {^t e_1} & 0 \end{bmatrix},
\]
and let $\overline{\tau}_0 : e^{\Rb \cdot H} \rightarrow \SL_d(\Rb)$ denote the lift of $\tau_0:=\tau|_{e^{\Rb \cdot H}}$. Let  $\mathfrak{sl}_d(\Rb)$ denote the Lie algebra of $\PGL_d(\Rb)$ and let $d\tau:\mathfrak{so}(m,1)\to\mathfrak{sl}_d(\Rb)$ be the derivative at the identity of the homomorphism $\tau:\PO(m,1)\to\PGL_d(\Rb)$.

 Let $\mathfrak{so}(m,1)$ denote the Lie algebra of $\PO(m,1)$. Explicitly,
 \begin{align*}
 \mathfrak{so}(m,1) = \left\{ \begin{bmatrix} A & u \\ {^tu} & 0 \end{bmatrix} : {^tA}=-A \right\}.
 \end{align*}
Further, if
 $$
 \pL^\prime : = \left\{ X \in \mathfrak{so}(m,1) : {^tX} = X \right\} \quad \text{and} \quad \kL^\prime := \left\{ X \in  \mathfrak{so}(m,1) : {^tX}=-X \right\},
 $$
 then 
 $$
 \mathfrak{so}(m,1) = \kL^\prime + \pL^\prime
 $$
 is a Cartan decomposition of $\mathfrak{so}(m,1)$.

\begin{lemma}\label{lem:the image of the Cartan is diagonal}
After conjugating $\tau$, we may assume that $\overline{\tau}_0(e^{\Rb \cdot H})$ is a subgroup of the diagonal matrices. 
\end{lemma}

\begin{proof} Let $\sL_d(\Rb) = \kL + \pL$ denote the standard Cartan decomposition of $\sL_d(\Rb)$, that is 
\begin{align*}
\kL = \{ X \in \sL_d(\Rb) : {^tX} = -X\} \quad \text{and} \quad \pL = \{ X \in \sL_d(\Rb) : {^tX} = X\}.
\end{align*}

Also, let $\gL := d\tau( \mathfrak{so}(m,1))$. Then $\gL$ is a simple Lie algebra isomorphic to $\mathfrak{so}(m,1)$ and $\gL = d\tau(\kL^\prime) + d\tau(\pL^\prime)$ is a Cartan decomposition of $\gL$. 

Then using Theorem 6 in~\cite{M1955} and Theorem 7.2 in~\cite[Chapter III]{H2001}, and conjugating $\tau$ we may assume that 
\begin{align*} 
d\tau(\kL^\prime) \subset \kL \quad \text{and} \quad d\tau(\pL^\prime) \subset \pL.
\end{align*}
Then $d\tau(\Rb\cdot H) \subset \pL$ and so by~\cite[Chapter V, Lemma 6.3(ii)]{H2001}, there exists some $k \in \SO(d)$ such that $\Ad(k)d\tau(\Rb\cdot H)$ is a subspace of the diagonal matrices in $\sL_d(\Rb)$. Thus $k\overline{\tau}_0(e^{\Rb \cdot H})k^{-1}$ is a subgroup of the diagonal matrices. 
\end{proof} 

\begin{lemma}\label{lem:the image of the Cartan is proximal} If $s \neq 0$, then $\overline{\tau}_0(e^{sH})$ is proximal and the eigenvalue with maximal modulus is a positive real number. \end{lemma} 

\begin{proof}Since $\tau$ is proximal, there exists $g \in \PO(m,1)$ such that $\tau(g)$ is proximal. Using Observation~\ref{obs:KAK}, for each $n \geq 0$ we can write $g^n = k_{1,n} e^{t_n H} k_{2,n}$ where $k_{1,n}, k_{2,n} \in K$ and $t_n \geq 0$. Since $K$ is compact, 
\begin{align*}
0 < \log \frac{\lambda_1}{\lambda_2}(\tau(g)) = \lim_{n \rightarrow +\infty} \frac{1}{n}\log \frac{\mu_1}{\mu_2}(\tau(g)^n) = \lim_{n \rightarrow +\infty} \frac{1}{n}\log \frac{\lambda_1}{\lambda_2}\left(\tau\left( e^{t_n H} \right)\right). 
\end{align*}
So for $n$ large, $\lambda_1(\tau(e^{t_nH})) > \lambda_2(\tau(e^{t_nH}))$. Since $\tau(e^{\Rb \cdot H})$ is a subgroup of the diagonal matrices, this implies that $\lambda_1(\tau(e^{sH})) > \lambda_2(\tau(e^{sH}))$ for all $s > 0$.  Since $e^{sH}$ is conjugate to $e^{-sH}$ in $\PO(m,1)$, $\tau(e^{sH})$, and hence $\overline{\tau}_0(e^{sH})$, is also proximal for every $s \neq 0$. 
 
Finally, since $\overline{\tau}_0(\id) = \id$ has all positive eigenvalues, we see that  the eigenvalue with maximal modulus of $\overline{\tau}_0(e^{sH})$ is positive for all $s \neq 0$.
\end{proof}

 Next let $e_1,\dots, e_{m+1}$ be the standard basis of  $\Rb^{m+1}$. Then define the vector  subspaces of $ \mathfrak{so}(m,1)$:
 \begin{align*}
 \aL & :=  \Rb \cdot H = \left\{ \begin{bmatrix} 0 & \lambda e_1 \\ \lambda {^te_1} & 0 \end{bmatrix} : \lambda \in \Rb \right\}, \\
 \gL_0 & :=   \left\{ \begin{bmatrix} A & \lambda e_1 \\ \lambda {^te_1} & 0 \end{bmatrix} : {^tA}=-A, \quad Ae_1 =0, \text{ and } \lambda \in \Rb \right\}, \\
\gL_{-1} & :=  \left\{ \begin{bmatrix} -u  {^te_1} +e_1{^tu} & u \\ {^tu} & 0 \end{bmatrix} :  \ip{u,e_1} =0\right\}, \text{ and}\\
\gL_{1} & :=  \left\{ \begin{bmatrix} u  {^te_1} - e_1  {^tu} & u \\ {^tu} & 0 \end{bmatrix} : \ip{u,e_1} =0\right\}.
 \end{align*}
Then $\mathfrak a\subset\mathfrak g_0$ is a maximal abelian subalgebra, and the decomposition 
\[
\mathfrak{so}(1,m) = \gL_0 + \gL_{-1} + \gL_{1}
\] 
is the associated (restricted) \emph{root space decomposition} of $\mathfrak{so}(1,m)$.

The following lemma states some basic properties of the root space decomposition, see \cite[Chapter II.1]{knapp}, and can be verified explicitly in this special case.

\begin{lemma}\label{obs:rootspace}\ \begin{enumerate}
\item Let  $\sigma\in\{0,1,-1\}$, and $Y \in \gL_\sigma$. Then 
\[[H,Y]=\sigma Y\quad\text{and}\quad \Ad \left( e^{s H} \right) Y= e^{\sigma s} Y.\]
\item Let $\alpha, \beta \in \{0,-1,1\}$. Then $[\gL_\alpha, \gL_\beta] \subset \gL_{\alpha+\beta}$, where $\gL_{-2}:=\{0\}=:\gL_2$. 
\end{enumerate}
\end{lemma}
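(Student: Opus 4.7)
The plan is to establish both parts by direct computation followed by the Jacobi identity, using the explicit matrix descriptions of $\aL$, $\gL_0$, $\gL_{-1}$, $\gL_1$.

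For part (1), I would simply carry out the block-matrix computation. Write an arbitrary element $Y \in \gL_\sigma$ in the given explicit form
\[
Y = \begin{bmatrix} A & u \\ {}^tu & 0 \end{bmatrix}
\]
(where $A$ and $u$ satisfy the constraints defining $\gL_\sigma$), and compute $HY$ and $YH$ block by block. The constraints ${}^te_1 u=0={}^tu e_1$ (for $\sigma=\pm 1$) and $Ae_1=0$ together with antisymmetry (for $\sigma=0$) make the off-diagonal contributions collapse exactly, and one reads off $[H,Y]=HY-YH=\sigma Y$ from the explicit form. Once this is in hand, the second identity follows at once from $\Ad\circ\exp = \exp\circ \mathrm{ad}$: since $\mathrm{ad}(H)^k Y = \sigma^k Y$, the power series $\Ad(e^{sH})Y = \sum_{k\geq 0}\frac{s^k}{k!}\mathrm{ad}(H)^kY$ telescopes to $e^{s\sigma} Y$.

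For part (2), I would invoke the Jacobi identity: for $X \in \gL_\alpha$, $Y \in \gL_\beta$,
\[
\mathrm{ad}(H)([X,Y]) = [[H,X],Y] + [X,[H,Y]] = \alpha[X,Y] + \beta[X,Y] = (\alpha+\beta)[X,Y],
\]
where I use part (1) in the second equality. Thus $[X,Y]$ lies in the $(\alpha+\beta)$-eigenspace of $\mathrm{ad}(H)$ acting on $\mathfrak{so}(m,1)$. To conclude, I would verify the vector space direct sum decomposition $\mathfrak{so}(m,1) = \gL_{-1} \oplus \gL_0 \oplus \gL_1$: the sum is direct because part (1) identifies the three summands with distinct eigenspaces of $\mathrm{ad}(H)$, and the sum exhausts $\mathfrak{so}(m,1)$ by a dimension count $\dim\gL_0 + \dim\gL_{-1}+\dim\gL_1 = \binom{m-1}{2}+1+2(m-1) = \binom{m+1}{2} = \dim \mathfrak{so}(m,1)$. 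Hence the only eigenvalues of $\mathrm{ad}(H)$ are $-1,0,1$. If $\alpha+\beta \in \{-1,0,1\}$, the $(\alpha+\beta)$-eigenspace is exactly $\gL_{\alpha+\beta}$, so $[X,Y]\in\gL_{\alpha+\beta}$. If $\alpha+\beta \in \{-2,2\}$, the $(\alpha+\beta)$-eigenspace is zero, which matches the convention $\gL_{\pm 2}=\{0\}$.

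There is no real obstacle here; the content of the lemma is entirely elementary once the root-space decomposition is made visible. The only piece that requires care rather than insight is the block-matrix verification in part (1): one must track that the constraints defining each $\gL_\sigma$ are precisely what is needed to make the off-diagonal terms cancel correctly, and keep the sign conventions consistent across the three cases $\sigma=0,\pm1$. Once those short calculations are written down, part (2) is a one-line Jacobi argument plus the dimension count.
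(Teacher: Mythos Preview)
Your proposal is correct and is exactly the explicit verification the paper alludes to; the paper does not write out a proof but simply remarks that the lemma ``states some basic properties of the root space decomposition \cite[Chapter II.1]{knapp}, and can be verified explicitly in this special case.'' Your block-matrix computation for part~(1) followed by the Jacobi identity and dimension count for part~(2) is precisely that verification.
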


The next lemma gives a description of the eigenvalues and eigenspaces of $\overline{\tau}_0(e^{sH})$.

\begin{lemma}\label{lem:weights} Let $e^\lambda$ denote the largest eigenvalue of $\overline{\tau}_0(e^{H})$ and let $V_\lambda \subset \Rb^d$ denote the eigenspace of $\overline{\tau}_0(e^{H})$ corresponding to $e^\lambda$. For $n \in \Zb_{\geq 0}$, define
\begin{align*}
V_{\lambda-(n+1)} := \Span\left( d\tau(\gL_{-1}) V_{\lambda-n}\right).
\end{align*}
Then:
\begin{enumerate}
\item If $v \in V_{\lambda-n}$, then $\overline{\tau}_0\left(e^{sH}\right)v = e^{s(\lambda -n)}v$.
\item If $Z \in \gL_0$, then $d\tau(Z)V_{\lambda-n} \subset V_{\lambda-n}$. 
\item If $Z \in \gL_{1}$, then $d\tau(Z)V_\lambda = \{0\}$ and $d\tau(Z) V_{\lambda-n} \subset V_{\lambda-(n-1)}$ when $n>0$.
\item $\lambda \in \frac{1}{2}\Nb$ and $\oplus_{n=0}^{2\lambda} V_{\lambda-n} = \Rb^d$.
\end{enumerate}
\end{lemma}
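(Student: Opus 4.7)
The approach is to prove (1)--(3) by induction using a single intertwining identity between $\overline{\tau}_0(e^{sH})$ and $d\tau(Y)$, and then deduce (4) from an irreducibility argument at the Lie-algebra level. For $Y \in \gL_{\sigma}$ with $\sigma \in \{-1, 0, 1\}$, we have $\Ad(e^{sH})Y = e^{\sigma s}Y$ by Lemma \ref{obs:rootspace}(1); naturality of $\Ad$ applied to the homomorphism $\tau$ (together with the fact that the adjoint action of $\PGL_d(\Rb)$ on $\mathfrak{sl}_d(\Rb)$ factors through $\SL^{\pm}_d(\Rb)$) then gives the intertwining relation
\[
\overline{\tau}_0(e^{sH})\, d\tau(Y) \;=\; e^{\sigma s}\, d\tau(Y)\, \overline{\tau}_0(e^{sH})
\]
as operators on $\Rb^d$. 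Setting $s = 1$ gives the eigenvalue-shift formula used throughout.

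For (1), induct on $n$: the base case $n=0$ is the definition of $V_0$, and for $n \geq 1$, write any $v \in V_n$ as $\sum_i d\tau(Y_i)w_i$ with $Y_i \in \gL_{-1}$ and $w_i \in V_{n-1}$, then apply the intertwining identity with $\sigma = -1$. For (2), again induct on $n$: the base is the identity with $\sigma = 0$; for the step, expand $d\tau(Z)d\tau(Y) = d\tau(Y)d\tau(Z) + d\tau([Z,Y])$ and apply Lemma \ref{obs:rootspace}(2), which gives $[\gL_0, \gL_{-1}] \subset \gL_{-1}$. For (3), first observe that if $Z \in \gL_1$ and $v \in V_0$, the intertwining identity places $d\tau(Z)v$ in the $e^{\lambda+1}$-eigenspace of $\overline{\tau}_0(e^H)$, which is trivial by maximality of $e^\lambda$; then induct on $n$, using $[\gL_1, \gL_{-1}] \subset \gL_0$ and (2) to handle the resulting commutator term.

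For (4), properties (1)--(3) and the root decomposition $\mathfrak{so}(m,1) = \gL_0 \oplus \gL_{-1} \oplus \gL_1$ imply that $V := \sum_{n \geq 0} V_n$ is $d\tau(\mathfrak{so}(m,1))$-invariant, hence $\tau(\PO(m,1)^0)$-invariant by exponentiation. The main obstacle is to pass from invariance under the identity component to $V = \Rb^d$, since the hypothesis of Proposition \ref{thm:hyperbolic} only provides irreducibility of $\rho = \tau|_\Gamma$ with $\Gamma$ a lattice in the possibly disconnected group $\PO(m,1)$. To handle this, I would set $\Gamma^0 := \Gamma \cap \PO(m,1)^0$, a finite-index subgroup of $\Gamma$; by Proposition \ref{prop:strongly_irreducible}, $\rho|_{\Gamma^0}$ is irreducible, and the Borel density theorem shows that $\Gamma^0$ is Zariski dense in $\PO(m,1)^0$. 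Any $\PO(m,1)^0$-invariant subspace is then $\Gamma^0$-invariant and hence either trivial or all of $\Rb^d$, and the non-vanishing of $V_0$ forces $V = \Rb^d$.
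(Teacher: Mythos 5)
Your treatment of (1)--(3) is essentially the paper's: you use the same intertwining identity $\overline{\tau}_0(e^{sH})\,d\tau(Y)\,\overline{\tau}_0(e^{sH})^{-1}=e^{\sigma s}\,d\tau(Y)$ for $Y\in\gL_\sigma$ (the paper writes it in the form $\overline{\tau}_0(e^{H})d\tau(Y)=d\tau(\Ad(e^{H})Y)\,\overline{\tau}_0(e^{H})$, which is the same consequence of Lemma \ref{obs:rootspace}), the same inductions on $n$, and the same commutator expansions via $[\gL_\alpha,\gL_\beta]\subset\gL_{\alpha+\beta}$; writing elements of $V_n$ as sums $\sum_i d\tau(Y_i)w_i$ is only a mild tightening of the paper's wording. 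Where you genuinely differ is (4). The paper asserts that $\sum_{n\geq 0}V_n$ is $d\tau$-invariant, ``hence $\tau$ invariant,'' and concludes from irreducibility of $\tau$; this keeps the lemma self-contained under the appendix's standing hypotheses ($\tau$ irreducible, $\tau(e^H)$ proximal), at the cost of passing quickly over exactly the point you raise, namely that $\PO(m,1)$ is not connected, so Lie-algebra invariance by itself only gives invariance under $\tau(\PO(m,1)^0)$. Your fix---pass to $\Gamma^0=\Gamma\cap\PO(m,1)^0$, which has index at most two in $\Gamma$, invoke Proposition \ref{prop:strongly_irreducible} for irreducibility of $\rho|_{\Gamma^0}$, and use $V_0\neq\{0\}$---is correct and resolves the issue, but two remarks are in order. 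First, it imports hypotheses that are not among the lemma's standing assumptions: the existence of the lattice $\Gamma$ and the $1$-Anosovness of $\rho$ (which Proposition \ref{prop:strongly_irreducible} needs). These do hold in the only place the lemma is used, namely inside the proof of Proposition \ref{thm:hyperbolic} via Lemma \ref{lem:2espace}, so nothing breaks, but strictly speaking you prove the statement under stronger hypotheses than the paper states it. Second, the appeal to the Borel density theorem is superfluous: since $\Gamma^0\subset\PO(m,1)^0$, any $\tau(\PO(m,1)^0)$-invariant subspace is automatically $\rho(\Gamma^0)$-invariant, and that trivial inclusion is the only implication your argument uses; Zariski density would matter only for the reverse implication, which you never need.
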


\begin{proof} (1): This is true by definition when $n=0$. So suppose that $n > 0$. It suffices to consider the case when $v = d\tau(Y)w$ for some $Y \in \gL_{-1}$ and $w \in V_{\lambda-(n-1)}$. Then by induction
\begin{align*}
\overline{\tau}_0\left(e^{sH}\right)d\tau(Y)w 
&= d\tau( \Ad(e^{sH})Y )\overline{\tau}_0\left(e^{sH}\right)w \\
&= d\tau( e^{-s}Y )\left(e^{s(\lambda-n+1)}w\right) \\
& = e^{s(\lambda-n)} d\tau( Y )w,
\end{align*}
where the second equality is a consequence of (1) of Lemma \ref{obs:rootspace}.

 (2): If $v_0 \in V_\lambda$, then 
 $$
\overline{\tau}_0\left(e^{H}\right) d\tau(Z)v_0 = d\tau( \Ad(e^H)Z )\overline{\tau}_0\left(e^H\right) v_0=e^\lambda d\tau(Z) v_0.
 $$
 So $d\tau(Z)V_\lambda \subset V_\lambda$. Thus we can suppose that $n > 0$. Fix $v \in d\tau(\gL_{-1}) V_{\lambda-(n-1)}$. Then $v = d\tau(Y)w$ for some $Y \in \gL_{-1}$ and $w \in V_{\lambda-(n-1)}$. Lemma \ref{obs:rootspace} part (2) implies that $[Z,Y] \in \gL_{-1}$ and so 
\begin{align*}
d\tau(Z)v = d\tau(Z)d\tau(Y)w = d\tau([Z,Y])w - d\tau(Y)d\tau(Z)w 
\end{align*}
is contained in $V_{\lambda-n}$ by induction. Hence $d\tau(Z)V_{\lambda-n} \subset V_{\lambda-n}$.

(3): If $v_0 \in V_\lambda$, then 
\begin{align*}
\overline{\tau}_0\left(e^H\right)d\tau(Z)v_0 = d\tau( \Ad(e^H)Z )\overline{\tau}_0\left(e^H\right)v_0  = e^{\lambda+1} d\tau( Z )v_0.
\end{align*}
Since $e^\lambda$ is the largest eigenvalue of $\overline{\tau}_0(e^H)$ we must have $d\tau(Z)v_0=0$. Since $v_0 \in V_\lambda$ was arbitrary, we then have $d\tau(Z)V_\lambda = \{0\}$. 

Next fix $n > 1$ and $v \in d\tau(\gL_{-1}) V_{\lambda-(n-1)}$. Then $v = d\tau(Y)w$ for some $Y \in \gL_{-1}$ and $w \in V_{\lambda-(n-1)}$. Lemma \ref{obs:rootspace} part (2) implies that $[Z,Y] \in \gL_{0}$, so
\begin{align*}
d\tau(Z)d\tau(Y)w = d\tau([Z,Y])w - d\tau(Y)d\tau(Z)w 
\end{align*}
is contained in $V_{\lambda-(n-1)}$ by (2) and induction. Hence $d\tau(Z)V_{\lambda-n} \subset V_{\lambda-(n-1)}$.
 
(4): By part (1), $\sum_{n \geq 0} V_{\lambda-n} = \oplus_{n \geq 0} V_{\lambda-n}$. By definition and parts (2) and (3), $\bigoplus_{n \geq 0} V_{\lambda-n}$ is a $d\tau(\gL)$-invariant and hence $\tau(G)$-invariant subspace. Since $\tau$ is irreducible, we then have $\oplus_{n \geq 0} V_{\lambda-n}=\Rb^d$. Since $e^H$ is conjugate to $e^{-H}$ in $\PO(m,1)$, we see that $e^{-\lambda}$ is the smallest eigenvalue of $\overline{\tau}_0\left(e^H\right)$. Hence $\lambda \in \frac{1}{2} \Nb$ and 
\begin{equation*}
\Rb^d = \oplus_{n \geq 0} V_{\lambda-n}=\oplus_{n=0}^{2\lambda} V_{\lambda-n}. \qedhere
\end{equation*}
\end{proof}

\begin{proof}[Proof of Lemma \ref{lem:prox+2epsace}] By Lemmas~\ref{lem:the image of the Cartan is diagonal}, ~\ref{lem:the image of the Cartan is proximal}, and~\ref{lem:weights},  it suffices to show that 
$$
\dim V_{\lambda -1} = m-1.
$$

Fix some non-zero $v_0 \in V_\lambda$, and consider the linear map $T: \gL_{-1} \rightarrow V_{\lambda-1}$ given by 
\begin{align*}
T(Y) = d\tau(Y)v_0.
\end{align*}

Since $\dim V_\lambda=1$, 
$$
V_{\lambda-1} = \Span\left( d\tau(\gL_{-1}) V_{\lambda}\right) = d\tau(\gL_{-1}) V_{\lambda}.
$$
So  $T$ is onto. Since $\dim \gL_{-1} = m-1$, it suffices to show that $\ker T = \{0\}$. To see this, let
\begin{align*}
 M := \left\{ k \in K: k \cdot e_1 = \pm e_1\right\}=\left\{ \begin{bmatrix} \sigma_1 & & \\ & A &  \\  & & \sigma_2 \end{bmatrix}: \sigma_1,\sigma_2 \in \{-1,1\}, \ A \in {\rm O}(m-1)\right\}.
 \end{align*}
 Then $M$ commutes with $e^{H}$. So $\Ad(M)$ preserves $\gL_{-1}$ and one can check that $\Ad(M)$ acts irreducibly on $\gL_{-1}$. 
 
 We claim that $\ker T$ is an $\Ad(M)$-invariant subspace. Since $M$ commutes with $e^H$, we see that $\tau(M)$ preserves $V_\lambda$. Then, since $M$ is compact and $\dim V_\lambda = 1$, 
 $$
 \tau(M)|_{V_\lambda} \leq \GL(V_\lambda) \cong \Rb^{\times}
 $$
 is compact and so $\tau(M)|_{V_\lambda} \leq \{ \pm \id_{V_\lambda}\}$. Define $\sigma : M \rightarrow \{1,-1\}$ by $\tau(k)|_{V_\lambda} = \sigma(k)\id_{V_\lambda}$. 
 
 Finally, if $Y \in \ker T$ and $k \in M$, then
\begin{align*}
T(\Ad(k)Y) = d\tau\left(\Ad(k)Y\right)v_0 = \tau(k)d\tau(Y) \tau(k^{-1}) v_0 = \sigma(k^{-1}) \tau(k)T(Y)v_0=0.
\end{align*}
So $\ker T$ is an $\Ad(M)$-invariant subspace. 

Then either $\ker T = \{0\}$ or $\ker T = \gL_{-1}$. If $\ker T = \gL_{-1}$, then $V_{\lambda - n} = 0$ for all $n > 0$. Then 
$$
1 = \dim V_\lambda=d
$$
and since $d > 1$ this is impossible. So $\ker T =\{0\}$ and hence $\dim V_{\lambda -1} = \dim \gL_{-1} = m-1$.
\end{proof}

\section{Geometric consequences of smoothness along a subset}\label{sec:regularity appendix} 

As in Section~\ref{sec:optimal1}, suppose that $M \subset \Pb(\Rb^d)$ is a topological $(m-1)$-dimensional manifold  which is $C^{\alpha}$ along a compact subset $N\subset M$ for some $\alpha >1$.  

In this Appendix we will record some geometric consequences of this definition. To that end, fix a distance $d_{\Pb}$ on $\Pb(\Rb^d)$ induced by a Riemannian metric on $\Pb(\Rb^d)$. Directly from the definition we obtain the following. 

\begin{observation}\label{obs:appendix3 estimate 1} There exists a constant $C_1 > 0$ such that: for every $a \in N$ there is a unique $m$-plane $T_a M \in \Gr_m(\Rb^d)$ that satisfies 
$$
d_{\Pb}(b, T_a M) \leq C_1 d_{\Pb}(a,b)^{\alpha}
$$
for all $b \in M$ (here, as usual, we view $T_a M$ as both a $m$-dimensional linear subspace of $\Rb^d$ and a $(m-1)$-dimensional projective subspace of $\Pb(\Rb^d)$). 
\end{observation} 

Next fix a distance $d_{m}$ on $\Gr_m(\Rb^d)$ induced by a Riemannian metric on $\Gr_m(\Rb^d)$. 

\begin{observation}\label{obs: appendix3 estimate 2}  There exists a constant $C_2 > 0$ such that: if $a,b \in N$, then 
$$
d_{m}(T_a M, T_b M) \leq C_2 d_{\Pb}(a,b)^{\alpha-1}.
$$
\end{observation} 

\begin{proof} By the compactness of $N$, it suffices to fix $a \in N$ and obtain the desired estimate for $b \in N$ sufficiently close to $a$. 

Fix local smooth coordinates around $a$ where there exists a continuous map $f:U\to\Rb^{d-m}$ defined on an open set $U\subset \Rb^{m-1}$, such that
\begin{enumerate}
\item $M$ coincides with the graph of $f$ near $a$.
\item For every $(u,f(u)) \in N$, $f$ is differentiable at $u$.
\item There are constants $C,\delta>0$ such that 
\[
\norm{f(u+h)-f(u)-df_u(h)}_2 \leq C\norm{h}_2^\alpha
\] 
for all $u\in U$ with $(u,f(u)) \in N$ and for all $h\in\Rb^{m-1}$ with $\norm{h}_2<\delta$.
\end{enumerate}

We can assume that $a=0$ in these local coordinates. Then it suffices to find $C_1, \delta_1 > 0$ such that: if $b=(u,f(u)) \in N$ satisfies $\norm{u}_2 < \delta_1$, then 
\begin{equation}
\label{eqn:desired estimate in the last appendix}
\norm{df_0(h) - df_u(h)}_2 \leq C_1 \norm{u}_2^{\alpha-1}\norm{h}_2
\end{equation}
for all $h \in \Rb^{m-1}$. Indeed, this implies that
\[\norm{df_0 - df_u} \leq C_1 \norm{u}_2^{\alpha-1},\]
($\norm{\cdot}$ is the operator norm induced by $\norm{\cdot}_2$), so the observation follows because $d_m(T_a M ,T_b M)$ is locally bi-Lipschitz to $\norm{df_0 - df_u}$ and $d_{\Pb}(a,b)$ is locally bi-Lipschitz to $\norm{u}_2$.

Let $\delta_1 : = \frac{\delta}{2}$ and $C_1:=C(2+2^\alpha)$. Let $(u,f(u)) \in N$ with $\norm{u}_2 < \delta_1$ and let $h \in \Rb^{m-1}$. In the special case where $\norm{h}_2=\norm{u}_2$,
\begin{align*}
 \norm{df_0(h) - df_u(h)}_2 & = \big\Vert -\big( f(u-u) - f(u) + df_u(u) \big) - \big( f(0+h) - f(0) - df_0(h) \big) \\
& \quad \quad \quad + \big( f(u+(h-u)) - f(u) - df_u(h-u) \big) \big\Vert_2 \\
& \leq C\norm{u}_2^\alpha + C\norm{h}_2^{\alpha} + C\norm{h-u}_2^{\alpha} \leq C(2+2^\alpha) \norm{u}_2^{\alpha-1}\norm{h}_2.
\end{align*}
For general $h$, first observe that if $h=0$, then Equation~\eqref{eqn:desired estimate in the last appendix} clearly holds. On the other hand, if $h\neq 0$, set $h':=\frac{\norm{u}_2}{\norm{h}_2}h$, and we have
\begin{align*}
 \norm{df_0(h) - df_u(h)}_2&= \frac{\norm{h}_2}{\norm{u}_2}\norm{df_0(h') - df_u(h')}_2\leq \frac{\norm{h}_2}{\norm{u}_2}C(2+2^\alpha) \norm{u}_2^{\alpha-1}\norm{h'}_2\\
 &=C(2+2^\alpha) \norm{u}_2^{\alpha-1}\norm{h}_2,\end{align*}
so Equation~\eqref{eqn:desired estimate in the last appendix} also holds. 
\end{proof} 

Next we specialize to the case when $m=2$ and let $\overline{\Phi} : N \times N \rightarrow \Gr_2(\Rb^d)$ denote the map 
$$
\overline{\Phi}(a,b) = \begin{cases} \Span\{ a,b\} & \text{ if } a \neq b \\ T_a M & \text{ if } a = b\end{cases}.
$$

\begin{observation}\label{obs: appendix3 estimate 3}   There exists a constant $C_3 > 0$ such that: if $a,b \in N$, then 
$$
d_{2}\left(T_a M, \overline{\Phi}(a,b)\right) \leq C_3 d_{\Pb}(a,b)^{\alpha-1}.
$$
\end{observation} 

\begin{proof} This follows from Observation~\ref{obs:appendix3 estimate 1}. 
\end{proof}

\bibliographystyle{alpha}
\bibliography{hilbert}

\end{document}